\newtheorem{thm}{Theorem}[section]
\newtheorem{prop}[thm]{Proposition}
\newtheorem{lem}[thm]{Lemma}
\newtheorem{cor}[thm]{Corollary}
\newtheorem*{thm*}{Theorem}
\newtheorem*{cor*}{Corollary}
\newtheorem{quest}[thm]{Question}
\theoremstyle{definition}
\newtheorem{example}[thm]{Example}
\newtheorem{rem}[thm]{Remark}
\theoremstyle{remark}
\numberwithin{equation}{section}
\newcommand{\GL}{\operatorname{{\mathbf GL}}}
\newcommand{\cA}{{\mathcal A}}
\newcommand{\cB}{{\mathcal B}}
\newcommand{\cC}{{\mathcal C}}
\newcommand{\cD}{{\mathcal D}}
\newcommand{\cE}{{\mathcal E}}
\newcommand{\cF}{{\mathcal F}}
\newcommand{\cH}{{\mathcal H}}
\newcommand{\cI}{{\mathcal I}}
\newcommand{\cJ}{{\mathcal J}}
\newcommand{\cM}{{\mathcal M}}
\newcommand{\cO}{{\mathcal O}}
\newcommand{\cQ}{{\mathcal Q}}
\newcommand{\cV}{{\mathcal V}}
\newcommand{\fA}{{\mathfrak A}}
\newcommand{\fB}{{\mathfrak B}}
\newcommand{\fC}{{\mathfrak C}}
\newcommand{\fL}{{\mathfrak L}}
\newcommand{\fV}{{\mathfrak V}}
\newcommand{\fW}{{\mathfrak W}}
\newcommand{\C}{{\mathbb C}}
\newcommand{\N}{{\mathbb N}}
\newcommand{\M}{{\mathbb M}}
\newcommand{\W}{{\mathbb W}}
\newcommand{\D}{{\mathbb D}}
\newcommand{\bB}{\mathbb{B}}
\newcommand{\bC}{\mathbb{C}}
\newcommand{\bD}{\mathbb{D}}
\newcommand{\bM}{\mathbb{M}}
\newcommand{\bN}{\mathbb{N}}
\newcommand{\mlt}{\operatorname{Mult}}
\newcommand{\alg}{\operatorname{alg}}
\newcommand{\Rep}{\operatorname{Rep}}
\newcommand{\Aut}{\operatorname{Aut}}
\newcommand{\ph}{\delta_{cb}}
\newcommand{\phb}{\delta_{b}}
\newcommand{\CB}{\operatorname{CB}}
\newcommand{\cbh}{\widehat{\fC\fB_d}}
\newcommand{\ol}{\overline}
\def\mcc{M\raise.5ex\hbox{c}C}
\def\mccarthy{M\raise.5ex\hbox{c}Carthy}
\begin{document}

\title[The bounded and completely bounded isomorphism problem]{Algebras of noncommutative functions on subvarieties of the noncommutative ball: the bounded and completely bounded isomorphism problem}

\author{Guy Salomon}
\address{Department of Pure Mathematics, University of Waterloo, Waterloo, ON, Canada}
\email{gsalomon@uwaterloo.ac.il}
\author{Orr M. Shalit}
\address{Department of Mathematics\\
Technion --- Israel Institute of Technology\\
Haifa, Israel}
\email{oshalit@technion.ac.il}
\author{Eli Shamovich}
\address{Department of Pure Mathematics, University of Waterloo, Waterloo, ON, Canada}
\email{eshamovi@uwaterloo.ca}

\subjclass[2010]{47L80, 46L07,47L25}

\thanks{The first author was partially supported by the Clore Foundation.
The second author was partially supported by Israel Science Foundation Grant no. 195/16. The third author was partially supported by the Fields Institute for Research in the Mathematical Sciences.}

\begin{abstract}
%This paper continues our study of algebras of bounded, noncommutative (nc) holomorphic functions on nc subvarieties of the nc unit ball $\fB_d$ (where $d \in \bN \cup \{\infty\}$).
Given a noncommutative (nc) variety $\fV$ in the nc unit ball $\fB_d$, we consider the algebra $H^\infty(\fV)$ of bounded nc holomorphic functions on $\fV$. 
%We find that the finite dimensional and weak-$*$ continuous representations are parametrized by the similarity envelope $\widetilde{\fV}$ of $\fV$. 
%Therefore, $H^\infty(\fV)$ can be considered as a certain subalgebra of nc holomorphic functions on $\widetilde{\fV}$. 
We investigate the problem of when two algebras $H^\infty(\fV)$ and $H^\infty(\fW)$ are isomorphic.  
We prove that these algebras are weak-$*$ continuously isomorphic if and only if there is an nc biholomorphism $G : \widetilde{\fW} \to \widetilde{\fV}$ between the similarity envelopes that is bi-Lipschitz with respect to the free pseudo-hyperbolic metric. 
Moreover, such an isomorphism always has the form $f \mapsto f \circ G$, where $G$ is an nc biholomorphism.  
These results also shed some new light on automorphisms of the noncommutative analytic Toeplitz algebras $H^\infty(\fB_d)$ studied by Davidson--Pitts and by Popescu. 
In particular, we find that $\operatorname{Aut}(H^\infty(\fB_d))$ is a proper subgroup of $\operatorname{Aut}(\widetilde{\fB}_d)$. 

When $d<\infty$ and the varieties are homogeneous, we remove the weak-$*$ continuity assumption, showing that two such algebras are boundedly isomorphic if and only if there is a bi-Lipschitz nc biholomorphism between the similarity envelopes of the nc varieties. 
We provide two proofs.
In the noncommutative setting, our main tool is the  noncommutative spectral radius, about which we prove several new results. 
In the free commutative case, we use a new free commutative Nullstellensatz that allows us to bootstrap techniques from the fully commutative case. 

%We discuss completely bounded versions of the above classification results, and it turns out that in the homogeneous case, two algebras are boundedly isomorphic if and only if they are completely boundedly isomorphic. 
%
%We also briefly treat the algebras $A(\fV)$ of bounded nc holomorphic functions on $\fV$ that extend uniformly continuously to $\ol{\fV}$. 
%In the case of homogeneous varieties, we find the same classification results as for the algebras of bounded nc holomorphic functions. 
\end{abstract}

\maketitle
%\tableofcontents

%%%%%%%%%%%%%%%%%%%%%%%%%%%%%%%%%%%%%%%%%%%%%%%%%%%%%%%%%%%%%%%%%%%%%%
%%%%%%%%%%%%%%%%%%%%%%%%%%%%%%%%%%%%%%%%%%%%%%%%%%%%%%%%%%%%%%%%%%%%%%
\section{Introduction}
%
%I think JFA is a very good choice and it should be accepted to there (80\%). OK, let's try there. 
%Sure it's fine by me that you two make some changes. Following the referee report we just got, I suggest that when editing the introduction, try to make it a bit less modest. For example, in the last paragraph of section 1.1, not to start with "This paper is a continuation of [34]." but rather "In this paper our goal is to understand... The situation is markedly different from [34]..." etc. Really the situation is different and the usual varieties in the ball cannot be used to classify the algebras. In order to get a handle on the invariants we need 1) to develop new results on nc function theory, and 2) apply them in a clever way. Section 2,5,6, and 8 all have some interesting new ideas and results that should be highlighted. 

The study of holomorphic functions in one and several complex variables is an old and well developed subject with countless applications. 
One fruitful venue of research is the interplay between complex analysis and operator algebras, exemplified in the Sz. Nagy-Foias bounded analytic functional calculus \cite{SzNFo10} and in Taylor's functional calculus of several commuting operators \cite{Tay70}. 
The main limitation of this approach lies in the fact that in general operators do not commute. 
This led Taylor and Voiculescu to study analytic functions of several noncommuting variables \cite{Tay72frame,Tay73,Voic85,Voic86,Voic04,Voic10}. 
In fact, classical analytic functions can be viewed as shadows of their noncommutative (nc, for short) counterparts, the so-called {\em nc holomorphic functions}, under an appropriate quotient map. 
We will defer the formal definition of nc holomorphic functions to the next section. 
For now it suffices to view them as a generalization of polynomials in several noncommuting variables, i.e, elements of a free associative algebra. 

A natural choice for the domain of nc holomorphic functions in $d$ noncommuting variables is the {\em nc universe} $\bM_d: = \sqcup_{n=1}^{\infty} M_n(\bC)^d$, the graded set of all $d$-tuples of complex square matrices. 
One can view the matrix levels as capturing the noncommutative nature of our functions, in analogy with Kaplanski's theorem \cite[Theorem 2]{Kap48} that states that elements of the free associative algebra $\C\langle z_1,\ldots,z_d\rangle$ are determined by their values on $\bM_d$.

Not surprisingly, however, $\bM_d$ is in a sense too big to have a rich theory of holomorphic functions, so just like in the classical case, analysts usually consider only certain subsets of it. 
Every classical domain, such as a ball or a polydisc admits natural ``quantizations''. 
In particular, in this paper we will focus on the {\em nc ball} $\fB_d$: the set of all $d$-tuples $(X_1,\dots X_d)  \in \bM_d$ satisfying   $X_1X_1^* + \cdots + X_dX_d^* < 1$.
 
It is worth noting that for every $n$, the $n$th level of the nc universe admins a natural $\GL_n$-action given by $S \cdot (X_1,\ldots,X_d) = (S^{-1}X_1S,\dots,S^{-1}X_dS)$. 
Unfortunately, most domains, including our nc ball, are not invariant under this action. 
We therefore define, for every set $\Omega \subseteq \bM_d$, its {\em similarity orbit} $\widetilde{\Omega}$, which is just the orbit of $\Omega$ under the levelwise $\GL_n$-action.

% The first level of the nc ball is the classical unit ball $\bB_d \subseteq \bM_d$ and thus the nc ball can be viewed as a noncommutative analogue of the unit ball. 

The algebra of bounded nc holomorphic functions on the nc ball, $H^{\infty}(\fB_d)$ turns out to be the free semigroup algebra $\fL_d$ studied by Arias and Popescu and Davidson and Pitts, see for example \cite{AriasPopescu, DavPitts2, DavPitts1, Popescu89, Popescu06b, Popescu10}. 
The free semigroup algebra is the universal weak-$*$ closed algebra generated by a pure row contraction. 
Its quotients by weak-$*$ closed two sided ideals are thus universal weak-$*$ closed algebras generated by pure row contractions satisfying prescribed algebraic relations. 
We would like to understand when such algebras are isomorphic. 
Though isomorphism can be understood in many ways we will focus on continuous and completely bounded isomorphisms. 
Such a question of course begs the introduction of an invariant. 
An immediate candidate for an invariant is the vanishing locus inside the nc ball of a weak-$*$ closed two-sided ideal of $H^{\infty}(\fB_d)$. 
We will call these subsets $\fV \subseteq \fB_d$ nc varieties. 
Each variety $\fV$ comes equipped with its algebra of functions $H^{\infty}(\fV)$, namely the quotient of the free semigroup algebra by the ideal of functions that vanish on the variety.

%In this paper, we consider subsets of the nc ball that are cut out by bounded nc holomorphic functions on the nc ball, and refer to them as {\em nc subvarieties of the nc ball} or simply as {\em nc varieties}. Our goal is to classify the algebras $H^\infty(\fV)$ of bounded nc holomorphic functions that live on these nc varieties in terms of the geometry of the varieties. These algebras are not only Banach algebras: they carry a structure of an operator algebra, and in fact, they can be viewed as multiplier algebras of certain nc reproducing kernel Hilbert spaces (so they also have a natural weak-$*$ topology as dual Banach spaces). A well-known example of such an algebra is revealed when the variety $\fV$ is the whole nc ball $\fB_d$. In this case, $H^\infty(\fV)$ becomes 

It turns out that the information contained in our nc subvariety of the ball is not at all sufficient to answer the (completely) bounded isomorphism question. 
The question forces us to treat the geometry of $\widetilde{\fV}$, the similarity envelope $\fV$. 
Several delicate issues immediately arise. 
The first obstacle is the fact that the similarity envelope is, in general, unbounded and thus classical results on bounded domains are not readily available. Furthermore, there is an algebraic complication since similarity orbits of even single points can be quite complicated.

The ``geometry" required for the classification theorem, is encoded in certain pseudo-metrics defined on the similarity envelope. 
More precisely, we define two {\em free pseudo-hyperbolic distances} $\phb$ and $\ph$ on the similarity envelope of the closed nc ball $\overline{\fB_d}$ that measure the difference between point evaluations: the first in terms of the usual operator norm and the second in terms of the complete bounded norm.

We can now state the classification theorem for homogeneous nc varieties (i.e. nc varieties that are cut out by homogeneous nc functions). 

\begin{thm}[Theorem \ref{thm:isom_thm_for_homo} and Corollary \ref{cor:cb-isom<=>biholo_weakstar}] \label{thm:main_int}
Let $\fV \subseteq \fB_d$ and $\fW \subseteq \cB_e$ be two homogeneous nc varieties. 
The following statements are equivalent: 
\begin{enumerate}[(a)]
\item $H^\infty(\fV)$ and $H^\infty(\fW)$ are weak-$*$ continuously isomorphic. 
\item $H^\infty(\fV)$ and $H^\infty(\fW)$ are boundedly isomorphic. 
\item $H^\infty(\fV)$ and $H^\infty(\fW)$ are completely boundedly isomorphic. 
\item There exists a $\phb$-bi-Lipschitz nc biholomorphism mapping $\widetilde{\fW}$ onto $\widetilde{\fV}$. 
\item There exists a $\ph$-bi-Lipschitz nc biholomorphism mapping $\widetilde{\fW}$ onto $\widetilde{\fV}$. 
\item There exists a $\phb$-bi-Lipschitz linear map mapping $\widetilde{\fW}$ onto $\widetilde{\fV}$. 
\item There exists a $\ph$-bi-Lipschitz linear map mapping $\widetilde{\fW}$ onto $\widetilde{\fV}$. 
\end{enumerate}
In addition, any isomorphism that appears in (a)--(c) can be viewd as a precompostion with a $\ph$-bi-Lipschitz nc biholomorphism between the similarity envelopes.
\end{thm}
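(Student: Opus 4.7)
The plan is to decompose the seven implications into those that already follow from the general classification in Corollary~\ref{cor:cb-isom<=>biholo_weakstar}---which applies to arbitrary nc subvarieties of $\fB_d$, not only homogeneous ones---and the implications that genuinely require homogeneity. That corollary delivers the equivalence of (a), (c), (d), (e), together with the closing "In addition" assertion that every such isomorphism has the form $f\mapsto f\circ G$ for a $\ph$-bi-Lipschitz nc biholomorphism between the similarity envelopes. The trivial remaining implications are $(c)\Rightarrow(b)$, $(f)\Rightarrow(d)$, and $(g)\Rightarrow(e)$, since a bi-Lipschitz linear map is in particular a bi-Lipschitz nc biholomorphism. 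The real content is to close the chain under homogeneity in two ways: (I) linearize a bi-Lipschitz nc biholomorphism to obtain (f) and (g); and (II) upgrade a merely bounded isomorphism to a completely bounded (equivalently, weak-$*$ continuous) one, giving $(b)\Rightarrow(c)$.

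For task (I), I would exploit the natural scalar action $\lambda\cdot X=\lambda X$, which preserves $\widetilde{\fV}$ and $\widetilde{\fW}$ because both are cut out by homogeneous ideals. Given a $\phb$-bi-Lipschitz nc biholomorphism $G\colon\widetilde{\fW}\to\widetilde{\fV}$, one may normalize so that $G(0)=0$ and expand $G$ as a convergent nc Taylor series $G=\sum_{k\geq 1}G_k$ near the origin with $G_k$ homogeneous of degree $k$. A direct computation from the definitions shows that dilations by $r\in(0,1]$ act contractively in both pseudo-hyperbolic metrics on the homogeneous envelopes, so the bi-Lipschitz constants of $G$ pass to $G_1$ on arbitrarily small neighborhoods of $0$. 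The linear piece $G_1$ then extends by homogeneity to all of $\widetilde{\fW}$, and the scaling behavior of $\phb$ and $\ph$ propagates the estimates globally, yielding (f) (and, symmetrically, (g)).

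For task (II), the natural device is circle averaging. Given a bounded isomorphism $\Phi\colon H^\infty(\fV)\to H^\infty(\fW)$, one composes with the gauge automorphisms $\alpha_\lambda$ and $\beta_\lambda$ induced on both algebras by the scalar action, and either integrates $\lambda\mapsto\beta_{\bar\lambda}\circ\Phi\circ\alpha_\lambda$ over the circle or extracts the graded pieces by Fourier analysis. The outcome is a bounded graded homomorphism, and since each graded summand is finite-dimensional, a graded map is automatically weak-$*$ continuous, which delivers (a). The serious point is verifying that averaging does not destroy invertibility: one must pin down the images of the coordinate generators precisely enough to show the averaged map is still an isomorphism. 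In the fully noncommutative case this is where the noncommutative spectral radius machinery developed earlier in the paper enters, providing the rigidity that identifies each image as belonging to the correct homogeneous component; in the free commutative case the new free commutative Nullstellensatz plays the analogous role, reducing the verification to a classical statement on the commutative quotient.

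The principal obstacle is task (II). Bounded isomorphisms between nonselfadjoint operator algebras are notoriously flexible: they need not be weak-$*$ continuous, nor completely bounded, and a priori they do not interact with any grading. The usefulness of homogeneity is exactly that it makes the circle action available, but turning the averaged graded homomorphism into an actual \emph{isomorphism} demands external rigidity, which is supplied by the noncommutative spectral radius in general and by the new Nullstellensatz in the free commutative case. This is where the technical novelty of the paper is concentrated, and where I would expect the bulk of the work to lie; once this bootstrap is in hand, the remaining implications of the theorem, as well as the closing precomposition statement, are either trivial or immediate consequences of the general Corollary~\ref{cor:cb-isom<=>biholo_weakstar}.
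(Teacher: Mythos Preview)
There are genuine gaps in both tasks, and in task (II) the mechanism you propose is not the one the paper uses.

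For (II), the circle average $\int \beta_{\bar\lambda}\circ\Phi\circ\alpha_\lambda\,d\lambda$ is a linear map but in general \emph{not} a homomorphism, so the claim that ``the outcome is a bounded graded homomorphism'' fails at the outset. The paper does not average the isomorphism. Instead (Theorem~\ref{thm:homog_no_need_w*}) it works directly on representations: setting $G=\pi\circ\alpha^*$, one must show $G(\fW)\subseteq\widetilde{\fV}$ rather than hitting the boundary $\widetilde{\ol{\fB_d}}\setminus\widetilde{\fB_d}$. If $\rho(G(X))=1$ for some $X\in\fW$, subharmonicity of $z\mapsto\rho(G(zX/\|X\|))$ (Corollary~\ref{cor:subharmonic}) forces $\rho(G(0))=1$, and a maximum-modulus argument on the first coordinate then shows $G$ is constant, contradicting that $\alpha$ is an isomorphism. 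So the spectral radius is used to rule out boundary behavior of $G$, not to analyze homogeneous components of an averaged map. Once $G$ lands in $\widetilde{\fV}$, Proposition~\ref{prop:cb-homo=>holo} gives $\alpha(f)=\widetilde f\circ G$, which is weak-$*$ continuous by Proposition~\ref{prop:4equiv_prime}. Note also that Corollary~\ref{cor:cb-isom<=>biholo_weakstar} alone does \emph{not} give (c)$\Rightarrow$(e): without weak-$*$ continuity a completely bounded isomorphism is not yet known to be implemented by a biholomorphism, so (c)$\Rightarrow$(a) is part of the homogeneous content too, not a freebie.

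For (I), the reduction to $G(0)=0$ is nontrivial and requires the disc trick (Proposition~\ref{prop:biholo=>0-biholo}); and your scaling argument is not justified as stated, since $\phb(X,Y)=\|\Phi_X-\Phi_Y\|$ does not behave homogeneously under $X\mapsto rX$. The paper instead sets $A=\Delta G(0,0)$, uses Proposition~\ref{prop:zeros_to_zeros} to get $A(\widetilde{\fW})\subseteq\widetilde{\fV}$, and proves $\sup_{W\in\fW}\|\Phi_{A(W)}\|\le\sup_{W\in\fW}\|\Phi_{G(W)}\|$ by showing that $z\mapsto\Phi_{G(zW)/z}\in B(H^\infty(\fV),M_n)$ is holomorphic on $\ol{\bD}$ and applying Cauchy's integral formula together with the fact that the gauge automorphisms $\beta_\theta$ of $H^\infty(\fV)$ are isometric. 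Finally, Proposition~\ref{prop:bdd_linear=>cb_linear} upgrades a $\phb$-bi-Lipschitz \emph{linear} map to a $\ph$-bi-Lipschitz one (by exhibiting the induced isomorphism as a similarity $M_f\mapsto TM_fT^{-1}$ on multiplication operators), which closes (f)$\Leftrightarrow$(g) and hence (b)$\Rightarrow$(c).
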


In the not necessarily homogeneous case, we lose the convenience of having a linear map --- meaning, we no longer have (f) and (g) --- and the rest of the equivalence list splits into two as follows: (a), (a)+(b), and (d) are equivalent, and (a)+(c) and (e) are equivalent (see Corollary \ref{cor:cb-isom<=>biholo_weakstar}).

One may rightly ask what about classifying these algebras up to an isometric  or completely isometric isomorphism. 
This question is somewhat easier to attack. In \cite{SalShaSha17} we showed that when it comes to homogeneous varieties such an isomorphism exists if and only if one of the varieties is the image of the other, under an nc automorphism of the nc ball $\fB_{\max\{d,e\}}$, and any such isomorphism is given by a precomposition with such an nc automorphism. 
The group of nc automorphisms of the nc ball is the well-known group of M\"obius transformations \cite{Popescu10}, so the class of isometric or completely isometric isomorphisms between the algebras $H^{\infty}(\fV)$ and $H^{\infty}(\fW)$ is somewhat poor.\footnote{In \cite{SalShaSha17}, we also examined the nonhomogeneous case, and we showed that these algebras are completely isometrically isomorphic if and only if the varieties $\fV$ and $\fW$ are nc {\em biholomorphic}. 
The main result of \cite{Sham18} then shows that, at least when the varieties contain a scalar point, such an nc biholomorphism is just a restriction of an nc automorphism of the nc ball.}

The situation in this paper is markedly different. For example, while the nc automorphism group of the nc ball is crystal-clear, the group of bi-Lipschitz nc automorphisms of the similarity envelope of the nc ball is far from being well understood. 
In fact, Theorem \ref{thm:main_int} together with the fact that algebraic automorphisms of $\fL_d$ are automatically weak-$*$ continuous \cite[Theorem 4.6]{DavPitts2} imply that it can be identified with the group of algebraic automorphisms of the algebra $\fL_d \cong H^\infty(\fB_d)$, which is mysterious in many ways (for example, it is not clear whether there are quasi-inner automorphisms which are not inner).

Some of the main tools that are developed to obtain Theorem \ref{thm:main_int} are interesting in their own right. For example, in Section \ref{sec:prelim_jspr} we prove an nc counterpart of the well-known Schwarz Lemma of the disc.

\begin{thm}[Lemma \ref{lem:free_spectral_Schwarz} and Proposition \ref{prop:coisometric_derivative}]\label{thm:Schwarz_int}
Let $f \colon \D \to \widetilde{\fB_d}$ be a holomorphic function mapping $0$ to $0$, and let $\rho$ denote the joint spectral radius of a $d$-tuple of matrices. Then
\begin{enumerate}[(a)]
\item $\rho(f(z)) \leq |z|$ for every $z \in \D$ and $\rho(f^{\prime}(0)) \leq 1$; and
\item if $f'(0)$ is an irreducible coisometry, then $f(z)$ is similar to $z f'(0)$ for very $z \in \D$.
\end{enumerate}
\end{thm}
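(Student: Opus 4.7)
The plan for part (a) is to reduce the Schwarz bound to the subharmonic maximum principle applied to $v(z) := \log \rho(f(z)/z)$. First, the preliminaries of Section \ref{sec:prelim_jspr} should give that the similarity envelope $\widetilde{\fB_d}$ at matrix level $n$ coincides with $\{X \in M_n(\C)^d : \rho(X) < 1\}$, so the hypothesis $f(\D) \subseteq \widetilde{\fB_d}$ translates to $\rho(f(z)) < 1$ for every $z \in \D$. Setting $h(z) := f(z)/z$, which extends holomorphically across $0$ with $h(0) = f'(0)$, one has $\rho(h(z)) = \rho(f(z))/|z|$ away from the origin. The crucial technical step is the subharmonicity of $v(z) = \log \rho(h(z))$ on $\D$: this should follow from Vesentini's theorem applied for each word $\alpha$ to the holomorphic matrix function $z \mapsto h(z)_\alpha$ (giving subharmonicity of $\tfrac{1}{|\alpha|}\log \rho(h(z)_\alpha)$), combined with the Berger--Wang identity $\rho(X) = \limsup_k \max_{|\alpha|=k} \rho(X_\alpha)^{1/k}$ and the standard fact that the upper-semicontinuous regularization of a countable supremum of subharmonic functions is subharmonic. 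On the circle $|z|=r<1$ one has $v < -\log r$; the maximum principle propagates this to $|z| \le r$, and letting $r \nearrow 1$ yields $v \le 0$ on $\D$, i.e., $\rho(f(z)) \le |z|$. Specializing to $z = 0$ gives $\rho(f'(0)) \le 1$.

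For part (b), the hypothesis that $T := f'(0)$ is an irreducible row coisometry immediately gives $\Phi_T(I) := \sum_i T_i T_i^* = I$, so $\rho(T)^2$ equals the spectral radius of the CP map $\Phi_T$, which is $1$. Thus $v(0) = 0$ attains the interior supremum established in part (a), and the strong maximum principle forces $v \equiv 0$, i.e., $\rho(h(z)) \equiv 1$, equivalently $\rho(f(z)) = |z|$ for every $z \in \D$. To upgrade this spectral equality to similarity $f(z) \sim zT$, the plan is: apply Perron--Frobenius to $\Phi_{h(z)}(A) := \sum_i h_i(z) A h_i(z)^*$ (of spectral radius $1$ and irreducible at $z=0$ by irreducibility of $T$, hence by continuity in a neighbourhood of $0$) to obtain a strictly positive fixed point $\xi_z$; then $S_z := \xi_z^{1/2}$ conjugates $h(z)$ into an honest row coisometry. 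Coupling this with the bound $|\tr h(z)_\alpha| \le n\rho(h(z))^{|\alpha|} = n$ (which uses $|\tr X| \le n\rho(X)$ together with $\rho(X_\alpha) \le \rho(X)^{|\alpha|}$) and Procesi's theorem that the $\GL_n$-orbit of an irreducible tuple is closed and classified by its full system of trace invariants $\{\tr X_\alpha\}$, one concludes $h(z) \sim T$ for every $z \in \D$, hence $f(z) \sim zT$.

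The main obstacle is clearly part (b): the $\GL_n$-orbit of $T$ is a thin submanifold of $M_n(\C)^d$ (codimension at least $n^2(d-1) + 1$), so the scalar constraint $\rho \equiv 1$ is by itself far too weak to pin a holomorphic deformation of $T$ to that orbit. The coisometry plus irreducibility hypothesis has to be leveraged carefully to force all the trace polynomials $z \mapsto \tr h(z)_\alpha$ to be identically equal to their values $\tr T_\alpha$ at the origin; the Perron--Frobenius normalization above delivers this only near $0$, and one must propagate it to the whole disc by analytic continuation of the (holomorphic, bounded) trace polynomials and a connectedness argument. Making this propagation watertight---in particular handling the infinite family of words of unbounded length uniformly---is where I expect the real work to concentrate.
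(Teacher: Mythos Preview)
Your plan for part (a) is essentially the paper's: show $\log\rho(h(z))$ is subharmonic for the holomorphic extension $h(z)=f(z)/z$, then run the maximum principle on discs $|z|\le r$ and let $r\to 1$. The only difference is the route to subharmonicity. The paper (Lemma~\ref{lem:log_subharmonic}) uses the norms $u_k(z)=\|h(z)^{(k)}\|$, shows $\log u_k$ is subharmonic, and then observes that $u_{2^m}(z)^{1/2^m}$ is \emph{monotone decreasing} to $\rho(h(z))$, so the limit is subharmonic. Your Berger--Wang route requires taking a $\limsup$ of subharmonic functions and then passing to the upper-semicontinuous regularization; you would still owe an argument that the regularization coincides with $\log\rho(h(z))$ itself. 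This is fixable, but the paper's monotone-along-dyadic-$k$ argument is cleaner and avoids the issue entirely.

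Part (b) has a genuine gap, and the paper's argument is quite different from what you outline. You correctly deduce $\rho(h(z))\equiv 1$ by the strong maximum principle, and you correctly invoke Perron--Frobenius to find, for $z$ near $0$, a strictly positive $\xi_z$ with $\Psi_{h(z)}(\xi_z)=\xi_z$. But $S_z=\xi_z^{1/2}$ depends only \emph{smoothly} on $z$, not holomorphically, so $S_z^{-1}h(z)S_z$ is not a holomorphic function of $z$, and your trace-invariant argument stalls: the bound $|\tr h(z)_\alpha|\le n$ only says these holomorphic functions are bounded, it does not force them to be constant. Nothing in your sketch supplies a maximum-modulus-type mechanism that pins $\tr h(z)_\alpha$ to its value at $0$.

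The paper's proof (Lemmas~\ref{lem:func_to_coisom_constant} and~\ref{lem:local_analytic_similarity}) supplies exactly such a mechanism, and it is the heart of the matter. First, one proves a rigidity lemma: any holomorphic $g\colon\D\to\overline{\fB_d}(n)$ that hits a coisometry at a single point is constant (the proof is a short numerical-range argument). Then, to apply it, one needs a \emph{holomorphic} conjugator. The paper obtains this by a Wiener--Hopf/Clancey--Gohberg factorization $a(re^{it})=h(re^{it})h(re^{it})^*$ on a small circle, producing a holomorphic $h$ on $r\D$; the conjugated $\tilde g(z)=h(z)^{-1}g(z)h(z)$ is then holomorphic and a coisometry on $|z|=r$. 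To push this inside, the paper uses Aupetit's result that the product of the $n$ largest singular values of a holomorphic matrix function is subharmonic: this product is $1$ on the boundary circle and bounded away from $0$ inside (since $\tilde g(z)$ is similar to a coisometry), hence identically $1$, which forces every $\tilde g(z)$ to be a coisometry. Now the rigidity lemma makes $\tilde g$ constant, so $g$ stays in a single similarity orbit on $r\D$, and one extends to all of $\D$ by the vanishing of the polynomial equations cutting out that orbit. None of the three ingredients---holomorphic factorization, singular-value subharmonicity, and the coisometry rigidity lemma---appear in your outline, and your trace approach does not replace them.
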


The case where the varieties $\fV$ and $\fW$ contain only commuting tuples is of special interest. 
The isomorphism problem in the radical commutative case was treated by Davidson, Ramsey and the second author in \cite{DRS11} and \cite{DRS15}. 
We develop new machinery to deal with such ``commutative noncommutative'' varieties even in the non-reduced case, which also gives rise to a different proof of Theorem \ref{thm:main_int}, this time with a commutative flavor. 
One main ingredient in this machinery is a certain type of a Nullstellensatz.

\begin{thm}[Theorem \ref{thm:null_com_homo}]\label{thm:Nlstln_int}
Let $\fV \subseteq \fB_d$ be a homogeneous nc variety containing only commuting $d$-tuples and let $V = \fV(1)$ be the scalar level of $\fV$.
Then there exists an integer $N$ such that for every nc function $f \in H^\infty(\fB_d)$ that vanishes on $V$, the $N$th power $f^N$ of $f$ vanishes on the whole nc variety $\fV$.
\end{thm}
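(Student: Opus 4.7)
The plan is to establish the statement first for polynomials, where it is a direct consequence of Hilbert's Nullstellensatz and Noetherianity, and then to extend to arbitrary $f \in H^\infty(\fB_d)$ by passing to generalized joint eigenspaces of commuting tuples in $\fV$.

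Write $J := \pi(J_{\mathrm{nc}}(\fV)) \subseteq \C[z_1,\dots,z_d]$ for the image, under the abelianization $\pi \colon \C\langle z\rangle \to \C[z]$, of the ideal of nc polynomials vanishing on $\fV$. Because $\fV$ is commutative we have $[z_i,z_j] \in J_{\mathrm{nc}}(\fV)$, so scalar evaluations factor through $\pi$ and the scalar zero locus of $J$ inside $\bB_d$ is precisely $V$. Hilbert's Nullstellensatz gives $\sqrt{J} = I(V)$, and since $\C[z]/J$ is Noetherian its nilradical is nilpotent: $I(V)^N \subseteq J$ for some integer $N$. This $N$ is the candidate exponent. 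For a polynomial $p \in I(V)$, lift $p^N \in J$ to an nc polynomial $\tilde q \in J_{\mathrm{nc}}(\fV)$ agreeing with $p^N$ modulo the commutator ideal of $\C\langle z\rangle$; evaluating on a commuting $X \in \fV$ gives $p^N(X) = \tilde q(X) = 0$, handling the polynomial case.

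For general $f \in H^\infty(\fB_d)$ with $f|_V = 0$, fix a commuting tuple $X \in \fV(n)$. Simultaneous triangularization produces a decomposition $\C^n = \bigoplus_\lambda E_\lambda$ into generalized joint eigenspaces; a standard norm estimate for joint eigenvectors of a strict row contraction places each $\lambda \in \bB_d$, and testing any $\tilde q \in J_{\mathrm{nc}}(\fV)$ against such an eigenvector forces $\lambda \in V$. On $E_\lambda$ write $X_i = \lambda_i I + N_i$ with pairwise commuting nilpotents $N_i$; the polynomial action of $\C[z]$ then extends to an action of the formal local ring $\widehat{\cO}_\lambda := \C[[z-\lambda]]$ on $E_\lambda$ (only finitely many Taylor monomials survive), and this action factors through $\widehat{\cO}_\lambda / (J\cdot\widehat{\cO}_\lambda)$ because elements of $J$ annihilate $X$. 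Since $f$ is analytic near $\lambda$ and vanishes on $V$, its formal Taylor series at $\lambda$ lies in the formal vanishing ideal of the germ of $V$ at $\lambda$; as $V$ is a reduced algebraic variety, a local Nullstellensatz argument (excellence of $\C[z]$, or R\"uckert's analytic Nullstellensatz together with faithful flatness) identifies this germ-ideal with the radical extension $I(V)\cdot\widehat{\cO}_\lambda$, so $f \in I(V)\cdot\widehat{\cO}_\lambda$ and hence $f^N \in I(V)^N\cdot\widehat{\cO}_\lambda \subseteq J\cdot\widehat{\cO}_\lambda$, which acts as zero on $E_\lambda$. Summing over $\lambda$ yields $f^N(X) = 0$, and since $X \in \fV$ was arbitrary, $f^N$ vanishes on all of $\fV$.

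The main obstacle I expect is the analytic-to-formal step: verifying that the Taylor series of an arbitrary bounded nc holomorphic function $f$ at a scalar point $\lambda \in V$ really lies in $I(V)\cdot\widehat{\cO}_\lambda$. This requires the radicality of this extended ideal (via excellence of $\C[z]$ or via R\"uckert, using that $V$ is reduced), as well as a careful identification of the nc functional calculus of $f$ evaluated on the commuting tuple $\lambda I + N$ with the formal-series action on $E_\lambda$, so that the algebraic vanishing in $\widehat{\cO}_\lambda/(J\cdot\widehat{\cO}_\lambda)$ transfers to an actual vanishing of the operator $f^N(X)\big|_{E_\lambda}$.
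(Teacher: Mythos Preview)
Your approach works, but it takes a genuinely different and considerably heavier route than the paper. The paper exploits homogeneity directly on the analytic side: since $\fV$ is homogeneous, the ideal $\cJ^c_V$ of commutative multipliers vanishing on $V$ is homogeneous, so any $f\in\cM_d$ with $f|_V=0$ has every homogeneous component $f_n$ a polynomial in $I(V)$, and a Fej\'er/Ces\`aro argument makes $f$ the bounded pointwise limit of polynomials $q_k\in I(V)$. Since $q_k^N\in I(\fV)$ by the polynomial Nullstellensatz step, the limit gives $f^N\in\cJ^c_\fV$; the $H^\infty(\fB_d)$ case follows by pushing through the commutative quotient. This uses nothing beyond Hilbert's Nullstellensatz and the Fej\'er kernel. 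Your argument instead localizes at each joint eigenvalue and relies on the nontrivial fact that $I(V)\cdot\widehat{\cO}_\lambda$ remains radical (via excellence of $\C[z]$, or R\"uckert together with excellence of $\C\{z-\lambda\}$), plus an identification of the nc evaluation of $\widetilde f$ at $\lambda I+N$ with the formal Taylor functional calculus --- both correct, but substantially more machinery for no gain in generality.

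One point you leave implicit deserves a comment: your assertion that ``the scalar zero locus of $J$ inside $\bB_d$ is precisely $V$'' does not follow from commutativity alone, which only gives $V\subseteq Z(J)\cap\bB_d$. The reverse inclusion requires that $\fV$ is cut out by its polynomial ideal, and this uses homogeneity (the ideal $\cJ_\fV\subseteq H^\infty(\fB_d)$ is the weak-$*$ closure of its polynomial part by the same Fej\'er argument the paper uses). Similarly, to place the joint eigenvalues of $X$ in $V$ you should test against arbitrary $g\in\cJ_\fV$ rather than just nc polynomials; this works because nc functions act diagonally after simultaneous triangularization. These are easy fixes, but they show that homogeneity is doing work in your polynomial step that you have not made explicit.
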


At the end of this paper we turn to another operator algebra related to an nc variety $\fV$: the algebra $A(\fV)$ of all bounded analytic functions that extend to uniformly continuous functions on $\ol{\fV}$. 
We prove the following analog of Theorem \ref{thm:main_int}.

\begin{thm}[Theorems \ref{thm:isom_thm_for_homoAV1} and \ref{thm:isom_thm_for_homoAV2}] \label{thm:main_int_cont}
Let $\fV \subseteq \fB_d$ and $\fW \subseteq \cB_e$ be two homogeneous nc varieties. 
The following statements are equivalent: 
\begin{enumerate}[(a)]
\item $A(\fV)$ and $A(\fW)$ are boundedly isomorphic. 
\item $A(\fV)$ and $A(\fW)$ are completely boundedly isomorphic. 
\item There exists a $\phb$-bi-Lipschitz nc biholomorphism mapping $\widetilde{\ol\fW}$ onto $\widetilde{\ol\fV}$. 
\item There exists a $\ph$-bi-Lipschitz nc biholomorphism mapping $\widetilde{\ol\fW}$ onto $\widetilde{\ol\fV}$. 
\item There exists a $\phb$-bi-Lipschitz linear map mapping $\widetilde{\ol\fW}$ onto $\widetilde{\ol\fV}$. 
\item There exists a $\ph$-bi-Lipschitz linear map mapping $\widetilde{\ol\fW}$ onto $\widetilde{\ol\fV}$. 
\end{enumerate}
In addition, any isomorphism that appears in (a)--(b) can be viewd as a precompostion with a $\phb$-bi-Lipschitz or a $\ph$-bi-Lipschitz nc biholomorphism mapping one similarity envelope of the variety's clousre onto the other.
\end{thm}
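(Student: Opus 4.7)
The plan is to adapt the proof of Theorem~\ref{thm:isom_thm_for_homo} (the $H^\infty$ version) in an essentially mechanical way, replacing the open variety $\fV$ by its closure $\overline{\fV}$ throughout, and substituting weak-$*$ continuity arguments by norm-continuity ones (which is what the uniform algebra $A(\fV)$ naturally provides).  The first thing to record is that the trivial implications (e)$\Rightarrow$(c) and (f)$\Rightarrow$(d) require no work at all, and that (c)$\Leftrightarrow$(d) and (e)$\Leftrightarrow$(f) should follow from a direct comparison between $\phb$ and $\ph$ on similarity envelopes of compact nc sets (these comparisons are presumably established in the preliminary sections of the paper, since exactly the same equivalences appear in Theorem~\ref{thm:main_int}).

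For (c)/(d) $\Rightarrow$ (a)/(b), given a $\phb$- (respectively $\ph$-) bi-Lipschitz nc biholomorphism $G : \widetilde{\overline{\fW}} \to \widetilde{\overline{\fV}}$, I define $\Phi_G : A(\fV) \to A(\fW)$ by $\Phi_G(f) = f \circ G$.  One checks that $\Phi_G(f)$ belongs to $A(\fW)$ because $G$ extends continuously to the closure and $f$ is uniformly continuous on $\overline{\fV}$; the bi-Lipschitz bound then forces $\Phi_G$ to be bounded (respectively completely bounded), since $\phb$ (respectively $\ph$) is defined precisely as a supremum of norm (respectively cb-norm) differences $\|f(X)-f(Y)\|$ over suitable unit balls of $A(\fV)$.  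Applying the same argument to $G^{-1}$ yields the inverse.

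For the hard direction (a)/(b) $\Rightarrow$ (c)/(d), suppose $\Phi : A(\fV) \to A(\fW)$ is a bounded (resp.\ completely bounded) isomorphism.  Apply $\Phi$ to the coordinate functions $z_1,\dots,z_d$ (which lie in $A(\fV)$ since they are restrictions of polynomials) to obtain a tuple $\Phi(\mathbf{z}) = (\Phi(z_1),\dots,\Phi(z_d))$ in $A(\fW)^d$.  For each $Y \in \overline{\fW}(n)$, point evaluation $\pi_Y : A(\fW) \to M_n$ is a bounded representation, so $\pi_Y \circ \Phi$ is a bounded representation of $A(\fV)$ on $M_n$; unpacking this representation on generators, it is determined by the tuple $G(Y) := (\Phi(z_1)(Y),\dots,\Phi(z_d)(Y))$.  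The key technical step is to show that $G(Y)$ lies in $\widetilde{\overline{\fV}}(n)$: this should follow from standard representation-theoretic arguments, recovering the point of evaluation of a bounded representation of a homogeneous nc function algebra on its closure (using, for instance, a noncommutative Gelfand-type argument together with the noncommutative spectral radius tools developed earlier in the paper).  Once $G$ is defined, it is automatically nc and has inverse defined by $\Phi^{-1}$, and the (cb-)boundedness of $\Phi$ translates directly into $\phb$- (resp.\ $\ph$-) bi-Lipschitz estimates on $G$.

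Finally, for (c)/(d) $\Rightarrow$ (e)/(f), I invoke homogeneity exactly as in the proof of Theorem~\ref{thm:isom_thm_for_homo}: the similarity envelopes of homogeneous varieties are invariant under scalar dilations, and averaging $G$ against the circle action (or equivalently taking its linear term at $0$) produces a linear nc map $L : \widetilde{\overline{\fW}} \to \widetilde{\overline{\fV}}$ that remains bi-Lipschitz.  A pleasant bonus of working with linear maps is that $\phb$- and $\ph$-bi-Lipschitz become equivalent for them (amplifying a linear map by identity preserves its norm), which is what closes the gap between (a) and (b) in the present theorem --- a gap that could not be closed in the nonhomogeneous setting.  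The main obstacle I anticipate is the boundary control in step three: verifying that $G(Y) \in \widetilde{\overline{\fV}}$ for $Y$ on the topological boundary of $\overline{\fW}$, where the good interior tools (polar decomposition of similarities, open mapping on ncvarieties, standard dilation theory inside $\fB_d$) degenerate.  This is where the uniform continuity of elements of $A(\fV)$ on the closure, together with approximation by interior points and the nc spectral radius estimates of Theorem~\ref{thm:Schwarz_int}, should be used to pass the interior statement to the closure.
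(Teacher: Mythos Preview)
Your overall architecture---adapt the $H^\infty$ argument, replacing $\fV$ by $\ol\fV$ and dropping weak-$*$ issues---is exactly what the paper does, and most of the circle of implications goes through as you outline. Two points deserve correction, one a misdirected worry and one a genuine gap.

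\textbf{The boundary is not where the difficulty lies.} You anticipate trouble in showing that $G(Y)\in\widetilde{\ol\fV}$ for $Y\in\partial\fW$, and plan to invoke the spectral Schwarz machinery of Section~\ref{sec:prelim_jspr} to pass from interior to closure. In fact the $A(\fV)$ case is \emph{simpler} than the $H^\infty$ case here: for homogeneous $\fV$ one has $A(\fV)\cong A(\fB_d)/\cI_\fV$ completely isometrically, and every unital completely contractive representation of $A(\fB_d)$ on $\bC^k$ is evaluation at some row contraction in $\ol{\fB_d}(k)$ (no purity needed, no weak-$*$ issues). Combined with Paulsen's similarity theorem this gives a clean bijection $\Rep_k(A(\fV))\leftrightarrow\widetilde{\ol\fV}(k)$ (this is Proposition~\ref{prop:finite_dim_reps_AV}). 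So $G(Y)\in\widetilde{\ol\fV}$ is immediate---no Schwarz lemma required at this stage. The spectral-radius tools enter only later, in the linearization step (c)/(d)$\Rightarrow$(e)/(f), exactly as in Propositions~\ref{prop:zeros_to_zeros} and the proof of Theorem~\ref{thm:isom_thm_for_homo}.

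\textbf{The equivalence of $\phb$- and $\ph$-bi-Lipschitz is not trivial.} You claim that (c)$\Leftrightarrow$(d) and (e)$\Leftrightarrow$(f) follow from ``a direct comparison between $\phb$ and $\ph$'', and later that for linear maps ``amplifying by identity preserves its norm''. Neither argument is correct: there is no uniform level-independent comparison between $\|\Phi_X-\Phi_Y\|$ and $\|\Phi_X-\Phi_Y\|_{cb}$, and the linearity of $A$ on $\bC^d$ has nothing to do with amplifications of the induced homomorphism. The actual mechanism is Proposition~\ref{prop:bdd_linear=>cb_linear}: if $\alpha(f)=\widetilde f\circ A$ is merely bounded and $A$ is linear, then homogeneity forces $\alpha$ to be graded, so it extends to a bounded operator $T:\cH_\fV\to\cH_\fW$ between the underlying nc RKHS's; one then checks $M_{\alpha(f)}=TM_fT^{-1}$, so $\alpha$ is a spatial similarity and hence automatically completely bounded. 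This Hilbert-space argument is the crux of why (a)$\Leftrightarrow$(b) holds in the homogeneous case, and you have not supplied it. Relatedly, the direct route (c)$\Rightarrow$(a) that you sketch (checking $f\circ G\in A(\fW)$ from uniform continuity) is not how the paper proceeds and is delicate because $G(\ol\fW)\not\subseteq\ol\fV$; the paper instead routes through the linear case, where the components of $G$ are polynomials and hence manifestly in $A(\fW)$.
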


We note that in order to treat the algebras $A(\fV)$ for nonhomogeneous varieties one would have to face some nontrivial nc-function-theoretic issues, which are beyond the scope of this paper.  

Besides the introduction, this paper consists of nine sections, and we shall now describe their content. 
We start with a brief digression in which we treat the purely algebraic case, which quickly illustrates the utillity of the nc point of view. 
In Section \ref{sec:algebraic} we prove algebraic analogues of Theorems \ref{thm:main_int} and \ref{thm:main_int_cont}, which we believe are interesting in their own right. 

In Section \ref{sec:Dru-Arv_bdd_rep} we lay the foundations of the rest of the paper by providing the necessary preliminaries. 
In Section \ref{sec:prelim_jspr} we give two alternative descriptions of $\widetilde{\fB_d}$ and present the nc counterpart of the well-known Schwartz lemma of the disc as presented in Theorem \ref{thm:Schwarz_int}. Section \ref{sec:ph_dist} is dedicated to two pseudo-hyperbolic distances on the similarity envelope of the free ball, and in Section \ref{sec:isomorphisms} we use these distances to state and prove the isomorphism theorem for general nc varieties. 
In Section \ref{sec:homogeneous} we specialise to the homogeneous case, in which we obtain the sharper results described in Theorem \ref{thm:main_int}. Section \ref{sec:q-commutation} contains a detailed study of the family of nc varieties determined by the $q$-commutation relations.  
In Section \ref{sec:connections_to_comm} the results of Section \ref{sec:homogeneous} are derived again, this time in the case of commutative varieties, using commutative techniques that includes the Nullstellensatz presented in Theorem \ref{thm:Nlstln_int}. 
Then, in the las section, Section \ref{sec:continuous_case}, we turn our attention to the norm closed (instead of \textsc{wot}-closed) algebras generated by the free polynomial functions on homogeneous nc varieties and prove Theorem \ref{thm:main_int_cont}.

%%%%%%%%%%%%%%%%%%%%%%%%%%%%%%%%
\section{A digression -- the purely algebraic case}\label{sec:algebraic}
As motivation for our main investigations, we consider the purely algebraic analogues of our problems.
Let $\bC[z] := \bC[z_1, \ldots, z_d]$ denote the algebra of complex polynomials in $d$ commuting variables (here $d<\infty$).
With every ideal $I \triangleleft \bC[z]$ one naturally associates the corresponding affine variety
\[
Z(I) = Z_{\bC^d} (I) = \{z \in \bC^d : p(z) = 0 \,\, \textrm{ for all } \,\, p \in I\}.
\]
Together with this geometric object, there are two natural algebraic objects: the quotient $\bC[z]/I$ --- which is the universal unital algebra generated by $d$ commuting elements satisfying the relations in $I$ --- and the algebra of regular functions:
\[
\bC[Z(I)] = \left\{ p\big|_{Z(I)} : p \in \bC[z]\right\}.
\]
Consider two ideals $I, J \triangleleft \bC[z]$.
One may ask when are the quotients $\bC[z]/I$ and $\bC[z]/J$ isomorphic, as algebras.
When $I$ and $J$ are radical, then it follows from Hilbert's Nullstellensatz that $\bC[z]/I \cong \bC[Z(I)]$ and $\bC[z]/J \cong \bC[Z(J)]$, and it is then not hard to show that $\bC[z]/I$ and $\bC[z]/J$ are isomorphic if and only if there exist polynomial maps $F,G:\bC^d \to \bC^d$ that restrict to mutually inverse bijections between $Z(J)$ and $Z(I)$.

What happens if $I$ and $J$ are not radical ideals?
The concrete and geometric object $Z(I)$ is no longer a complete invariant for the quotient algebra $\bC[z]/I$.
Algebraic geometry offers some elaborate but opaque ``geometric" replacements for the variety.
A more simple-minded (and perhaps more satisfying) alternative is suggested to us by nc function theory.

We can consider $\bC[z]$ as an algebra of nc functions on $\fC\bM_d$ (recall that $\fC\bM_d$ is the set of all commuting $d$-tuples of matrices, of all sizes).
Given an ideal $I \triangleleft \bC[z]$, let
\[
Z_{\fC\bM_d}(I) = \{X \in \fC\bM_d : p(X) = 0 \,\, \textrm{ for all } \,\, p \in I\}.
\]
Points in $Z_{\fC\bM_d}(I)$ correspond bijectively to all finite dimensional representations of $\bC[z]/I$, via the map $\pi$ that sends every finite dimensional representation $\rho$ to its image on the coordinate functions in the quotient:
\[
\pi(\rho) = (\rho(z_1+I), \ldots, \rho(z_d+I)).
\]
The inverse of $\pi$ is given by
\[
\pi^{-1} : X \mapsto \rho_X,
\]
for all $X \in Z_{\fC\bM_d}(I)$, where $\rho_X$ is evaluation at $X$:
\[
\rho_X(p+I) = p(X).
\]

Suppose we are given a homomorphism $\alpha : \bC[z]/I \to \bC[z]/J$. 
Then $\alpha$ gives rise to a map between the spaces of representations, by $\alpha^* : \rho \mapsto \rho \circ \alpha$. 
Now, as 
\[
(\alpha(z_1+I), \ldots, \alpha(z_d+I)) \in \bC[z]/J\times \dots \times \bC[z]/J ,
\]
there exists $F = (F_1, \ldots, F_d) \in \bC[z] \times \cdots \times \bC[z]$ such that 
$\alpha(z_i+I)=F_i+J$ for every $i$.
Then, if $X \in Z_{\fC\bM_d}(J)$,
\[
\begin{split}
F(X)		&=	\left(\rho_X(F_1+J),\dots, \rho_X(F_d+J) \right)\\
		&=	\left(\rho_X(\alpha(z_1+I)),\dots, \rho_X(\alpha(z_d+I)) \right)\\
		&=	\left(\alpha^*(\rho_X)(z_1+I),\dots, \alpha^*(\rho_X)(z_d+I) \right)\\
		&=	\pi(\alpha^*(\rho_X)).
\end{split}
\]
Thus, we see that a homomorphism $\alpha : \bC[z]/I \to \bC[z]/J$ gives rise to a polynomial map $F \in \bC[z]^d$ mapping $Z_{\fC\bM_d}(J)$ into $Z_{\fC\bM_d}(I)$.

On the other hand, suppose we are given a polynomial map that restricts to a map from $Z_{\fC\bM_d}(J)$ into $Z_{\fC\bM_d}(I)$. Let
\[
\bC[Z_{\fC\bM_d}(I)] = \left\{p\big|_{Z_{\fC\bM_d}(I)} : p \in \bC[z]\right\}.
\]
Then $F$ clearly gives rise, via pre-composition, to a homomorphism from $\bC[Z_{\fC\bM_d}(I)]$ to $\bC[Z_{\fC\bM_d}(J)]$.

We therefore see that a homomorphism $\bC[z]/I \to \bC[z]/J$ always gives rise to a polynomial map that restricts to a map from $Z_{\fC\bM_d}(J)$ into $Z_{\fC\bM_d}(I)$, and such a map always gives rise to a homomorphism $\bC[Z_{\fC\bM_d}(I)] \to \bC[Z_{\fC\bM_d}(J)]$. To close the loop, we need a link, a Nullstellensatz, between the quotient $\bC[z]/I$ and the function algebra $\bC[Z_{\fC\bM_d}(I)]$.

Given a set $S \subseteq \fC\bM_d$, let
\[
I(S) = I_{\bC[z]}(S) = \{p \in \bC[z] : p(X) = 0 \,\, \textrm{ for all } \,\, X \in S\}.
\]
In \cite[Corollary 11.7]{SalShaSha17} we obtained that $I(Z_{\fC\bM_d}(J)) = J$ for every $J \triangleleft \bC[z]$.
We call this the {\em commutative free Nullstellensatz}, and it has been known to algebraists in one form or another (see \cite{EisHoch79}).
The commutative free Nullstellensatz implies at once that $\bC[z]/I$ is isomorphic to the function algebra $\bC[Z_{\fC\bM_d}(I)]$.

This shows, additionally, that homomorphisms $\alpha : \bC[Z_{\fC\bM_d}(I)] \to \bC[Z_{\fC\bM_d}(J)]$ are necessarily pre-composition with a polynomial map $F$ mapping $Z_{\fC\bM_d}(J)$ into $Z_{\fC\bM_d}(I)$. Indeed, after identifying $\bC[z]/I \cong \bC[Z_{\fC\bM_d}(I)]$, we saw before that the relation between $\alpha$ and $F$ is given by $F(X)=\pi(\alpha^*(\rho_X))$ for all $X \in Z_{\fC\bM_d}(J)$. Applying $\pi^{-1}$ to this equality, we obtain that $\alpha^*(\rho_X)=\rho_{F(X)}$, and therefore
\[
\alpha(p) = p \circ F \,\,  , \,\, \textrm{ for all } p \in \bC[Z_{\fC\bM_d}(I)].
\]
We summarize the conclusion of the above discussion, in the case of an isomorphism, as follows.
%%%%%%%%%%
\begin{thm}\label{thm:alg_com_iso}
Let $I$ and $J$ be two ideals in $\bC[ z ]$.
The algebras $\bC[z]/I$ and $\bC[z]/J$ are isomorphic if and only if $Z_{\fC\bM_d}(J)$ and $Z_{\fC\bM_d}(I)$ are isomorphic, in the sense that there exists polynomial maps $F$ and $G$ that restrict to bijections between $Z_{\fC\bM_d}(J)$ and $Z_{\fC\bM_d}(I)$.
Moreover, every homomorphism from $\bC[z]/I$ to $\bC[z]/J$ is implemented by a polynomial map $F: Z_{\fC\bM_d}(J)  \to Z_{\fC\bM_d}(I)$.
\end{thm}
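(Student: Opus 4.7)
The plan is to exploit the functorial correspondence sketched in the discussion preceding the statement, anchored by the commutative free Nullstellensatz. The proof really has three moves: translate algebra homomorphisms to maps of finite-dimensional representations, translate these in turn into polynomial maps on the nc varieties, and then invoke the Nullstellensatz to pass back from the function algebras $\bC[Z_{\fC\bM_d}(I)]$ to the quotients $\bC[z]/I$.

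First I would handle the forward direction. Given an isomorphism $\alpha\colon\bC[z]/I\to\bC[z]/J$, I pick polynomial lifts $F_1,\dots,F_d\in\bC[z]$ with $\alpha(z_i+I)=F_i+J$ and set $F=(F_1,\ldots,F_d)$. The computation displayed in the paragraph preceding the statement gives $F(X)=\pi(\alpha^*(\rho_X))$ for every $X\in Z_{\fC\bM_d}(J)$, so $F(X)$ corresponds to the representation $\rho_X\circ\alpha$ of $\bC[z]/I$ and hence automatically lies in $Z_{\fC\bM_d}(I)$. Applying the same recipe to $\alpha^{-1}$ produces a polynomial tuple $G$; since $\alpha^*$ and $(\alpha^{-1})^*$ are mutually inverse on representation spaces, $F$ and $G$ restrict to mutually inverse bijections $Z_{\fC\bM_d}(J)\leftrightarrow Z_{\fC\bM_d}(I)$ (in fact at every matricial level, though for the biconditional we only need this as set maps).

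Next I would check the backward direction. Given polynomial maps $F,G$ restricting to mutually inverse bijections between the varieties, precomposition with $F$ sends $p|_{Z_{\fC\bM_d}(I)}$ to $(p\circ F)|_{Z_{\fC\bM_d}(J)}$, which gives an algebra homomorphism $\bC[Z_{\fC\bM_d}(I)]\to\bC[Z_{\fC\bM_d}(J)]$, and likewise for $G$; the two are clearly inverses. At this point I invoke the commutative free Nullstellensatz \cite[Corollary 11.7]{SalShaSha17}, which asserts $I_{\bC[z]}(Z_{\fC\bM_d}(K))=K$ for every ideal $K\triangleleft\bC[z]$. This identifies $\bC[z]/K\cong \bC[Z_{\fC\bM_d}(K)]$ via $p+K\mapsto p|_{Z_{\fC\bM_d}(K)}$ and transports the isomorphism to $\bC[z]/I\cong\bC[z]/J$.

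The ``moreover'' clause falls out of the same calculation. Applying $\pi^{-1}$ to $F(X)=\pi(\alpha^*(\rho_X))$ gives $\alpha^*(\rho_X)=\rho_{F(X)}$, so for any lift $p\in\bC[z]$ of a class in $\bC[z]/I$ and any $X\in Z_{\fC\bM_d}(J)$ we have $\alpha(p+I)(X)=\rho_X(\alpha(p+I))=\alpha^*(\rho_X)(p+I)=p(F(X))=(p\circ F)(X)$; the Nullstellensatz then upgrades this pointwise identity to the algebraic equality $\alpha(p+I)=p\circ F+J$ in $\bC[z]/J$. The only genuinely nontrivial ingredient in the whole argument is the Nullstellensatz itself: the entire strategy rests on finite-dimensional representations of $\bC[z]/K$ separating its elements, and once that fact is in hand everything else is a formal chase through the correspondence $\rho\leftrightarrow \pi(\rho)$.
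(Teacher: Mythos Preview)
Your proposal is correct and follows essentially the same approach as the paper: the discussion preceding the theorem already lays out exactly the three moves you describe (homomorphism $\to$ polynomial map via lifts and $\pi\circ\alpha^*$, polynomial map $\to$ function-algebra homomorphism via precomposition, and the commutative free Nullstellensatz to identify $\bC[z]/I$ with $\bC[Z_{\fC\bM_d}(I)]$). Your treatment of the ``moreover'' clause, deriving $\alpha(p+I)=p\circ F+J$ from $\alpha^*(\rho_X)=\rho_{F(X)}$ and the Nullstellensatz, is also exactly what the paper does.
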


One can consider ideals inside the algebra $\bC\langle z \rangle := \bC\langle z_1, \ldots, z_d \rangle$ of free polynomials in $d$ noncommuting variables, and given such an ideal $I \triangleleft \bC\langle z \rangle$, one can consider the noncommutative variety
\[
Z_{\bM_d}(I) = \{X \in \bM_d : p(X) = 0 \,\, \textrm{ for all } \,\, p \in I\}.
\]
If $I$ is a homogeneous ideal, then there is an appropriate noncommutative homogeneous Nullstellensatz \cite[Theorem 7.3]{SalShaSha17}, which says that (with obvious notation) 
\[
I_{\bC\langle z \rangle}(Z_{\bM_d}(J)) = J \,\,\textrm{  for every homogeneous }\,\, J \triangleleft \bC\langle z \rangle .
\]
If one replaces the commutative free Nullstellensatz with the noncommutative homogeneous Nullstellensatz, then the same argument as above (where polynomials are replaced by free polynomials) gives the following theorem:
%%%%%%%%%%
\begin{thm}\label{thm:alg_hom_iso}
Let $I$ and $J$ be two homogeneous ideals in $\bC\langle z \rangle$.
The algebras $\bC\langle z \rangle /I$ and $\bC\langle z \rangle /J$ are isomorphic if and only if $Z_{\bM_d}(J)$ and $Z_{\bM_d}(I)$ are isomorphic, in the sense that there exists free polynomial maps $F$ and $G$ that restrict to bijections between $Z_{\bM_d}(J)$ and $Z_{\bM_d}(I)$.
Moreover, every homomorphism $\bC\langle z \rangle/I$ and $\bC\langle z \rangle/J$ is implemented by a free polynomial map $F: Z_{\bM_d}(J)  \to Z_{\bM_d}(I)$.
\end{thm}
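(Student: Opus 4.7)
The plan is to mirror the proof of Theorem \ref{thm:alg_com_iso} with two substitutions: $\bC[z]$ is replaced by $\bC\langle z\rangle$, and the commutative free Nullstellensatz is replaced by the noncommutative homogeneous Nullstellensatz \cite[Theorem 7.3]{SalShaSha17}, which asserts that $I_{\bC\langle z\rangle}(Z_{\bM_d}(J)) = J$ for every homogeneous ideal $J \triangleleft \bC\langle z\rangle$. Exactly as in the commutative case, the finite-dimensional representations of $\bC\langle z\rangle/J$ are in bijection with $Z_{\bM_d}(J)$ via $\rho \leftrightarrow (\rho(z_1+J),\ldots,\rho(z_d+J))$ and $X \mapsto \rho_X$, where $\rho_X(p+J)=p(X)$; this bijection is level-preserving, since evaluating $d$ free polynomials at a tuple in $M_n(\bC)^d$ returns an element of $M_n(\bC)^d$.

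Given a unital homomorphism $\alpha : \bC\langle z\rangle/I \to \bC\langle z\rangle/J$, I would choose free polynomial lifts $F_i \in \bC\langle z\rangle$ of $\alpha(z_i + I)$ and set $F := (F_1,\ldots,F_d)$. Since $\alpha$ is a homomorphism, one has the identity $\alpha(p+I) = p(F_1,\ldots,F_d) + J$ for every $p \in \bC\langle z\rangle$. In particular, $p \in I$ forces $p(F) \in J$, so $p(F(X)) = 0$ for every $X \in Z_{\bM_d}(J)$; this shows that $F$ restricts to a free polynomial map $Z_{\bM_d}(J) \to Z_{\bM_d}(I)$, and that the induced pullback of representations satisfies $\alpha^*(\rho_X) = \rho_{F(X)}$. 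Conversely, starting with a free polynomial map $F$ carrying $Z_{\bM_d}(J)$ into $Z_{\bM_d}(I)$, I would define $\alpha(z_i+I) := F_i + J$; well-definedness amounts to verifying that $p \in I$ implies $p(F) \in J$, and this is precisely where the homogeneous Nullstellensatz for $J$ is used, since by hypothesis $p(F)$ vanishes on $Z_{\bM_d}(J)$.

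Combining the two constructions proves both claims of the theorem: every homomorphism is implemented by precomposition with a free polynomial map $F$, and when $\alpha$ is an isomorphism with inverse $\beta$, the free polynomial maps $F$ and $G$ produced from $\alpha$ and $\beta$ are mutually inverse on the varieties, because $\beta\alpha = \id$ and $\alpha\beta = \id$ and the assignment $\alpha \mapsto F$ is functorial via $\alpha^*(\rho_X) = \rho_{F(X)}$. The reverse implication, that mutually inverse free polynomial maps produce inverse homomorphisms, is immediate from the same functoriality. I expect no genuine obstacle once the homogeneous Nullstellensatz is granted; the only bookkeeping subtlety is that the lifts $F_i$ are not unique, but the induced map on the varieties is, because two lifts differ by an element of $J$ which vanishes on $Z_{\bM_d}(J)$. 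Homogeneity of both $I$ and $J$ is used only through the Nullstellensatz, and its failure in the general noncommutative setting is exactly what limits the scope of the statement.
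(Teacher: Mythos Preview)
Your proposal is correct and follows exactly the approach the paper takes: the paper's proof of this theorem consists of a single sentence stating that the argument for Theorem \ref{thm:alg_com_iso} goes through verbatim once $\bC[z]$ is replaced by $\bC\langle z\rangle$ and the commutative free Nullstellensatz is replaced by the noncommutative homogeneous Nullstellensatz, which is precisely your plan. Your write-up in fact spells out a few details (the explicit formula $\alpha(p+I) = p(F) + J$, the observation that only the Nullstellensatz for the target ideal is needed to make the reverse construction well-defined, and the non-uniqueness of lifts) that the paper leaves implicit.
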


Our main goal in the remainder of this paper is to understand the analogue of the above results for algebras of bounded analytic nc functions. 
As such algebras are not finitely generated in an algebraic sense by the coordinate functions, there are interesting technical issues to overcome. 
%In \cite{SalShaSha17}, the main problem that we treated, was to describe when two algebras $H^\infty(\fV)$ and $H^\infty(\fW)$ are completely isometrically isomorphic, where $\fV, \fW \subseteq \fB_d$ and $d < \infty$. 
%One of our main results there was that there exists a completely isometric isomorphism $\alpha : H^\infty(\fV) \to H^\infty(\fW)$ if and only if there exist $F,G \in {\rm{End}}(\fB_d)$ such that $F\circ G|_{\fW}={\rm{id}}|_{\fW}$ and $G\circ F|_{\fV}={\rm{id}}|_{\fV}$; in this case, $\alpha$ is implemented by composition with $G$, i.e. $\alpha(f) = f \circ G$ (see \cite[Theorem 6.12]{SalShaSha17}). 
%This raised the interesting nc function theoretic question, whether this function $G$ (and therefore $F$, too) can be guaranteed to be an automorphism of the nc unit ball. 
%This was solved affirmatively in \cite[Theorem 8.4]{SalShaSha17} in the case that $\fV$ and $\fW$ are homogeneous varieties (and in this case it was shown that one variety is in fact the image of the other variety under a unitary linear transformation). 
%In \cite{Sham18} this was solved affirmatively in the much broader case where $\fV$ and $\fW$ contain a scalar point. 
%
%In this paper, we turn our attention to the problem of describing (completely) bounded isomorphisms $\alpha : H^\infty(\fV) \to H^\infty(\fW)$. 
%It turns out that the geometry of the varieties themselves can no longer serve as the classifying structure.
%Our results are described in the next subsection. 
%

In passing, we are happy to note that it was by considering the operator algebraic problems that the above purely algebraic results crystallized for us, and they seem to have been overlooked.
It is worth noting that Theorem \ref{thm:alg_com_iso} can be restated in the free setting as a theorem on ideals $I, J$ of $\bC\langle z \rangle$ that contain the commutant ideal. This point of view, together with Theorem \ref{thm:alg_hom_iso}, suggests that there might be a general theorem regarding {\em any} pair of ideals $I, J \triangleleft \bC\langle z \rangle$. Such a generalization, however, fails to be true (consider the trivial ideal and the ideal generated by the nc polynomial $z_1 z_2 - z_2 z_1 - 1$).

\section{Preliminaries}\label{sec:Dru-Arv_bdd_rep}

\subsection{Nc functions and nc varieties}

We study noncommutative (nc) function theory in $d$ complex variables, where $d \in \bN$ or $d = \infty$.
Let $M_n = M_n(\bC)$ denote the set of all $n \times n$ matrices over $\bC$, and let $M_n^d$ be the set of all $d$-tuples $X = (X_1, X_2, \ldots)$ of $n \times n$ matrices, such that the row $X$ determines a bounded operator from $\bC^n \oplus \bC^n \oplus \ldots$ to $\bC^n$.
We norm $M_n^d$ with the row operator norm $\|X\| = \|\sum_j X_j  X_j^*\|^{1/2}$, and endow $M_n^d$ with the induced topology. 
We define (the {\em nc universe})
\[
\bM_d = \sqcup_{n=1}^{\infty} M_n^d,
\]
and (the {\em commutative nc universe})
\[
\fC \bM_d = \{X \in \bM_d : X_i X_j = X_j X_i \,\, \textrm{ for all } \,\, i,j\}.
\]

A set $\Omega \subseteq \bM_d$ is said to be an {\em nc set} if it is closed under direct sums.
If $\Omega$ is an nc set, we denote $\Omega(n) = \Omega \cap M_n^d$. 
A subset $\Omega \subseteq \bM_d$ is said to be {\em open/closed} if for all $n$, $\Omega(n)$ is open/closed. 
This collection of open sets gives rise to a topology on $\M_d$ and its subsets, called the {\em disjoint union topology}. 
%An nc set $\Omega \subseteq \M_d$ will be said to be an {\em nc domain} if it is open and if every $\Omega_n$ is connected.
The {\em boundary of $\Omega$}, denoted $\partial \Omega$, is defined to be $\sqcup_{n=1}^\infty \partial \Omega(n)$. 
The principle nc open set that we shall consider is the {\em ($d$-dimensional) open matrix unit ball} $\fB_d$,  which is defined to be
\[
\fB_d = \left\{ X \in \bM_d : \|X\|^2 = \left\|\sum X_j X_j^*\right\| < 1\right\}.
\]

%%%%%%%%%%%%%%%%%%%%%%%%%%%%%%%%%%%
Let $\cV$ be a vector space.
A function $f$ from an nc set $\Omega \subseteq \bM_d$ to $\sqcup_{n=1}^\infty M_n(\cV)$ is said to be an {\em nc function (with values in $\cV$)} if
\begin{enumerate}
\item $f$ is graded: $X \in \Omega(n) \Rightarrow f(X) \in M_n(\cV)$; 
\item $f$ respects direct sums: $f(X \oplus Y) = f(X) \oplus f(Y)$; and
\item $f$ respects similarities: if $X \in \Omega(n)$ and $S \in M_n$ is invertible, and if $S^{-1} X S \in \Omega(n)$, then $f(S^{-1} X S) = S^{-1} f(X) S$.
\end{enumerate}
An nc function with values in $\bC$ is said to be a {\em scalar-valued} nc function. 
In this paper, we shall deal only with scalar-valued nc functions. 

A function $f$ defined on an nc open set $\Omega$ is said to be {\em nc holomorphic} if it is an nc function and, in addition, it is locally bounded. 
It turns out that an nc holomorphic function is really a holomorphic function when considered as a function $f : \Omega(n) \to M_n$, for all $n$, and moreover it has a ``Taylor series'' at every point (see \cite{KVBook}). 
We therefore allow ourselves to use the terms {\em holomorphic} and {\em analytic} interchangeably. 

We let
\[
H^\infty(\fB_d) = \left\{ f : \fB_d \to \M_d : f \textrm{ is a bounded nc function} \right\}, 
\]
denote the algebra of all bounded nc holomorphic functions on the nc unit ball. 
This algebra (with the $\sup$ norm) can be shown to be the algebra of multipliers of the noncommutative Drury-Arveson space, and as an operator algebra it is unitarily equivalent to the {\em noncommutative analytic Toeplitz algebra} studied by Davidson and Pitts, which was also studied by Popescu under the name {\em noncommutative Hardy algebra} (see \cite[Section 3]{SalShaSha17} and \cite{DavPitts1, DavPitts2, Popescu95b, Popescu06b}). 

A {\em noncommutative (nc) variety} in the unit ball is a set of the form
\[
V_{\fB_d}(S) = \{X \in \fB_d :  f(X) = 0 \,\, \textrm{ for all } f \in S\},
\]
where $S$ is a set of bounded nc holomorphic functions. 
We emphasize that although it makes sense to consider varieties determined by arbitrary nc holomorphic functions, such generality is beyond our scope, and we will assume that every variety is given by $S \subseteq H^\infty(\fB_d)$. 
Note that nc varieties are nc sets.

A {\em free polynomial} is an element in $\bC\langle z_1, \ldots, z_d\rangle$ (the free algebra in $d$ noncommuting variables).
Let $\W_d$ be the free monoid on $d$ generators $\{g_1, \ldots, g_d\}$. 
If $k = g_{i_1} \cdots g_{i_n} \in \W_d$ (in which case we write $|k| = n$), we define the {\em free monomial} $z^k = z_{i_1}  \cdots z_{i_n}$. 
A polynomial $p(z) = \sum_{k \in \W_d} a_k z^k$ can be written in a unique way as 
$p(z) = \sum_{n \in \bN} p_n(z)$
where
\[p_n(z) = \sum_{k\in \W_d, |k|=n} a_k z^k .
\]
The polynomial $p_n$ is called {\em the homogeneous component of degree $n$} of $p$.
Every free polynomial is a (scalar-valued) nc function on $\M_d$ in a natural way, by evaluation; moreover, this nc function is bounded on every uniformly bounded subset of $\M_d$.  
The inclusion of $\bC\langle z_1, \ldots, z_d\rangle$ in $H^\infty(\fB_d)$ is injective (this might not be entirely obvious, but follows from the known fact the matrix algebra $M_n$ satisfies no polynomial identity of degree less than $2n$; see \cite{AmiLev50}).

%%%%%%%%%%%%%%%%%%%%%%%%%%%%%%%%%%%

Given an nc variety $\fV \subseteq \fB_d$, we define $H^\infty(\fV)$ to be the algebra of bounded nc functions on $\fV$, and $A(\fV)$ to be the algebra of bounded nc functions that extend to uniformly continuous functions on  $\overline{\fV}$ (the closure is taken in the disjoint union topology). 
We give $H^\infty(\fV)$ and $A(\fV)$ the obvious operator algebra structure, where the matrix norm of $F \in M_n(H^\infty(\fV))$ is given by
\[
\|F\| = \sup_{z \in \fV} \|F(z)\| = \sup_{k \in \bN} \sup_{z \in \fV(k)} \|F(z)\|_{M_{nk}}.
\]
It follows from \cite[Corollary 5.6]{BMV15b} (see also \cite[Theorem 4.7]{SalShaSha17}) that if $\Omega = \fB_d$, and if
$f : \fV \to \bM_1$ is an nc function that is bounded on $\fV$, then there exists a bounded nc holomorphic function $F$ on $\fB_d$ such that $\|F\| = \|f\|$ and $f = F\big|_\fV$.

The algebra $H^\infty(\fV)$ can be identified with the multiplier algebra of an nc reproducing kernel Hilbert space \cite[Theorem 5.4]{SalShaSha17}. 
Given an nc variety $\fV$, we define
%\[
%\cI_\fV = \{p \in \bC\langle z_1, \ldots, z_d\rangle :   p(Z) = 0 \,\, \textrm{ for all } Z \in \fV\} 
%\]
\[
\cI_\fV = \{f \in A(\fB_d) :   f(Z) = 0 \,\, \textrm{ for all } Z \in \fV\}
\]
and
\[
\cJ_\fV = \{f \in H^\infty(\fB_d) :   f(Z) = 0 \,\, \textrm{ for all } Z \in \fV\}.
\]
Then we have that $H^\infty(\fV)$ is completely isometrically isomorphic to $H^\infty(\fB_d)/\cJ_{\fV}$ \cite[Theorems 5.2 and 5.4]{SalShaSha17}, and when $\fV$ is homogeneous, $A(\fV)$ is completely isometrically isomorphic to $A(\fB_d)/\cI_{\fV}$ \cite[Proposition 9.7]{SalShaSha17}.

%
%This paper continues the work \cite{SalShaSha17}.
%Our goal is to understand the structure of the algebras of the form $A(\fV)$ and $H^\infty(\fV)$, and in particular to classify them up to (completely) bounded isomorphisms in terms of the nc complex analytic geometry of the underlying variety $\fV$. 
%Before we state our main results --- for motivation as well as perspective --- let us discuss a similar problem in the purely algebraic case. 

%%%%%%%%%%%%%%%%%%%%%%%%%%%%%%%%%%%%
%%%%%%%%%%%%%%%%%%%%%%%%%%%%%%%%%%%%
%%%%%%%%%%%%%%%%%%%%%%%%%%%%%%%%%%%%
%%%%%%%%%%%%%%%%%%%%%%%%%%%%%%%%%%%%

%\section{The nc Drury--Arveson space and bounded representations of $H^\infty(\fV)$}

%%%%%%%%%%%%%%%%%%%%%%%%%%%%%%%%%%%

%Recall that $\W_d$ denotes the free monoid on $d$ generators ($d \in \bN \cup \{\infty\}$).

\subsection{The nc Szego kernel and the nc Drury--Arveson space}\label{subsec:ncDA}
The {\em nc Szego kernel} $K(Z,W)$ on the nc ball $\fB_d$ of $\M_d$ defined by
\[
K(Z,W)(T) = \sum_{k\in\W_d} Z^k T W^{*k} ,
\]
for $Z \in \fB_d(n)$, $W \in \fB_d(m)$ and $T \in M_{n \times m}(\C)$ \cite{BMV15b}.

Consider the nc function $K_{W,v,y}$ for $W \in \fB_d(m)$, $v, y \in \C^m$, defined for $Z \in \fB_d(n)$:
\[
K_{W,v,y}(Z)u = K(Z,W)(uv^*)y = \sum_{k \in \W_d} Z^k u v^* W^{*k} y = \sum_{k \in \W_d} \langle  y, W^k v \rangle Z^k u.
\]
Thus $K_{W,v,y}$ is an nc function given by the power series
\[
K_{W,v,y}(Z) = \sum_{k \in \W_d} \langle y, W^k v \rangle Z^k .
\]
The {\em nc Drury--Arveson space} $\cH^2_{d}$ is the nc reproducing kernel Hilbert space determined by the nc Szego kernel $K$, in the sense of \cite{BMV15a}.
In \cite[Section 3]{SalShaSha17} we saw that $\mlt \cH^2_d = H^\infty(\fB_d)$.
Moreover, we observed there that $H^\infty(\fB_d)$ can be naturally identified with the {\em noncommutative analytic Toeplitz algebra} $\fL_d$ that Davidson and Pitts treated in \cite{DavPitts2}, or, what is the same, Popescu's {\em noncommutative Hardy algebra} \cite{Popescu91}, which he denotes by $\mathscr F^\infty_d$.

Recall the Bunce--Frazho--Popescu dilation theorem \cite{Bunce,Frazho,Popescu89}, which says that if $T = (T_1, \ldots, T_d)$ is a pure row contraction on a Hilbert space $H$, then there is a Hilbert space $\cE$ of dimension $d$, an auxiliary Hilbert space $\cD$, and an isometry $V: H \to \cF(\cE) \otimes \cD$ such that $V H$ is a co-invariant subspace for the free shift $L \otimes I_\cD$, and such that
\[
T_i = V^* (L_i \otimes I_\cD) V \quad , \quad i=1,2, \ldots, d.
\]
Identifying $H^\infty(\fB_d)$ with $\fL_d$, this gives rise to a functional calculus: for every pure row contraction $T$, there is a weak-operator continuous, unital, completely contractive homomorphism
\[
\Phi_T : H^\infty(\fB_d) \to \overline{\alg}^{\textup{wot}}(T),
\]
given by $\Phi_T(f) = V^* (f(L) \otimes I_\cD) V$ (where $f(L)$ is the image of $f$ in $\fL_d$ under the isomorphism $H^\infty(\fB_d)  \cong \fL_d$).
If $T$ is a strict contraction ($\|T\| := \|T\|_{\textup{row}}<1$) then it is not hard to see that $\Phi_T$ becomes the evaluation at $T$, that is
\[
\Phi_T\left(\sum_{k \in \W_d} a_k z^k\right) = \sum_{k \in \W_d} a_k T^k.
\]
This gives rise to a functional calculus for multiplier algebras on nc varieties, versions of which were observed in \cite{Popescu06a,ShalitSolel}.

%%%%%%%%%%%%%%%%%%%%%%%%%%%%%%%%%%%

In a previous work \cite[Corollary 2.6]{SalShaSha17}, we showed that when $\Omega \subseteq \M_d$ is an nc set and $k$ is a completely positive nc kernel, then the nc multiplier algebra $\mathcal M(k)$ of the nc RKHS $\mathcal H(k)$ associated to $k$ is weak-$*$ closed and is therefore a dual algebra. In addition, to every nc subvariety $\fV \subseteq \fB_d$ we associated an nc RKHS $\cH^2_\fV$ --- a subspace of the nc Drury--Arveson space $\cH^2_d$ --- and its algebra of multipliers $\mlt \cH^2_\fV$, and we showed that $\mlt \cH^2_\fV$ is completely isometrically isomorphic to $H^{\infty}(\fV)$ (this relies on the deep result --- originally due to Agler and McCarthy \cite[Theorem 1.5]{AM15d} for an algebraic variety, and later extended to a much general setting by Ball, Marx and Vinnikov \cite[Theorem 3.1]{BMV15b} --- that every bounded nc function on $\fV$ extends to a bounded nc function of $\fB_d$ with the same norm); see \cite[Theorem 5.4]{SalShaSha17}. Thus, $H^\infty(\mathfrak{V})$ has a natural weak-$*$ topology.

Davidson and Pitts showed that when the variety $\fV$ is the whole nc ball $\fB_d$, then the weak-operator and weak-$*$ topologies of $H^{\infty}(\fV)=H^\infty(\fB_d)$ coincide \cite{DavPitts1}. 
Since for every nc variety $\fV \subseteq \fB_d$ the algebra $H^{\infty}(\fV)$ is a quotient of $H^{\infty}(\fB_d)$ by the $\textsc{wot}$-closed ideal
\[
\cJ_\fV = \{f \in H^\infty(\fB_d) :   f(Z) = 0 \,\, \textrm{ for all } Z \in \fV\}, 
\]
the weak-operator and weak-$*$ topologies coincide on $H^{\infty}(\fV)$ as well.
We record this for a later use.

\begin{thm}\label{thm:weak-$*$_and_wot_agree}
Let $\fV \subseteq \fB_d$ be an nc variety. Then the weak-operator and weak-$*$ topologies coincide on $H^\infty(\fV)$.
\end{thm}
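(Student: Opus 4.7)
The plan is to extend the one-sentence reduction sketched just above the theorem into a full proof by filling in two items: that $\cJ_\fV$ is WOT-closed, and that the equality ``WOT $=$ weak-$*$'' descends from $H^\infty(\fB_d)$ through the quotient $H^\infty(\fV) \cong H^\infty(\fB_d)/\cJ_\fV$.

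First I would verify that $\cJ_\fV$ is WOT-closed in $H^\infty(\fB_d)$. For each $Z \in \fV(n)$, every matrix entry of the evaluation $f \mapsto f(Z) \in M_n(\C)$ is expressible as an inner product against a reproducing kernel vector in $\cH^2_d$, and is therefore WOT-continuous on $H^\infty(\fB_d) \cong \fL_d$ acting on $\cH^2_d$. Consequently $\cJ_\fV = \bigcap_{Z \in \fV} \ker(\mathrm{ev}_Z)$ is an intersection of WOT-closed subspaces. Combined with the Davidson--Pitts theorem \cite{DavPitts1}, this also makes $\cJ_\fV$ weak-$*$ closed in $H^\infty(\fB_d)$, so the canonical quotient map $\pi : H^\infty(\fB_d) \to H^\infty(\fV)$ is both WOT-to-WOT and weak-$*$-to-weak-$*$ continuous, and the weak-$*$ structure inherited from the quotient agrees with the one that $H^\infty(\fV)$ carries as multiplier algebra of $\cH^2_\fV$.

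The inclusion WOT $\subseteq$ weak-$*$ on $H^\infty(\fV)$ is automatic, since for a weak-$*$ closed subalgebra of $B(\cH^2_\fV)$ the weak-$*$ topology coincides with the ultraweak (that is, $\sigma$-WOT) topology, which refines WOT. For the reverse inclusion I would proceed by a compactness/subnet argument. Given a weak-$*$ convergent net $a_\alpha \to a$ in $H^\infty(\fV)$, the uniform boundedness principle yields norm-boundedness of $(a_\alpha)$, and since $\pi$ is a complete quotient map the $a_\alpha$ lift to a norm-bounded net $g_\alpha \in H^\infty(\fB_d)$. By Banach--Alaoglu I pass to a weak-$*$ convergent subnet $g_{\alpha'} \to g$, necessarily with $\pi(g) = a$. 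By Davidson--Pitts, $g_{\alpha'} \to g$ also in WOT on $H^\infty(\fB_d)$. The coinvariance of $\cH^2_\fV$ inside $\cH^2_d$ for $H^\infty(\fB_d)$ then gives $\langle a_{\alpha'} v, w\rangle = \langle g_{\alpha'} v, w\rangle \to \langle g v, w\rangle = \langle a v, w\rangle$ for every $v, w \in \cH^2_\fV$. A standard Hausdorff subnet argument (every subnet of $a_\alpha$ has a further subnet WOT-converging to $a$) upgrades this to WOT-convergence of the original net, giving weak-$*$ $\subseteq$ WOT and hence equality.

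The main obstacle lies in the delicate matching between the ``intrinsic'' WOT topology that $H^\infty(\fV) \subseteq B(\cH^2_\fV)$ inherits from the ambient algebra and the quotient WOT pushed forward from $H^\infty(\fB_d)$; a priori these can differ. The compactness argument above sidesteps a direct functional-analytic comparison (such as rewriting every WOT-continuous functional on $H^\infty(\fB_d)$ that vanishes on $\cJ_\fV$ explicitly as a finite sum of vector functionals with vectors in $\cH^2_\fV$) by using Banach--Alaoglu together with Davidson--Pitts as black boxes.
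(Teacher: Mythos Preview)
Your overall strategy---reduce to the Davidson--Pitts result on $H^\infty(\fB_d)$ and push it through the quotient by $\cJ_\fV$---is exactly the paper's route (the paper records only the one-sentence reduction). You are also right to flag the subtlety about matching the intrinsic WOT on $H^\infty(\fV)\subseteq B(\cH^2_\fV)$ with the quotient topology; the paper glosses over this. However, the way you execute the ``hard'' direction contains two genuine errors.

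First, the direction is reversed. You correctly argue that WOT $\subseteq$ weak-$*$ is automatic (since $\sigma$-WOT refines WOT). For the reverse inclusion weak-$*$ $\subseteq$ WOT you must show that WOT-convergence implies weak-$*$-convergence; instead you begin with a weak-$*$ convergent net $a_\alpha\to a$ and conclude WOT-convergence. That implication is exactly the easy direction again, so the argument as written proves nothing new, and the label ``giving weak-$*$ $\subseteq$ WOT'' at the end does not match what you actually established.

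Second, the sentence ``the uniform boundedness principle yields norm-boundedness of $(a_\alpha)$'' is false for nets: weak-$*$ convergent nets in a dual space need not be bounded (UBP gives this only for sequences, or more precisely for pointwise-bounded families, which a convergent net need not be). The fix is to invoke the Krein--\v{S}mulian theorem: to show that every weak-$*$ closed convex set is WOT-closed (equivalently, that the topologies agree), it suffices to test on norm-bounded sets, so you may assume from the outset that the net is bounded. With that in hand, your lifting/Banach--Alaoglu/coinvariance/subnet machinery does work---provided you start from a bounded WOT-convergent net $a_\alpha\to a$, lift to bounded $g_\alpha$, extract a weak-$*$ (hence WOT) convergent subnet $g_{\alpha'}\to g$ in $H^\infty(\fB_d)$, use WOT-continuity of the compression to $\cH^2_\fV$ and uniqueness of WOT-limits to identify $\pi(g)=a$, and then conclude $a_{\alpha'}\to a$ weak-$*$. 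The standard ``every subnet has a further subnet'' argument then gives the desired implication.
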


As a corollary we obtained that for every nc subvariety $\fV \subseteq \fB_d$ and 
every a pure row contraction $T$ satisfying $\cJ_\fV \subseteq \ker \Phi_T$ (in particular, if $T \in \fV$), there is a weak-operator continuous, unital completely contractive homomorphism from $\mlt \cH^2_\fV$ to $\overline{\alg}^{\textup{wot}}(T)$ mapping $M_z$ to $T$ \cite[Corollary 5.3]{SalShaSha17} (see also \cite{AriasPopescu}).

%
%\begin{cor}\label{cor:functional_calculus}
%Let $\fV \subseteq \fB_d$ be an nc variety.
%Let $T$ be a pure row contraction.
%If $\cJ_\fV \subseteq \ker \Phi_T$ (in particular, if $T \in \fV$), then there is a weak-operator continuous, unital completely contractive homomorphism from $\mlt \cH^2_\fV$ to $\overline{\alg}^{\textup{wot}}(T)$ mapping $M_z$ to $T$.
%\end{cor}

\subsection{Bounded finite dimensional representations of $H^\infty(\fV)$} \label{subsec:fdim_rep}

Let $\Rep_{k}(\cA)$ denote the space of all unital bounded representations of an operator algebra $\cA$ on $\C^k$, and let $\Rep^{cc}_{k}(\cA)$ denote the completely contractive representations in $\Rep_{k}(\cA)$.

%%%%%%%%%%%%%%%%%%%%%%%%%%%%%%%%%%%
For $\fV \subseteq \fB_d$, we let $\ol{\fV}^{p}$ denote the set of all pure $T\in \ol{\fB}_d$ such that $\cJ_\fV \subseteq \ker \Phi_T$.
The completely contractive representations of $H^\infty(\fV)$ were determined in \cite[Theorem 6.3]{SalShaSha17}:
%%%%%%%%%%%%%%%%%%%%%%%%%%%%%%%%%%%
\begin{thm}\label{thm:finite_dim_ccreps}
Let $\fV \subseteq \fB_d$ be an nc variety.
For very $k \in \bN$, there is a natural continuous projection $\pi_{k}$ of $\Rep^{cc}_{k}(H^\infty(\fV))$ into the closed unit ball $\overline{\fB_d}(k)$, given by
\[
\pi_{k}(\Phi) = (\Phi(z_1), \ldots, \Phi(z_d)).
\]
For every $T \in \ol{\fV}^{p}$, there is a unique weak-$*$ continuous representation $\Phi_T \in \pi_k^{-1}(T)$, and these are the only weak-$*$ continuous elements in $\Rep^{cc}_{k}(H^\infty(\fV))$.
If $d < \infty$ and $T \in \fV$, then $\pi_{k}^{-1}(T)$ is the singleton $\{\Phi_T\}$.
Moreover, if $d < \infty$, then
\[
\pi_{k}(\Rep^{cc}_{k}(H^\infty(\fV)))\cap \fB_d(k) = \fV(k) .
\]
\end{thm}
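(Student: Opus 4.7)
The plan is to establish the four assertions by reducing everything to the Bunce--Frazho--Popescu dilation of pure row contractions, combined with the weak-$*$ density of free polynomials in $H^\infty(\fV)$. First, $\pi_k$ is well-defined because any completely contractive $\Phi$ sends the coordinate row $z = (z_1,\dots,z_d) \in M_{1,d}(H^\infty(\fV))$ to a row of norm at most $\|z\| \le 1$, placing $(\Phi(z_1),\dots,\Phi(z_d))$ in $\overline{\fB_d}(k)$; continuity is immediate if one endows $\Rep_k^{cc}(H^\infty(\fV))$ with the point-norm topology. To produce $\Phi_T$ for $T \in \overline{\fV}^{p}$, I would apply the Bunce--Frazho--Popescu dilation to obtain an isometry $V \colon \C^k \to \mathcal{F}(\cE) \otimes \cD$ with $T_i = V^*(L_i \otimes I_{\cD})V$ for each $i$, and define $\widetilde{\Phi}_T(f) = V^*(f(L) \otimes I_\cD)V$ as in the functional calculus recalled in Section \ref{subsec:ncDA}. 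This $\widetilde{\Phi}_T$ is unital, completely contractive and \textsc{wot}-continuous on $H^\infty(\fB_d) \cong \fL_d$, hence weak-$*$ continuous by Theorem \ref{thm:weak-$*$_and_wot_agree}; the hypothesis $\cJ_\fV \subseteq \ker \Phi_T$ lets it descend to the required $\Phi_T$ on $H^\infty(\fV) = H^\infty(\fB_d)/\cJ_\fV$.

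Next I would prove that the weak-$*$ continuous elements of $\Rep_k^{cc}(H^\infty(\fV))$ are exactly the $\Phi_T$ with $T \in \overline{\fV}^{p}$. The key ingredient is that free polynomials are weak-$*$ dense in $H^\infty(\fV)$ (Ces\`aro means of the Taylor expansion converge weak-$*$ to the given nc function), so any weak-$*$ continuous representation is determined by its values on the $z_i$; this yields uniqueness of the lift over a given $T$, and in particular forces any such $\Phi$ to coincide with $\Phi_T$ once we know $T \in \overline{\fV}^p$. For the other direction, if $\Phi$ is weak-$*$ continuous and $T = \pi_k(\Phi)$, then $T \in \overline{\fB_d}(k)$ and $\cJ_\fV \subseteq \ker(\Phi \circ q)$ by construction, so it remains only to check that $T$ is pure. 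Here I would lift $\Phi$ to a weak-$*$ continuous cc representation $\widetilde{\Phi}$ of $H^\infty(\fB_d)$, and use that the free shift $(L_1,\dots,L_d)$ is the universal pure row contraction; the BFP machinery then transfers purity of the image of the shift through $\widetilde{\Phi}$ to $T$, placing $T$ in $\overline{\fV}^{p}$.

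For the third assertion, strictness $\|T\| < 1$ produces an absolutely convergent power series functional calculus: for any $f = \sum_n f_n \in H^\infty(\fB_d)$, the homogeneous pieces satisfy a bound of the form $\|f_n(T)\| \lesssim \|f\|\, \|T\|^n$ coming from the Popescu--Drury--Arveson inequality, so $f(T) := \sum_n f_n(T)$ is norm-convergent. Any $\Phi \in \pi_k^{-1}(T)$ agrees with evaluation on polynomials and, being completely contractive, is norm-continuous on the polynomial algebra with appropriate control over the partial sums; a Ces\`aro/Abel summation argument lets us conclude $\Phi(f) = f(T) = \Phi_T(f)$ for all $f$, yielding $\pi_k^{-1}(T) = \{\Phi_T\}$. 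Finally, $\fV(k) \subseteq \pi_k(\Rep^{cc}_k(H^\infty(\fV))) \cap \fB_d(k)$ is witnessed by evaluation at each point of $\fV(k)$, while the reverse inclusion follows because a strict row contraction in the image is pure, the relation $\cJ_\fV \subseteq \ker \Phi$ gives $f(T) = 0$ for every $f \in \cJ_\fV$, and since $\fV$ is by definition cut out by some subset of $\cJ_\fV$ we conclude $T \in \fV(k)$.

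The principal technical obstacle I anticipate is the purity step in the second paragraph: the condition $\sum_{|\alpha|=n} T_\alpha T_\alpha^* \to 0$ involves adjoints, while our representations are only (completely) contractive homomorphisms, not $*$-homomorphisms, so one cannot simply push purity forward through $\Phi$. Circumventing this requires working through the concrete dilation on the Fock space rather than arguing abstractly, and this is precisely where the reproducing kernel Hilbert space structure of $\cH_d^2$ and the identification $H^\infty(\fB_d) \cong \fL_d$ carry most of the weight.
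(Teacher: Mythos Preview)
The paper does not prove this theorem; it is quoted from \cite[Theorem 6.3]{SalShaSha17}, so there is no in-paper argument to compare against. Your plan hits the expected ingredients --- the BFP dilation for the construction of $\Phi_T$, weak-$*$ density of polynomials for uniqueness among weak-$*$ continuous representations, and the absolutely convergent power-series calculus at strict row contractions --- and you correctly single out purity of $T = \pi_k(\Phi)$ for a weak-$*$ continuous $\Phi$ as the step that genuinely requires the concrete Fock-space model rather than an abstract argument.

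One step should be made precise. For the claim $\pi_k^{-1}(T) = \{\Phi_T\}$ when $T \in \fV$, your ``Ces\`aro/Abel summation argument'' does not close as written: $\Phi$ is only norm continuous, while Ces\`aro or Abel means of $f$ converge to $f$ only weak-$*$, so one cannot pass the limit through $\Phi$. The clean fix is a Gleason-type factorization in $\fL_d$: if $g \in H^\infty(\fB_d)$ vanishes to order $N$ at the origin then one can write $g = \sum_{|k|=N} z^k h_k$ with $\sum_{|k|=N} h_k^* h_k \le g^* g$, and complete contractivity of $\Phi$ then gives $\|\Phi(g)\| \le \|T^{(N)}\|\,\|g\| \le \|T\|^N \|g\|$. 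Applying this to the tail $R_N = f - \sum_{m<N} f_m$ (which satisfies $\|R_N\| \le (N+1)\|f\|$) yields $\|\Phi(f) - \sum_{m<N} f_m(T)\| \le \|T\|^N(N+1)\|f\| \to 0$, so $\Phi(f) = f(T)$ for \emph{every} $\Phi \in \pi_k^{-1}(T)$, with no weak-$*$ continuity needed. The same remark applies to your final paragraph: the implication ``$\cJ_\fV \subseteq \ker \Phi$ gives $f(T) = 0$ for every $f \in \cJ_\fV$'' is only valid once you know that the lift of $\Phi$ to $H^\infty(\fB_d)$ coincides with evaluation at $T$, which is precisely the uniqueness just discussed; so the ``Moreover'' clause should be deduced from the singleton-fiber assertion rather than argued in parallel with it.
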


We can now describe the bounded finite dimensional representations.
All that is required to pass from completely contractive to bounded representations, is to pass to the similarity invariant envelope of the variety.

%%%%%%%%%%%%%%%%%%%%%%%%%%%%%%%%%%%
\begin{lem}\label{lem:pure_con_sim_strict}
$\widetilde{\ol{\fV}^{p}} = \widetilde{\fV}$ for every nc variety $\fV \subseteq \fB_d$.
\end{lem}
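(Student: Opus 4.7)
The lemma asserts equality of two similarity envelopes in $\bM_d$, and I would prove the two inclusions separately. The inclusion $\widetilde{\fV} \subseteq \widetilde{\overline{\fV}^{p}}$ is immediate: a point $T \in \fV$ is a strict row contraction and hence pure (one has $\sum_{|k|=n} T^k T^{*k} \leq \|T\|^{2n} I \to 0$), while $f(T) = 0$ for every $f \in \cJ_\fV$ by definition of $\fV$, so $T \in \overline{\fV}^{p}$. Passing to similarity envelopes preserves this inclusion.

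For the reverse inclusion I start with $T \in \overline{\fV}^{p}(k)$ and aim to produce an invertible $S \in \GL_k$ such that $S^{-1}TS \in \fV(k)$. The first step is a Rota-type similarity trick showing that any pure row contraction on $\bC^k$ is similar to a strict one. Denote by $\Psi : M_k \to M_k$ the completely positive Rota map $\Psi(X) := \sum_i T_i X T_i^*$. Purity of $T$ translates to $\Psi^n(I) \to 0$, which in finite dimensions is norm convergence, so for sufficiently large $N$ the positive operator
\[
A := \sum_{n=0}^{N} \Psi^n(I)
\]
is positive definite and satisfies $\Psi(A) = A - I + \Psi^{N+1}(I) < A$. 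Factoring $A = SS^*$ with $S \in \GL_k$, this rearranges to $\sum_i (S^{-1} T_i S)(S^{-1} T_i S)^* < I$, so $T' := S^{-1} T S$ lies in $\fB_d(k)$.

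The second step is to verify that the strict contraction $T'$ actually lies in $\fV(k)$, i.e., that $f(T') = 0$ for every $f \in \cJ_\fV$. I consider the two unital completely bounded homomorphisms from $H^\infty(\fB_d)$ to $M_k$ defined by $\Phi_{T'}$ and by the similarity conjugate $f \mapsto S^{-1} \Phi_T(f) S$. Both are weak-$*$ continuous: $\Phi_{T'}$ because $T'$ is a strict (hence pure) contraction and Theorem~\ref{thm:finite_dim_ccreps} identifies it as the unique weak-$*$ continuous lift of $T'$; and $f \mapsto S^{-1}\Phi_T(f) S$ because $\Phi_T$ is itself the unique weak-$*$ continuous lift of $T$ given by the same theorem and conjugation by $S$ on the finite-dimensional $M_k$ is continuous. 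They agree on every free polynomial by the trivial similarity invariance of polynomial evaluation, and free polynomials are weak-$*$ dense in $H^\infty(\fB_d) \cong \fL_d$ by a standard Ces\`aro-means argument due to Davidson--Pitts. Hence the two homomorphisms coincide, and $f(T') = \Phi_{T'}(f) = S^{-1}\Phi_T(f) S = 0$ whenever $f \in \cJ_\fV$.

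The main obstacle is this last step: transferring the abstract vanishing $\cJ_\fV \subseteq \ker \Phi_T$, which is defined only through a dilation of the merely-pure contraction $T$, into honest pointwise vanishing of $\cJ_\fV$ at the explicit strict contraction $T'$. The argument turns not on any direct similarity covariance of the Bunce--Frazho--Popescu dilation itself, but on the uniqueness of the weak-$*$ continuous lift in Theorem~\ref{thm:finite_dim_ccreps} combined with the weak-$*$ density of free polynomials in $\fL_d$.
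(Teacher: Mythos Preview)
Your proof is correct. The two-step structure matches the paper's, but each step is handled a bit differently, and both differences are worth noting.

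For the similarity step, the paper invokes Proposition~\ref{prop:pure_iff_conj}, whose proof passes through the Perron--Frobenius theory of irreducible positive maps on $M_n$ (the results of Evans--H\o egh-Krohn) together with an induction on the Jordan--H\"older components of $X$. Your Rota-type construction $A = \sum_{n=0}^N \Psi^n(I)$ with $N$ large enough that $\Psi^{N+1}(I) < I$ is more elementary: it produces directly a strictly positive $A$ with $\Psi_X(A) < A$, hence via Lemma~\ref{lem:conj_to_cont} a similarity to a strict row contraction, without any appeal to irreducibility or composition series. This is a genuine simplification in the finite-dimensional setting.

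For the second step, the paper writes only ``Being similar to $X$, we see that $Y$ annihilates every $f \in \cJ_\fV$''. Your argument unpacks exactly what is implicit in that sentence: the representation $f \mapsto S^{-1}\Phi_T(f)S$ is weak-$*$ continuous (since $\Phi_T$ is, by the Bunce--Frazho--Popescu functional calculus, and conjugation on $M_k$ is continuous), agrees with $\Phi_{T'}$ on free polynomials, and hence coincides with $\Phi_{T'}$ by weak-$*$ density of polynomials in $H^\infty(\fB_d)$. This is the same mechanism the paper relies on, but you make it explicit; in particular you correctly isolate that the point is \emph{not} any similarity-covariance of the dilation itself but rather uniqueness of the weak-$*$ continuous lift.
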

\begin{proof}
Recall that $\ol{\fV}^{p}$ is the set of all pure $T\in \ol{\fB}_d$ such that $\cJ_\fV \subseteq \ker \Phi_T$.
It suffices to show that if $X \in \ol{\fV}^p$, then $X$ is similar to a strict row contraction $Y \in \fV$.
By Proposition \ref{prop:pure_iff_conj} below, since $X$ is pure, it is similar to some strict row contraction $Y$.
Being similar to $X$, we see that $Y$ annihilates every $f \in \cJ_\fV$, so $Y \in \fV$.
\end{proof}

%%%%%%%%%%%%%%%%%%%%%%%%%%%%%%%%%%%
\begin{thm}\label{thm:finite_dim_reps}
Let $\fV \subseteq \fB_d$ be an nc variety.
For very $k \in \bN$, there is a natural continuous projection $\pi_{k}$ of $\Rep_{k}(H^\infty(\fV))$ into the similarity invariant envelope $\widetilde{\overline{\fB_d}}(k)$ of the closed unit ball $\overline{\fB_d}(k)$, given by
\[
\pi_{k}(\Phi) = (\Phi(z_1), \ldots, \Phi(z_d)).
\]
For every $T \in \widetilde{\ol{\fV}^{p}} = \widetilde{\fV}$, there is a unique weak-$*$ continuous representation $\Phi_T \in \pi_{k}^{-1}(T)$, and these are the only weak-$*$ continuous elements in $\Rep_{k}(H^\infty(\fV))$.
If $d < \infty$ and $T \in \widetilde{\fV}$, then $\pi_{k}^{-1}(T)$ is the singleton $\{\Phi_T\}$.
Moreover, if $d < \infty$, then
\[
\pi_{k}(\Rep_{k}(H^\infty(\fV)))\cap \widetilde{\fB_d(k)} = \widetilde{\fV(k)}.
\]
\end{thm}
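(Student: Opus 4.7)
The overall plan is to bootstrap from the completely contractive case treated in Theorem \ref{thm:finite_dim_ccreps}, with Paulsen's similarity theorem as the bridge between bounded and completely contractive representations.

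The first move is to show that $\pi_k$ takes values in $\widetilde{\ol{\fB_d}}(k)$, and indeed inside $\widetilde{\fV}$. Given $\Phi \in \Rep_k(H^\infty(\fV))$, the finite-dimensionality of the target $M_k$ automatically makes $\Phi$ completely bounded, so Paulsen's similarity theorem furnishes an invertible $S \in M_k$ such that $\Phi^S(\cdot):=S^{-1}\Phi(\cdot)S$ is completely contractive. Theorem \ref{thm:finite_dim_ccreps} then places $\pi_k(\Phi^S)$ in $\ol{\fV}^p$, and since $\pi_k(\Phi)=S\,\pi_k(\Phi^S)\,S^{-1}$, we obtain $\pi_k(\Phi)\in\widetilde{\ol{\fV}^p}=\widetilde{\fV}$, where the last equality is Lemma \ref{lem:pure_con_sim_strict}. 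Continuity of $\pi_k$ is immediate from its defining formula.

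Next I would construct $\Phi_T$ for each $T\in\widetilde\fV$: choose $S\in\GL_k$ and $Y\in\fV$ with $T=SYS^{-1}$, and define $\Phi_T(f):=S\,\Phi_Y(f)\,S^{-1}$ using the weak-$*$ continuous representation $\Phi_Y$ from Theorem \ref{thm:finite_dim_ccreps}. This $\Phi_T$ is weak-$*$ continuous and satisfies $\pi_k(\Phi_T)=T$. Independence of the pair $(S,Y)$ follows because any two candidate definitions agree on free polynomials (using $p(SYS^{-1})=S\,p(Y)\,S^{-1}$), hence agree on the weak-$*$ closure of polynomials, which is all of $H^\infty(\fV)$. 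The same density argument shows that a weak-$*$ continuous representation is determined by its image under $\pi_k$, giving the uniqueness clause.

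The hard part will be the singleton assertion when $d<\infty$. After conjugating by a similarity, the task reduces to showing that for a strict $Y\in\fV$, any bounded representation $\Phi\in\pi_k^{-1}(Y)$ equals $\Phi_Y$. The key observation is that because $d<\infty$, each homogeneous component $f_n$ of $f\in H^\infty(\fV)$ is a finite sum over length-$n$ words, hence a free polynomial, and therefore $\Phi(f_n)=f_n(Y)=\Phi_Y(f_n)$ automatically. Combined with the bound $\|f_n\|_\infty\leq \|f\|_\infty$ (obtained by rotational averaging $f(\zeta z)$ over $\zeta\in\bT$), this implies that for every $r<1$ the Abel sum $\sigma_r(f):=\sum_n r^n f_n$ converges to an element of $H^\infty(\fV)$ in the $H^\infty$-norm, and norm-boundedness of both $\Phi$ and $\Phi_Y$ yields $\Phi(\sigma_r(f))=\Phi_Y(\sigma_r(f))$ for every $r<1$. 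The final step is to pass $r\to 1^-$: since $\sigma_r(f)\to f$ only in the weak-$*$ topology, this amounts to verifying that $\Phi$ is weak-$*$ continuous. My plan is to apply Paulsen a second time to produce a completely contractive $\Phi^{S'}$ with $\pi_k(\Phi^{S'})=(S')^{-1}Y S'\in \ol{\fV}^p$, and then to appeal to the singleton clause of Theorem \ref{thm:finite_dim_ccreps} to identify $\Phi^{S'}$ with $\Phi_{(S')^{-1}Y S'}$; conjugating back gives $\Phi=\Phi_Y$. Once the singleton claim is established, the remaining image identity $\pi_k(\Rep_k(H^\infty(\fV)))\cap\widetilde{\fB_d(k)}=\widetilde{\fV(k)}$ follows: the $\supseteq$ inclusion is the existence of $\Phi_T$ constructed above, and the $\subseteq$ inclusion reduces via Paulsen and Lemma \ref{lem:pure_con_sim_strict} to the corresponding image identity in Theorem \ref{thm:finite_dim_ccreps}.
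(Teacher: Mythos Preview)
Your proposal is correct and follows the paper's approach: use Smith's theorem and Paulsen's similarity theorem to reduce every bounded finite-dimensional representation to a completely contractive one, and then read off each assertion from Theorem~\ref{thm:finite_dim_ccreps} together with Lemma~\ref{lem:pure_con_sim_strict}. The paper's proof is essentially your first paragraph plus the observation that $\rho$ is weak-$*$ continuous if and only if $S\rho(\cdot)S^{-1}$ is; everything else is left implicit.

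The one place you diverge is the Abel-sum discussion for the singleton clause, and this detour is unnecessary. You yourself recognize that passing $r\to 1^{-}$ requires weak-$*$ continuity of $\Phi$, which is exactly what you are trying to prove, and you then fall back on Paulsen plus the singleton clause of Theorem~\ref{thm:finite_dim_ccreps}. But that last move already establishes $\Phi=\Phi_Y$ on its own, so the entire Abel-sum paragraph can be deleted. One small point to tighten: after the second application of Paulsen you land at $(S')^{-1}YS'\in\ol{\fV}^{p}$, whereas the singleton clause of Theorem~\ref{thm:finite_dim_ccreps} is literally stated only for points of $\fV$; the paper glosses over this too, so just be aware that you are implicitly using that the argument behind that clause works for all of $\ol{\fV}^{p}$ when $d<\infty$.
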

\begin{proof}
Let $\Phi \in \Rep_{k}(H^\infty(\fV))$.
By a theorem of Smith, $\Phi$, being a bounded map into $M_n(\C)$, is completely bounded (see \cite{PaulsenBook}, pp. 113--114).
By a theorem of Paulsen \cite[Theorem 9.1]{PaulsenBook},  there exists a completely contractive $\rho \in \Rep^{cc}_k(H^\infty(\fV))$ and an invertible matrix $S$ such that $\Phi(\cdot) = S \rho(\cdot) S^{-1}$.
By Theorem \ref{thm:finite_dim_ccreps}, $\pi_k(\rho) \in \overline{\fB_d}(k)$.
Thus,
\[
\pi_{k}(\Phi) := (\Phi(z_1), \ldots, \Phi(z_d)) = S\pi_k(\rho)S^{-1} \in \widetilde{\overline{\fB_d}}(k).
\]
Moreover, $\rho$ is weak-$*$ continuous if and only if $\Phi$ is, so the rest follows from Theorem \ref{thm:finite_dim_ccreps} and Lemma \ref{lem:pure_con_sim_strict}.
\end{proof}

Henceforth, we will identify $\widetilde{\fV}$ with the weak-$*$ continuous finite dimensional representations of $H^\infty(\fV)$, via the identification
\[
T \leftrightarrow \Phi_T .
\]

%%%%%%%%%%%%%%%%%%%%%%%%%%%%%%%%%%
\begin{quest}
Are finite dimensional representations automatically continuous?
\end{quest}

%%%%%%%%%%%%%%%%%%%%%%%%%%%%%%%%%%%%%%%%%%%%%%%%%%%%%%%%%%%%%%%%%%%%%%
%%%%%%%%%%%%%%%%%%%%%%%%%%%%%%%%%%%%%%%%%%%%%%%%%%%%%%%%%%%%%%%%%%%%%%
\section{The similarity invariant envelope and the joint spectral radius}\label{sec:prelim_jspr}

Recall that $\bM_d = \sqcup_{n=1}^{\infty} M_n^d$, the set of all $d$ tuples of all matrices, of all sizes. 
Unless we make the explicit assumption that $d$ is finite, we shall treat $d \in \bN \cup \{\infty\}$. 

%%%%%%%%%%%%%%%%%%%%%%%%%%%%%%%%%%%%%%%%%%%%%%%
\subsection{The similarity invariant envelope}

Let $\Omega \subseteq \M_d$.
The {\em similarity invariant envelope} (or simply the {\em similarity envelope}) of $\Omega$ is defined to be the set $\widetilde{\Omega}$ given by
\[
\widetilde{\Omega}(n) = \{S X S^{-1}  : X \in \Omega(n) , S \in \GL_n(\C)\}.
\]
Clearly, if $\Omega$ is open, then so is $\widetilde{\Omega}$.
In the appendix of \cite{KVBook}, it is shown that $\widetilde{\Omega}$ is an nc set if $\Omega$ is, and that every nc function $f$ on $\Omega$ extends uniquely to an nc function $\widetilde{f}$ on $\widetilde{\Omega}$.

Every $f \in H^\infty(\fB_d)$ extends uniquely to an nc holomorphic function $\widetilde{f}$ on $\widetilde{\fB_d}$, and likewise, if $\fV \subseteq \fB_d$ is an nc variety, then every $f \in H^\infty(\fV)$ extends uniquely to an nc holomorphic function $\widetilde{f}$ on $\widetilde{\fV}$. 
Of course, $\widetilde{f}$ need not be bounded. 
The nc holomorphic functions on $\widetilde{\fB_d}$ which are extensions of functions in $H^\infty(\fB_d)$ are precisely those whose restriction to $\fB_d$ is bounded, or, equivalently, whose restriction to every set bounded in the completely bounded norm, is bounded.

The similarity envelope of $\fB_d$ will be of particular interest in this paper and thus we will provide two descriptions of this set. 
For every $n \in \N$ and $X \in \M_d(n)$ we have the associated completely positive map on $M_n$, given by 
\[
\Psi_X(T) = \sum_{j=1}^d X_j T X_j^* .
\]
A point is in $\overline{\fB_d}$ if and only if $\Psi_X(I) \leq I$. 
Recall that a point $X \in \M_d(n)$ is called {\em pure} if $\lim_{k\to \infty} \Psi^k_X(I) = 0$. 
We will first show that every pure point is in fact similar to a strict row contraction. To prove this claim we need two lemmas.
The first lemma, Lemma  \ref{lem:conj_to_cont}, was proved in greater generality by Popescu in \cite[Theorem 3.8]{Popescu14} (see also \cite[Section 5]{Popescu03}), and so were Lemma \ref{lem:tilde_is_spectral_ball} and Proposition \ref{prop:pure_iff_conj}; see Remark \ref{Rk:PopescuWasFirst}.

\begin{lem}[Theorem 3.8 of \cite{Popescu14}] \label{lem:conj_to_cont}
For every $X \in \bM_d$, $X$ is similar to a strict row contraction if and only if there exists a strictly positive $A$, such that $\Psi_X(A) < A$.
\end{lem}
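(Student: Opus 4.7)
My proposed approach is a direct similarity computation based on the covariance of the completely positive map $\Psi_X$. The key observation is that if $Y_j = S^{-1} X_j S$ for some $S \in \GL_n(\C)$, then
\[
\Psi_Y(T) \;=\; \sum_j S^{-1} X_j S\, T\, S^* X_j^* S^{-*} \;=\; S^{-1}\, \Psi_X(STS^*)\, S^{-*}
\]
for every $T \in M_n$. In particular, $\Psi_Y(I) < I$ is equivalent to $\Psi_X(SS^*) < SS^*$. Both implications of the lemma fall out of this observation by choosing $S$ appropriately.

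For the ``only if'' direction, if $X_j = S Y_j S^{-1}$ with $\Psi_Y(I) < I$, then setting $A := SS^* > 0$ and applying the identity above (with the roles of $X$ and $Y$ reversed) gives $\Psi_X(A) = S\,\Psi_Y(I)\, S^* < SS^* = A$. For the converse, given a strictly positive $A$ with $\Psi_X(A) < A$, I would set $S := A^{1/2}$, the unique positive square root, which is invertible by the strict positivity of $A$, and put $Y_j := S^{-1} X_j S = A^{-1/2} X_j A^{1/2}$. Conjugating the inequality $\Psi_X(A) < A$ by the invertible positive operator $A^{-1/2}$ yields
\[
\Psi_Y(I) \;=\; A^{-1/2}\, \Psi_X(A)\, A^{-1/2} \;<\; A^{-1/2}\, A\, A^{-1/2} \;=\; I,
\]
so $Y$ is a strict row contraction similar to $X$.

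I do not anticipate any real obstacle: the entire argument is essentially a two-line piece of operator algebra once the covariance identity is noted. The only subtle points are that the strict positivity of $A$ is what guarantees that $A^{1/2}$ is invertible (so the similarity is well-defined), and that a strict operator inequality is preserved under conjugation by an invertible element, which is immediate.
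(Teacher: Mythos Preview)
Your proposal is correct and is essentially identical to the paper's own proof: both directions hinge on the same covariance identity, taking $S = A^{1/2}$ to pass from $\Psi_X(A) < A$ to $\Psi_{S^{-1}XS}(I) < I$, and setting $A = SS^*$ for the reverse implication. The only cosmetic difference is that you state the covariance of $\Psi$ explicitly first, whereas the paper absorbs it into the two one-line computations.
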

\begin{proof}
First let us assume that such an $A$ exists. 
If $S$ is the unique positive square root of $A$, then by assumption we have
\[
\sum_{j=1}^d \left( S^{-1} X_j S\right) \left( S^{-1} X_j S\right)^* = S^{-1} \Psi_X(A) (S^{-1})^* < I.
\]
In other words $S^{-1} X S$ is a strict row contraction.

Conversely, if $X$ is similar to a strict row contraction, then there exists $S$ invertible, such that $S^{-1} X S$ is a strict row contraction. Using the same consideration as above we deduce that
\[
\Psi_X(S S^*) < S S^*.
\]
Now set $A = S S^*$.
\end{proof}

\begin{lem} \label{lem:strict_unnecessary}
Suppose that $X \in \M_d(n)$ and $Y \in \M_d(m)$ are similar to strict row contractions.
Then, for every $d$-tuple of $n \times m$ matrices $Z$ we have that the point $\begin{bmatrix} X & Z \\ 0 & Y \end{bmatrix}$ is similar to a strict row contraction.
\end{lem}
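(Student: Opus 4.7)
My plan is to reduce the statement to the previous lemma (Lemma \ref{lem:conj_to_cont}) by constructing an appropriate strictly positive block-diagonal witness. Let me denote the block-triangular tuple by
\[
W_j = \begin{bmatrix} X_j & Z_j \\ 0 & Y_j \end{bmatrix}.
\]
By Lemma \ref{lem:conj_to_cont} applied to $X$ and to $Y$, there exist strictly positive $A_1 \in M_n$ and $A_2 \in M_m$ with $B_1 := A_1 - \Psi_X(A_1) > 0$ and $B_2 := A_2 - \Psi_Y(A_2) > 0$. The natural ansatz is
\[
A = \begin{bmatrix} t A_1 & 0 \\ 0 & A_2 \end{bmatrix}
\]
for a real parameter $t > 0$ to be chosen large. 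A direct block multiplication gives
\[
\Psi_W(A) = \begin{bmatrix} t\Psi_X(A_1) + \sum_j Z_j A_2 Z_j^* & \sum_j Z_j A_2 Y_j^* \\ \sum_j Y_j A_2 Z_j^* & \Psi_Y(A_2) \end{bmatrix},
\]
so $A - \Psi_W(A)$ has diagonal blocks $tB_1 - \sum_j Z_j A_2 Z_j^*$ and $B_2$, and off-diagonal block $-\sum_j Z_j A_2 Y_j^*$.

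The key step is then to verify that $A - \Psi_W(A) > 0$ for $t$ sufficiently large. Since the lower-right block $B_2$ is strictly positive, by the Schur complement criterion positivity reduces to
\[
tB_1 \;>\; \sum_j Z_j A_2 Z_j^* \;+\; \Bigl(\sum_j Z_j A_2 Y_j^*\Bigr) B_2^{-1} \Bigl(\sum_j Y_j A_2 Z_j^*\Bigr),
\]
and the right-hand side is a fixed self-adjoint matrix independent of $t$ while $B_1 > 0$; hence the inequality holds once $t$ is large enough. With such a $t$ fixed, Lemma \ref{lem:conj_to_cont} applied to $W$ furnishes a similarity making $W$ into a strict row contraction.

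I do not expect a serious obstacle here: the content is a Schur-complement calculation once the right witness $A$ is guessed, and the only substantive choice is to scale the $X$-block by a large factor so that its ``slack'' $tB_1$ dominates the coupling term contributed by $Z$. One minor point to make explicit in the write-up is that the partial sums and inverses all live in finite-dimensional matrix algebras, so there is no subtlety in taking $B_2^{-1}$ or in the convergence of the scalar $t$ argument.
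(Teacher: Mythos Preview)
Your proof is correct and takes a somewhat different route from the paper's. The paper first conjugates by $S\oplus T$ to reduce to the case where $X$ and $Y$ are themselves strict row contractions, and then observes that conjugating by $\operatorname{diag}(t^{-1}I_n,\,tI_m)$ sends the off-diagonal block to $t^2 Z$; as $t\to 0$ this tends to $X\oplus Y\in\fB_d(n+m)$, and openness of the ball finishes the argument. You instead stay at the level of Lemma~\ref{lem:conj_to_cont} throughout: you build a block-diagonal witness $A=\operatorname{diag}(tA_1,A_2)$ directly from the witnesses for $X$ and $Y$, and verify $\Psi_W(A)<A$ by a Schur-complement computation for large $t$. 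The two arguments are cousins (your scaling of the $A_1$-block plays the same role as the paper's $\operatorname{diag}(t^{-1}I,tI)$ conjugation), but yours avoids the preliminary reduction and the appeal to openness of $\fB_d$, at the cost of an explicit positivity computation. One small remark: since the paper allows $d=\infty$, you might note that the sums $\sum_j Z_j A_2 Z_j^*$ and $\sum_j Z_j A_2 Y_j^*$ converge because the block-triangular tuple lies in $\M_d(n+m)$, not merely because the ambient matrix algebras are finite dimensional.
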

\begin{proof}
If $S^{-1} X S$ and $T^{-1} Y T$ are strict row contractions, then by conjugating by $S \oplus T$ we may assume that in fact $X$ and $Y$ are strict row contractions, since $Z$ will be replaced by $S^{-1} Z T$.

Let $t > 0$ and consider the conjugation of our point by $t^{-1} I_n \oplus t I_m$
\[
\begin{bmatrix} t I_n & 0 \\ 0 & t^{-1} I_m \end{bmatrix} \begin{bmatrix} X & Z \\ 0 & Y \end{bmatrix} \begin{bmatrix} t^{-1} I_n & 0 \\ 0 & t I_m \end{bmatrix} = \begin{bmatrix} X & t^2 Z \\ 0 & Y \end{bmatrix}
\]
Note that $\lim_{t \to 0} \begin{bmatrix} X & t^2 Z \\ 0 & Y \end{bmatrix} = X \oplus Y \in \fB_d(n+m)$. Since $\fB_d(n+m)$ is open, there exists $t > 0$ such that $t^{-1} I_n \oplus t I_m$ implements the similarity of our matrix to a strict row contraction.
\end{proof}

A $d$-tuple $X=(X_1,\dots,X_d) \in \M_d(n)$ is called {\em irreducible} (or sometimes {\em generic}) if the only nontrivial subspace $K$ satisfying $\alg(X_1,\dots,X_d) K\subseteq K$ is $\mathbb C^d$, or equivalently, if the $X_1, \ldots, X_d$ have no joint nontrivial proper invariant subspace.
If $X$ is not irreducible, then it is called {\em reducible}.

It is not hard to see that every $X \in \M_d(n)$ is similar to a block upper triangular $d$-tuple whose diagonal blocks are all irreducible. 
Indeed, if $X$ is reducible, let $0 \subsetneq K \subsetneq \mathbb C^n$ be a joint invariant subspace of the $X_i$s. Then there exists an invertible matrix $S$ such that $S^{-1}XS=\begin{bmatrix} X'&*\\0&X'' \end{bmatrix}$ where both $X'$ and $X''$ are of size smaller than that of $X$. An induction argument now completes the proof. 

Alternatively, since it is finite dimensional, the $\alg(X_1,\dots,X_d)$-module $\mathbb C^n$ is both Noetherian and Artinian. 
Such modules (i.e., Nothereian and Artinian modules) are exactly the modules that have
composition series \cite[Proposition 3.2.2]{HazeGubarVv04}.

One way or another, the Jordan--H\"{o}lder theorem \cite[Theorem 3.2.1]{HazeGubarVv04}, implies that if
\[
\begin{bmatrix}
Y^{(1)}	& 			&	*	\\
 		&	\ddots	&		\\
0		&			&	Y^{(k)}	
\end{bmatrix}
\quad \text{and} \quad
\begin{bmatrix}
Z^{(1)}	& 			&	*	\\
 		&	\ddots	&		\\
0		&			&	Z^{(l)}	
\end{bmatrix}
\]
are two ``decompositions'' as above of $X$ --- namely, both are similar to $X$ and each of the diagonal blocks is irreducible --- then $k=l$, and there exists a permutation $\sigma$ of $\{1,\dots,k\}$ such that $Y^{(i)}$ is similar to $Z^{(\sigma(i))}$ for all $1\leq i \leq k$. 
Thus, up to similarity $Y^{(1)}, \dots, Y^{(k)}$ are unique. 
We call them the {\em Jordan--H\"{o}lder components} of $X$.

\begin{prop}[Theorem 3.8 of \cite{Popescu14}] \label{prop:pure_iff_conj}
A point $X \in \M_d$ is pure if and only if it is similar to a strict row contraction.
\end{prop}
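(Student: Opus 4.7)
The plan is to prove both directions, with the nontrivial direction relying on Lemma \ref{lem:conj_to_cont}.

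\textbf{Easy direction.} Suppose $X \in \M_d(n)$ is similar to a strict row contraction, say $Y = S^{-1} X S$ with $\|Y\|_{\textup{row}} = r < 1$. Then $\Psi_Y(I) = YY^* \leq r^2 I$, and since $\Psi_Y$ is positive and preserves the order $\leq$, an easy induction gives $\Psi_Y^k(I) \leq r^{2k} I$, which tends to $0$. Using $\Psi_X^k(SS^*) = S\,\Psi_Y^k(I)\, S^*$ together with the inequality $I \leq \|S^{-1}\|^2\, SS^*$ (valid for any invertible $S$), I conclude $\Psi_X^k(I) \leq \|S^{-1}\|^2\, S \Psi_Y^k(I) S^* \to 0$, so $X$ is pure.

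\textbf{Hard direction.} Assume $X \in \M_d(n)$ is pure, i.e.\ $\|\Psi_X^k(I)\| \to 0$. The key step is to upgrade this to the convergence of the Neumann-type series $A := \sum_{k=0}^\infty \Psi_X^k(I)$. Since $\Psi_X$ is a completely positive map on the finite-dimensional space $M_n$, for any self-adjoint $T \in M_n$ we have $-\|T\| I \leq T \leq \|T\| I$, and applying the positive map $\Psi_X^k$ gives $\|\Psi_X^k(T)\| \leq \|T\|\,\|\Psi_X^k(I)\|$. Splitting a general $T$ into real and imaginary parts shows $\Psi_X^k \to 0$ in the operator norm on $B(M_n)$, so the spectral radius of $\Psi_X$ as a linear map on $M_n$ is strictly less than $1$. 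Consequently, $I - \Psi_X$ is invertible on $M_n$ and the series defining $A$ converges.

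\textbf{Conclusion via Lemma \ref{lem:conj_to_cont}.} The element $A$ is positive and satisfies $A \geq I$, so it is strictly positive. A direct computation gives
\[
\Psi_X(A) = \sum_{k=0}^\infty \Psi_X^{k+1}(I) = A - I < A.
\]
By Lemma \ref{lem:conj_to_cont}, $X$ is similar to a strict row contraction, completing the proof.

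\textbf{Expected obstacle.} The conceptual content lies entirely in the hard direction, specifically in promoting the pointwise decay $\Psi_X^k(I) \to 0$ to norm decay of $\Psi_X^k$ on all of $M_n$, which is what guarantees convergence of the series defining $A$. In finite dimensions, this follows cleanly from the fact that self-adjoint matrices are order-bounded by scalar multiples of $I$, and the argument would genuinely require more care in an infinite-dimensional analogue; here, however, it is the only subtle point.
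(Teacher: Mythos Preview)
Your proof is correct and takes a genuinely different, more elementary route than the paper's.

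For the hard direction, the paper first treats the irreducible case separately: it invokes Perron--Frobenius theory for irreducible positive maps (results of Evans--H{\o}egh-Krohn and Farenick) to produce a strictly positive eigenvector $A$ of $\Psi_X$ with eigenvalue $r = \rho(\Psi_X)$, then uses $\|\Psi_X^k\| = \|\Psi_X^k(I)\| \to 0$ to conclude $r < 1$, whence $\Psi_X(A) = rA < A$ and Lemma~\ref{lem:conj_to_cont} applies. The general case is then handled by induction on the number of Jordan--H\"older components, using Lemma~\ref{lem:strict_unnecessary} to glue the irreducible pieces back together.

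Your argument bypasses all of this structure. You use the same observation that $\|\Psi_X^k\| = \|\Psi_X^k(I)\| \to 0$ (which you spell out via the order-bound $-\|T\|I \leq T \leq \|T\|I$), but then simply set $A = \sum_{k \geq 0} \Psi_X^k(I)$ directly. This gives a strictly positive $A$ with $\Psi_X(A) = A - I < A$ in one stroke, with no need for irreducibility, Perron--Frobenius, Jordan--H\"older decomposition, or Lemma~\ref{lem:strict_unnecessary}. What the paper's approach buys is the extra structural information that in the irreducible case the minimizing similarity can be taken to make $X$ a scalar multiple of a coisometry (Lemma~\ref{lem:irreducible_radius}(2)), which is used later in Section~\ref{sec:prelim_jspr}; your argument does not yield this, but it was not the goal of the proposition.
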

\begin{proof}
Assume first that $X$ is similar to a strict row contraction, i.e., there exists an invertible $S$, such that $S^{-1} X S$ is a strict row contraction. 
Thus $\Psi_{S^{-1} X S}(I) < I$ and we conclude that $\lim_{k \to \infty}\Psi^k_{S^{-1} X S}(I) = 0$. 
One checks that for $A = S S^*$, we have the identity
\[
\Psi^k_{S^{-1} X S}(I) = S^{-1} \Psi^k_X(A) (S^{-1})^*.
\]
%Given $\epsilon > 0$, for $k$ sufficiently large we have that $\Phi^k_X(A) < \epsilon A$ and thus $\lim_{k \to \infty} \Phi_X(A) = 0$.
It follows that $\lim_{k \to \infty} \Psi^k_X(A) = 0$.
Since $A$ is strictly positive, we can choose $c > 0$, such that $c I \leq A$ and applying $\Psi^k$ we conclude that $\lim_{k\to\infty} \Psi^k_X(I) = 0$ or in other words that $X$ is pure.

Now assume that $X$ is pure.
First, let us assume that $X$ is also irreducible.
Let $r$ denote the spectral radius of $\Psi_X$.
By \cite[Theorem 2]{Far96} the map $\Psi_X$ is irreducible in the sense of \cite{EH78}.
By Theorems 2.3 and 2.5 in \cite{EH78}, there is a strictly positive $A \in M_n$, such that $\Psi_X(A) = r A$.
Since $\Psi_X$ is completely positive, $\|\Psi_X^k\| = \|\Psi_X^k(I)\|$ and thus $\lim_{k \to \infty} \Psi^k_X = 0$ in norm. We can conclude that $r < 1$, so $\Psi_X(A) = r A < A$. Thus by Lemma \ref{lem:conj_to_cont} we are done in the case that $X$ is irreducible.

To prove the general case we proceed by induction on the number of Jordan--H\"{o}lder components of $X$. If the number is $1$, then $X$ is irreducible and we have proved it.
Now if the number is $k$, then we can write $X = \begin{bmatrix} X^{\prime} & Z \\ 0 & X^{\prime\prime} \end{bmatrix}$, where $X^{\prime}$ is irreducible and $X^{\prime\prime}$ has $k-1$ Jordan--H\"{o}lder components.
Note that since $X$ is pure, $X^{\prime}$ and $X^{\prime\prime}$ must also be pure.
By induction, $X^{\prime}$ and $X^{\prime\prime}$ are similar to strict row contractions.
It remains to apply Lemma \ref{lem:strict_unnecessary}.
\end{proof}

%%%%%%%%%%%%%%%%%%%%%%%%%%%%%%%%%%%
\subsection{The joint spectral radius}\label{subsec:jnr}

Let $X \in \M_d(n)$. 
Given $k \in \bN$, write $X^{(k)}$ for the row comprising of the free monomials of degree $k$ in the entries of $X$. 
Following Popescu \cite{Popescu09a}, we define the {\em joint spectral radius} of the $d$-tuple $X$ as 
\[
\rho(X) = \lim_{k \to \infty} \|\Psi_X^k(I)\|^{1/2k} . 
\]
Since $\Psi_{X^{(k)}} = \Psi_X^k$ is a completely positive map we have that $\|\Psi_X^k\| = \|\Psi_X^k(I)\|$ and thus $\lim_{k \to \infty} \|\Psi_X^k(I)\|^{1/k}$ is the spectral radius of $\Psi_X$. 
Note that $\|\Psi_X^k(I)\|^{1/2k} = \|X^{(k)}\|^{1/k}$; in particular this notion of spectral radius reduces to the usual one when $d = 1$.  

The joint spectral radius will be crucial tool for showing that every bounded isomorphism gives rise to an nc biholomorphism between the similarity envelopes of varieties. 
The following lemma lists some properties of the joint spectral radius.

\begin{lem} \label{lem:irreducible_radius}
For every $X \in \M_d(n)$ we have that:
\begin{enumerate}
\item for every $S \in \GL_n$, $\rho(X) = \rho(S^{-1} X S)$;

\item if $X$ is irreducible, then $\rho(X) = \min\{\|S^{-1} X S\| \mid S \in \GL_n\}$; and

\item if $X$ is reducible and $X_1,\ldots,X_k$ are its Jordan--H\"{o}lder components, then $\rho(X) = \max\{\rho(X_1),\ldots,\rho(X_k)\}$, and thus it is the same as the joint spectral radius of the semi-simple elements in the closure of the similarity orbit of $X$.
\end{enumerate}
\end{lem}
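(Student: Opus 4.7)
The plan is to prove the three claims in order, building on the analysis of the completely positive map $\Psi_X$ already developed in Lemma \ref{lem:conj_to_cont} and Proposition \ref{prop:pure_iff_conj}.

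For (1), the key observation is the conjugation identity $\Psi_{S^{-1}XS}(T) = S^{-1}\Psi_X(STS^*)(S^*)^{-1}$, which by induction yields $\Psi_{S^{-1}XS}^k(I) = S^{-1}\Psi_X^k(SS^*)(S^*)^{-1}$. Since $SS^*$ is strictly positive, there exist $0 < c \leq C$ with $cI \leq SS^* \leq CI$, and by positivity of $\Psi_X^k$ the norm $\|\Psi_X^k(SS^*)\|$ is sandwiched between $c\|\Psi_X^k(I)\|$ and $C\|\Psi_X^k(I)\|$; taking $2k$-th roots and letting $k\to\infty$ then absorbs all similarity-induced constants.

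For (2), first note that for any $Y$ we have $\|\Psi_Y^k(I)\| = \|Y^{(k)}\|^2 \leq \|Y\|^{2k}$, hence $\rho(Y) \leq \|Y\|$; combined with (1) this gives $\rho(X) \leq \|S^{-1}XS\|$ for every $S$. For the reverse inequality I would invoke Farenick's theorem \cite[Theorem 2]{Far96}, which says that the tuple $X$ being irreducible is equivalent to $\Psi_X$ being irreducible in the sense of \cite{EH78}; then \cite[Theorems 2.3 and 2.5]{EH78} produce a strictly positive $A \in M_n$ with $\Psi_X(A) = rA$, where $r$ equals the spectral radius of $\Psi_X$. Because $\Psi_X$ is completely positive, $\|\Psi_X^k\| = \|\Psi_X^k(I)\| = \|X^{(k)}\|^2$, so $r = \rho(X)^2$. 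Taking $S := A^{1/2}$ and applying the conjugation identity from (1) at $T = I$ gives $\Psi_{S^{-1}XS}(I) = S^{-1}\Psi_X(A)S^{-1} = rI$, and hence $\|S^{-1}XS\| = \sqrt{r} = \rho(X)$, so that $S$ realizes the infimum as a minimum.

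For (3), bring $X$ by similarity to block upper triangular form with irreducible diagonal blocks $X_1,\ldots,X_k$ (the Jordan--H\"older components). Each free monomial in the entries of $X$ is block upper triangular with $X_i^w$ on the $i$-th diagonal block, so $\|X^{(\ell)}\| \geq \max_i \|X_i^{(\ell)}\|$; taking $\ell$-th roots yields $\rho(X) \geq \max_i \rho(X_i)$. For the reverse inequality, fix $\varepsilon>0$ and use (2) to pick $T_i$ with $\|T_i^{-1}X_iT_i\| \leq \rho(X_i)+\varepsilon$; after conjugating by $T_1\oplus\cdots\oplus T_k$ the diagonal blocks have norm at most $M := \max_i\rho(X_i)+\varepsilon$, while the off-diagonal blocks may remain large. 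Now apply the scaling trick behind Lemma \ref{lem:strict_unnecessary}: conjugating by $\diag(t^{0}I, t^{1}I,\ldots, t^{k-1}I)$ multiplies the $(i,j)$ block by $t^{j-i}$, leaving the diagonal blocks untouched while sending every off-diagonal block to $0$ as $t \to 0^+$. For $t$ sufficiently small the resulting row norm is at most $M+\varepsilon$, so by (1) and $\rho(Y)\leq\|Y\|$ we obtain $\rho(X) \leq \max_i\rho(X_i) + 2\varepsilon$; letting $\varepsilon\to 0$ yields equality. The same scaling argument (now letting $t \to 0$ without the $T_i$ conjugation) shows that $X_1\oplus\cdots\oplus X_k$ lies in the closure of the similarity orbit of $X$; this tuple is semi-simple and has joint spectral radius $\max_i \rho(X_i) = \rho(X)$, since $\Psi_{X_1\oplus\cdots\oplus X_k}^\ell(I)$ is block-diagonal with blocks $\Psi_{X_i}^\ell(I)$. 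Finally, the uniqueness clause of Jordan--H\"older ensures that every semi-simple element in the closure of the similarity orbit has the same components up to permutation and isomorphism, hence the same joint spectral radius.

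The main obstacle I anticipate is the reverse inequality in (3): independently squeezing the diagonal blocks to near their joint spectral radii and then shrinking the off-diagonal blocks without disturbing that gain. The two-step scaling (first via $T_1\oplus\cdots\oplus T_k$, then via the diagonal $t$-rescaling from Lemma \ref{lem:strict_unnecessary}) should handle this cleanly; the only subtlety is verifying that the off-diagonal blocks genuinely decouple in the limit, which is guaranteed by the exponents $t^{j-i}$ being strictly positive powers of $t$ for $i<j$.
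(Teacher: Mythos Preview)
Your proof is correct. Parts (1) and (2) are essentially the paper's argument, only phrased in terms of $\Psi_X^k$ rather than the row $X^{(k)}$ (the two are equivalent via $\|\Psi_X^k(I)\| = \|X^{(k)}\|^2$).

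In part (3) you take a genuinely different route from the paper. The paper brings $X$ to block upper triangular form, chooses $S_k \in \GL_n$ with $S_k^{-1}XS_k \to X_{ss} := X_1 \oplus \cdots \oplus X_k$, and then invokes continuity of the spectral radius of the linear map $\Psi_X$ on the finite-dimensional space $M_n$ to conclude $\rho(X) = \rho(X_{ss})$; the identity $\rho(X_{ss}) = \max_i \rho(X_i)$ is then immediate. Your argument instead proves the two inequalities directly: the lower bound $\rho(X) \geq \max_i \rho(X_i)$ by compressing the row $X^{(\ell)}$ to a diagonal block, and the upper bound by first invoking part (2) to make each diagonal block have norm close to its joint spectral radius, then using the $\diag(t^0 I, t I, \ldots, t^{k-1}I)$ rescaling to damp the off-diagonal blocks. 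The paper's approach is shorter and does not rely on part (2); your approach is more self-contained, avoiding the appeal to continuity of the spectral radius, and it makes the role of the scaling trick from Lemma \ref{lem:strict_unnecessary} explicit.
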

\begin{proof}
Let $X \in \fB_d(n)$ and let $S \in \GL_n$. Observe that for every $k \in \N$, $(S^{-1} X S)^{(k)} = S^{-1} X^{(k)} S$. 
Therefore, $\rho(S^{-1} X S) \leq \rho(X)$. 
Applying the same consideration we see that $\rho(X)  = \rho(S (S^{-1} X S) S^{-1}) \leq \rho(S^{-1} X S)$. Hence we have that similarities preserve the joint spectral radius. 
%Alternatively, one can observe that $\rho(X)^2 = \rho(\Psi_X)$ and that $\Psi_{S^{-1} X S} = (\overline{S} \otimes S^{-1}) \Psi_X (\overline{S^{_1}} \otimes S)$, where $\overline{S}$ stands for the entrywise conjugation of $S$. Since they are similar their spectral radius is the same and we are done. 
In particular we have that $\rho(X) = \rho(S^{-1} X S) \leq \|S^{-1} X S\|$ for every $S \in \GL_n$.

On the other hand, if $X$ is irreducible, then --- as noted in the proof of Proposition \ref{prop:pure_iff_conj} --- there exists a strictly positive $A \in M_n$, such that $\Psi_X(A) = \rho(\Psi_X) A = \rho(X)^2 A$. 
Letting $T = \sqrt{A}$, we put $Y = T^{-1} X T$, and we find that $\Psi_Y(I) =  T^{-1} \Psi_X(A) (T^{-1})^* = \rho(X)^2 I$. 
On the other hand, $\Psi_Y(I) = \|Y\|^2 I$, and $\rho(X)^2 I = \rho(Y)^2 I$.  
We therefore get that $\rho(Y) = \|Y\|$ and additionally $\|Y\| = \min\{ \|S^{-1} X S\| \mid S \in \GL_n\}$.

For the last claim observe that applying a similarity we may assume that $X$ is block upper triangular, or in other words, that $X$ is similar to a sum $X_{ss} + N$ of block diagonal $d$-tuple $X_{ss}$ and jointly nilpotent $d$-tuple $N$. 
Now we can choose $S_k \in \GL_n$, such that $\lim_{k \to \infty} S_k^{-1} X S_k = X_{ss}$. 
Since $\rho(S_k^{-1} X S_k) = \rho(X) = \sqrt{\rho(\Psi_X)}$, and since the spectral radius of an operator on a finite dimensional space is a continuous function, we get that $\rho(X) = \rho(X_{ss})$. 
It is obvious that $\rho(X_{ss}) = \max\{\rho(X_1),\ldots,\rho(X_k)\}$.
\end{proof}

\begin{lem}[Theorem 3.8 of \cite{Popescu14}] \label{lem:tilde_is_spectral_ball}
Let $X \in \M_d$ and $k \in \N$. 
Then, $\rho(X) < 1$ if and only if $\lim_{k \to \infty} \|X^{(k)}\| = 0$, and this happens if and only if $X \in \widetilde{\fB_d}$.
\end{lem}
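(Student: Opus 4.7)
The plan is to prove the two equivalences separately: first between $\rho(X) < 1$ and $\lim_k \|X^{(k)}\| = 0$, and then between the latter and membership in $\widetilde{\fB_d}$.

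My starting observation is that complete positivity of $\Psi_X$ gives $\|\Psi_X^k(I)\| = \|X^{(k)}(X^{(k)})^*\| = \|X^{(k)}\|^2$, so the definition of the joint spectral radius rewrites as $\rho(X) = \lim_k \|X^{(k)}\|^{1/k}$. Since $\Psi_X^{k+\ell} = \Psi_X^k \circ \Psi_X^\ell$, applying both sides to $I$ and taking norms yields the submultiplicative estimate $\|X^{(k+\ell)}\| \leq \|X^{(k)}\|\cdot\|X^{(\ell)}\|$. Fekete's lemma then upgrades the limit defining $\rho(X)$ to an infimum: $\rho(X) = \inf_k \|X^{(k)}\|^{1/k}$.

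From this the first equivalence is immediate. If $\|X^{(k)}\| \to 0$, then some $\|X^{(k_0)}\| < 1$, and the Fekete infimum forces $\rho(X) \leq \|X^{(k_0)}\|^{1/k_0} < 1$. Conversely, if $\rho(X) < 1$, choose any $r \in (\rho(X), 1)$; then $\|X^{(k)}\|^{1/k} < r$ for all sufficiently large $k$, so $\|X^{(k)}\| < r^k \to 0$.

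For the second equivalence, I note that $\|X^{(k)}\| \to 0$ is, via the identity of the second paragraph, the same as $\|\Psi_X^k(I)\| \to 0$, which is by definition the purity of $X$. Proposition \ref{prop:pure_iff_conj} then identifies purity with similarity to a strict row contraction, i.e., with $X \in \widetilde{\fB_d}$. There is no real obstacle here beyond aligning these known facts; the only point requiring care is the submultiplicativity, which I extract from the composition $\Psi_X^{k+\ell} = \Psi_X^k \circ \Psi_X^\ell$ rather than through any direct combinatorial manipulation of the rows of degree-$k$ monomials.
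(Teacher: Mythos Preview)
Your proof is correct and rests on the same key ingredients as the paper's: the identity $\|X^{(k)}\|^2 = \|\Psi_X^k(I)\|$, the characterization $\rho(X)^2 = \rho(\Psi_X)$, and Proposition~\ref{prop:pure_iff_conj}. The only organizational difference is that the paper argues cyclically --- $\rho(X)<1 \Rightarrow \|X^{(k)}\|\to 0 \Rightarrow X\in\widetilde{\fB_d} \Rightarrow \rho(X)<1$ --- with the last implication handled via the similarity invariance of $\rho$ (Lemma~\ref{lem:irreducible_radius}(1)) and the trivial bound $\rho(Y)\le\|Y\|<1$ for a strict row contraction; you instead prove two separate biconditionals, invoking Proposition~\ref{prop:pure_iff_conj} in both directions and using Fekete's lemma (via submultiplicativity of $k\mapsto\|\Psi_X^k(I)\|$) to get $\|X^{(k)}\|\to 0 \Rightarrow \rho(X)<1$. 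Both routes are short and equally natural; the cyclic version avoids Fekete, while yours avoids appealing to the similarity invariance of $\rho$.
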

\begin{proof}
If $\rho(X) < 1$, then for $k \in \N$ sufficiently large $\|X^{(k)}\|^{1/k} \leq r < 1$, and thus $\|X^{(k)}\| \leq r^k \to 0$. 
If $\lim_{k \to \infty} \|X^{(k)}\| = 0$, then $\lim_{k \to \infty} \Psi_X^k(I) = 0$, so $X$ is pure. 
By Proposition \ref{prop:pure_iff_conj}, $X \in \widetilde{\fB_d}$. 
Finally, let $X$ be a strict row contraction. 
Then $\Psi_X$ is a strict contraction, so $\rho(X) = \rho(\Psi_X)^{1/2} < 1$. 
Since similar tuples have the same joint spectral radius, the proof is complete. 
\end{proof}

\begin{rem}\label{Rk:PopescuWasFirst}
In \cite[Theorem 3.8]{Popescu14}, Popescu proves in greater generality the equivalence of the first and the third conditions in Lemma \ref{lem:tilde_is_spectral_ball}, the equivalence in Lemma \ref{lem:conj_to_cont} and the one in Proposition \ref{prop:pure_iff_conj}.
More precisely, in his notations, if one sets
$k=1$, 
$n_1=d$, 
$m_1=1$,
$q_1=Z_{1,1}+\dots+Z_{1,d}$, and
$\cQ=\{0\}$,
then one obtains
$\Phi_{q_i,X_i}(Y)=\Psi_X(Y)$,
$\Delta^p_{q,X}=(id-\Psi_X)^p$, and
$D^m_q(\cH)=\ol{(B(\cH)^d)_1}$, where $(B(\cH)^{d})_1:=\{X \in B(\cH)^d : \sum_{i=1}^d X_i X_i^* < I_\cH\}$ is the open unit ball of $B(\cH)^{d}$. 

In this case, Theorem 3.8 in his paper becomes the equivalence of the following five conditions:
\begin{enumerate}[(i)]
\item $X$ is similar to some $Y \in (B(\cH)^d)_1$,
\item there exists $A \geq 0$ such that $\Psi_X(A)<A$,
\item $\rho(X)<1$,
\item $\lim_{k\to\infty}\|\Psi_X^k(I)\| = 0$, and
\item $\Psi_X$ is power-bounded and pure, and there exists an invertible positive $B \in B(\cH)$ such that the equation $\Psi_X(Z)=Z-B$ has a positive solution $Z \in B(\cH)$.
\end{enumerate}
Consider the case $\cH=\mathbb C^n$. Then the equivalence (i)$\iff$(ii) is our Lemma \ref{lem:conj_to_cont}, the equivalence (i)$\iff$(iv) is our Proposition \ref{prop:pure_iff_conj}, and the equivalence (i)$\iff$(iii) is the equivalence of the first and the third conditions in Lemma \ref{lem:tilde_is_spectral_ball}. Condition (v) is just an alternative version of condition (ii). Nevertheless, our methods are different than those of Popescu.
\end{rem}

The goal of the following discussion is to obtain a Schwarz type lemma for the joint spectral radius, by proving that it is ``subharmonic on discs". 
We will follow the ideas of Vesentini \cite{Ves70} and \cite{Ves79}.

\begin{lem} \label{lem:log_subharmonic}
If $f \colon \D \to \M_d(n)$ is an analytic function, then the function $\log \rho(f(z))$ is subharmonic.
\end{lem}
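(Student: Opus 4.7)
The approach is a Vesentini-style argument that reduces subharmonicity of $\log\rho(f(\cdot))$ to the (much easier) subharmonicity of $\log\|F_k(\cdot)\|$ where $F_k(z):=f(z)^{(k)}$ is the row of degree-$k$ free monomials evaluated at $f(z)$. The idea is that $F_k$ is a holomorphic matrix-valued function on $\D$ (its entries are polynomials in the entries of the analytic $f$), and then $\tfrac{1}{k}\log\|F_k(z)\|$ converges pointwise to $\log\rho(f(z))$ by the very definition of the joint spectral radius.

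First I would record two properties of the sequence $u_k(z):=\tfrac{1}{k}\log\|F_k(z)\|$. For each fixed $k$, $\log\|F_k(z)\|$ is subharmonic on $\D$: for every pair of unit vectors $u,v$, $z\mapsto\langle F_k(z)u,v\rangle$ is scalar holomorphic, so $\log|\langle F_k(z)u,v\rangle|$ is subharmonic, and $\log\|F_k(z)\|$ is the supremum of these over the compact set of unit-vector pairs, hence continuous and subharmonic. Second, the sequence $k\mapsto\log\|F_k(z)\|$ is subadditive at each $z$: since $\Psi_{f(z)}$ is completely positive one has $\|\Psi_{f(z)}^{k+\ell}(I)\|\leq\|\Psi_{f(z)}^k(I)\|\cdot\|\Psi_{f(z)}^\ell(I)\|$, i.e. $\|F_{k+\ell}(z)\|\leq\|F_k(z)\|\cdot\|F_\ell(z)\|$. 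Fekete's lemma therefore gives
\[
\log\rho(f(z))=\inf_{k\geq 1}u_k(z)=\lim_{k\to\infty}u_k(z).
\]
Being an infimum of continuous functions, $\log\rho(f(\cdot))$ is upper semicontinuous on $\D$.

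To establish the sub-mean-value inequality, fix a closed disk $\overline{D(z_0,r)}\subset\D$. Subharmonicity of each $u_k$ yields
\[
u_k(z_0)\leq\frac{1}{2\pi}\int_0^{2\pi}u_k(z_0+re^{i\theta})\,d\theta.
\]
The bound $\|F_k(z)\|^2=\|\Psi_{f(z)}^k(I)\|\leq\|\Psi_{f(z)}\|^k=\|f(z)\|^{2k}$ gives the uniform upper envelope $u_k(z)\leq\log\|f(z)\|$, which is bounded on $\overline{D(z_0,r)}$. Combined with the pointwise convergence $u_k\to\log\rho\circ f$, reverse Fatou yields
\[
\log\rho(f(z_0))=\lim_{k\to\infty}u_k(z_0)\leq\limsup_{k\to\infty}\frac{1}{2\pi}\int_0^{2\pi}u_k\,d\theta\leq\frac{1}{2\pi}\int_0^{2\pi}\log\rho(f(z_0+re^{i\theta}))\,d\theta.
\]
Together with upper semicontinuity, this is the sub-mean-value characterization of subharmonicity and completes the argument.

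The only step requiring care is the interchange of $\lim$ and integral, which is handled by the one-sided Fatou lemma using the explicit uniform envelope $u_k\leq\log\|f\|$; everything else is formal. (One could alternatively identify $\rho(f(z))^2$ with the Banach-algebra spectral radius of the holomorphic family $z\mapsto\Psi_{f(z)}\in B(M_n)$ and invoke Vesentini's theorem directly, but the argument above is self-contained and in the same spirit.)
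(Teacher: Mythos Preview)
Your proof is correct and follows the same Vesentini-style strategy as the paper: reduce to the subharmonicity of $\log\|f(z)^{(k)}\|$ and then pass to the limit. The paper executes the endgame slightly differently. Rather than using subadditivity and Fekete to write $\log\rho(f(z))=\inf_k u_k(z)$ and then verifying the subharmonicity axioms directly via reverse Fatou, the paper restricts to the subsequence $k=2^n$, observes from $\|X^{(2^{n+1})}\|\le\|X^{(2^n)}\|^2$ that $u_{2^{n+1}}\le u_{2^n}$ pointwise, and then invokes the standard fact (from Vesentini's original paper) that a decreasing limit of subharmonic functions is subharmonic. Your route is more self-contained and avoids the citation; the paper's route is shorter since monotone convergence makes the limit step immediate. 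Either way the content is the same.
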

\begin{proof}
 It is obvious that the function $f_k(z) = f(z)^{(k)}$ is holomorphic. 
Therefore, since the norm of a holomorphic Banach-valued function is subharmonic (this follows from Cauchy's formula) the function $u_k(z) = \|f_k(z)\|$ is continuous and subharmonic. 
As in the proof of \cite[Theorem 1]{Ves70} we use the fact that a function $u$ is $\log$-subharmonic in $\D$ if and only if for every $a \in \C$ the function $|e^{az}| u(z)$ is subharmonic. Therefore $\log u_k(z)$ is subharmonic.

Let us write $T_n = I_{2^n} \otimes X^{(2^n)}$, then $X^{(2^{n+1})} = X^{(2^n)} T_n$ and thus $\|X^{(2^{n+1})}\| \leq \|X^{(2^n)}\|^2$. Therefore, $u_{2^{n+1}}(z)^{1/{2^{n+1}}} \leq u_{2^n}(z)^{1/2^n}$ for every $z \in \D$. Thus the sequence of functions $u_{2^n}(z)^{1/2^n}$ monotonically pointwise decreases to $\rho(f(z))$. As in part B of the proof of \cite[Theorem 1]{Ves70} we conclude that $\log \rho(f(z))$ is subharmonic.

\end{proof}

\begin{cor} \label{cor:subharmonic}
The function $\rho(f(z))$ is subharmonic for every $f \colon \D \to \M_d(n)$ analytic.
\end{cor}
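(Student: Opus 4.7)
The plan is to deduce this directly from Lemma \ref{lem:log_subharmonic}, which already establishes that $\log \rho(f(z))$ is subharmonic on $\D$ (with the convention $\log 0 = -\infty$). The standard composition principle in potential theory says that if $u$ is subharmonic on a domain and $\phi \colon [-\infty, \infty) \to \R$ is convex and non-decreasing (with $\phi(-\infty)$ defined as $\lim_{t \to -\infty} \phi(t)$), then $\phi \circ u$ is subharmonic.

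Applying this with $\phi(t) = e^t$, which is convex and monotonically increasing with $\phi(-\infty) = 0$, we immediately obtain that
\[
e^{\log \rho(f(z))} = \rho(f(z))
\]
is subharmonic on $\D$. The only mild subtlety is handling the possibility that $\rho(f(z_0)) = 0$ at some point $z_0$, but this is already absorbed into the standard convention for subharmonic functions taking the value $-\infty$, and the sub-mean-value inequality for $\rho(f(z))$ is preserved by monotone non-decreasing convex composition.

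Thus the corollary follows in one line from Lemma \ref{lem:log_subharmonic}, with no new obstacles. If one prefers a self-contained argument that avoids invoking the composition principle as a black box, one can instead revisit the proof of Lemma \ref{lem:log_subharmonic} and note that the sequence $u_{2^n}(z)^{1/2^n} = \|f(z)^{(2^n)}\|^{1/2^n}$ consists of continuous subharmonic functions (each being a positive power of the subharmonic function $\|f(z)^{(2^n)}\|$, since $t \mapsto t^{1/2^n}$ is concave but $\log \|f(z)^{(2^n)}\|$ is subharmonic and the same trick as above works), and this sequence decreases monotonically to $\rho(f(z))$; a monotone decreasing limit of subharmonic functions is subharmonic, which gives the claim.
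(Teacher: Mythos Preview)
Your proposal is correct and matches the paper's intent: the corollary is stated without proof, relying implicitly on the fact that $\rho(f(z)) = \exp(\log \rho(f(z)))$ together with the standard composition principle (convex non-decreasing applied to subharmonic), exactly as you argue. Your parenthetical alternative via the decreasing sequence $u_{2^n}^{1/2^n}$ is also fine, but note that the subharmonicity of each $u_{2^n}^{1/2^n}$ already uses the same convex-composition trick applied to $\log u_{2^n}$, so it is not really more elementary.
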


The following lemma is a version of the Schwarz lemma for the joint spectral radius. 

\begin{lem}[Vesentini--Schwarz Lemma]\label{lem:free_spectral_Schwarz}
If $f \colon \D \to \widetilde{\fB_d}(n)$ is an analytic function and $f(0) = 0$, then $\rho(f(z)) \leq |z|$ for every $z \in \D$ and $\rho(f^{\prime}(0)) \leq 1$.
\end{lem}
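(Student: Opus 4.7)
The plan is to reduce the statement to an application of the maximum principle for subharmonic functions, using the fact already established in Lemma \ref{lem:log_subharmonic} that $z \mapsto \log \rho(h(z))$ is subharmonic for any holomorphic map $h \colon \D \to \M_d(n)$. The key trick is to factor out the zero of $f$ at the origin in a classical Schwarz-lemma fashion, and then exploit the homogeneity of the joint spectral radius.

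First, I would set $g(z) = f(z)/z$ for $z \neq 0$ and $g(0) = f'(0)$. Since $f(0) = 0$, the function $g$ is analytic on all of $\D$ (the apparent singularity at the origin is removable). Next, the joint spectral radius is homogeneous of degree $1$ in the scalar: $\rho(\lambda X) = |\lambda|\rho(X)$ for every $\lambda \in \C$, since $(\lambda X)^{(k)} = \lambda^k X^{(k)}$ gives $\|(\lambda X)^{(k)}\|^{1/k} = |\lambda|\|X^{(k)}\|^{1/k}$, and the defining limit of $\rho$ inherits this. In particular, $\rho(f(z)) = |z|\rho(g(z))$ for every $z \in \D$.

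Now by Lemma \ref{lem:log_subharmonic} applied to $g$, the function $v(z) := \log \rho(g(z))$ is subharmonic on $\D$. For $z \neq 0$, the identity above reads $v(z) = \log \rho(f(z)) - \log|z|$. Since $f$ takes values in $\widetilde{\fB_d}(n)$, Lemma \ref{lem:tilde_is_spectral_ball} gives $\rho(f(z)) < 1$ for every $z \in \D$, so $\log \rho(f(z)) < 0$, and hence $v(z) < -\log|z|$. Letting $|z| \to 1^-$ yields $\limsup_{|z| \to 1^-} v(z) \leq 0$. The maximum principle for subharmonic functions then gives $v(z) \leq 0$ throughout $\D$, i.e., $\rho(g(z)) \leq 1$ for every $z \in \D$. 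Setting $z = 0$ yields $\rho(f'(0)) = \rho(g(0)) \leq 1$, while for general $z \in \D$, the homogeneity identity $\rho(f(z)) = |z|\rho(g(z))$ produces $\rho(f(z)) \leq |z|$.

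I do not anticipate a serious obstacle here: once the subharmonicity of $\log \rho \circ f$ (Lemma \ref{lem:log_subharmonic}) and the characterization of $\widetilde{\fB_d}$ via $\rho$ (Lemma \ref{lem:tilde_is_spectral_ball}) are in hand, the argument is essentially the classical Vesentini--Schwarz argument for the spectral radius in a Banach algebra (\emph{cf.} \cite{Ves70}, \cite{Ves79}), now carried out with $\rho$ in place of the usual spectral radius. The only mild care required is the verification of the homogeneity of $\rho$ and the boundary behaviour of $v$ on $\D$; neither step involves any nontrivial noncommutative input beyond the two lemmas cited.
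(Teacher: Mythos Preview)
Your proof is correct and follows essentially the same route as the paper: define $g(z)=f(z)/z$, use subharmonicity of (the log of) $\rho\circ g$ together with the homogeneity of $\rho$ and the bound $\rho(f(z))<1$ coming from Lemma \ref{lem:tilde_is_spectral_ball}, and apply the maximum principle. The only cosmetic differences are that the paper works with $\rho(g(z))$ via Corollary \ref{cor:subharmonic} rather than $\log\rho(g(z))$ via Lemma \ref{lem:log_subharmonic}, and it phrases the maximum principle on circles $|z|=r$ (bounding $\rho(g(z_0))\leq 1/r$ and letting $r\to 1$) rather than via a boundary $\limsup$; neither changes the substance of the argument.
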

\begin{proof}
Note that for every $z \in \D$, $\rho(f(z)) < 1$. Since the function $g(z) = \frac{f(z)}{z}$ is analytic, by Corollary \ref{cor:subharmonic} we get that $\rho(g(z))$ is subharmonic of the disc. Now for every $z_0 \in \D$ and every $|z_0| < r < 1$, since the joint spectral radius is homogeneous we get that $\rho(g(z_0)) \leq \frac{1}{r}$. Passing to the limit we see that $\rho(g(z_0)) \leq 1$. Thus $\rho(f(z)) \leq |z|$ for every $z \in \D$.

Finally, note that $g(0) = f^{\prime}(0)$ and thus $\rho(f^{\prime}(0)) = \rho(g(0))\leq 1$.
\end{proof}

%%%%%%%%%%%%%%%%%%%%%%%%%%%%%% This is the spectral Cartan piece %%%%%%%%%%%%%%%%%%%%%%%%%%%%%

We would like to obtain a spectral counterpart of the statement in the classical Schwartz lemma that if $f \colon \D \to \D$ is such that $f(0) = 0$ and $|f'(0)| = 1$, then $f(z) = z f'(0)$. We will break the preparation for this result into several lemmas.

\begin{lem} \label{lem:similar_to_coisom}
Let $X \in \M_d(n)$ be an irreducible point, such that $\rho(X) = 1$. Then $X$ is similar to a coisometry. 
\end{lem}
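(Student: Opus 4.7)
The plan is to adapt the argument already used in the proof of Proposition \ref{prop:pure_iff_conj} and in part (2) of Lemma \ref{lem:irreducible_radius}, exploiting the Perron--Frobenius theory for irreducible completely positive maps. The key observation is that when $X$ is irreducible, the associated completely positive map $\Psi_X$ has a strictly positive eigenvector at its spectral radius, and this eigenvector supplies the intertwining similarity.

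First I would note that since $X$ is irreducible, the completely positive map $\Psi_X$ is irreducible in the sense of \cite{EH78} by \cite[Theorem 2]{Far96}. Then, by Theorems 2.3 and 2.5 of \cite{EH78}, there exists a strictly positive $A \in M_n$ with $\Psi_X(A) = \rho(\Psi_X) A$. Recalling that $\rho(\Psi_X) = \rho(X)^2 = 1$ by assumption, this eigenvalue equation simplifies to $\Psi_X(A) = A$, i.e.,
\[
\sum_{j=1}^d X_j A X_j^* = A.
\]

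Next I would set $T = \sqrt{A}$, the unique positive square root of $A$, which is invertible since $A$ is strictly positive, and define $Y = T^{-1} X T$. A direct computation yields
\[
\Psi_Y(I) = \sum_{j=1}^d (T^{-1} X_j T)(T^{-1} X_j T)^* = T^{-1}\Bigl(\sum_{j=1}^d X_j A X_j^*\Bigr) T^{-1} = T^{-1} A T^{-1} = I,
\]
so $Y$ is a row coisometry. Hence $X = T Y T^{-1}$ is similar to the coisometry $Y$, as desired.

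There is really no substantial obstacle here: the heavy lifting (existence of the strictly positive eigenvector for $\Psi_X$) is done by the Evans--H\o egh-Krohn Perron--Frobenius theorem, which has already been invoked in the paper. The only thing to be careful about is that one needs $A$ to be \emph{strictly} positive in order for $T = \sqrt{A}$ to be invertible and for the conjugation by $T^{-1}$ to make sense, but this strict positivity is exactly what irreducibility of $\Psi_X$ guarantees.
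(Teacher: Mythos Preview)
Your proof is correct and takes essentially the same approach as the paper: use irreducibility of $\Psi_X$ and the Evans--H{\o}egh-Krohn Perron--Frobenius theorem to produce a strictly positive $A$ with $\Psi_X(A)=A$, then conjugate by $\sqrt{A}$. If anything, your argument is slightly more direct, since the paper first passes through Lemma~\ref{lem:irreducible_radius} to replace $X$ by a similar $Y$ with $\|Y\|=\rho(Y)=1$ before applying Perron--Frobenius to $\Psi_Y$, a detour that is not actually needed.
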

\begin{proof}
By Lemma \ref{lem:irreducible_radius} we know that $X$ is similar to a point $Y$, such that $\|Y\| = \rho(Y) = 1$, hence $Y$ is an irreducible row contraction. As in the proof of Proposition \ref{prop:pure_iff_conj} we note that $\Psi_Y$ is irreducible and thus there exists a strictly positive $A$, such that $\Psi_Y(A) = A$. Taking the similarity with $\sqrt{A}$, we get a row coisometry.
\end{proof}

\begin{lem} \label{lem:func_to_coisom_constant}
Let $g \colon \D \to \overline{\fB_d}(n)$ be an analytic function, such that $g(z)$ is a coisometry for some $z_0 \in \D$. Then $g$ is constant.
\end{lem}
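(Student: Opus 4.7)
The plan is to show first that the holomorphic $M_n$-valued function $h(z) := g(z) g(z_0)^*$ is identically equal to $I_n$, and then to use this rigidity, together with the row-contractivity of $g$, to force $g$ to be constant.

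For the first step, since $g(z_0)$ is a row coisometry one has $g(z_0) g(z_0)^* = I$, so $h(z_0) = I$; meanwhile, viewing $g(z)$ as a row operator $\C^{nd} \to \C^n$ of norm at most $1$ and $g(z_0)^*$ as a column of norm $1$, one gets $\|h(z)\| \leq 1$ throughout $\D$. For an arbitrary nonzero $v \in \C^n$ the $\C^n$-valued holomorphic function $F_v(z) := h(z) v$ then satisfies $\|F_v(z)\| \leq \|v\|$ with $F_v(z_0) = v$, and applying the scalar maximum modulus principle to $z \mapsto \langle F_v(z), v\rangle/\|v\|^2$ (which is bounded by $1$ in modulus and equals $1$ at the interior point $z_0$) forces $\langle F_v(z), v\rangle \equiv \|v\|^2$. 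Combined with $\|F_v(z)\| \leq \|v\|$, a Cauchy--Schwarz equality argument gives $F_v \equiv v$; as $v$ is arbitrary, $h \equiv I$, i.e., $g(z) g(z_0)^* = I$ for every $z \in \D$.

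For the second step, taking adjoints also gives $g(z_0) g(z)^* = I$, so expanding
\[
\bigl(g(z) - g(z_0)\bigr)\bigl(g(z) - g(z_0)\bigr)^* = g(z) g(z)^* - g(z) g(z_0)^* - g(z_0) g(z)^* + g(z_0) g(z_0)^* = g(z) g(z)^* - I
\]
yields a positive semidefinite matrix on the left that coincides with the negative semidefinite matrix $g(z) g(z)^* - I$ on the right (the latter because $g$ takes values in $\overline{\fB_d}(n)$). Both sides must therefore vanish, whence $g(z) = g(z_0)$ for every $z \in \D$.

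I do not anticipate a serious obstacle; the entire argument rests on a single application of the vector-valued maximum modulus principle. The key insight is that right-multiplication by the fixed operator $g(z_0)^*$ converts the coisometric value at $z_0$ into an interior extremum of an otherwise contractive holomorphic $M_n$-valued function, after which the row-contractivity of $g$ suffices to promote rigidity of $g(z) g(z_0)^*$ into rigidity of $g$ itself.
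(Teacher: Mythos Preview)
Your proof is correct and follows essentially the same approach as the paper: both arguments apply the scalar maximum modulus principle to $z \mapsto \langle g(z) g(z_0)^* v, v\rangle/\|v\|^2$ to conclude $g(z) g(z_0)^* \equiv I$, and then expand $(g(z)-g(z_0))(g(z)-g(z_0))^*$ exactly as you do. The only cosmetic difference is that the paper phrases the conclusion $h(z)=I$ via the numerical range being the singleton $\{1\}$, whereas you use the equality case of Cauchy--Schwarz directly.
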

\begin{proof}
Consider every $X \in \overline{\fB_d}(n)$ as a linear map $X \colon \C^d \otimes \C^n \to \C^n$. Let $\xi \in \C^n$ be a unit vector and define $g_{\xi}(z) = \langle g(z) g(z_0)^* \xi, \xi \rangle$. Note that $g_{\xi}$ is an analytic function on $\D$. Furthermore, by Cauchy--Schwarz $|g_{\xi}(z)| \leq 1$ on $\D$ and $g_{\xi}(z_0) = 1$, since $g(z_0)$ is a coisometry. Conclude that $g_{\xi}$ is the constant function $1$. Since this is true for every $\xi$ we see that for every $z \in \D$, the numerical range of the operator $g(z) g(z_0)^*$ is the singleton $\{1\}$, and thus this operator is the identity. Thus,
\[
0 \leq \left( g(z) - g(z_0) \right) \left( g(z)^* - g(z_0)^* \right) = g(z) g(z)^* - g(z) g(z_0)^* - g(z_0) g(z)^* + I = g(z) g(z)^* - I \leq 0.
\]
Therefore, $g(z) = g(z_0)$.
\end{proof}
\begin{lem}\label{lem:implicit}
Let $V$ be a finite dimensional real vector space, and let $z \mapsto T_z$ be a smooth function from $\mathbb D$ to $L(V)$ the space of real linear transformations over $V$. Assume that $0$ is a simple eigenvalue (i.e., of algebraic multiplicity $1$) of $T_z$ for every $z\in \mathbb D$. Then for every $0\neq v_0 \in \ker T_0$, there exists $s>0$ and a smooth function $v: s\mathbb D  \to V$ satisfying $v(0)=v_0$, such that 
\[
\ker T_z = {\rm{span}} \{v(z)\}\quad \text{ for all } z\in s \mathbb D.
\]
\end{lem}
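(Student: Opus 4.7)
The plan is to construct $v(z)$ as a smoothly varying eigenvector of $T_z$ using the Riesz spectral projection associated with the eigenvalue $0$. The crucial input is that, since $V$ is finite-dimensional and the roots of the characteristic polynomial of $T_z$ vary continuously with $z$, the simplicity of $0$ as an eigenvalue of $T_0$ propagates to a uniform spectral separation near the origin: there exist $\epsilon > 0$ and $s > 0$ such that for every $z \in s\overline{\D}$ the only point of the complex spectrum of $T_z$ inside $\{\lambda \in \C : |\lambda| \le \epsilon\}$ is $0$ itself, and the circle $\{|\lambda| = \epsilon\}$ is disjoint from the spectrum.

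Passing to the complexification $V_{\C} := V \otimes_{\R} \C$ and still writing $T_z$ for the complexified map, I would define
\[
P_z := \frac{1}{2\pi i} \oint_{|\lambda| = \epsilon} (\lambda I - T_z)^{-1} \, d\lambda, \qquad z \in s\D.
\]
The resolvent $(\lambda I - T_z)^{-1}$ is jointly smooth in $(z, \lambda)$ on the compact set $s\overline{\D} \times \{|\lambda| = \epsilon\}$, so $P_z$ is a smooth $\End(V_{\C})$-valued function on $s\D$. By standard spectral theory, $P_z$ is a projection onto the spectral subspace of $T_z$ corresponding to the eigenvalues enclosed by the contour, which here is the generalized $0$-eigenspace. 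The simplicity hypothesis forces algebraic and geometric multiplicities to agree (both equal to $1$), so this generalized $0$-eigenspace coincides with $\ker T_z$ and is one-dimensional. Since the contour is symmetric about the real axis and $T_z$ is a real operator, $P_z$ commutes with complex conjugation, and therefore restricts to a smooth projection on the real form $V$.

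I would then set $v(z) := P_z v_0$. This is a smooth $V$-valued function with $v(0) = P_0 v_0 = v_0 \neq 0$, and $v(z) \in \ker T_z$ for every $z \in s\D$. By continuity, after possibly shrinking $s$, we may assume $v(z) \neq 0$ throughout $s\D$, whence $\ker T_z = \spn\{v(z)\}$, as required.

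The only point requiring any care is establishing the uniform spectral separation of $0$ from the remaining eigenvalues on a neighborhood of the origin; this follows from the continuity of the spectrum in the matrix entries together with the finite-dimensionality of $V$ and the hypothesis that $0$ is isolated in the spectrum of each $T_z$. The simplicity assumption is essential: it is precisely what guarantees that the image of $P_z$ is one-dimensional and equal to $\ker T_z$, rather than a larger generalized eigenspace that need not be contained in $\ker T_z$.
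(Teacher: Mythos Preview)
Your argument via the Riesz spectral projection is correct. The paper proceeds differently: it applies the implicit function theorem to the map
\[
F(v,\lambda,z)=\begin{pmatrix} T_z v-\lambda v\\ v_0^{T}v-1\end{pmatrix},
\]
checking directly that the derivative in $(v,\lambda)$ is invertible at $(v_0,0,0)$, to obtain smooth local functions $v(z)$ and $\lambda(z)$ with $T_z v(z)=\lambda(z)v(z)$; it then needs a second step, another implicit-function argument applied to the characteristic polynomial $p(z,x)=\det(xI-T_z)$, to conclude that $\lambda(z)\equiv 0$. Your contour-integral construction sidesteps this second step entirely, since the range of $P_z$ is automatically the generalized $0$-eigenspace and no auxiliary eigenvalue function appears. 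On the other hand, the paper's approach stays within real linear algebra and avoids complexification and the holomorphic functional calculus. Both are standard; your route is arguably cleaner, while the paper's is more elementary.
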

\begin{proof}
We may identify $V$ as $\mathbb R^n$ and $L(V)$ as $M_n(\mathbb R)$.
We first show that there exists $s>0$ and
smooth functions $v: s\mathbb D  \to V$ and $\lambda: s\mathbb D \to \mathbb R$ satisfying $v(0)=v_0$ and $\lambda(0)=0$, such that 
$
T_z v(z)= \lambda(z) v(z) 
$
for all $z\in s \mathbb D$.
This is likely to be very well known and also appears in an unpublished note of Kazdan \cite{Kazdan}, but we include it here for completeness. 

Let
$
F(v,\lambda,z):=\left[
\begin{smallmatrix}
f(v,\lambda,z)\\
g(v,\lambda,z)
\end{smallmatrix}\right]
:=
\left[
\begin{smallmatrix}
T_z v - \lambda v\\
v_0^T v - 1
\end{smallmatrix}\right].
$
Then,
$
F'(v,\lambda,z):=\left[
\begin{smallmatrix}
\frac{\partial f}{\partial v} & \frac{\partial f}{\partial \lambda} \\
\frac{\partial g}{\partial v} & \frac{\partial g}{\partial \lambda} 
\end{smallmatrix}\right]
=\left[
\begin{smallmatrix}
T_z - \lambda & -v \\
v_0^T & 0 
\end{smallmatrix}\right].
$
To apply the implicit function theorem we must show that $F'(v(0),\lambda(0),0)$ is invertible, or equivalently that its kernel is trivial. Let $\bigl[\begin{smallmatrix}
u \\
\alpha 
\end{smallmatrix}\bigr] \in \ker F'(v(0),\lambda(0),0)$. Then  
$T_0 u - \alpha v_0 = 0$  and also $v_0^T u=0$.  If $\alpha \neq 0$, then 
by the first equality $T_0^2 u = \alpha T_0 v_0 =0$ while $T_0 u \neq 0$, which is impossible as $0$ is a simple eigenvalue of $T_0$.
Thus
$\alpha=0$, so by the first equality we have $u\in \ker T_0 = {\rm{span}}\{v_0\}$. Therefore, the second equality implies that $u=0$. The implicit function theorem now gives a neighborhood $s\mathbb D$ of the origin and smooth functions $v: s\mathbb D  \to V$ and $\lambda: s\mathbb D \to \mathbb R$ satisfying $v(0)=v_0$ and $\lambda(0)=0$, such that 
$
T_z v(z)= \lambda(z) v(z)$
for all $z\in s \mathbb D$.

Finally, we need to show that $\lambda$ must be the zero function. Let $p_{T_z}(x):={\rm{det}} (x I - T_z)$
be the characteristic polynomial of $T_z$, and set $p(z,x):=p_{T_z}(x)$. Since $0$ is a simple eigenvalue of $T_0$ we have that 
$\frac{\partial p}{\partial x}(0,0) \neq 0$. By the implicit function theorem, there exists a neighborhood $s\mathbb D$ of the origin and a function $\mu:s\mathbb D \to \mathbb R$ such that inside a neighborhood of $(0,0) \in \mathbb D \times \mathbb R$ the curve $(z,\mu(z))$ contains {\em all} solutions to $p(z,x)=0$, so $\lambda(z) = \mu(z)$ in a neighborhood of $0$. 
In particular, as we know that $(z,0)$ is a solution we find that $\mu = \lambda$ must be zero near the origin.
\end{proof}

In \cite[Theorem 2.3]{EH78}, the authors show --- as part of their generalization of the Perron--Frobenius theorem --- that for a positive irreducible linear map $\Theta$ on $M_n$ with spectral radius $r$, there exists a unique strictly positive matrix $A$ such that $\Theta(A)=rA$. It is interesting (and  also useful, as we shall see in the next Lemma) to note that $r$ is not only of geometric multiplicity $1$, but also of algebraic multiplicity $1$ (i.e., simple).

To see this, equip $M_n$ with the Hilbert--Schmidt inner product, namely set $\langle Z , W \rangle:={\rm{tr}}(W^*Z)$, and let $\Theta^*$ be the adjoint of $\Theta$ with respect to this inner-product. Since  $\Theta^*$ is positive and irreducible as well \cite{EH78}, there exists a unique strictly positive matrix $B$ such that $\Theta^*(B)=r'B$ where $r'$ is the spectral radius of $\Theta^*$. Note that $B^\perp:=\{Z\in M_n : \langle Z , B \rangle = 0\}$ is $\Theta$ invariant.

If we show that $M_n$ is the direct sum of the two $\Theta$-invariant spaces $\mathbb C A$ and $B^\perp$, then we are clearly done. To this end, note that 
$\langle A , B \rangle = {\rm{tr}}(B^*A) =  {\rm{tr}}\big((A^{\frac 12})^*BA^{\frac 12}\big)$. Since $B$ is strictly positive, all its eigenvalues are positive. By the complex version of Sylvester's law of inertia, all the eigenvalues of the latter matrix are positive, so that $\langle A , B \rangle >0$. Thus, $A \not \in B^\perp$.

\begin{lem} \label{lem:local_analytic_similarity}
Let $g \colon \D \to \M_d(n)$ be an analytic function, such that $g(z)$ is irreducible and similar to a coisometry for every $z \in \D$, then there exists an irreducible coisometry $X$, such that $g(z)$ is similar to $X$, for every $z \in \D$.
\end{lem}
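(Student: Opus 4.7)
The plan is to construct a real-analytic family of coisometries $X(z)$ pointwise similar to $g(z)$ and then exploit the holomorphicity of $g$ to show that $X(z)$ moves only within a single unitary orbit. First, I would apply Lemma \ref{lem:implicit} to the real-linear family $T_z := \Psi_{g(z)}|_{\operatorname{Herm}(n)} - I$ acting on the real vector space of $n \times n$ Hermitian matrices. Because each $g(z)$ is irreducible, the Perron--Frobenius reasoning used in the proof of Proposition \ref{prop:pure_iff_conj} guarantees that $1$ is a simple eigenvalue of $\Psi_{g(z)}$ with eigenspace spanned by a strictly positive matrix, so Lemma \ref{lem:implicit} yields, near any fixed $z_0 \in \D$, a smooth family of such eigenvectors $B(z)$ (normalized by $\operatorname{tr}(B(z))=1$). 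Since $g$ is holomorphic, $T_z$ is real-analytic in $z$, and the real-analytic implicit function theorem upgrades $B(z)$ to real-analyticity. Setting $P(z):=B(z)^{1/2}$ and $X(z):=P(z)^{-1}g(z)P(z)$ produces a real-analytic family of irreducible coisometries (the coisometry identity $\sum_i X_i X_i^*=I$ is a restatement of $\Psi_{g(z)}(B(z))=B(z)$).

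Second, I would derive the PDE satisfied by $X$. Differentiating $g = PXP^{-1}$ against $\partial/\partial \bar z$ and using $\partial g/\partial \bar z = 0$ gives
\[
\frac{\partial X}{\partial \bar z} = [X,K], \qquad K := P^{-1}\frac{\partial P}{\partial \bar z}.
\]
Differentiating the coisometry identity $\sum_i X_i X_i^*=I$ in $\bar z$ and substituting this PDE, a short calculation yields $\Psi_X(K+K^*)=K+K^*$. Because each $X(z)$ is an irreducible coisometry, the $1$-eigenspace of $\Psi_{X(z)}$ is spanned by $I$, so $K+K^*=\mu I$ for some real scalar $\mu$. Writing $K = A + \tfrac{\mu}{2}I$ with $A$ anti-Hermitian and using $[X,I]=0$, one obtains
\[
\frac{\partial X}{\partial \bar z} = [X,A],
\]
which is the infinitesimal action of unitary conjugation.

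Third, the analogous analysis in the $z$-direction---carried out by differentiating $\sum X_i X_i^* = I$ in $z$ and absorbing the holomorphic derivative $g'(z)$ into an anti-Hermitian commutator via the same Perron--Frobenius rigidity---together with an integration of the resulting gauge (possible on the simply connected disc) produces a real-analytic unitary $U(z)$ with $U(z_0)=I$ such that $X(z) = U(z)X_0 U(z)^*$ in a neighborhood of $z_0$, where $X_0 := X(z_0)$. Hence $g(z) = P(z)U(z)X_0 U(z)^* P(z)^{-1}$ is similar to the fixed irreducible coisometry $X_0$ on a neighborhood of $z_0$. Since the $\GL_n$-orbit of an irreducible tuple is Zariski-closed in $M_n^d$, the set $\{z \in \D : g(z) \sim X_0\}$ is both open (by the previous step) and closed, and connectedness of $\D$ delivers the conclusion with $X := X_0$.

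The main obstacle is step three: the clean derivation of infinitesimal unitary conjugation in the $\bar z$-direction is immediate from the holomorphy of $g$, but the $z$-direction is genuinely more delicate because $\partial X/\partial z$ a priori contains a contribution $P^{-1}g'(z)P$ that is not obviously a commutator $[X,\cdot]$. Showing that this term also collapses into an anti-Hermitian commutator modulo scalars (equivalently, verifying the flatness/integrability of the gauge connection) is where the coisometry constraint and irreducibility must be pushed hardest.
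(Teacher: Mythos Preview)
Your approach is genuinely different from the paper's, but it has a concrete gap already at step two, and step three is (as you yourself flag) unresolved.

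\textbf{The error in step two.} Your ``short calculation'' yielding $\Psi_X(K+K^{*})=K+K^{*}$ relies on the identity $(X_i^{*})_{\bar z}=((X_i)_{\bar z})^{*}$, but this is false: for any matrix-valued function one has $(X_i^{*})_{\bar z}=((X_i)_{z})^{*}$. Consequently, differentiating $\sum_i X_iX_i^{*}=I$ in $\bar z$ gives
\[
\sum_i (X_i)_{\bar z}X_i^{*}+X_i\bigl((X_i)_{z}\bigr)^{*}=0,
\]
which mixes the $\bar z$-derivative $(X_i)_{\bar z}=[X_i,K]$ with the $z$-derivative $(X_i)_{z}=P^{-1}g_i'P+[X_i,L]$ (where $L=P^{-1}P_z$). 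The term $P^{-1}g'P$ does not cancel, so you do \emph{not} obtain $\Psi_X(K+K^{*})=K+K^{*}$, and $\partial X/\partial\bar z$ is not shown to be an infinitesimal unitary conjugation. The very obstacle you identify in step three --- that $P^{-1}g'P$ is not obviously a commutator --- is already present in the $\bar z$-direction once the derivative is computed correctly. Thus the gauge-theoretic scheme, as written, does not close.

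\textbf{How the paper proceeds instead.} The paper avoids this difficulty by insisting on a \emph{holomorphic} conjugator. After using Lemma~\ref{lem:implicit} to obtain the smooth positive eigenvector family $a(z)$ on a small disc $r\overline{\D}$, it invokes a Wiener--Hopf/Clancey--Gohberg factorization to produce a function $h$ holomorphic on $r\D$, continuous on $r\overline{\D}$, with $h(re^{it})h(re^{it})^{*}=a(re^{it})$. Then $\widetilde g:=h^{-1}gh$ is holomorphic and is a coisometry on the boundary circle $|z|=r$. Aupetit's theorem on the subharmonicity of products of singular values forces $\widetilde g(z)$ to be a coisometry for all $|z|\le r$, and Lemma~\ref{lem:func_to_coisom_constant} (which needs holomorphy) then makes $\widetilde g$ constant. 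Finally, Zariski-closedness of the similarity orbit of an irreducible tuple propagates the conclusion from $r\D$ to all of $\D$. The key idea your proposal lacks is precisely this passage to a holomorphic conjugator; with only a smooth Hermitian $P$, you lose access to Lemma~\ref{lem:func_to_coisom_constant} and are forced into the differential argument that, as shown above, does not go through.
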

\begin{proof}
Since for every $z \in \D$, $g(z)$ is irreducible and $\rho(g(z)) = 1$ we know that $\Psi_{g(z)}$ is irreducible with spectral radius $1$. The discussion prior to this Lemma now implies that $1$ is a simple eigenvalue of $\Psi_{g(z)}$ with a positive eigenvector. 
Note that $\Psi_{g(z)} - {\rm{id}}_{M_n}$ maps the real vector space of self-adjoint matrices $(M_n)_{sa}$ into itself, and set $T_z:=(\Psi_{g(z)} - {\rm{id}}_{M_n})|_{(M_n)_{sa}}$.  Choose a positive $A_0 \in \ker T_0$. By Lemma \ref{lem:implicit}, there exists $s>0$ and a smooth function $a: s\mathbb D \to (M_n)_{sa}$ satisfying $a(0)=A_0$ such that
\[
\Psi_{g(z)}a(z)=a(z) \quad \text{for all }z \in s\mathbb D.
\]

%
%Furthermore, since $g(z)$ is always similar to a coisometry we can consider the matrix-valued function $\Psi_{g(z)} - {\rm{id}}_{M_n}$. 
%Let $\left(\ker(\Psi_{g(z)} - {\rm{id}}_{M_n})\right)_{sa}$ be the set of all self-adjoint matrices in $\ker(\Psi_{g(z)} - {\rm{id}}_{M_n})$. 
%
%Choose a positive $A_0 \in \left(\ker (\Psi_{g(0)} - {\rm{id}}_{M_n})\right)_{sa}$.
%By \cite[]{LaxBook}
%
%By the above consideration this real vector space is of dimension $1$. We have thus obtained a real line bundle over $\D$ which is trivial since $\D$ is a contractible paracompact space \cite[Corollary 4.8]{Husemoller_Bundles}.
%
%Choose a positive $A \in \left(\ker (\Psi_{g(0)} - {\rm{id}}_{M_n})\right)_{sa}$. Since our line bundle is trivial there exists a smooth section $a(z)$ over $\mathbb D$ satisfying $a(0)=A$.
Since $a(0) > 0$ there exists some $r>0$ such that $a(z)>0$ for every $z\in \ol{r\D}$.
%$\det a(z) > 0$. Since the eigenvector associated to the spectral radius of $\Psi_{g(z)}$ is a scalar multiple of a positive definite matrix, we conclude that $a(z) > 0$, for every $z \in s\D$.
%Since the eigenvalue is simple the function $a(z)$ that maps $z$ to the a non-zero eigenvector is smooth on a neighbourhood of the origin. 
%Now consider $\Psi_{g(z)} - \rho(g(z)) I$ as a map of the trivial vector bundle $M_n(\C) \times s\D$ to itself, where $s > 0$ is such that the $\rho(g(z))$ is smooth on $s\D$. 
%Since $g(z)$ is always irreducible, the kernel is one dimensional, and in fact forms a line bundle on the disc, which is trivial. 
%Now choose a non-vanishing section of this line bundle. 
%Alternatively, one can apply the implicit function theorem again and shrinking $s$ further if necessary obtain the desired smooth function $a(z)$, which is strictly positive on a neighbourhood of the origin. 
%Now fix $0 < r < s$, where $a(z)$ is always strictly positive. 
Since the minimal eigenvalue of $a$ is bounded away from $0$ on the circle $|z| = r$, by \cite[Corollary III.2.1]{ClaGoh81} we can find a holomorphic function $h$ on the disc $r \D$ and continuous on the circle, such that $h(r e^{it}) h(r e^{it})^* = a(r e^{it})$ for every $t \in [0, 2\pi)$. 
There are two things to note regarding the result cited. First, the result discusses right factorization, but it is equivalent to the left factorization by just factoring $a(z)^T$ as Clancey and Gohberg indicate. Secondly, it is immediate that $a(z)$ has entries in the Wiener algebra since it is smooth.
%To see it we apply the Wiener-Masani theorem to $a$ and find an outer function $h$ that satisfies the above equality (see \cite{Hel64} for details and the original paper \cite{WieMas57}).

Define $\widetilde{g}(z) = h(z)^{-1} g(z) h(z)$. 
Note that for $|z|=r$, 
\[\widetilde{g}(z) \widetilde{g}(z)^* = h(z)^{-1} \Psi_{g(z)}(a(z)) (h(z)^*)^{-1} = I_n ,
\]
so $\widetilde{g}(z)$ is a coisometry. 
Now think of $\widetilde{g}$ as a function taking values in $n \times nd$ matrices, and complete $\widetilde{g}$ to a function $F$ with values in $nd \times nd$ matrices by adding rows of zeroes. Note that for every $z \in r\D$, the $n$ largest singular values of $F(z)$ are the singular values of $\widetilde{g}(z)$. By \cite[Theorem 1]{Aup97} the product of the $n$ largest singular values of $F(z)$ is a subharmonic function on $r\D$ that we shall denote by $\sigma(z)$. 
Since each singular value of $\widetilde{g}(z)$ is bounded by $1$ (the norm is the largest singular value), $0 \leq \sigma(z) \leq 1$. 
Furthermore, for every $t \in [0,2\pi)$ we have $\sigma(r e^{it}) = 1$, since the boundary values are coisometries. 
Since each $\widetilde{g}(z)$ is similar to a coisometry, we know that its singular values are non-zero (for otherwise, $\widetilde{g}(z)^*$ would have had a kernel, and similarity is implemented by a base change of specific form in the domain and range of the linear map). 
We conclude that $\sigma$ is bounded away from zero on $r \overline{\D}$. 
Therefore the function $1/\sigma$ is also subharmonic, since a composition of a subharmonic function with a convex function is subharmonic. 
However, $1/\sigma$ is also bounded by $1$ and thus $\sigma$ is identically $1$ on $r \overline{\D}$. 

Now recall that for each $z \in r\D$, the singular values of $\widetilde{g}$ are non-negative numbers bounded from above by $1$, and their product is $1$, hence they are all $1$, and by singular value decomposition, $\widetilde{g}(z)$ is a coisometry. 
Now it remains to apply Lemma \ref{lem:func_to_coisom_constant} to conclude that $\widetilde{g}$ is constant.

To conclude the proof, note that since $X = \widetilde{g}(0)$ is irreducible its similarity orbit is an algebraic subvariety of $\M_d(n)$, hence there is a finite number of polynomials $p_1,\ldots,p_k$, that cut out this orbit. 
By the above discussion the functions $p_1(g(z)),\ldots,p_k(g(z))$ are identically zero on $r \D$ and thus on all of $\D$. Hence, we can conclude that $g$ lands in the similarity orbit of $X$.
\end{proof}

The next proposition is a version of the equality clause in the classical Schwartz lemma.

\begin{prop} \label{prop:coisometric_derivative}
If $f \colon \D \to \widetilde{\fB_d}(n)$ is an analytic function is such that $f(0) = 0$ and $f'(0)$ is an irreducible coisometry, then $f(z)$ is similar to $z f'(0)$, for every $z \in \D$.
\end{prop}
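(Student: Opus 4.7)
The plan is to reduce to the auxiliary function $g(z) := z^{-1}f(z)$, extended analytically to $\D$ by $g(0) := f'(0)$ (the extension is analytic since $f(0)=0$). I would first establish that $\rho(g(z)) \equiv 1$ on $\D$: the proof of Lemma \ref{lem:free_spectral_Schwarz} already yields $\rho(g(z)) \leq 1$, and since $f'(0)$ is a coisometry we have $\Psi_{f'(0)}(I) = I$, so $\Psi_{f'(0)}^k(I) = I$ for every $k$ and hence $\rho(g(0)) = 1$. Corollary \ref{cor:subharmonic} says $\rho \circ g$ is subharmonic on $\D$, so attainment of the supremum at the interior point $0$ together with the maximum principle for subharmonic functions forces $\rho(g(z)) \equiv 1$.

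Next I would apply Lemma \ref{lem:local_analytic_similarity} after shrinking the disc. Irreducibility of a $d$-tuple in $M_n^d$ is a Zariski-open condition (by Burnside's theorem, some words in the entries of length at most $n^2$ must span $M_n$), so since $g(0)=f'(0)$ is irreducible, there exists $0<r<1$ such that $g(z)$ is irreducible for every $z \in r\D$. Combined with $\rho(g(z))=1$, Lemma \ref{lem:similar_to_coisom} then shows $g(z)$ is similar to a coisometry for every such $z$. Applying Lemma \ref{lem:local_analytic_similarity} to the analytic map $z \mapsto g(rz)$ on $\D$ yields an irreducible coisometry $X$ such that $g(z)$ is similar to $X$ for every $z \in r\D$; in particular $X$ is similar to $g(0) = f'(0)$.

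To propagate the similarity from $r\D$ to all of $\D$, I would invoke the observation (already used at the end of the proof of Lemma \ref{lem:local_analytic_similarity}) that the similarity orbit of an irreducible tuple is an algebraic subvariety of $\M_d(n)$, cut out by finitely many polynomials $p_1,\dots,p_k$. Each $z \mapsto p_j(g(z))$ is analytic and vanishes on the open set $r\D$, so it vanishes on all of $\D$ by the identity principle; thus $g(z)$ is similar to $f'(0)$ for every $z \in \D$, say $g(z) = S(z)^{-1} f'(0) S(z)$, and therefore $f(z) = z g(z) = S(z)^{-1}\bigl(z f'(0)\bigr) S(z)$, as required. The main obstacle I anticipate is the first step: pinning down $\rho(g(z)) \equiv 1$ crucially requires the coisometry hypothesis on $f'(0)$ to produce an interior maximum of the subharmonic function $\rho\circ g$, without which neither Lemma \ref{lem:similar_to_coisom} nor Lemma \ref{lem:local_analytic_similarity} could be brought to bear in the form needed here.
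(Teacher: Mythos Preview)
Your proposal is correct and follows essentially the same route as the paper's proof: pass to $g(z)=f(z)/z$, use subharmonicity of $\rho\circ g$ together with $\rho(g(0))=1$ to force $\rho(g)\equiv 1$, shrink to a disc where $g$ is irreducible, apply Lemma~\ref{lem:similar_to_coisom} and then Lemma~\ref{lem:local_analytic_similarity}, and finally use the algebraicity of the irreducible similarity orbit plus the identity principle to propagate from $r\D$ to all of $\D$. If anything, your write-up is a bit more careful than the paper's, which invokes Lemma~\ref{lem:local_analytic_similarity} for $g$ on $\D$ even though irreducibility has only been checked on $r\D$; you correctly rescale first and then make the identity-principle extension explicit.
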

\begin{proof}
Since $f'(0)$ is a coisometry, then $\rho(f'(0)) = 1$ and thus equality holds in the Vesentini--Schwartz lemma. Let $g(z) = f(z)/z$, then $\rho(g(z)) = 1$ for every $z \in \D$ and $g(0) = f'(0) = X$ is an irreducible coisometry. Since the irreducible points are open there exists $r > 0$, such that for every $|z| < r$ we have $g(z)$ is irreducible and thus similar to a coisometry by Lemma \ref{lem:similar_to_coisom}. Now by Lemma \ref{lem:local_analytic_similarity} $g$ takes values in the similarity orbit of $X$. Now since $f(z) = z g(z)$ we are done. 
\end{proof}

The following theorem is a spectral version of the Cartan uniqueness theorem. Let $X \in \fB_d$. we will define the {\em spectrum} of $X$ to be its Jordan--H\"{o}lder componenets and denote it by $\sigma_{JH}(X)$.

\begin{thm} \label{thm:spectral_cartan}
Let $G \colon \widetilde{\fB_d} \to \widetilde{\fB_d}$ be an analytic nc map, such that $G(0) = 0$ and $\Delta G(0,0) = I$. Then $\sigma_{JH}(G(X)) = \sigma_{JH}(X)$ for every $X \in \fB_d$. In particular, if $X$ is irreducible, then $G(X)$ is similar to $X$.
\end{thm}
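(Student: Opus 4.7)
My plan is to handle the irreducible case first (where the claim strengthens to $G(X)\sim X$) and then deduce the general case using the nc calculus for block upper triangular tuples.

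For the irreducible case, let $X\in\widetilde{\fB_d}(n)$ be irreducible. The case $X=0$ (possible only when $n=1$) is trivial, so assume $X\neq 0$; since any jointly nilpotent $d$-tuple of size $n\geq 2$ admits a common invariant flag, irreducibility forces $\rho(X)>0$. Set $r:=1/\rho(X)>1$, so $rX$ is irreducible with $\rho(rX)=1$. Lemma \ref{lem:similar_to_coisom} then yields an invertible $U$ and an irreducible coisometry $C\in M_n^d$ with $rX=UCU^{-1}$. Define $h\colon\D\to\widetilde{\fB_d}(n)$ by $h(w):=G(wC)$; this is well-defined, since $\rho(wC)=|w|<1$ places $wC$ in $\widetilde{\fB_d}$ by Lemma \ref{lem:tilde_is_spectral_ball}, and $G$ is valued in $\widetilde{\fB_d}$. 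The hypotheses $G(0)=0$ and $\Delta G(0,0)=I$ give the Taylor expansion $G(Y)=Y+G_2(Y)+G_3(Y)+\cdots$, so $h(0)=0$ and $h'(0)=C$, an irreducible coisometry. Proposition \ref{prop:coisometric_derivative} then yields $h(w)\sim wC$ for every $w\in\D$. Taking $w=1/r$ gives $G(C/r)\sim C/r=U^{-1}XU$, and since $G$ is an nc function, $G(X)=U\,G(C/r)\,U^{-1}\sim X$.

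For the general case, let $X\in\fB_d(n)$. Extending a composition series of $X$ to a full flag yields an invertible $S\in\GL_n$ such that $S^{-1}XS$ is block upper triangular with irreducible Jordan--H\"older diagonal blocks $X_1,\ldots,X_m$. By Lemma \ref{lem:irreducible_radius}(3) one has $\rho(X_i)\le\rho(X)<1$, so each $X_i\in\widetilde{\fB_d}$ by Lemma \ref{lem:tilde_is_spectral_ball}, and the irreducible case just proved gives $G(X_i)\sim X_i$. Iterating the standard nc identity that sends a $2\times 2$ block upper triangular tuple with diagonal blocks $Y,W$ and off-diagonal entry $Z$ to the block upper triangular tuple with diagonal blocks $G(Y), G(W)$ and off-diagonal entry $\Delta G(Y,W)(Z)$, one obtains that $G(S^{-1}XS)$ is itself block upper triangular with diagonal blocks $G(X_1),\ldots,G(X_m)$. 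Consequently $\sigma_{JH}(G(S^{-1}XS))=\{G(X_1),\ldots,G(X_m)\}=\{X_1,\ldots,X_m\}=\sigma_{JH}(X)$, and similarity-invariance of $\sigma_{JH}$ gives $\sigma_{JH}(G(X))=\sigma_{JH}(X)$.

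The crux is the irreducible case: matching the hypotheses of Proposition \ref{prop:coisometric_derivative} is exactly what the rescaling by $r=1/\rho(X)$ and the conjugation by $U$ accomplish, producing a disc-to-similarity-envelope map whose derivative at the origin is precisely an irreducible coisometry. Once this is in hand, the general case is structural: the nc formula for block upper triangular evaluation, combined with the Jordan--H\"older uniqueness of composition factors and the fact that $\widetilde{\fB_d}$ is characterized by $\rho<1$, reduces the theorem to the irreducible case.
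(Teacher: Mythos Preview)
Your proof is correct and follows essentially the same approach as the paper: reduce to the irreducible case via Lemma \ref{lem:similar_to_coisom} and Proposition \ref{prop:coisometric_derivative} applied to a suitable disc $w\mapsto G(wC)$ with coisometric derivative, then handle general $X$ by passing to block upper triangular form and using that nc functions act diagonally on the blocks. The only cosmetic difference is that you rescale by $r=1/\rho(X)$ first and then conjugate to a coisometry, whereas the paper phrases this as passing to a similar point $Y$ with $Y/\|Y\|$ coisometric; these are the same manoeuvre.
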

\begin{proof}
Assume first that $X \in \fB_d$ is an irreducible point, such that $X/\|X\|$ is a coisometry. Consider the function $f \colon \D \to \widetilde{\fB_d}$, defined by $f(z) = G(z X/\|X\|)$. By our assumption $f'(0) = \Delta G(0,0)(X/\|X\|) = X/\|X\|$. Applying Proposition \ref{prop:coisometric_derivative} we have that $f(z)$ is similar to $z X/\|X\|$. 

Now if $X$ is an irreducible point, by Lemmas \ref{lem:irreducible_radius} and \ref{lem:similar_to_coisom}, we know that $X$ is similar to a point $Y$, such that $Y/\|Y\|$ is coisometric. Since $G$ is nc $G(X)$, is similar to $G(Y)$, but by the previous paragraph $G(Y)$ is similar to $Y$ and we are done with the case of irreducible points.

Now for arbitrary $X$, we know that $X$ is similar to a block upper triangular form with irreducible blocks on the diagonal. Since $G$ is nc the result follows from the irreducible case.
\end{proof}

\begin{rem}
The theorem implies that such a $G$ preserves the similarity orbit of the semi-simple part of every $X$ and thus induces the identity map on the GIT quotients $\widetilde{\fB_d}(n)//\operatorname{PGL}_n$. Hence the resemblance with Cartan's uniqueness theorem. The case for $d=1$ was proved in \cite{RanWhi91} in wider generality, namely without assuming equivariance.
\end{rem}

\section{The free pseudo-hyperbolic distance} \label{sec:ph_dist}

Let $X, Y \in \widetilde{\overline{\fB_d}}(n)$. We define the {\em free pseudo-hyperbolic distance} between $X$ and $Y$ by $\ph(X,Y)  = \|\Phi_X - \Phi_Y\|_{cb}$, where $\Phi_X$ and $\Phi_Y$ are norm continuous completely contractive representations of $A(\fB_d)$ on $\C^n$ associated to $X$ and $Y$, respectively. 
If $X \in \widetilde{\overline{\fB_d}}(n)$ and $Y \in \widetilde{\overline{\fB_d}}(m)$, then we set $\ph(X,Y) = \ph(X^{\oplus m}, Y^{\oplus n})$.

One can extend this distance to representations of $\cH^{\infty}(\fB_d)$ on finite dimensional Hilbert spaces. For the points in the interior of the ball that correspond to weak-$*$ continuous representations, these distances coincide.
Therefore, for the interior of the ball we may consider either $H^{\infty}(\fB_d)$ or $A(\fB_d)$.

\begin{prop} \label{prop:pseudhyp_properties}
\begin{enumerate}[(i)]
\item The free pseudo-hyperbolic distance is a pseudo-metric on $\widetilde{\overline{\fB_d}}$ that is independent of the choice of the level.

\item For every two points $X,Y \in \fB_d$ we have $\ph(X,Y) < 2$.
%\item Every free holomorphic function $F \colon \fB_d \to \fB_d$ is a contraction with respect to the free pseudo-hyperbolic distance. Furthermore, it is an isometry if and only if $F$ is an automorphism of the noncommutative ball.

\item We have the following nc properties of the pseudo-hyperbolic distance:
\begin{itemize}
\item $\ph(X^{\prime}\oplus X^{\prime\prime}, Y^{\prime} \oplus Y^{\prime\prime}) = \max\{\ph(X^{\prime}, Y^{\prime}), \ph(X^{\prime\prime}, Y^{\prime\prime})\}$.

\item If $X,Y \in \widetilde{\fB_d}(n)$ and $S \in \GL_n$, such that $S^{-1} X S, S^{-1} Y S \in \fB_d$, then:
\[
\ph(S^{-1} X S, S^{-1} Y S) \leq \|S\|\|S^{-1}\| \ph(X,Y).
\]
\end{itemize}
\end{enumerate}
\end{prop}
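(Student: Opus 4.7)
The first and third parts are essentially formal, while the real work lies in proving the strict inequality in (ii). I will handle them in that order of increasing difficulty.

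For (i), all the pseudo-metric axioms drop straight out of the cb-norm: non-negativity, symmetry, and $\ph(X,X) = 0$ are immediate, and the triangle inequality follows from the corresponding inequality for $\|\cdot\|_{cb}$ applied to $\Phi_X - \Phi_Z = (\Phi_X - \Phi_Y) + (\Phi_Y - \Phi_Z)$. Independence of the level uses the fact that for an nc function $f$ one has $\Phi_{X^{\oplus k}}(f) = f(X^{\oplus k}) = f(X)^{\oplus k}$, so $\Phi_{X^{\oplus k}} - \Phi_{Y^{\oplus k}}$ is just the $k$-fold ampliation of $\Phi_X - \Phi_Y$, and ampliation does not change the cb-norm; hence $\ph(X^{\oplus km}, Y^{\oplus kn})$ is constant in $k$, which is the desired well-definedness.

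For (iii), the direct sum formula uses $F(X' \oplus X'') = F(X') \oplus F(X'')$ for matrix-valued nc $F$, which makes $\Phi_{X' \oplus X''} - \Phi_{Y' \oplus Y''}$ block diagonal with blocks $\Phi_{X'} - \Phi_{Y'}$ and $\Phi_{X''} - \Phi_{Y''}$; the cb-norm of such a block diagonal map is the maximum of the cb-norms of its blocks. For the similarity estimate, the matrix-valued nc identity $F(S^{-1}XS) = (I_k \otimes S^{-1}) F(X) (I_k \otimes S)$ gives
\[
\|F(S^{-1}XS) - F(S^{-1}YS)\| \le \|S\|\,\|S^{-1}\|\,\|F(X) - F(Y)\|
\]
for every $F \in M_k(A(\fB_d))$ of norm at most $1$, and taking suprema yields the claim.

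The genuine content is (ii), and my plan there is a compactness plus maximum-modulus argument. Suppose, for contradiction, that $\ph(X,Y) = 2$ with $X,Y \in \fB_d$ placed at a common level $n$. By the definition of the cb-norm, there are unit-norm $F_\ell \in M_{k_\ell}(A(\fB_d))$ with $\|F_\ell(X) - F_\ell(Y)\| \to 2$. Fix $r$ with $\max(\|X\|,\|Y\|) < r < 1$ so that both $X,Y$ lie in $r\,\ol{\fB_d}$; after pairing with unit vectors and passing to scalar-valued functions, a Montel / weak-$*$-compactness extraction produces an analytic function of modulus at most $1$ on a neighbourhood of $X,Y$ whose modulus attains $1$ at these interior points. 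The classical maximum-modulus principle --- or, equivalently, the Vesentini--Schwarz bound of Lemma~\ref{lem:free_spectral_Schwarz} applied along a suitable analytic disc through the relevant points --- then delivers the contradiction. The main obstacle is precisely this transfer of the max-modulus principle from scalar $H^\infty$ on the disc to matrix-valued bounded nc functions on the open ball, and rigorously executing the compactness-plus-slicing step is the only real challenge in the proof.
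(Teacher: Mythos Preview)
Your treatment of (i) and (iii) is fine and matches the paper: these are formal consequences of the operator-space structure of $\CB(A(\fB_d),M_n)$, and the paper dispatches them in exactly that spirit.

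For (ii) your Montel/maximum-modulus route can be made to work --- once $X,Y$ are at a common level $n$, the functions $g_\ell(Z) = \langle F_\ell(Z)u_\ell,v_\ell\rangle$ are ordinary bounded holomorphic functions on the classical domain $\fB_d(n)$, and a normal-families extraction yields a limit $g$ with $|g|\le 1$ and $|g(X)|=|g(Y)|=1$, whence $g$ is constant, a contradiction. (Your appeal to Lemma~\ref{lem:free_spectral_Schwarz} is misplaced, though: that lemma controls the joint spectral radius, not the norm, and plays no role here.) So the argument is correct but somewhat heavy.

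The paper takes a much shorter path. It first observes, via Popescu's free Schwarz lemma, that for $f\in M_k(H^\infty(\fB_d))$ with $f(0)=0$ and $\|f\|_\infty\le 1$ one has $\|f(X)\|\le\|X\|$; this immediately gives $\ph(0,X)=\|X\|<1$ for every $X\in\fB_d$. The strict inequality $\ph(X,Y)<2$ then follows from the triangle inequality through the origin: $\ph(X,Y)\le\ph(X,0)+\ph(0,Y)=\|X\|+\|Y\|<2$. This avoids any compactness or extraction and yields the sharper quantitative bound $\ph(X,Y)\le\|X\|+\|Y\|$ as a byproduct.
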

\begin{proof}

$(i)$ We think of $\overline{\fB_d}(n)$ as a subset of $\CB(A(\fB_d),M_n)$, the set of all completely bounded homomorphism from $A(\fB_d)$ into $M_n$, via the correspondence $X \mapsto \Phi_X$. Thus, in fact, each level is identified with a subset of the unit sphere in this Banach space and we use the operator space structure on the operator space dual of $A(\fB_d)$ to induce $\ph$.
By \cite[Proposition 7.14]{KVBook} we immediately see that it is a pseudo-metric, with only the direct sum ampliations of the same point identified.

$(ii)$ For every $f \in M_n(H^{\infty}(\fB_d))$, such that $f(0) = 0$ and $\|f\|_{\infty} < 1$, Popescu's free Schwarz lemma \cite{Popescu09} says that $\|f(X)\| \leq \|X\|$.
This implies immediately that $\ph(0,X) = \|X\|$ for every $X \in \fB_d$. Now from the triangle inequality we conclude that for every two points $X, Y \in \fB_d$ we have $\ph(X,Y) < 2$.

%$(iii)$ Every free holomorphic function $F \colon \fB_d \to \fB_d$ gives rise to a completely contractive homomorphism $\alpha_F \colon H^{\infty}(\fB_d) \to H^{\infty}(\fB_d)$ given by $f \mapsto f \circ F$ (this is a special case of Proposition \ref{prop:holo=>cb-homo}).
%Thus we have immediately that for every $X,Y \in \fB_d(n)$:
%\[
%\ph(F(X),F(Y)) = \|\Phi_{F(X)} - \Phi_{F(Y)}\|_{cb} = \|(\Phi_X - \Phi_Y)\circ \alpha_F\|_{cb} \leq \|\Phi_X - \Phi_Y\|_{cb} = \ph(X,Y)
%\]
%Now if $F$ is an automorphism then it induces a completely isometric automorphism on $H^{\infty}(\fB_d)$ and hence the above inequality is an equality.
%On the other hand, suppose that $F$ is an isometry.
%$F$ maps scalar points to scalar points, so by composing with an automorphism of the ball, we may assume that $F(0) = 0$.
%Since $\ph$ is a metric on every level and $F$ is graded, we see that $F$ restricts to an isometry of the compact metric space $\ol{r\fB_d(n)}$ into itself, for every $n$ and every $r<1$.
%As such, the restriction of $F$ to $\ol{r\fB_d(n)}$ is also surjective for all $n$ and $r$, and we conclude that $F$ is bijective.
%It is then straightforward that $F^{-1}$ must also be free holomorphic.

$(iii)$ This is immediate from the fact that the completely bounded norm is an operator space structure on the dual of $A(\fB_d)$. %$H^{\infty}(\fB_d)$.
\end{proof}

%\begin{rem}
%We can now define our analogue of the hyperbolic metric on the noncommutative ball as in the classical case by $d(X,Y) = 2 \log \frac{2 + \ph(X,Y)}{2 - \ph(X,Y)}$.
%\end{rem}

\begin{prop}\label{prop:3equiv}
Let $\fV \subseteq \fB_d$ and $\fW \subseteq \fB_e$ be nc varieties, and $G \colon \widetilde{\fW} \to \widetilde{\fV}$ an nc holomorphic function. Then the following statements are equivalent:
\begin{enumerate}[(i)]
\item $G$ is Lipschitz with respect to $\ph$,
\item $\sup_{W\in \fW}\|\Phi_{G(W)}\|_{cb} < \infty$, ~ and
\item the mapping $f \mapsto \widetilde{f} \circ G$ for $f \in \cH^\infty(\fV)$ is a well-defined completely bounded homomorphism $\alpha:H^\infty(\fV) \to H^\infty(\fW)$.
\end{enumerate}
Furthermore, in this case
\begin{enumerate}[(a)]
\item $\|\alpha \|_{cb} = \sup_{W\in \fW}\|\Phi_{G(W)}\|_{cb}$,
\item the value in (a) is a Lipschitz constant for $G$, ~ and
\item $\alpha$ is weak-$*$ continuous.
\end{enumerate}
\end{prop}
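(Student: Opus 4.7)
The driving observation is an identity: for every $W \in \widetilde{\fW}$ and every $\alpha : H^\infty(\fV) \to H^\infty(\fW)$ defined as in (iii), one has $\Phi_{G(W)} = \Phi_W \circ \alpha$. I would establish this first, since it ties together all three conditions. Both sides are weak-$*$ continuous representations of $H^\infty(\fV)$ on $\bC^n$ (where $n$ is the level of $W$), and they agree on the generators $z_j$, each being sent to the $j$-th coordinate of $G(W) \in \widetilde{\fV}(n)$. Theorem \ref{thm:finite_dim_reps} guarantees uniqueness of such representations, so the identity follows.

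Given this identity, (iii) $\Rightarrow$ (ii) is immediate with a uniform estimate: for $W \in \fW$, $\|\Phi_{G(W)}\|_{cb} = \|\Phi_W \circ \alpha\|_{cb} \le \|\alpha\|_{cb}$ since $\Phi_W$ is completely contractive. Conversely, for (ii) $\Rightarrow$ (iii), one checks that $\alpha(f) := \widetilde f \circ G$ satisfies $\|\alpha(f)(W)\| = \|\Phi_{G(W)}(f)\| \le M\|f\|$, where $M$ is the supremum in (ii), so $\alpha$ lands in $H^\infty(\fW)$ and is a homomorphism (multiplicativity is formal from the pullback structure); the matrix amplification of the same estimate yields $\|\alpha\|_{cb} \le M$. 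Combining the two inequalities proves (a). For (iii) $\Rightarrow$ (i) and thus (b), the core identity extends to all $W \in \widetilde{\fW}$ by the nc property of $G$ (conjugating by a similarity $S$ taking $W$ into $\fW$), giving $\Phi_{G(W_1)} - \Phi_{G(W_2)} = (\Phi_{W_1} - \Phi_{W_2}) \circ \alpha$, and taking cb norms produces the Lipschitz estimate with constant $\|\alpha\|_{cb}$.

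The implication (i) $\Rightarrow$ (ii) is the most delicate, as a Lipschitz bound only controls differences of representations and the variety need not contain a distinguished base point such as $0$. I plan to fix an arbitrary $W_0 \in \fW$ at some finite level $n_0$; the representation $\Phi_{G(W_0)}$ is automatically bounded into $M_{n_0}$ by Smith's theorem, so $\|\Phi_{G(W_0)}\|_{cb}$ is a finite constant $C_0$. For any other $W \in \fW$, the universal bound $\ph(W, W_0) < 2$ from Proposition \ref{prop:pseudhyp_properties}(ii), combined with the Lipschitz hypothesis, gives $\ph(G(W), G(W_0)) \le 2L$. Since cb norm is preserved under ampliation via $\Phi_{X^{\oplus m}} = \Phi_X^{\oplus m}$ (so the definition of $\ph$ across mismatched levels causes no loss), the triangle inequality yields $\|\Phi_{G(W)}\|_{cb} \le C_0 + 2L$, uniformly in $W \in \fW$.

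Finally, for the weak-$*$ continuity (c) of $\alpha$, I would invoke the Krein--\v{S}mulian theorem to reduce the question to weak-$*$ continuity on norm-bounded sets of $H^\infty(\fV)$. On bounded sets, weak-$*$ convergence in a multiplier algebra of an nc reproducing kernel Hilbert space is determined by matrix-level pointwise convergence on the variety, since the reproducing kernels at points of $\fV$ span a total subspace. If $f_\lambda \to f$ is weak-$*$ convergent and bounded, then by the weak-$*$ continuity of each $\Phi_{G(W)}$ for $W \in \fW$ (Theorem \ref{thm:finite_dim_reps}), $\Phi_{G(W)}(f_\lambda) \to \Phi_{G(W)}(f)$, which by the core identity translates to $\alpha(f_\lambda)(W) \to \alpha(f)(W)$ pointwise on $\fW$. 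Coupled with boundedness of $\{\alpha(f_\lambda)\}$ (via (a)), this forces $\alpha(f_\lambda) \to \alpha(f)$ weak-$*$ in $H^\infty(\fW)$, completing the proof.
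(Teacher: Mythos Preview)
Your proposal is correct and follows essentially the same route as the paper: the cycle (i)$\Rightarrow$(ii)$\Rightarrow$(iii)$\Rightarrow$(i) via the identity $\Phi_{G(W)}=\Phi_W\circ\alpha$, and (c) via Krein--\v{S}mulian plus bounded pointwise convergence. One point to clean up: your justification of the core identity via uniqueness from Theorem~\ref{thm:finite_dim_reps} assumes that $\Phi_W\circ\alpha$ is weak-$*$ continuous, which is exactly (c) and not yet available---but the identity is in fact immediate from the definition $\alpha(f)(W)=\widetilde f(G(W))$ (and extends to $\widetilde{\fW}$ by similarity invariance of $G$ and $\widetilde f$), which is how the paper uses it without further comment.
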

\begin{proof}
(i) $\implies$ (ii).
If $G$ is $K$-Lipschitz, choose some $W_0 \in \fW$. Then
\[
\|\Phi_{G(W)}\|_{cb} 
	\leq	\|\Phi_{G(W)} - \Phi_{G(W_0)}\|_{cb} + \|\Phi_{G(W_0)}\|_{cb} 
	\leq	K \|\Phi_{W} - \Phi_{W_0}\|_{cb} + \|\Phi_{G(W_0)}\|_{cb} 
	\leq	2K + \|\Phi_{G(W_0)}\|_{cb}.
\]
Thus, $\sup_{W \in \fW} \|\Phi_{G(W)}\|_{cb}<\infty$. 

(ii) $\implies$ (iii).
If $\sup_{W\in \fW}\|\Phi_{G(W)}\|_{cb} < \infty$, then for every $f =\big( f_{ij}\big) \in M_n(\cH^{\infty}(\fV))$
\[
\begin{split}
\sup_{W \in \fW} \|(\widetilde f\circ G)(W)\|
	&=		\sup_{W \in \fW} \left\|\big(\widetilde f_{ij}(G(W))\big)\right\| 		\\
	&=		\sup_{W \in \fW} \left\|\Phi_{G(W)}\big( f_{ij}\big)\right\|	\\
	&\leq	\sup_{W \in \fW} \|\Phi_{G(W)}\|_{cb} \|f\|_{M_n(H^{\infty}(\fV))}.
\end{split}
\]
Thus, $\widetilde{f} \circ G$ is in $M_n(H^\infty(\fW))$,  the map $\alpha: f \mapsto \widetilde{f} \circ G$ is a completely bounded homomorphism, and $\|\alpha\|_{cb} \leq \sup_{W\in \fW}\|\Phi_{G(W)}\|_{cb}$.

(iii) $\implies$ (i).
As $\Phi_{G(W)} = \alpha^*(\Phi_W)$ and $\alpha$ is completely bounded, $G$ is $\|\alpha\|_{cb}$-Lipschitz.\\

The last argument shows, in addition, that $\sup_{W \in \fW} \|\Phi_{G(W)}\|_{cb} \leq \|\alpha\|_{cb}$, so we have (a) and (b). To show (c), we need to use following two facts: (1) by Theorem \ref{thm:weak-$*$_and_wot_agree} the \textsc{wot} and weak-$*$ topologies coincide in $H^\infty(\fV)$ and $H^\infty(\fW)$ and (2) for bounded nets in $H^\infty(\fV)$, \textsc{wot}-convergence and pointwise convergence are the same \cite[Lemma 2.5]{SalShaSha17}. 
From the first fact, we see (using the Krein-Sm\u ulian Theorem) that it suffices to check that $\alpha$ is weak-$*$/\textsc{wot} continuous on the closed unit ball of $H^\infty(\fV)$. 
This follows readily from the second fact, since $\alpha$ is implemented by composition. 
\end{proof}

\begin{cor}\label{cor:holo=>contraction}
Let $\fV \subseteq \fB_d$ and $\fW \subseteq \fB_e$ be nc varieties, and $G : {\fW} \to {\fV}$ an nc holomorphic map. 
Then $G$ is a contraction with respect to the pseudo-hyperbolic metric. In particular, if $G$ is a biholomorphism, then it is isometric.
\end{cor}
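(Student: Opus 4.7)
The plan is to realize $G$ as the dual of a completely contractive composition homomorphism between function algebras, and then apply Proposition \ref{prop:3equiv}. Given $f \in H^\infty(\fV)$, the composition $f \circ G$ is an nc function on $\fW$, and since $G$ maps $\fW$ into $\fV$ we have $\|f \circ G\|_{\fW} \le \|f\|_{\fV}$; at matrix levels the same estimate holds, so $\alpha(f) := f \circ G$ defines a unital completely contractive homomorphism $\alpha \colon H^\infty(\fV) \to H^\infty(\fW)$.

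Next, I would appeal to Proposition \ref{prop:3equiv}, whose implication (iii) $\Rightarrow$ (i) together with conclusion (b) yields that $G$ is $\ph$-Lipschitz with constant $\|\alpha\|_{cb} \le 1$, i.e., a $\ph$-contraction. (The proposition is formally stated for maps $\widetilde{\fW} \to \widetilde{\fV}$, but $G$ already takes values in $\fV \subseteq \widetilde{\fV}$ so no extension step is required; alternatively one can extend $G$ to the similarity envelopes using the nc extension theory of Section \ref{sec:Dru-Arv_bdd_rep}.) Equivalently, one can give a direct one-line derivation from the identity $\Phi_{G(W)} = \Phi_W \circ \alpha$, namely
\[
\ph(G(W_1),G(W_2)) \;=\; \|(\Phi_{W_1}-\Phi_{W_2})\circ\alpha\|_{cb} \;\le\; \|\alpha\|_{cb}\,\ph(W_1,W_2) \;\le\; \ph(W_1,W_2).
\]
Running the same estimate with the operator norm in place of the cb-norm shows $G$ is also a $\phb$-contraction, covering both pseudo-hyperbolic distances simultaneously.

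If in addition $G$ is an nc biholomorphism, then $G^{-1} \colon \fV \to \fW$ is nc holomorphic and the contraction statement applied to it gives $\ph(W_1,W_2) \le \ph(G(W_1),G(W_2))$; combining with the previous inequality yields isometry. I do not anticipate any genuine obstacle. The only point to verify with a little care is that $\alpha(f)$ really lies in $H^\infty(\fW)$ rather than merely in the bounded-on-$\fW$ nc holomorphic functions on $\widetilde{\fW}$, but this is automatic because $G$ takes values in the bounded set $\fV$ rather than in the possibly unbounded $\widetilde{\fV}$, which is precisely what forces the Lipschitz constant down to $1$.
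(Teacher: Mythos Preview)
Your proposal is correct and follows essentially the same approach as the paper: both rely on Proposition \ref{prop:3equiv} together with the observation that $G(\fW)\subseteq\fV$ forces the relevant bound to be $1$. The paper invokes condition (ii) directly (noting $\sup_{W\in\fW}\|\Phi_{G(W)}\|_{cb}\le 1$ since $G(W)\in\fV$), whereas you pass through condition (iii) by constructing the completely contractive composition homomorphism $\alpha$; these are two entries of the same equivalence, so the difference is cosmetic.
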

\begin{proof}
Since $G({\fW})\subseteq {\fV}$, we have that $\sup_{W \in \fW} \|\Phi_{G(W)}\|_{cb}\leq 1$. By Proposition \ref{prop:3equiv}, $G$ is $1$-Lipschitz, so it is a contraction.
\end{proof}

\begin{prop}\label{prop:isometric_biholo}
Let $G:\fB_d \to \fB_d$ be an nc holomorphic map, and assume $d< \infty$. 
Then $G$ is an automorphism of $\fB_d$ if and only if it is isometric with respect to the pseudo-hyperbolic distance.
\end{prop}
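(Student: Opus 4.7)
The forward implication is immediate: if $G$ is an automorphism, then $G$ and $G^{-1}$ are both nc holomorphic self-maps of $\fB_d$, so by Corollary \ref{cor:holo=>contraction} each is a $\ph$-contraction; combining these gives that $G$ is a $\ph$-isometry. The substantial content is the converse.

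For the converse, I would first normalize so that $G$ fixes the origin. Set $a = G(0) \in \fB_d(1)$ and let $\varphi_a \in \Aut(\fB_d)$ be a M\"obius automorphism with $\varphi_a(a) = 0$; such a $\varphi_a$ exists since, for $d<\infty$, the M\"obius group acts transitively on $\fB_d(1)$. By the already-proved forward direction applied to $\varphi_a$, the composition $\tilde G := \varphi_a \circ G$ is again a $\ph$-isometric nc self-map of $\fB_d$, and it satisfies $\tilde G(0) = 0$. Using the identity $\ph(X,0) = \|X\|$ noted in the proof of Proposition \ref{prop:pseudhyp_properties}, I obtain $\|\tilde G(X)\| = \|X\|$ for every $X \in \fB_d$. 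The linear part $U := \Delta \tilde G(0,0)$ is an nc-linear map of the form $X \mapsto \bigl(\sum_j u_{ij}X_j\bigr)_{i=1}^d$ for some $(u_{ij}) \in M_d(\C)$; taking $X=tY$ in $\|\tilde G(X)\|=\|X\|$ and sending $t\downarrow 0$ yields $\|UY\|=\|Y\|$, and testing against scalar $Y \in \fB_d(1)$ forces $(u_{ij})$ to be unitary (this is where finite-dimensionality enters). Since a unitary change of coordinates is an automorphism of $\fB_d$, I may replace $\tilde G$ by $U^{-1}\circ \tilde G$ and assume additionally that $\Delta \tilde G(0,0) = \id$.

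The crux is then a Cartan-uniqueness theorem in the nc setting: any nc holomorphic map $\fB_d \to \fB_d$ fixing $0$ whose derivative there is the identity is already the identity. To prove it, expand $\tilde G$ in its homogeneous nc Taylor series at $0$: $\tilde G = \sum_{n \geq 0} \tilde G_n$, with $\tilde G_0 = 0$ and $\tilde G_1(X) = X$. If $\tilde G \neq \id$, let $k \geq 2$ be the smallest index with $\tilde G_k \neq 0$. A direct induction, exploiting the homogeneity of each $\tilde G_n$, gives
\[
\tilde G^{\circ n}(X) = X + n\,\tilde G_k(X) + (\text{terms of homogeneous degree} \geq k+1).
\]
Since each iterate $\tilde G^{\circ n}$ maps $\fB_d$ into itself, the nc Cauchy estimate applied to $\tilde G^{\circ n}$ bounds the norm of its degree-$k$ homogeneous component by $\|X\|^k$, giving $n\|\tilde G_k(X)\| \leq \|X\|^k$ for every $X \in \fB_d$ and every $n$. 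This forces $\tilde G_k = 0$, a contradiction, so $\tilde G = \id$. Unwinding, $G = \varphi_a^{-1} \circ U$ is a composition of automorphisms of $\fB_d$, hence itself an automorphism.

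The main obstacle is the Cartan-uniqueness step: it requires a clean nc Cauchy estimate for bounded nc holomorphic functions on $\fB_d$, extracting uniformly norm-controlled homogeneous Taylor components, together with the careful power-series substitution identity that isolates the coefficient $n\,\tilde G_k$ in $\tilde G^{\circ n}$. The hypothesis $d<\infty$ enters both here (through standard nc function theory on $\fB_d$) and in the preceding normalization, where it guarantees both that a linear isometry of the tuple is given by a unitary scalar $d\times d$ matrix and that $\Aut(\fB_d)$ acts transitively on $\fB_d(1)$.
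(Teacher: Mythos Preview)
Your proof is correct but takes a genuinely different route from the paper's. After the common normalization $G(0)=0$ (using a M\"obius automorphism and the $\ph$-isometry of automorphisms), the paper argues purely topologically: from $\ph(X,0)=\|X\|$ one gets $\|G(X)\|=\|X\|$, so $G$ restricts to a $\ph$-isometric self-map of the compact metric space $\ol{r\fB_d(n)}$ for every $n$ and $r<1$ (compactness uses $d<\infty$); the standard fact that an isometry of a compact metric space into itself is surjective then yields bijectivity level-by-level, and holomorphy of $G^{-1}$ is immediate. Your argument instead proceeds analytically: you extract from norm preservation that $\Delta G(0,0)$ is a unitary $d\times d$ matrix, reduce to the case $\Delta G(0,0)=\id$, and finish with the nc Cartan uniqueness theorem (via homogeneous Taylor expansion, the iteration identity for the degree-$k$ term, and Cauchy estimates for bounded nc maps into $\fB_d$). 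Both approaches are valid; the paper's is shorter and avoids the Taylor-series machinery, while yours identifies $G$ explicitly as a M\"obius map composed with a unitary and, in effect, reproves the nc Cartan theorem. Note that once you have established that $\Delta G(0,0)$ is unitary, the $\ph$-isometry hypothesis is no longer used in your Cartan step, which is a pleasant byproduct.
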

\begin{proof}
If $G$ is an automorphism, by Corollary \ref{cor:holo=>contraction}, it is isometric.
On the other hand, suppose that $G$ is an isometry.
$G$ maps scalar points to scalar points, so by composing with an nc automorphism of the ball $\fB_d$, we may assume that $G(0) = 0$. 
To see that there is indeed no harm in this assumption, recall that $\operatorname{Aut}(\fB_d) = \operatorname{Aut}(\bB_d)$ (see \cite[Theorem 2.8]{Popescu10}) and every such automorphism gives rise to a completely isometric automorphism of $H^\infty(\fB_d)$ (see Propositions 6.5 and 6.8 in \cite{SalShaSha17}), therefore an automorphism of the ball is isometric with respect to the pseudo-hyperbolic distance. 

So suppose that $G$ is an isometry and that $G(0) = 0$. 
Since $\ph$ is a metric on every level and $G$ is graded, we see that $G$ restricts to an isometry of the compact metric space $\ol{r\fB_d(n)}$ into itself, for every $n$ and every $r<1$.
As such, the restriction of $G$ to $\ol{r\fB_d(n)}$ is also surjective for all $n$ and $r$, and we conclude that $G$ is bijective.
It is then straightforward that $G^{-1}$ must also be free holomorphic.
\end{proof}

One can define another free pseudo-hyperbolic distance: 
\[
\phb(X,Y)=\|\Phi_X - \Phi_Y\|.
\]
Evidently, $\phb \leq \ph$, and all results above have counterpart versions for $\phb$. 
It is interesting to note that, since Proposition \ref{prop:isometric_biholo} holds with $\phb$ instead of $\ph$, it follows that a self map of $\fB_d$ is isometric with respect to $\ph$ if and only if it is isometric with respect to $\phb$. 
We record the $\phb$-version of Proposition \ref{prop:3equiv} for later use. 
In fact, when the distance $\phb$ is used, we get a somewhat sharper result.

\begin{prop}\label{prop:4equiv_prime}
Let $\fV \subseteq \fB_d$ and $\fW \subseteq \fB_e$ be nc varieties, and $G \colon \widetilde{\fW} \to \widetilde{\fV}$ an nc holomorphic function. Then the following statements are equivalent:
\begin{enumerate}[(i)]
\item $G$ is Lipschitz with respect to $\phb$,
\item $\sup_{W\in \fW}\|\Phi_{G(W)}\| < \infty$,
\item the mapping $f \mapsto \widetilde{f} \circ G$ for $f \in \cH^\infty(\fV)$ is a well-defined bounded homomorphism $\alpha : H^\infty(\fV) \to H^\infty(\fW)$, ~ and
\item[(iii)'] the mapping $f \mapsto \widetilde{f} \circ G$ for $f \in \cH^\infty(\fV)$ is a well-defined homomorphism $\alpha :  H^\infty(\fV) \to H^\infty(\fW)$.
\end{enumerate}
Furthermore, in this case
\begin{enumerate}[(a)]
\item $\|\alpha \|= \sup_{W\in \fW}\|\Phi_{G(W)}\|$,
\item the value in (a) is a Lipschitz constant for $G$, ~ and
\item $\alpha$ is weak-$*$ continuous.
\end{enumerate}
\end{prop}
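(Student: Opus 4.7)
The plan is to mirror the proof of Proposition~\ref{prop:3equiv} for the four equivalences, with the only genuinely new ingredient being the automatic boundedness statement $(\text{iii})' \Rightarrow (\text{iii})$. For the ``old'' equivalences $(\text{i}) \Leftrightarrow (\text{ii}) \Leftrightarrow (\text{iii})$, every estimate in the proof of Proposition~\ref{prop:3equiv} is scalar (in the sense that it uses the operator norm of a single matrix) and goes through verbatim after replacing $\ph$ by $\phb$ and $\|\cdot\|_{cb}$ by $\|\cdot\|$. Concretely, $(\text{i})\Rightarrow(\text{ii})$ follows from $\|\Phi_{G(W)}\| \leq \|\Phi_{G(W)} - \Phi_{G(W_0)}\| + \|\Phi_{G(W_0)}\| \leq K\,\phb(W,W_0) + \|\Phi_{G(W_0)}\|$ together with the trivial bound $\phb \leq \ph < 2$ on $\fB_d\times\fB_d$; the implication $(\text{ii})\Rightarrow(\text{iii})$ via the estimate $\sup_W\|(\widetilde f\circ G)(W)\| \leq \sup_W\|\Phi_{G(W)}\|\,\|f\|$; and $(\text{iii})\Rightarrow(\text{i})$ from the duality $\Phi_{G(W)} = \alpha^*(\Phi_W)$, which shows $G$ is $\|\alpha\|$-Lipschitz w.r.t.\ $\phb$. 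These same inequalities will also yield $(\text{a})$ (equality) and $(\text{b})$.

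The genuinely new content is the implication $(\text{iii})' \Rightarrow (\text{iii})$, i.e.\ automatic continuity. The natural approach is the closed graph theorem: both $H^\infty(\fV)$ and $H^\infty(\fW)$ are Banach spaces, so it suffices to show that the linear map $\alpha: f \mapsto \widetilde f \circ G$ has closed graph. Suppose $f_n \to f$ in $H^\infty(\fV)$ and $\alpha(f_n) \to g$ in $H^\infty(\fW)$. For each fixed $W \in \fW$, the hypothesis $G(W) \in \widetilde{\fV}$ together with Theorem~\ref{thm:finite_dim_reps} identifies $G(W)$ with a (necessarily bounded) weak-$*$ continuous finite dimensional representation $\Phi_{G(W)}$ of $H^\infty(\fV)$ satisfying $\Phi_{G(W)}(h) = \widetilde h (G(W))$ for all $h \in H^\infty(\fV)$. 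Thus
\[
\widetilde{f_n}(G(W)) = \Phi_{G(W)}(f_n) \;\xrightarrow[n\to\infty]{}\; \Phi_{G(W)}(f) = \widetilde f(G(W)),
\]
while on the other hand $\widetilde{f_n}(G(W)) = \alpha(f_n)(W) \to g(W)$ by pointwise convergence of $\alpha(f_n)$ to $g$. Hence $g(W) = \widetilde f (G(W)) = \alpha(f)(W)$ for every $W \in \fW$, so $g = \alpha(f)$ and the graph is closed. The crucial step here is knowing that each $\Phi_{G(W)}$ is \emph{a priori} a bounded representation; this is exactly what Theorem~\ref{thm:finite_dim_reps} guarantees for points in the similarity envelope, and it is the main (and really only) obstacle in this direction.

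Finally, for $(\text{c})$, weak-$*$ continuity of $\alpha$ follows exactly as in the proof of Proposition~\ref{prop:3equiv}: by Theorem~\ref{thm:weak-$*$_and_wot_agree} the weak-$*$ and \textsc{wot} topologies coincide on both $H^\infty(\fV)$ and $H^\infty(\fW)$, so by the Krein--\v Smulian theorem it suffices to verify weak-$*$/\textsc{wot} continuity on bounded sets, and by \cite[Lemma~2.5]{SalShaSha17} this reduces to pointwise convergence, which is immediate since $\alpha$ is precomposition with $G$.
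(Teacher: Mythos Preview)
Your proof is correct and matches the paper for the equivalences $(\text{i})\Leftrightarrow(\text{ii})\Leftrightarrow(\text{iii})$ and for parts (a)--(c). For the only new implication, however, the paper takes a slightly different route: rather than proving $(\text{iii})'\Rightarrow(\text{iii})$ via the closed graph theorem, it proves $(\text{iii})'\Rightarrow(\text{ii})$ directly by the uniform boundedness principle. Namely, if $\alpha$ is well defined then for each fixed $f\in H^\infty(\fV)$ the function $\widetilde f\circ G$ lies in $H^\infty(\fW)$ and is therefore bounded, so $\sup_{W\in\fW}\|\Phi_{G(W)}(f)\|<\infty$ for every $f$, and Banach--Steinhaus gives $\sup_{W\in\fW}\|\Phi_{G(W)}\|<\infty$. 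Both arguments hinge on exactly the same fact---that each $\Phi_{G(W)}$ is an a priori bounded representation---and both are one-liners once that is in hand; the paper's version has the minor advantage of landing directly on condition (ii), which is the quantity appearing in (a).
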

\begin{proof}
The proof of (i) $\implies$ (ii) $\implies$ (iii) $\implies$ (i), and the proof of $(a)$, $(b)$, and $(c)$ (in case (i)--(iii) holds, is the same as in Proposition \ref{prop:3equiv}. 
We also obviously have (iii) $\implies$ (iii)'. We shall show (iii)' $\implies$ (ii).
Suppose that $\alpha$ is a well defined map from $H^\infty(\fV)$ into $H^\infty(\fW)$ (it is clear in this case that it is a unital homomorphism).
Then $\widetilde{f} \circ G$ is a bounded function on $\fW$ for all $f \in H^\infty(\fV)$, so we have
\[
\sup_{W \in \fW}\|\Phi_{G(W)}(f)\| = \sup_{W \in \fW} \|\widetilde{f}(G(W))\| = \|\widetilde{f}\circ G\|_\infty < \infty .
\]
Since this holds for every $f \in H^\infty(\fV)$, the uniform boundedness principle implies (ii).
\end{proof}

\begin{rem}
It is tempting to consider a version of Gleason parts with respect to the free pseudo-hyperbolic metric. The immediate question is whether the relation $\ph(X,Y) <2$ is an equivalence relation. Classically, one can define the Gleason parts of a function algebra using either a variant of the pseudo-hyperbolic metric or a form of Harnack inequality. Harnack domination was defined by Suciu (\cite{Suc97}) for the case $d =1$ and by Popescu for $d > 1$ (\cite{Popescu09}). Therefore, it is also natural to ask whether there is a connection between the Harnack parts as defined by Popescu and the pseudo-hyperbolic metric. Unfortunately, the answer to both questions is negative.

To see that the ``Gleason parts'' are not parts at all consider the following example. Let $A = \left(\begin{smallmatrix} 0 & 1 \\ 0 & 0 \end{smallmatrix} \right)$ and $S = \left(\begin{smallmatrix} 2 & 0 \\ 0 & 1 \end{smallmatrix} \right)$, then $S^{-1} A S = \left(\begin{smallmatrix} 0 & 1/2 \\ 0 & 0 \end{smallmatrix} \right)$. Let $f \in M_k(A(\D))$, then
\[
\|f(S^{-1} A S) - f(A)\| = \|I_{M_k} \otimes(T_S - I_{M_2})(A)\| \leq \|T_S - I_{M_2}\|.
\]
Here $T_S$ is the operator on $M_2$ defined by $T_S(B) = S^{-1} B S$. 
Note that since $S$ is diagonal, $T_S$ is in fact a Schur multiplier, so is $T_S - I_{M_2}$ is too. 
In particular, $(T_S - I_{M_2})(B) = C \circ B$, where $C = \left(\begin{smallmatrix} 0 & -1/2 \\ 1 & 0 \end{smallmatrix} \right)$. Therefore, $\|T_S - I_{M_2}\| = 1$, and we can conclude that for every $k \in \N $ and $f \in M_k(A(\D))$ we get
\[
\| \Phi_{S^{-1} A S}(f) - \Phi_A(f)\| \leq \|T_S - I_{M_2}\| < 2.
\]
So $A$ is ``equivalent'' to an interior point and so is $-A$, however, the distance between them is at least $2$, as can be observed by taking $f(z)=z$. Therefore, this is not an equivalence relation. 

Harnack domination is an equivalence relation, as proved by Popescu, and the open ball is a part. One can show, just as in the classical case, that if $X$ Harnack dominates $Y$, then $\ph(X,Y) < 2$, however as we have seen above, the converse is false.
\end{rem}

\begin{rem}
Recently, a more general approach to noncommutative hyperbolic geometry was formulated by Belinschi and Vinnikov in \cite{BelVin18}. They define a noncommutative version of the Lempert function for an nc domain $\Omega$. Let $X \in \Omega(n)$ and $Y \in \Omega(m)$ and $Z \in M_{n,m}(\C) \otimes \C^d$, then 
\[
L(X,Y)(Z)^{-1} = \sup \left\{t \in [0,\infty] ~:~ \begin{bmatrix} X & sZ \\ 0 & Y \end{bmatrix} \in \Omega, \text{ for all } s \in [0,t] \right\}.
\]
Unfortunately, in the case of $\Omega = \widetilde{\fB}_d$ we get that $L$ is identically $0$, since the similarity envelope of the ball consists precisely of points of spectral radius strictly less than $1$ and by Lemma \ref{lem:irreducible_radius} we know that the spectral radius of an block upper triangular matrix is the same as the maximum of the spectral radii of its diagonal blocks. 
This observation also tells us that there are many copies of $\C$ embedded into  every level except the first. 
This is an indication of the fact that the nice hyperbolic structure of the ball that is used in the previous works cannot be extended to the similarity envelope. 
That being said, the results of Section \ref{sec:prelim_jspr} show that a modicum of hyperbolic-like properties is preserved, when one passes to the similarity envelope, if one uses the joint spectral radius. 
\end{rem}

%%%%%%%%%%%%%%%%%%%%%%%%%%%%%%%%%%%%%%%%%%%%%%%
%%%%%%%%%%%%%%%%%%%%%%%%%%%%%%%%%%%%%%%%%%%%%%%
\section{Classification up to weak-$*$ continuous isomorphism} \label{sec:isomorphisms}

Let $\fV \subseteq \fB_d$ and $\fW \subseteq \fB_{e}$ be nc varieties.
In this section, we allow for any $d,e \in \mathbb N \cup\{\infty\}$, unless we state otherwise.
An nc holomorphic map $G:\widetilde{\fB_{e}} \to \mathbb M_d$ is called an {\em nc biholomorphism} from $\widetilde{\fW}$ onto $\widetilde{\fV}$ if there exists an nc holomorphic map $F:\widetilde{\fB_{d}} \to \mathbb M_e$ such that
\[
F\circ G|_{\widetilde{\fW}}={\rm id}_{\widetilde{\fW}} \quad \text{and} \quad
G\circ F|_{\widetilde{\fV}}={\rm id}_{\widetilde{\fV}}.
\]
In this case, and whenever there is no confusion, we let $G^{-1}$ denote the map $F$.
The goal of this section is to classify the algebras of bounded analytic functions on nc subvarieties of $\fB_d$ in terms of biholomorphisms between the respective similarity invariant envelopes of the varieties.

If $f$ is an nc holomorphic function on $\fV$, then we write $\widetilde{f}$ for the unique extension of $f$ to its similarity envelope $\widetilde{\fV}$.

\begin{rem}
If $\alpha : H^\infty(\fV) \to H^\infty(\fW)$ is a unital and bounded homomorphism, then the adjoint $\alpha^* : H^\infty(\fW)^* \to H^\infty(\fV)^*$ is also bounded.
Moreover, there is also a natural adjoint mapping $\alpha^*$ between representation spaces,
\[
\alpha^* : \sqcup_n \Rep_{n}(H^\infty(\fW)) \to \sqcup_n \Rep_{n}(H^\infty(\fV)),
\]
given by $\alpha^* (\rho) = \rho\circ \alpha$.
If $\alpha$ is completely contractive/bounded, then $\alpha^*$ preserves completely contractive/bounded representations.
\end{rem}
%%%%%%%%%%%%%%%%%%%%%%%%%%%%%%%%
\begin{prop}\label{prop:cb-homo=>holo}
Let $\fV \subseteq \fB_d$ and $\fW \subseteq \fB_{e}$ be nc varieties.
Let $\alpha : H^\infty(\fV) \to H^\infty(\fW)$ be a unital bounded homomorphism.
If $\alpha^*$ maps weak-$*$ continuous finite dimensional representations to weak-$*$ continuous representations, then there exists an nc holomorphic map $G: \widetilde{\fW} \to \widetilde{\fV}$ 
which implements $\alpha$ by the formula
\[
\alpha(f) = \widetilde{f} \circ G.
\]
In this case, $\|\alpha\|=\sup_{W\in \fW}\|\Phi_{G(W)}\|$.
If $\alpha$ is, in addition, completely bounded, then $\|\alpha\|_{cb}=\sup_{W\in \fW}\|\Phi_{G(W)}\|_{cb}$.
\end{prop}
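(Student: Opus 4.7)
The plan is to build $G$ as the ``transpose'' of $\alpha$, exploiting the identification of $\widetilde{\fV}$ with the space of weak-$*$ continuous finite dimensional representations of $H^\infty(\fV)$ established in Theorem \ref{thm:finite_dim_reps}. For a point $W\in \widetilde{\fW}(n)$, the representation $\Phi_W \in \Rep_n(H^\infty(\fW))$ is weak-$*$ continuous. By hypothesis, $\alpha^*(\Phi_W)=\Phi_W\circ\alpha$ is then a weak-$*$ continuous element of $\Rep_n(H^\infty(\fV))$, so Theorem \ref{thm:finite_dim_reps} together with Lemma \ref{lem:pure_con_sim_strict} supplies a unique point $G(W)\in\widetilde{\fV}(n)$ with $\alpha^*(\Phi_W)=\Phi_{G(W)}$. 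Evaluating on the coordinate functions gives the explicit formula
\[
G(W)=\bigl(\Phi_W(\alpha(z_1)),\dots,\Phi_W(\alpha(z_d))\bigr)=\bigl(\widetilde{\alpha(z_1)}(W),\dots,\widetilde{\alpha(z_d)}(W)\bigr),
\]
from which it is manifest that $G$ is nc holomorphic on $\widetilde{\fW}$, since each coordinate is the (unique) extension of an element of $H^\infty(\fW)$ to the similarity envelope.

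Next I would establish the intertwining identity. For any $f\in H^\infty(\fV)$ and any $W\in\widetilde{\fW}$,
\[
\widetilde{\alpha(f)}(W)=\Phi_W\bigl(\alpha(f)\bigr)=\alpha^*(\Phi_W)(f)=\Phi_{G(W)}(f)=\widetilde{f}\bigl(G(W)\bigr),
\]
where the outermost equalities are exactly the definition of the correspondence $T\leftrightarrow \Phi_T$ from Theorem \ref{thm:finite_dim_reps}. Restricting to $W\in\fW$ produces the advertised formula $\alpha(f)=\widetilde{f}\circ G$.

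For the norm identities I would simply invoke Proposition \ref{prop:4equiv_prime} in the bounded case and Proposition \ref{prop:3equiv} in the completely bounded case: the map $G$ constructed above satisfies $\alpha(f)=\widetilde{f}\circ G$, and the corresponding item (a) in each of those propositions gives precisely $\|\alpha\|=\sup_{W\in\fW}\|\Phi_{G(W)}\|$ and $\|\alpha\|_{cb}=\sup_{W\in\fW}\|\Phi_{G(W)}\|_{cb}$. The main obstacle I foresee is really concentrated in the first step, namely verifying that the image of $\alpha^*(\Phi_W)$ under the projection $\pi_n$ lands inside $\widetilde{\fV}(n)$ and not merely in the larger envelope $\widetilde{\overline{\fB_d}}(n)$, and that the resulting $\Phi_{G(W)}$ acts on every $f\in H^\infty(\fV)$ as the honest evaluation $\widetilde{f}(G(W))$. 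Both points are precisely what Theorem \ref{thm:finite_dim_reps} and Lemma \ref{lem:pure_con_sim_strict} have been set up to deliver; once one checks this carefully, the rest of the argument is bookkeeping.
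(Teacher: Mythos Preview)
Your proposal is correct and follows essentially the same approach as the paper: define $G(W)=\pi_k(\alpha^*(\Phi_W))$, use Theorem \ref{thm:finite_dim_reps} to see that $G$ lands in $\widetilde{\fV}$, verify $\alpha(f)=\widetilde{f}\circ G$ via the chain $\widetilde{f}(G(W))=\Phi_{G(W)}(f)=\alpha^*(\Phi_W)(f)=\Phi_W(\alpha(f))$, and then read off the norm identities from Propositions \ref{prop:3equiv} and \ref{prop:4equiv_prime}. Your explicit coordinate formula for $G$ and the remark that each $G_i=\widetilde{\alpha(z_i)}$ is nc holomorphic are useful elaborations that the paper leaves implicit, but there is no substantive difference in strategy.
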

\begin{proof}
Define $G:\widetilde{\fW} \to \widetilde{\ol{\fB_d}}$ by
\[
G(W)=\pi_k(\alpha^*(\Phi_W)), \quad W \in \widetilde{\fW}(k) .
\]
By the assumption on $\alpha^*$ and Theorem \ref{thm:finite_dim_reps}, we find that $G$ maps $\widetilde{\fW}$ into $\widetilde{\fV}$. Note that
\[
\widetilde{f}(G(W)) = \Phi_{G(W)}(f) = \alpha^*(\Phi_W)(f)=\Phi_W(\alpha(f))=\alpha(f)(W)
\]
for all $W \in \fW$, so that $\alpha(f) = \widetilde{f} \circ G$. Propositions \ref{prop:3equiv} and \ref{prop:4equiv_prime} show that $\|\alpha\|=\sup_{W\in \fW}\|\Phi_{G(W)}\|$ and that $\|\alpha\|_{cb}=\sup_{W\in \fW}\|\Phi_{G(W)}\|_{cb}$, if $\alpha$ is completely bounded.
\end{proof}

We record the isomorphism equivalence implied by Propositions \ref{prop:3equiv}, \ref{prop:4equiv_prime} and \ref{prop:cb-homo=>holo}.
\begin{cor}\label{cor:cb-isom<=>biholo_weakstar}
Let $\fV \subseteq \fB_d$ and $\fW \subseteq \fB_{e}$ be nc varieties. Then the following statements are equivalent:
\begin{enumerate}[(i)]
\item $H^\infty(\fV)$ and $H^\infty(\fW)$ are weak-$*$ continuously and completely boundedly isomorphic,
\item the similarity envelopes $\widetilde{\fV}$ and $\widetilde{\fW}$ are nc biholomorphic via a $\ph$-bi-Lipschitz biholomorphism, 
\item the similarity envelopes $\widetilde{\fV}$ and $\widetilde{\fW}$ are nc biholomorphic via an nc biholomorphism $G: \widetilde{\fW} \to \widetilde{\fV}$ satisfying 
\[
\sup_{W\in \fW}\|\Phi_{G(W)}\|_{cb} < \infty \quad \text{and} \quad \sup_{V\in \fV}\|\Phi_{G^{-1}(V)}\|_{cb}<\infty.
\]
\end{enumerate}
Furthermore, in this case $\sup_{W\in \fW}\|\Phi_{G(W)}\|_{cb} = \|\alpha\|_{cb}$ and $\sup_{W\in \fW}\|\Phi_{G^{-1}(W)}\|_{cb} = \|\alpha^{-1}\|_{cb}$, and
\[
\|\alpha^{-1}\|_{cb}^{-1} \ph(W_1,W_2) \leq \ph(G(W_1),G(W_2)) \leq \|\alpha\|_{cb} \ph(W_1,W_2).
\]

Likewise, the following statements are equivalent:
\begin{enumerate}[(i)]
\item $H^\infty(\fV)$ and $H^\infty(\fW)$ are weak-$*$ continuously and boundedly isomorphic,
\item[(i)'] $H^\infty(\fV)$ and $H^\infty(\fW)$ are weak-$*$ continuously isomorphic,
\item the similarity envelopes $\widetilde{\fV}$ and $\widetilde{\fW}$ are nc biholomorphic via a $\phb$-bi-Lipschitz biholomorphism, 
\item the similarity envelopes $\widetilde{\fV}$ and $\widetilde{\fW}$ are nc biholomorphic via an nc biholomorphism $G: \widetilde{\fW} \to \widetilde{\fV}$ satisfying 
\[
\sup_{W\in \fW}\|\Phi_{G(W)}\| < \infty \quad \text{and} \quad \sup_{V\in \fV}\|\Phi_{G^{-1}(V)}\|<\infty.
\]
\end{enumerate}
Furthermore, in this case $\sup_{W\in \fW}\|\Phi_{G(W)}\| = \|\alpha\|$ and $\sup_{W\in \fW}\|\Phi_{G^{-1}(W)}\| = \|\alpha^{-1}\|$, and
\[
\|\alpha^{-1}\|^{-1} \phb(W_1,W_2) \leq \phb(G(W_1),G(W_2)) \leq \|\alpha\|\phb(W_1,W_2)
\]
\end{cor}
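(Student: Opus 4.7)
The plan is to derive both equivalence chains directly from Propositions \ref{prop:3equiv}, \ref{prop:4equiv_prime}, and \ref{prop:cb-homo=>holo}, running them in parallel since the passage from an operator-algebraic homomorphism to a map of similarity envelopes (and back) has already been carried out there.

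For the completely bounded chain, first I would show (i) $\Rightarrow$ (iii). Given a weak-$*$ continuous completely bounded isomorphism $\alpha: H^\infty(\fV) \to H^\infty(\fW)$, the inverse $\alpha^{-1}$ is automatically weak-$*$ continuous: $\alpha$ is the adjoint of a bounded map between preduals, which must itself be bijective because $\alpha$ is, and hence $\alpha^{-1}$ is the adjoint of that bounded bijection's inverse. Thus both $\alpha^*$ and $(\alpha^{-1})^*$ send weak-$*$ continuous finite dimensional representations to weak-$*$ continuous representations. Applying Proposition \ref{prop:cb-homo=>holo} to $\alpha$ and to $\alpha^{-1}$ yields nc holomorphic maps $G: \widetilde{\fW} \to \widetilde{\fV}$ and $F: \widetilde{\fV} \to \widetilde{\fW}$ that implement these isomorphisms by composition, together with the identities $\|\alpha\|_{cb} = \sup_{W \in \fW}\|\Phi_{G(W)}\|_{cb}$ and $\|\alpha^{-1}\|_{cb} = \sup_{V \in \fV}\|\Phi_{F(V)}\|_{cb}$. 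Evaluating the identities $\alpha^{-1}\alpha = \mathrm{id}$ and $\alpha\alpha^{-1} = \mathrm{id}$ on the coordinate functions, and using that an nc holomorphic function on $\widetilde{\fV}$ is determined by its restriction to $\fV$, shows that $F$ and $G$ are mutually inverse nc maps of the similarity envelopes, so $F = G^{-1}$, proving (iii).

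The remaining implications in the first chain are formal. The step (iii) $\Rightarrow$ (ii) is immediate from the (ii) $\Rightarrow$ (i) direction of Proposition \ref{prop:3equiv}, which converts the finiteness of $\sup_{W}\|\Phi_{G(W)}\|_{cb}$ (and its counterpart for $G^{-1}$) into a Lipschitz bound with the same constant. For (ii) $\Rightarrow$ (i), given a $\ph$-bi-Lipschitz nc biholomorphism $G$, Proposition \ref{prop:3equiv} produces weak-$*$ continuous completely bounded unital homomorphisms $\alpha(f) = \widetilde{f}\circ G$ and $\beta(g) = \widetilde{g}\circ G^{-1}$, and a check on the coordinate functions shows $\beta = \alpha^{-1}$. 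The displayed bi-Lipschitz inequalities then follow: the right-hand one is just the Lipschitz bound for $G$ with constant $\|\alpha\|_{cb}$, and the left-hand one is obtained by applying the Lipschitz bound for $G^{-1}$ to the pair $(G(W_1), G(W_2))$.

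The second chain (for general bounded isomorphisms) follows the same template with Proposition \ref{prop:4equiv_prime} in place of Proposition \ref{prop:3equiv} and the uniform norm in place of the completely bounded norm. The only new ingredient is the equivalence (i) $\Leftrightarrow$ (i)'; clearly (i) $\Rightarrow$ (i)', and conversely, a weak-$*$ continuous algebra homomorphism between our dual operator algebras is the adjoint of a bounded predual map and is therefore bounded, and if it is in addition an isomorphism the predual map is a bounded bijection by the open mapping theorem, yielding a weak-$*$ continuous bounded inverse. The main obstacle I expect is bookkeeping rather than substance: one must verify that the geometric map $G$ extracted from $\alpha$ and the map $F$ extracted from $\alpha^{-1}$ really compose to the identity on the entire similarity envelopes and not merely on $\fW$ and $\fV$. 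This is handled by the identity $\Phi_{G(W)} = \alpha^*(\Phi_W)$ established inside Proposition \ref{prop:cb-homo=>holo} together with the unique nc extension from $\fV$ to $\widetilde{\fV}$.
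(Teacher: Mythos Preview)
Your proposal is correct and takes essentially the same approach as the paper, which simply records the corollary as following from Propositions \ref{prop:3equiv}, \ref{prop:4equiv_prime}, and \ref{prop:cb-homo=>holo} without spelling out the details; you have filled in the bookkeeping (weak-$*$ continuity of the inverse, mutual invertibility of $F$ and $G$, and the equivalence (i) $\Leftrightarrow$ (i)$'$) exactly as intended.
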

%
%\begin{cor}\label{cor:cb-isom<=>biholo_weakstar}
%Let $\fV \subseteq \fB_d$ and $\fW \subseteq \fB_{e}$ be nc varieties. 
%Then  $H^\infty(\fV)$ and $H^\infty(\fW)$ are weak-$*$ continuously isomorphic if and only if the similarity envelopes $\widetilde{\fV}$ and $\widetilde{\fW}$ are nc biholomorphic via an nc biholomorphism $G: \widetilde{\fW} \to \widetilde{\fV}$ satisfying
%\[
%\sup_{W\in \fW}\|\Phi_{G(W)}\| < \infty \quad \text{and} \quad \sup_{V\in \fV}\|\Phi_{G^{-1}(V)}\|<\infty.
%\]
%Furthermore, in this case $\sup_{W\in \fW}\|\Phi_{G(W)}\| = \|\alpha\|$. 
%
%Likewise, $H^\infty(\fV)$ and $H^\infty(\fW)$ are completely boundedly and weak-$*$ continuously isomorphic if and only if the similarity envelopes $\widetilde{\fV}$ and $\widetilde{\fW}$ are nc biholomorphic via an nc biholomorphism $G: \widetilde{\fW} \to \widetilde{\fV}$ satisfying
%\[
%\sup_{W\in \fW}\|\Phi_{G(W)}\|_{cb} < \infty \quad \text{and} \quad \sup_{V\in \fV}\|\Phi_{G^{-1}(V)}\|_{cb}<\infty, 
%\]
%and in this case $\sup_{W\in \fW}\|\Phi_{G(W)}\|_{cb} = \|\alpha\|_{cb}$.
%\end{cor}

\begin{rem}
Note that when $\alpha$ is a completely isometric isomorphism, or equivalently when $\|\alpha\|_{cb}=\|\alpha^{-1}\|_{cb}=1$, then Corollary \ref{cor:cb-isom<=>biholo_weakstar} implies that
$\sup_{W\in \fW}\|\Phi_{G(W)}\|_{cb} = 1$ and $\sup_{V\in \fV}\|\Phi_{G^{-1}(V)}\|_{cb} = 1$.
Since $\|T\| \leq \|\Phi_{T}\|_{cb}$ for every $T$, we obtain that $G$ must map $\fW$  injectively into $\ol\fV$ and $G^{-1}$ maps $\fV$ injectively into $\ol\fW$.
Recalling the nc maximum principle \cite[Lemma 6.11]{SalShaSha17}, it follows that $G$ is an nc biholomorphism between $\fW$ and $\fV$.
Conversely, whenever $G$ is an nc biholomorphism between $\fW$ and $\fV$, we have that
$\sup_{W\in \fW}\|\Phi_{G(W)}\|_{cb} = 1$ and $\sup_{V\in \fV}\|\Phi_{G^{-1}(V)}\|_{cb} = 1$.
Corollary \ref{cor:cb-isom<=>biholo_weakstar} yields a completely bounded isomorphism $\alpha$ with $\|\alpha\|_{cb}=\|\alpha^{-1}\|_{cb}=1$, so that $\alpha$ is in fact a completely isometric isomorphism.
This recovers \cite[Corollary 6.14]{SalShaSha17} (in \cite{SalShaSha17} it was shown that when $d,e <\infty$, then the weak-$*$ continuity condition is redundant).
\end{rem}

\begin{example} \label{ex:DHS}
In \cite[Example 6.6]{DHS14} the authors show that given a separated, but not strongly separated Blaschke sequence $V \subseteq \D$, there exists a bi-Lipschitz biholomorphism  of $V$ onto $W$, where $W$ is an interpolating sequence. 
Since $V$ is not interpolating, the associated multiplier algebras are not isomorphic. The above theorem demonstrated what fails in this case, since bi-Lipschitz on the first level is not enough to conclude that the multiplier algebras are isomorphic.
\end{example}

The following example shows that the assumption $\sup_{W\in \fW}\|\Phi_{G(W)}\| < \infty$ does not hold for every holomorphic map $G$ between similarity invariant envelopes of varieties, and, in fact, it may fail even for automorphisms of $\widetilde{\fB}_d$.
In particular, it shows that not every automorphism of $\widetilde{\fB}_d$ gives rise to an automorphism of $H^\infty(\fB_d)$.

%%%%%%%%%%%%%%%%%%%%%%%%%%%%%%%%%
\begin{example}\label{ex:holo_not_bounded}
On $\fB_2$, consider the function $f(x) = f(x_1, x_2) = (1-x_1)^2$.
The inverse of $f$ is a well defined unbounded nc function, and it extends to $\widetilde{\fB}_2$.
Define $G : \widetilde{\fB}_2 \to \widetilde{\fB}_2$ by
\[
G(x) = f(x)^{-1} x f(x).
\]
It is easy to check that $G$ is biholomorphism of $\widetilde{\fB}_d$.

Now we consider pairs of matrices $X = (X_1, X_2) \in \fB_2(2)$, given by
\[
X_1 = \begin{bmatrix} \lambda & 0\\
0 & \mu \end{bmatrix}
\quad , \quad
X_2 = \begin{bmatrix}  0 & a \\
0 &  0\end{bmatrix} ,
\]
where all entries are assumed positive.
In order to be a strict row contraction, it is necessary and sufficient that $\lambda, \mu <1$ and $a < \sqrt{1-\lambda^2}$.
For definiteness, choose $a = \frac{1}{2} \sqrt{1-\lambda^2} > \frac{1}{2}(1 - \lambda)$.

Now we observe $G(X) = f(X)^{-1} X f(X)$, the second component of which is
\begin{align*}
f(X)^{-1} X_2 f(X) &= \begin{bmatrix} (1- \lambda)^{-2} & 0\\
0 & (1 - \mu)^{-2} \end{bmatrix}
\begin{bmatrix}  0 & a \\
0 &  0\end{bmatrix}
\begin{bmatrix} (1- \lambda)^{2} & 0\\
0 & (1 - \mu)^{2} \end{bmatrix} \\
&= \begin{bmatrix}  0 & a\left(\frac{1 - \mu}{1-\lambda}\right)^{2}  \\
0 &  0\end{bmatrix} .
\end{align*}
By the choice of $a$, we find that
\[
\|f(X)^{-1} X_2 f(X) \| > \frac{1}{2}\frac{(1-\mu)^2}{1 - \lambda}.
\]
Fixing $\mu$ small and letting $\lambda \to 1$, we see that $G$ is not bounded on $\fB_2(2)$.
\end{example}

We record a consequence of our results that adds to what is known (see, for example, \cite{DavPitts2,Popescu10}) about the automorphism group of $H^\infty(\fB_d)$. 

\begin{thm}\label{thm:auto_of_Ld}
Let $d \in \bN$. 
Every automorphism $\alpha$ of $H^\infty(\fB_d)$ is given by 
\[
\alpha(f) = \widetilde{f} \circ G, 
\]
where $G : \widetilde{\fB}_d \to \widetilde{\fB}_d$ is an nc biholomorphism of $\widetilde{\fB}_d$ such that 
\[
\sup_{W\in \fB_d}\|\Phi_{G(W)}\| < \infty \quad \text{and} \quad \sup_{V\in \fB_d}\|\Phi_{G^{-1}(V)}\|<\infty. 
\]
\end{thm}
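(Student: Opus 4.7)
The plan is to combine the deep automatic continuity result of Davidson--Pitts with the classification machinery developed in this section. Specifically, given an arbitrary algebraic automorphism $\alpha$ of $H^\infty(\fB_d)$, the strategy is first to upgrade $\alpha$ to a weak-$*$ continuous isomorphism, and then to apply the weak-$*$ continuous case of Corollary \ref{cor:cb-isom<=>biholo_weakstar} with $\fV = \fW = \fB_d$.

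For the first step, I would invoke the isomorphism $H^\infty(\fB_d) \cong \fL_d$ (as recorded in the preliminaries of Section \ref{sec:Dru-Arv_bdd_rep}) to transfer $\alpha$ to an algebraic automorphism of the noncommutative analytic Toeplitz algebra $\fL_d$. Then \cite[Theorem 4.6]{DavPitts2}, which is cited explicitly in the introduction and guarantees that every algebraic automorphism of $\fL_d$ is automatically weak-$*$ continuous, yields that $\alpha$ (and equally $\alpha^{-1}$) is weak-$*$ continuous as a map $H^\infty(\fB_d) \to H^\infty(\fB_d)$.

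For the second step, I would apply the second equivalence in Corollary \ref{cor:cb-isom<=>biholo_weakstar} (the one involving $\phb$ and bounded isomorphisms, not completely bounded ones), with both varieties equal to $\fB_d$. That corollary produces an nc biholomorphism $G \colon \widetilde{\fB}_d \to \widetilde{\fB}_d$ satisfying
\[
\sup_{W \in \fB_d} \|\Phi_{G(W)}\| = \|\alpha\| < \infty \quad \text{and} \quad \sup_{V \in \fB_d} \|\Phi_{G^{-1}(V)}\| = \|\alpha^{-1}\| < \infty,
\]
and, by Proposition \ref{prop:cb-homo=>holo}, implementing $\alpha$ via the formula $\alpha(f) = \widetilde{f} \circ G$. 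This gives exactly the description claimed in the theorem.

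I do not expect a genuine obstacle in this argument, since the heavy lifting is done by \cite[Theorem 4.6]{DavPitts2} and by Corollary \ref{cor:cb-isom<=>biholo_weakstar}. The only point that requires a moment of care is to make sure one invokes the $\phb$-version (bounded) of the corollary rather than the $\ph$-version (completely bounded), because a priori $\alpha$ is only assumed algebraic; after invoking Davidson--Pitts it becomes bounded and weak-$*$ continuous, but not obviously completely bounded. That is precisely why the statement of the theorem uses the norms $\|\Phi_{G(W)}\|$ and $\|\Phi_{G^{-1}(V)}\|$ rather than their completely bounded counterparts, and why Example \ref{ex:holo_not_bounded} is relevant: it shows that the boundedness conditions on $G$ are genuinely needed and do not follow from $G$ being a mere nc biholomorphism of $\widetilde{\fB}_d$.
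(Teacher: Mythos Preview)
Your proposal is correct and follows essentially the same approach as the paper: invoke \cite[Theorem 4.6]{DavPitts2} to obtain weak-$*$ continuity of $\alpha$, then apply Corollary \ref{cor:cb-isom<=>biholo_weakstar} with $\fV = \fW = \fB_d$. The paper's proof is just the two-line version of what you wrote, and your additional remark about using the $\phb$-version rather than the $\ph$-version is a correct point of care.
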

\begin{proof}
By \cite[Theorem 4.6]{DavPitts2}, every automorphism of $H^\infty(\fB_d)$ is \textsc{wot}-continuous. 
Thus Corollary \ref{cor:cb-isom<=>biholo_weakstar} applies. 
\end{proof}

Recall that in \cite{DavPitts2} an automorphism of $H^{\infty}(\fB_d)$ was called {\em quasi-inner}, if it is in the kernel of the homomorphism to $\Aut(\fB_d(1))$. In terms of the preceding theorem, this means that the induced nc biholomorphism of $\fB_d$ is the identity when restricted to the first level. We can therefore apply the spectral Cartan theorem (Theorem \ref{thm:spectral_cartan}) to obtain the following corollary, that can be interpreted as stating that it is very difficult to distinguish a quasi-inner automorphism from an inner one.

%%%%%%%%%%%%%%%%%%%%%%%% This is a consequence of spectral Cartan %%%%%%%%%%%%%%%%%%%%%%%%%%
\begin{cor} \label{cor:quasi-inner}
If $\alpha \colon H^{\infty}(\fB_d) \to H^{\infty}(\fB_d)$ is a quasi-inner automorphism, then there exists an nc  biholomorphism $G \colon \widetilde{\fB}_d \to \widetilde{\fB}_d$ such that $\alpha(f) = \widetilde{f} \circ G$ for all $f \in H^\infty(\fB_d)$, and such that for every irreducible $X \in \fB_d$, $G(X)$ is similar to $X$.
\end{cor}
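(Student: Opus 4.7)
The plan is to combine Theorem \ref{thm:auto_of_Ld} with the spectral Cartan theorem (Theorem \ref{thm:spectral_cartan}). By Theorem \ref{thm:auto_of_Ld}, any automorphism $\alpha$ of $H^\infty(\fB_d)$ is of the form $\alpha(f) = \widetilde{f} \circ G$ for some nc biholomorphism $G : \widetilde{\fB}_d \to \widetilde{\fB}_d$ with $\sup_{W \in \fB_d}\|\Phi_{G(W)}\| < \infty$ and $\sup_{V \in \fB_d}\|\Phi_{G^{-1}(V)}\| < \infty$. So the bulk of the work is encoding the hypothesis that $\alpha$ is quasi-inner as a condition on $G$ that triggers Theorem \ref{thm:spectral_cartan}.

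The quasi-inner hypothesis says exactly that $\alpha$ acts as the identity on the induced map into $\Aut(\fB_d(1))$. Unwinding the definitions, this means that for every scalar point $z \in \fB_d(1) = \bB_d$, the evaluation $\Phi_z \circ \alpha = \Phi_z$. Since the coordinates of $G$ are the images of $z_1, \ldots, z_d$ under $\alpha$ evaluated at $z$, we obtain $G(z) = z$ for every $z \in \fB_d(1)$. In particular $G(0) = 0$.

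Next I need to verify $\Delta G(0,0) = I$. Writing the Taylor--Taylor expansion of each component $G_i$ at the origin, the homogeneous degree-one part is a linear map $Z \mapsto \sum_j a_{ij} Z_j$, and this linear part is the same polynomial in any matrix level (in particular on the scalar level). Since $G|_{\fB_d(1)}$ is the identity, the degree-one part of $G_i$ on scalars is $z_i$, which forces $a_{ij} = \delta_{ij}$. By the definition of the first-order nc difference-differential operator via the block-upper-triangular structure, $\Delta G(0,0)$ is precisely the matrix $(a_{ij})$, so $\Delta G(0,0) = I$.

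With $G(0) = 0$ and $\Delta G(0,0) = I$ in hand, Theorem \ref{thm:spectral_cartan} applies directly and yields $\sigma_{JH}(G(X)) = \sigma_{JH}(X)$ for every $X \in \fB_d$; in particular, whenever $X$ is irreducible, $G(X)$ is similar to $X$, completing the proof. The only place requiring care is the passage from the quasi-inner condition to the identity on the scalar level and the extraction of $\Delta G(0,0) = I$; once that is in place, the theorem is immediate from the tools already assembled.
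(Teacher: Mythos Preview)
Your proof is correct and follows the same approach the paper indicates: obtain $G$ from Theorem \ref{thm:auto_of_Ld}, use the quasi-inner hypothesis to see that $G$ is the identity on the first level (hence $G(0)=0$ and $\Delta G(0,0)=I$), and then apply the spectral Cartan theorem (Theorem \ref{thm:spectral_cartan}). The paper leaves the verification of $\Delta G(0,0)=I$ implicit, whereas you spell it out via the Taylor--Taylor expansion; this is a correct and helpful addition, and otherwise the arguments are identical.
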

%%%%%%%%%%%%%%%%%%%%%%%%%%%%%%%%%%%%%%%%%%%%%%%%%%%%%%%%%%%%%%%%%%%%%%%%%%%%%%%%%%%%%%%%%%%%

%%%%%%%%%%%%%%%%%%%%%%%%%%%%%%%%%%%%%%%%%%%%%%%
\section{The homogeneous case} \label{sec:homogeneous}
%%%%%%%%%%%%%%%%%%%%%%%%%%%%%%%%%%%%%%%%%%%%%%%

In this section, we focus our attention on the case where the underlying varieties are homogeneous. 
Our first result shows that when $\fV$ and $\fW$ are homogeneous, the weak-$*$ continuity requirement can be dropped in Proposition \ref{prop:cb-homo=>holo} and Corollary \ref{cor:cb-isom<=>biholo_weakstar}. 
Throughout this section, we shall assume that $d< \infty$. 

\begin{thm} \label{thm:homog_no_need_w*}
Let $\fV \subseteq \fB_d$ and $\fW \subseteq \fB_e$ be two homogeneous nc varieties. 
Let $\alpha \colon H^{\infty}(\fV) \to H^{\infty}(\fW)$ be a bounded isomorphism. 
Then the induced nc function $G = \pi \circ \alpha^* : \widetilde{\fW} \to \widetilde{\ol{\fB_d}}$ maps $\widetilde{\fW}$ into $\widetilde{\fV}$. 
Consequently, every (completely) bounded isomorphism between $H^\infty(\fV)$ and $H^\infty(\fW)$ arises as the composition with an nc biholomorphism $G : \widetilde{\fW} \to \widetilde{\fV}$. 
\end{thm}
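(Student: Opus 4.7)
The plan is to reduce the inclusion $G(\widetilde{\fW})\subseteq\widetilde{\fV}$ to a spectral-radius estimate and then apply a Vesentini--Schwarz argument after a preliminary rigidity step. By the last clause of Theorem \ref{thm:finite_dim_reps}, $\pi_k(\Rep_k(H^\infty(\fV)))\cap\widetilde{\fB_d}(k)=\widetilde{\fV}(k)$; combined with Lemma \ref{lem:tilde_is_spectral_ball} and the fact that $G(W)=\pi_k(\alpha^*(\Phi_W))$ automatically lies in $\widetilde{\ol{\fB_d}}(k)$, the claim reduces to showing $\rho(G(W))<1$ for every $W\in\widetilde{\fW}$.

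First I would show that $G(0)=0$. Since $\fV$ and $\fW$ are homogeneous and each contains the origin, the gauge actions $\gamma_\lambda^{\fV}(f)(Z)=f(\lambda Z)$ and $\gamma_\lambda^{\fW}$, $\lambda\in\bT$, are weak-$*$ continuous completely isometric automorphisms of $H^\infty(\fV)$ and $H^\infty(\fW)$. The augmentation character $\chi_0^{\fV}$ is characterized among bounded characters of $H^\infty(\fV)$ as the unique one vanishing on all the coordinates $z_1,\dots,z_d$, equivalently as the unique bounded character fixed by the entire gauge group. The goal is to show that the bounded character $\chi_0^{\fW}\circ\alpha$ of $H^\infty(\fV)$ is itself $\gamma^{\fV}$-invariant, which by uniqueness forces $\chi_0^{\fW}\circ\alpha=\chi_0^{\fV}$, i.e.\ $G(0)=0$. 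This in turn amounts to showing that the one-parameter group $\beta_\lambda:=\alpha\circ\gamma_\lambda^{\fV}\circ\alpha^{-1}$ of bounded automorphisms of $H^\infty(\fW)$ preserves $\chi_0^{\fW}$. I expect this to be the main technical obstacle, and would approach it by combining the isomorphism structure of $\alpha$ (which makes $\beta_\lambda$ a genuine one-parameter group) with the compactness of $\bT$ and a Dixmier-type averaging of $\beta_\lambda$ over the gauge group of $\fW$, forcing the pulled-back augmentation to agree with the original.

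With $G(0)=0$ in hand, fix $W\in\widetilde{\fW}(k)$ and set $\sigma=\rho(W)<1$. Homogeneity of $\fW$ gives $rW\in\widetilde{\fW}(k)$ for every $r\in\bC$ with $|r|<1/\sigma$, so
\[
h\colon\D_{1/\sigma}\longrightarrow\widetilde{\ol{\fB_d}}(k),\qquad h(r)=G(rW),
\]
is holomorphic with $h(0)=0$. Writing $h(r)=r\widetilde h(r)$ with $\widetilde h$ holomorphic on $\D_{1/\sigma}$, Lemma \ref{lem:log_subharmonic} makes $\log\rho(\widetilde h(r))$ subharmonic there, while $h(r)\in\widetilde{\ol{\fB_d}}(k)$ forces $\rho(h(r))\leq 1$ and hence $\rho(\widetilde h(r))\leq 1/|r|$ for $r\ne 0$. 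The maximum principle applied on $\ol{\D_R}$ for $1<R<1/\sigma$ yields $\rho(\widetilde h(r_0))\leq 1/R$ for $|r_0|\leq R$; letting $R\nearrow 1/\sigma$ and taking $r_0=1$ gives
\[
\rho(G(W))=\rho(h(1))\leq\sigma<1,
\]
which completes the spectral estimate and hence shows $G(\widetilde{\fW})\subseteq\widetilde{\fV}$. The ``consequently'' clause then follows by applying the same argument to $\alpha^{-1}$ to obtain $F\colon\widetilde{\fV}\to\widetilde{\fW}$ and verifying at the representation level that $F\circ G=\mathrm{id}_{\widetilde{\fW}}$ and $G\circ F=\mathrm{id}_{\widetilde{\fV}}$, so $G$ is an nc biholomorphism and $\alpha$ is implemented by $\alpha(f)=\widetilde f\circ G$ via Proposition \ref{prop:cb-homo=>holo}.
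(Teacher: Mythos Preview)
Your argument has a genuine gap: the claim $G(0)=0$ is false in general, so the first half of your strategy cannot succeed. Take $\fV=\fW=\fB_1$ (the disc, which is trivially a homogeneous nc variety) and let $\alpha(f)=f\circ\varphi_a$ for a M\"obius map $\varphi_a$ of $\bD$ with $\varphi_a(0)=a\ne 0$. Then $\alpha$ is a bounded (indeed completely isometric) automorphism, yet $G(0)=\alpha^*(\chi_0)(z)=\varphi_a(0)=a\ne 0$. Your proposed mechanism --- showing that $\beta_\lambda:=\alpha\circ\gamma^{\fV}_\lambda\circ\alpha^{-1}$ fixes $\chi_0^{\fW}$ via some averaging argument --- cannot work, because in this example $\beta_\lambda$ is a one-parameter group of M\"obius automorphisms of $H^\infty(\bD)$ that genuinely moves the origin; averaging the characters $\chi_0^{\fW}\circ\beta_\lambda$ over $\bT$ does not yield a character. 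The uniqueness of the gauge-fixed character is fine, but there is no reason the transported gauge group $\beta_\lambda$ should fix $\chi_0^{\fW}$.

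The paper's proof avoids this issue entirely: it does not attempt to pin down $G(0)$. Instead it argues by contradiction that if $\rho(G(X))=1$ for some $X\in\fW$, then subharmonicity of $r\mapsto\rho(G(rX/\|X\|))$ on $\bD$ forces $\rho(G(0))=1$, i.e.\ $G(0)\in\partial\bB_d$. A further argument using the trace and the maximum principle then shows that $G$ is constant, contradicting the fact that $\alpha$ is an isomorphism. Your Vesentini--Schwarz step is perfectly sound once $G(0)=0$ is known (and in fact recovers Proposition \ref{prop:preserves_radius}), but that hypothesis only becomes available \emph{after} Theorem \ref{thm:homog_no_need_w*} is proved, via the disc trick of Proposition \ref{prop:biholo=>0-biholo}; you cannot use it as an input here.
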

\begin{proof}
We first note that if $\fV = 0$ or $\fW = 0$ (understood as the nc variety over the scalar point $0$), then the problem trivializes. 
Thus we assume henceforth that neither variety consists of just the point $0$. 

For every $n \in \N$ we have that $\alpha^* \colon \Rep_n(H^{\infty}(\fW)) \to \Rep_n(H^{\infty}(\fV))$ is a bijection and that $G = \pi \circ \alpha^*$. 
Suppose towards a contradiction that for some $X \in \fW$, $\alpha^*(\Phi_X)$ is not weak-$*$ continuous. Then $G(X) \in \widetilde{\ol\fB_d}\setminus \widetilde{\fB_d}$, so $\rho(G(X)) = 1$ by Lemma \ref{lem:tilde_is_spectral_ball}. 
Consider the function $f^{(X)}(z) = G(z X/\|X\|)$ defined and analytic on the disc. 
By Corollary \ref{cor:subharmonic} the function $\rho(f^{(X)}(z))$ is subharmonic and thus by the maximum modulus principle it is the constant function $1$. 
In particular $\rho(G(0)) = 1$, i.e, $\|G(0)\| = 1$ as a point in $\ol{\bB_d}$. 
% By the classical maximum modulus, we see that $G$ restricted to the first level is constant. 
This implies that $G$ is constant on $\fW$ as follows. 

Applying a unitary we may assume that $G(0) = (1, 0, \ldots, 0)$. 
Now, let $n \in \mathbb N$, let $W$ be a point in $\fW(n)$, and consider $f^{(W)}_1(z)$ --- the first coordinate of the function $f^{(W)}(z) = (f^{(W)}_1(z), \ldots, f^{(W)}_d(z))$, where $f^{(W)}(z) = G(z W/\|W\|)$.
Note that since $G$ respects direct sums $f^{(W)}_1(0) = I_n$. 
Next, since $G(\fW) \subseteq \widetilde{\ol{\fB_d}}$, we have that $f^{(W)}_1(z)$ is similar to a contraction for every $z \in \mathbb D$. 
Thus, for each $z \in \mathbb D$, the eigenvalues of $f^{(W)}_1(z)$ are all of absolute value less than or equal to $1$, so that $|{\rm{tr}}(f^{(W)}_1(z))| \leq n$. 
As ${\rm{tr}}(f^{(W)}_1(0)) = {\rm{tr}}(I_n) = n$, the classical maximum modulus implies that ${\rm{tr}}(f^{(W)}_1(z))=n$ for every $z\in \mathbb D$, and thus all of the eigenvalues of $f^{(W)}_1(z)$ are all equal to $1$. 
Since for every $z \in \mathbb D$ the matrix $f^{(W)}_1(z)$ is similar to a contraction, it is power bounded. 
Since all of its eigenvalues are $1$, its Jordan form cannot contain a non-trivial Jordan block. 
Thus, $f^{(W)}_1(z)=I_n$ for every $z \in \mathbb D$.
 Since this holds for every point $W \in \fW$, we have that the function $G_1 = \alpha(z_1|_{\fV})$ is identically equal to the constant nc function $1$. 
Furthermore, for every $n \in \N$ and every $W \in \fW(n)$, we have that $G(W)$ is similar to a row contraction, i.e, there exists $S \in \operatorname{GL}_n(\C)$, such that
\[
0 \leq \sum_{j=1}^d (S^{-1} G_j(W) S) (S^{-1} G_j(W) S)^* \leq I_n.
\]
However, $G_1(W) = I$ and thus this forces $G_j(W) = 0$, for $j = 2, \ldots, d$. 
Therefore, $G$ is constant, as we claimed. 

Now recall that from the definition of $G$ as $\pi \circ \alpha^*$, the nc function $G_j$ is equal to $\alpha(z_j|_\fV)$ for every $j = 1, \ldots, d$. 
It follows from the previous paragraph that $\alpha(z_j|_{\fV})$ is equal to a constant nc function on $\fW$, for $j = 1, \ldots, d$. 
Since $\alpha$ is an isomorphism and preserves the unit, we find that the tuple of coordinate functions $(z_1|_\fV, \ldots, z_d|_\fV)$ is  equal to the constant tuple $G(0)$. 
This implies that $\fV$ is the nc variety over the scalar point $G(0)$, which is a contradiction. 

It follows that $G$ maps $\widetilde{\fW}$ into $\widetilde{\fV}$, so by Theorem \ref{thm:finite_dim_reps}, $\alpha^*$ maps weak-$*$ continuous representations in $\Rep_{k}(H^\infty(\fW))$ to weak-$*$ continuous representations in $\Rep_{k}(H^\infty(\fV))$. 
By Proposition \ref{prop:cb-homo=>holo}, the isomorphism $\alpha$ is given by composition with $G$. 
Finally, since we can argue similarly about $\alpha^{-1}$, the map $G$ must be a biholomorphism between $\widetilde{\fW}$ and $\widetilde{\fV}$. 
\end{proof}

We shall now show that for any two homogeneous nc varieties $\fV \subseteq \fB_d$ and $\fW \subseteq \fB_e$, if the similarity envelopes $\widetilde\fV$ and $\widetilde\fW$ are nc biholomorphic, then $\widetilde\fV$ is the image of $\widetilde\fW$ under an invertible linear transformation.
We start with the following lemma.

\begin{lem}\label{lem:DiscToDisc}
Let $\fV \subseteq \fB_d$ and $\fW \subseteq \fB_e$ be two nc homogeneous varieties such that $\widetilde\fV$ and $\widetilde\fW$ are nc biholomorphic.
If $0$ is not mapped to $0$, then there exist
two discs $D_1 \subseteq \fV(1)=\widetilde\fV(1)$ and $D_2 \subseteq \fW(1)=\widetilde\fW(1)$, both containing $0$, such that $D_1$ is mapped by the biholomorphism onto $D_2$.
\end{lem}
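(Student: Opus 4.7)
Let $F\colon\widetilde{\fV}\to\widetilde{\fW}$ denote the biholomorphism (reversing its direction if necessary so that it matches the direction in the statement). Since at the scalar level the $\GL_1$-action is trivial we have $\widetilde{\fV}(1)=\fV(1)$ and $\widetilde{\fW}(1)=\fW(1)$, and because $F$ is graded it restricts to a holomorphic bijection between these two scalar levels. Put $w:=F(0)\in\fW(1)$ and $v:=F^{-1}(0)\in\fV(1)$; both are nonzero, for if $v=0$ then $F(0)=F(v)=0$, contradicting the hypothesis $F(0)\neq 0$.

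The central observation is that homogeneity turns $\fV(1)$ into a complex cone inside $\bB_d$: the homogeneous nc functions defining $\fV$ restrict at the scalar level to ordinary homogeneous polynomials in the commuting variables $z_1,\dots,z_d$, so $\fV(1)$ is cut out in $\bB_d$ by honest homogeneous polynomials. Consequently the whole complex segment $\{\lambda v : |\lambda|<1/\|v\|\}$ lies in $\fV(1)$. Since $v\in\fB_d$ forces $\|v\|<1$, we may fix some $r$ with $1<r<1/\|v\|$ and set
\[
D_1 \;:=\; \{\lambda v : \lambda\in\bC,\ |\lambda|<r\} \;\subseteq\; \fV(1).
\]
Then $D_1$ is a disc containing $0$ (at $\lambda=0$) and $v$ (at $\lambda=1$).

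Finally, let $D_2:=F(D_1)$. Since $F$ is graded, $D_2\subseteq\widetilde{\fW}(1)=\fW(1)$; since the parametrization $\lambda\mapsto F(\lambda v)$ is holomorphic on $\{|\lambda|<r\}$ and injective (as $F$ is one-to-one on $\widetilde{\fV}$ and $v\neq 0$), $D_2$ is an analytic disc in $\fW(1)$ onto which $F$ maps $D_1$ bijectively. Moreover $D_2$ contains $w=F(0)$ (at $\lambda=0$) and, crucially, $0=F(v)=F(F^{-1}(0))$ (at $\lambda=1$), so both $D_1$ and $D_2$ pass through the origin as required. No serious obstacle is anticipated: the entire content of the lemma is that once the biholomorphism moves the scalar origin, the two distinct points $\{0,v\}\subseteq\fV(1)$ and $\{0,w\}\subseteq\fW(1)$ each lie on a genuine disc by homogeneity, and $F$ intertwines these discs automatically.
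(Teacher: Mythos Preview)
Your construction gives a genuine flat disc $D_1=\{\lambda v:|\lambda|<r\}$ through the origin in $\fV(1)$, and then sets $D_2:=F(D_1)$. The problem is that $D_2$ is only an \emph{analytic} disc --- the injective holomorphic image of a disc --- and there is no reason for it to be a flat disc of the form $\{\lambda w_0:|\lambda|<s\}$. The lemma, however, is invoked immediately afterward in the ``disc trick'' (Proposition \ref{prop:biholo=>0-biholo}), where one uses that the scalar rotations $z\mapsto e^{i\theta}z$ carry $D_2$ into itself and that the level sets $\{z\in D_2:|z|=|\nu|\}$ are honest circles whose interiors make sense inside $D_2$. None of this holds for a general analytic disc: for instance, the image of $\lambda\mapsto(\lambda,\lambda^2)$ in $\bC^2$ is not invariant under $z\mapsto e^{i\theta}z$. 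So while your argument is internally consistent under the reading ``disc $=$ analytic disc'', it does not produce what the subsequent proof actually needs.

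The paper's route is more structural and is designed precisely to force $D_2$ to be flat as well: a biholomorphism between varieties must preserve the singular locus, and the singular locus of a homogeneous variety is again homogeneous. Iterating, $G|_V$ carries a descending chain of homogeneous subvarieties of $V$ to the corresponding chain for $W$; at the bottom one reaches a one-dimensional homogeneous piece on \emph{each} side, i.e., a union of flat discs through $0$, and $G|_V$ matches them up. That is what guarantees $D_2$ is a flat disc through the origin, not merely an analytic one.
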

\begin{proof}
Let $G: \widetilde\fV \to \widetilde\fW$ be an nc biholomorphism mapping $\widetilde\fV$ onto $\widetilde\fW$. 
If $0$ is not mapped to $0$, then both $V:=\fV(1)=\widetilde\fV(1)$ and $W:=\fW(1)=\widetilde\fW(1)$ are non-trivial homogeneous varieties and $G|_{V}:V \to W$ is a biholomorphism. The first paragraph of the proof of \cite[Lemma 5.9]{SalomonShalit} (see also \cite{DRS11})  shows --- using an analysis of the singular nuclei of $V$ and $W$ --- that there are two discs $D_1  \subseteq V$ and $D_2 \subseteq W$, both containing $0$, such that $D_1$ is mapped by $G|_{V}$ onto $D_2$.
\end{proof}

\begin{prop}\label{prop:biholo=>0-biholo}
Let $\fV \subseteq \fB_d$ and $\fW \subseteq \fB_e$ be two nc homogeneous varieties such that $\widetilde\fV$ and $\widetilde\fW$ are nc biholomorphic.
Then there exists an nc biholomorphism $F$ of $\widetilde\fV$ onto $\widetilde\fW$ that maps $0$ to $0$.
\end{prop}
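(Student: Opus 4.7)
The plan is to modify the given nc biholomorphism $G\colon \widetilde\fV \to \widetilde\fW$ by post-composition with an nc automorphism of $\widetilde\fW$ that moves $G(0)$ back to the origin. If $G(0)=0$ we set $F=G$ and we are done, so assume $a:=G(0)\neq 0$. Applying Lemma \ref{lem:DiscToDisc}, we obtain discs $D_1\subseteq \fV(1)$ and $D_2\subseteq \fW(1)$, both containing $0$, with $G(D_1)=D_2$. Since $\fV$ and $\fW$ are homogeneous, their scalar levels $V=\fV(1)$ and $W=\fW(1)$ are affine cones, so every analytic disc through $0$ in either must lie along a complex line through $0$. Hence $D_1=\ell_1\cap \mathbb{B}_d$ and $D_2=\ell_2\cap \mathbb{B}_e$ for complex lines $\ell_1\subseteq V$ and $\ell_2\subseteq W$. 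The restriction $G|_{D_1}$ is a conformal bijection between the two discs, hence a M\"obius transformation; choosing a unit direction vector $w_0$ spanning $\ell_2$ we may write $a=\lambda_0 w_0$ for some $\lambda_0\in \mathbb{D}$.

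The heart of the argument is then to realize the disc M\"obius transformation $\mu w_0 \mapsto \bigl(\tfrac{\mu-\lambda_0}{1-\bar\lambda_0 \mu}\bigr) w_0$ as the restriction to the line $\ell_2\cap \mathbb{B}_e$ of an nc biholomorphism $\Theta\colon \widetilde\fW \to \widetilde\fW$ satisfying $\Theta(a)=0$. The natural candidate is the restriction to $\widetilde\fW$ of the nc M\"obius transformation of $\widetilde\fB_e$ centered at the point $a$ (as studied by Popescu \cite{Popescu10}), which is already known to be an nc biholomorphism of $\widetilde\fB_e$ mapping $a$ to $0$. Granted such a $\Theta$, the composition $F:=\Theta\circ G\colon \widetilde\fV \to \widetilde\fW$ is an nc biholomorphism with $F(0)=\Theta(a)=0$, completing the argument.

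The main obstacle is thus verifying that the nc M\"obius transformation of $\fB_e$ centered at $a=\lambda_0 w_0$ actually restricts to a self-map of $\widetilde\fW$. This fails for arbitrary points of $\fW$, but here it should follow from the special position of $a$ on the line $\ell_2\cap \mathbb{B}_e$, which is entirely contained in $W$ by the cone structure of $\fW$. Concretely, one needs to show that for every homogeneous polynomial $p \in \mathcal J_\fW$, the pullback $p\circ \Theta$ again vanishes on $\widetilde\fW$. The argument should exploit the particular form of the nc M\"obius, which acts nontrivially only on the single direction $w_0$ and scales the orthogonal complement, together with the homogeneity of $\mathcal J_\fW$, to reduce the invariance to a computation along the line $\ell_2\cap \mathbb{B}_e$, where it becomes elementary. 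If this preservation claim happens to fail in full generality, one would instead take the image $\Theta(\widetilde\fW)$ as a new nc subvariety of $\widetilde\fB_e$ and argue, using the homogeneity hypothesis again, that it coincides with $\widetilde\fW$.
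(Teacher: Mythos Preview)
Your approach has a genuine gap at its central step: the nc M\"obius transformation $\Theta$ of $\fB_e$ centered at $a=\lambda_0 w_0$ does \emph{not} in general preserve a homogeneous variety $\fW$, even when $a$ lies on a complex line through $0$ contained in $W=\fW(1)$. A concrete commutative counterexample already appears at the scalar level: take $e=2$, $\fW$ the nc variety cut out by $p(z_1,z_2)=z_1z_2$, and $a=(\lambda_0,0)$ with $\lambda_0\neq 0$. The ball automorphism $\Theta$ sending $a$ to $0$ has (up to normalization) the form
\[
\Theta(z_1,z_2)=\left(\frac{\lambda_0-z_1}{1-\bar\lambda_0 z_1},\ \frac{\sqrt{1-|\lambda_0|^2}\,z_2}{1-\bar\lambda_0 z_1}\right),
\]
so $\Theta(0,t)=(\lambda_0,\sqrt{1-|\lambda_0|^2}\,t)$, and $p(\Theta(0,t))=\lambda_0\sqrt{1-|\lambda_0|^2}\,t\neq 0$ for $t\neq 0$. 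Thus $\Theta$ moves the $z_2$-axis off the cross, and $p\circ\Theta$ does not vanish on $\fW$. Your description of $\Theta$ as ``acting nontrivially only on the direction $w_0$ and scaling the orthogonal complement'' is inaccurate: the orthogonal component is multiplied by a scalar \emph{function} of the $w_0$-coordinate, while the $w_0$-coordinate itself is shifted by a M\"obius, so points orthogonal to $w_0$ acquire a nonzero $w_0$-component. Your proposed fallback --- replacing $\widetilde\fW$ by $\Theta(\widetilde\fW)$ and invoking homogeneity again --- does not help, since $\Theta(\widetilde\fW)$ is no longer homogeneous.

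The paper circumvents this obstruction by the ``disc trick'' of \cite[Proposition 4.7]{DRS11}: rather than producing a single automorphism moving $G(0)$ to $0$, it studies the orbit sets $\cO(0;\widetilde\fV)\subseteq D_1$ and $\cO(0;\widetilde\fV,\widetilde\fW)\subseteq D_2$ of points reachable from $0$ by nc automorphisms of $\widetilde\fV$, respectively by nc biholomorphisms $\widetilde\fV\to\widetilde\fW$. The only automorphisms of the varieties actually used are the rotations $X\mapsto e^{i\theta}X$, which \emph{do} preserve homogeneous varieties; these force both orbit sets to be circular. One then argues topologically: the circle $C$ through $G(0)$ in $D_2$ lies in $\cO(0;\widetilde\fV,\widetilde\fW)$, its preimage $G^{-1}(C)$ is a circle in $D_1$ through $0$, so by circularity the closed disc bounded by $G^{-1}(C)$ lies in $\cO(0;\widetilde\fV)$, hence the open disc bounded by $C$ --- which contains $0$ --- lies in $\cO(0;\widetilde\fV,\widetilde\fW)$. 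The point is that one never needs a M\"obius of the variety, only rotational symmetry combined with the given biholomorphism $G$.
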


\begin{proof}
We can import the ``disc trick" used in \cite[Proposition 4.7]{DRS11} to the current setting (see also \cite[Lemma 5.9]{SalomonShalit}). 
Since the argument is just a couple of paragraphs long, we include it for completeness. 

Let $G$ be an nc biholomorphism mapping $\widetilde\fV$ onto $\widetilde\fW$.
If $0$ is mapped by $G$ to $0$, we are done.
Assume that $G(0) \neq 0$.
We will prove that there exists an nc biholomorphism $F$, mapping $\widetilde\fV$ onto $\widetilde\fW$, such that $F(0) = 0$.

Lemma \ref{lem:DiscToDisc} implies there exist two discs $D_1 \subseteq \widetilde\fV(1)$ and $D_2 \subseteq \widetilde\fW(1)$ such that
$G(D_1) = D_2$.
Define
\[
\cO(0;\widetilde\fV):=\{z \in D_1 : z=F(0) \text{ for some automorphism $F$ of $\widetilde\fV$} \},
\]
and
\[
\cO(0;\widetilde\fV,\widetilde\fW):=\left\{z \in D_2~ :~
\begin{minipage}{0.43\linewidth}
\text{$z=F(0)$ for some nc biholomorphism}\\
\text{$F$ of $\widetilde\fV$ onto $\widetilde\fW$}
\end{minipage}
\right\}.
\]
Since homogeneous varieties are invariant under multiplication by complex numbers, it is easy to check that these sets are circular, that is, for every $\mu \in \cO(0;\widetilde\fV)$ and $\nu \in \cO(0;\widetilde\fV,\widetilde\fW)$, it holds that $C_{\mu,D_1}:=\{z \in D_1: |z|=|\mu|\} \subseteq \cO(0;\widetilde\fV)$ and $C_{\nu,D_2}:=\{z \in D_2: |z|=|\nu|\} \subseteq \cO(0;\widetilde\fV,\fW)$.

Now, as $G(0)$ belongs to $\cO(0;\widetilde\fV,\widetilde\fW)$, we obtain that $C := C_{G(0),D_2}\subseteq \cO(0;\widetilde\fV,\widetilde\fW)$.
Therefore, the circle $G^{-1}(C)$ is a subset of $\cO(0;\widetilde\fV)$; note that this circle passes through the point $0 = G^{-1}(G(0))$. 
As $\cO(0;\widetilde\fV)$ is circular, every point of the interior of the circle $G^{-1}(C)$ lies in $\cO(0;\widetilde\fV)$.
Thus, the interior of the circle $C$ must be a subset of $\cO(0;\widetilde\fV,\widetilde\fW)$. 
But the interior of $C$ contains $0$. 
We conclude that $0 \in \cO(0;\widetilde\fV,\widetilde\fW)$.
\end{proof}

\begin{prop} \label{prop:preserves_radius}
Let $\fV \subseteq \fB_d$ and $\fW \subseteq \fB_e$ be two homogeneous nc varieties. Let $G \colon \widetilde{\fV} \to \widetilde{\fW}$ be an nc biholomorphism, such that $G(0) = 0$. 
Then for every $X \in \widetilde{\fV}$ we have that $\rho(X) = \rho(G(X))$.
\end{prop}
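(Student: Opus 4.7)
The plan is to apply the Vesentini--Schwarz lemma (Lemma \ref{lem:free_spectral_Schwarz}) to the restriction of $G$ to a complex disc through the origin. The key preliminary observation is that, because $\fV$ is homogeneous, its similarity envelope $\widetilde{\fV}$ is closed under scalar multiplication: if $Y = S^{-1}X'S$ with $X' \in \fV$ and $\lambda \in \mathbb C$, then $\lambda Y = S^{-1}(\lambda X')S$ with $\lambda X' \in \fV$. Thus for any $X \in \widetilde{\fV}$ and any $z \in \mathbb C$ with $|z|\rho(X) < 1$, the $d$-tuple $zX$ lies in $\widetilde{\fB_d}$ (by Lemma \ref{lem:tilde_is_spectral_ball} and homogeneity of the joint spectral radius) and therefore in $\widetilde{\fV}$. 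The analogous statement holds for $\widetilde{\fW}$.

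Given this, I would fix $X \in \widetilde{\fV}(n)$ and define
\[
f(z) := G(zX), \qquad |z| < 1/\rho(X)
\]
(with the convention $1/0 = \infty$). This $f$ is a classically holomorphic map from a disc into $\widetilde{\fW}(n) \subseteq \widetilde{\fB_e}(n)$, and $f(0) = G(0) = 0$. For any $R$ with $1 \le R < 1/\rho(X)$, the rescaled map $h_R(z) := f(Rz)$ is analytic from $\mathbb D$ into $\widetilde{\fB_e}(n)$ with $h_R(0) = 0$, so Lemma \ref{lem:free_spectral_Schwarz} gives $\rho(h_R(z)) \le |z|$ on $\mathbb D$. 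Setting $z = 1/R$ yields
\[
\rho(G(X)) = \rho(f(1)) \le 1/R,
\]
and letting $R \uparrow 1/\rho(X)$ (or $R \to \infty$ when $\rho(X) = 0$) produces $\rho(G(X)) \le \rho(X)$.

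The reverse inequality follows by symmetry: since $G$ is an nc biholomorphism and $G^{-1}(0) = G^{-1}(G(0)) = 0$, the same argument with the roles of $\widetilde{\fV}$ and $\widetilde{\fW}$ interchanged — using that $\fW$ is also homogeneous — applied at the point $G(X) \in \widetilde{\fW}$ gives
\[
\rho(X) = \rho\bigl(G^{-1}(G(X))\bigr) \le \rho(G(X)).
\]
Combining the two inequalities yields the desired equality.

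The one substantive point to verify is really the opening observation: homogeneity of $\fV$ (and of $\fW$) is what allows us to form the analytic disc $z \mapsto zX$ inside $\widetilde{\fV}$ and to keep its $G$-image inside $\widetilde{\fW}$. Without homogeneity this construction breaks down, which is exactly why the proposition is formulated for homogeneous varieties. Once that is in place, the remainder is a direct invocation of the Vesentini--Schwarz estimate from Section \ref{sec:prelim_jspr}.
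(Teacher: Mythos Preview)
Your argument is correct and in fact more streamlined than the paper's. The paper first reduces to $X\in\fV$, then splits into cases: for irreducible $X$ it invokes Lemma~\ref{lem:irreducible_radius} to pass to a similar point with $\rho(X)=\|X\|$ and applies Lemma~\ref{lem:free_spectral_Schwarz} on the unit disc via $z\mapsto G(zX/\|X\|)$; for reducible $X$ it passes to the Jordan--H\"older components. You instead work directly with the disc of radius $1/\rho(X)$ and rescale, which avoids both the irreducibility hypothesis of Lemma~\ref{lem:irreducible_radius} and the Jordan--H\"older bookkeeping. The trade-off is that the paper's route gives a concrete disc lying inside $\fV$ itself, whereas yours needs the description $\widetilde{\fV}=Z(J_\fV)\cap\widetilde{\fB_d}$ to justify that the larger disc stays in the similarity envelope.

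One expository point: your first sentence overreaches. The claim ``$\lambda X'\in\fV$ for $\lambda\in\bC$'' is false as written, since $\fV\subseteq\fB_d$ forces $|\lambda|\,\|X'\|<1$. What you actually need (and what your second sentence correctly uses) is that $zX\in\widetilde{\fV}$ whenever $|z|\rho(X)<1$. The clean justification is: $\rho(zX)=|z|\rho(X)<1$ gives $zX\in\widetilde{\fB_d}$ by Lemma~\ref{lem:tilde_is_spectral_ball}; homogeneity of $J_\fV$ gives $zX\in Z(J_\fV)$; and $\widetilde{\fV}=Z(J_\fV)\cap\widetilde{\fB_d}$. With that correction the proof goes through. (Also, in the rescaling step you need $R>1$ strictly so that $1/R\in\bD$; this is harmless since $\rho(X)<1$.)
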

\begin{proof}
Since similarities preserve the joint spectral radius, it suffices to prove that for every $X \in \fV$, $\rho(G(X)) = \rho(X)$. Let $X \in \fV$ be irreducible. 
By Lemma \ref{lem:irreducible_radius} we know that $\rho(X) = \min\{ \|S^{-1} X S\| \mid S \in \GL_n\}$. For the same reason, we may choose $T \in \GL_n$ that realizes the minimum and replace $X$ by $T^{-1} X T$ and assume that $\rho(X) = \|X\|$.

Now consider the function $f(z) = G(z X/\|X\|)$ defined on the disc. Since $G(0) = 0$, by Lemma \ref{lem:free_spectral_Schwarz} we know that $\rho(G(X)) = \rho(f(\|X\|)) \leq \|X\| = \rho(X)$. Applying the same consideration for $G^{-1}$ we get that for every irreducible $X \in \fB_d$, $\rho(X) = \rho(G(X))$.

By Lemma \ref{lem:irreducible_radius} we know that if $X$ is reducible, then $\rho(X) = \max\{ \rho(X_1),\ldots,\rho(X_k)\}$, where $X_1, \ldots, X_k$ are the Jordan--H\"{o}lder components of $X$. Since $G$ is an nc automorphism the Jordan--H\"{o}lder components of $G(X)$ are precisely $G(X_1),\ldots,G(X_k)$ and since we know that for each of them the spectral radius is preserved, we are done.
\end{proof}

%For a multiplier $f \in H^{\infty}(\fB_d)$, let us write $\rho(f)$ for the spectral radius of the multiplication operator $M_f$. Note that for every $X \in \fB_d$ the subspace $\bigvee_{v,y \in \C^n} K_{X,v,y}$ is $M_f^*$-invariant and the spectrum of the restriction of $M_f^*$ to this subspace is precisely the spectrum of $f(X)^*$ we conclude that $\rho(f) = \sup\{ \rho(f(X)) \mid X \in \fB_d \}$. Note that the above discussion accounts only for the point spectrum, but since the kernels are dense, we get the equality for the spectral radius.

The following is the free analogue of \cite[Lemma 7.5]{DRS11}.

\begin{cor} \label{cor:linear_isom}
Let $\fV, \fW \subseteq \fB_d$ be two homogeneous nc varieties. Let $A \colon \C^d \to \C^d$ be an invertible linear map, such that $A$ maps $\widetilde{\fV}$ bijectively onto $\widetilde{\fW}$. 
Then for every $X \in \fV$, $\rho(X) = \rho(A(X))$.
\end{cor}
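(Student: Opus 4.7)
The plan is to verify that the hypotheses of Proposition \ref{prop:preserves_radius} are satisfied by $A$, and then simply apply that proposition.

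First, I would observe that any linear map $A \colon \C^d \to \C^d$ extends canonically to a graded map on $\M_d$ by applying it coordinate-wise: $A(X)_i := \sum_j a_{ij} X_j$ for $X = (X_1,\dots,X_d) \in M_n^d$. This extension is visibly an nc holomorphic map. Indeed, it is linear (hence polynomial, hence holomorphic), it is graded, it respects direct sums componentwise, and it respects similarities because scalar-matrix multiplication and matrix conjugation commute: for $S \in \GL_n$,
\[
A(S^{-1} X S)_i = \sum_j a_{ij} S^{-1} X_j S = S^{-1}\Bigl(\sum_j a_{ij} X_j\Bigr) S = S^{-1} A(X)_i S.
\]
Moreover $A(0)=0$ since $A$ is linear. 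The same applies to the inverse linear map $A^{-1}$, and the compositions $A \circ A^{-1}$ and $A^{-1}\circ A$ agree with the identity on every $M_n^d$.

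Next, by hypothesis $A$ restricts to a bijection $\widetilde{\fV} \to \widetilde{\fW}$; combined with the first paragraph, the restriction $A\big|_{\widetilde{\fV}}$ is an nc biholomorphism from $\widetilde{\fV}$ onto $\widetilde{\fW}$ with inverse $A^{-1}\big|_{\widetilde{\fW}}$, and it fixes the origin.

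Finally, I would invoke Proposition \ref{prop:preserves_radius}, which applies to any such origin-preserving nc biholomorphism between similarity envelopes of homogeneous nc varieties, to conclude that $\rho(A(X)) = \rho(X)$ for every $X \in \widetilde{\fV}$, and in particular for every $X \in \fV$. No step here is expected to be an obstacle: the substantive work was already carried out in Proposition \ref{prop:preserves_radius} (and ultimately in the Vesentini--Schwarz lemma \ref{lem:free_spectral_Schwarz} together with the Jordan--H\"older behavior of $\rho$ in Lemma \ref{lem:irreducible_radius}). The only thing to verify is the straightforward fact that a linear map on $\C^d$ genuinely gives an nc biholomorphism of the similarity envelopes, which is the content of the first paragraph above.
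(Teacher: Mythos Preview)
Your proposal is correct and matches the paper's intended argument: the corollary is stated immediately after Proposition~\ref{prop:preserves_radius} with no proof, so it is meant to follow directly from that proposition once one notes that an invertible linear map on $\C^d$ induces a $0$-preserving nc biholomorphism between the similarity envelopes. You have simply made explicit the easy verifications the paper leaves implicit.
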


Before stating the next proposition, we require some notation. 
For an ideal $J \triangleleft \bC\langle z_1, \ldots, z_d\rangle$, we write
\[
Z(J) = Z_{\bM_d}(J) = \{X \in \bM_d : p(X) = 0 \,\, \textrm{ for all } \,\, p \in J\}.
\]
Given a homogeneous variety $\fV \subseteq \fB_d$, we let $J_\fV$ be the corresponding homogeneous ideal in $\bC\langle z_1, \ldots, z_d\rangle$ consisting of polynomials vanishing on $\fV$. 

\begin{prop}\label{prop:zeros_to_zeros}
Let $\fV \subseteq \fB_d$ and $\fW \subseteq \cB_e$ be two homogeneous nc varieties, and let $G: \widetilde{\fW} \to \widetilde{\fV}$ be a $0$-preserving nc holomorphic map. 
Then the linear transformation $\Delta G (0,0)$ maps $\widetilde\fW$ into $\widetilde\fV$. 
\end{prop}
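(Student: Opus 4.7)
The plan is to split the conclusion into two complementary facts about $L(X) := \Delta G(0,0)(X)$ for $X \in \widetilde{\fW}$: first, that every nc polynomial vanishing on $\fV$ also vanishes at $L(X)$; and second, that $\rho(L(X)) < 1$, so that $L(X) \in \widetilde{\fB_d}$. Together these show $L(X) \in \widetilde{\fV}$ once the following ideal-theoretic description is in place. Since $\fV$ is homogeneous it is cut out by homogeneous polynomials, so $\fV = \fB_d \cap Z_{\bM_d}(J_\fV)$, and then comparing degrees in the expansion $p(\lambda X) = \sum \lambda^k p_k(X)$ for $|\lambda|\leq 1$ makes $J_\fV$ itself a homogeneous ideal. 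Using similarity invariance of polynomial evaluation together with Lemma~\ref{lem:tilde_is_spectral_ball} (which identifies $\widetilde{\fB_d}$ as the tuples of joint spectral radius strictly less than $1$), I would deduce
\[
\widetilde{\fV} \;=\; \widetilde{\fB_d} \cap Z_{\bM_d}(J_\fV),
\]
and analogously $\widetilde{\fW} = \widetilde{\fB_e} \cap Z_{\bM_e}(J_\fW)$.

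For the first fact, fix $X \in \widetilde{\fW}$ and write $X = SYS^{-1}$ with $Y \in \fW$. Homogeneity of $\fW$ gives $tX = S(tY)S^{-1} \in \widetilde{\fW}$ for every $|t|\leq 1$, so $t\mapsto G(tX)$ is an analytic map $\overline{\D} \to \widetilde{\fV}$ vanishing at the origin. Expanding $G$ around $0$ as $G = L + G_2 + G_3 + \cdots$, with $G_k$ the $k$-th homogeneous component of the nc Taylor--Taylor series, one obtains $G(tX) = tL(X) + t^2 G_2(X) + t^3 G_3(X) + \cdots$. For any homogeneous $p \in J_\fV$ of degree $k$, the composite $h(t) := p(G(tX))$ vanishes identically on $\overline{\D}$, while substituting the expansion of $G(tX)$ into the monomials of $p$ shows that the lowest-order term of $h$ at $t=0$ is exactly $t^k p(L(X))$. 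Hence $p(L(X)) = 0$ for every homogeneous $p \in J_\fV$, and homogeneity of $J_\fV$ extends the conclusion to all of $J_\fV$.

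For the second fact, set $r := \rho(X) < 1$ and pick a real $s$ with $1 < s < 1/r$. Then $\rho(sX) = sr < 1$, so $sX \in \widetilde{\fB_e}$; and for every homogeneous $q \in J_\fW$ of degree $m$, $q(sX) = s^m q(X) = 0$. The ideal-theoretic description of $\widetilde{\fW}$ therefore forces $sX \in \widetilde{\fW}$. It follows that $f(t) := G(tsX)$ is an analytic map $\D \to \widetilde{\fV} \subseteq \widetilde{\fB_d}$ sending $0$ to $0$ with $f'(0) = sL(X)$. The Vesentini--Schwarz Lemma~\ref{lem:free_spectral_Schwarz} yields $\rho(sL(X)) \leq 1$, hence $\rho(L(X)) \leq 1/s < 1$. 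Combined with the first fact, this places $L(X)$ in $Z_{\bM_d}(J_\fV) \cap \widetilde{\fB_d} = \widetilde{\fV}$, as desired.

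The main obstacle is the strictness of the joint spectral radius estimate: naively applying Vesentini--Schwarz to $G(tX)$ directly only produces $\rho(L(X)) \leq 1$, which is too weak to guarantee $L(X) \in \widetilde{\fB_d}$. Extracting the extra margin requires enlarging $X$ to $sX$ for some $s > 1$ while staying inside $\widetilde{\fW}$, which is precisely what ties the argument to the homogeneity hypothesis via the ideal-theoretic picture of the similarity envelope; everything else reduces to bookkeeping with the nc Taylor series.
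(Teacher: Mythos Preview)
Your proof is correct and follows essentially the same two-step strategy as the paper: first showing that $L(X)$ lies in $Z_{\bM_d}(J_\fV)$ by extracting the lowest-order term of $p(G(tX))$ for homogeneous $p\in J_\fV$, then using the Vesentini--Schwarz Lemma after rescaling to force $\rho(L(X))<1$. The only cosmetic difference is in the rescaling: the paper takes $X\in\fW$ and applies Vesentini--Schwarz to $z\mapsto G(zX/\|X\|)$, obtaining $\rho(L(X))\leq\|X\|<1$ (then invoking similarity-equivariance of the linear map $L$ to pass to $\widetilde{\fW}$), whereas you work with $X\in\widetilde{\fW}$ directly and rescale by $s\in(1,1/\rho(X))$.
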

\begin{proof}
First, we prove that $\Delta G (0,0)$ maps $Z(J_\fW)$ into $Z(J_\fV)$.
For every $W\in \fW(n)$ define $\gamma_W: \mathbb D \to \mathbb M_d(n)$ by
\[
\gamma_W(z):= G(zW).
\]
Then $\gamma_W$ is a holomorphic map satisfying $\gamma_W(\mathbb D) \subseteq \widetilde\fV(n)$. As $G$ maps $0$ to $0$, $\gamma_W$ maps $0$ to $0$ as well. Thus, there exists a holomorphic map $\beta_W: \mathbb D \to \mathbb M_d(n)$ such that
\[
\gamma_W(z)=z\left(\gamma_W'(0)+z\beta_W(z)\right).
\]
Now let $f \in J_\fV$ be an $m$-homogeneous function. As $\gamma_W(\mathbb D) \subseteq \widetilde\fV(n)$ we have
\[
0
	= f\left(\gamma_W(z)\right) 
	= f\left( z\left(\gamma_W'(0)+z\beta_W(z)\right) \right)
	= z^m f\left(\gamma_W'(0)+z\beta_W(z)\right)
\]
for every $z \in \mathbb D$, and thus $f\left(\gamma_W'(0)+z\beta_W(z)\right)=0$ for every $z \in \mathbb D$. Evaluating this at $0$, we have $f\left(\gamma_W'(0)\right)=0$.
Since this is true for every homogeneous $f \in J_\fV$, we must have that $\gamma_W'(0) \in Z(J_\fV)$.
But
\[
\gamma_W'(0)=\lim_{z\to 0}\frac{G(zW)}{z}= \Delta G(0,0) (W),
\]
so $\Delta G (0,0)$ maps $\fW$, and hence $Z(J_\fW)$, to $Z(J_\fV)$.

Now, $\widetilde\fV=Z(J_\fV) \cap \widetilde\fB_d$, so in order to prove that $\Delta G(0,0)$ maps $\widetilde{\fW}$ into $\widetilde{\fV}$, it suffices to show that $\Delta G(0,0)$ maps $\widetilde \fW$ into $\widetilde \fB_d$. 
Let $X \in \fW(n)$ and consider the function $f(z) = G(zX/\|X\|)$. 
Since $G(zX) \in \widetilde{\fV}$ for every $z \in \D$ we know that $\rho(f(z)) < 1$ for every $z \in \D$. 
By Lemma \ref{lem:free_spectral_Schwarz} we get that $\frac{1}{\|X\|}\rho(\Delta G(0,0)(X)) = \rho(f^{\prime}(0))  \leq 1$, and, in particular, $\rho(\Delta G(0,0)(X)) < 1$. 
Thus by Lemma \ref{lem:tilde_is_spectral_ball} we conclude that $\Delta G(0,0)(X) \in \widetilde{\fB}_d$. 
\end{proof}

\begin{prop}\label{prop:bdd_linear=>cb_linear}
Let $\fV \subseteq \fB_d$ and $\fW \subseteq \fB_e$ be homogeneous varieties and
let $A$ be an invertible linear transformation mapping $\widetilde \fW$ onto $\widetilde \fV$.
Then the following statements are equivalent:
\begin{enumerate}[(i)]
\item $A$ is bi-Lipschitz with respect to $\ph$;
\item $\sup_{W\in \fW}\|\Phi_{A(W)}\|_{cb} < \infty$ and $\sup_{V\in \fV}\|\Phi_{A^{-1}(V)}\|_{cb} < \infty$;
\item the mapping $f \mapsto \widetilde{f} \circ A$, for $f \in \cH^\infty(\fV)$, is a completely bounded isomorphism $H^\infty(\fV) \to H^\infty(\fW)$;
\item $A$ is bi-Lipschitz with respect to $\phb$;
\item $\sup_{W\in \fW}\|\Phi_{A(W)}\| < \infty$ and $\sup_{V\in \fV}\|\Phi_{A^{-1}(V)}\| < \infty$;
\item the mapping $f \mapsto \widetilde{f} \circ A$, for $f \in \cH^\infty(\fV)$, is a bounded isomorphism $H^\infty(\fV) \to H^\infty(\fW)$;  and
\item the mapping $f \mapsto \widetilde{f} \circ A$, for $f \in \cH^\infty(\fV)$, is an algebraic isomorphism $H^\infty(\fV) \to H^\infty(\fW)$.
\end{enumerate}
\end{prop}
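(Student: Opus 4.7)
The seven conditions partition naturally into a $\ph$-cluster $\{$(i), (ii), (iii)$\}$ and a $\phb$-cluster $\{$(iv), (v), (vi), (vii)$\}$. The within-cluster equivalences are direct applications of Propositions \ref{prop:3equiv} and \ref{prop:4equiv_prime} to the nc holomorphic map $A$ and to its inverse $A^{-1}$; in particular, the equivalence (vi)$\iff$(vii) invokes the clause (iii)'$\iff$(iii) of Proposition \ref{prop:4equiv_prime}, which forces any algebraic homomorphism of the form $f \mapsto \widetilde f \circ A$ to be automatically bounded and weak-$*$ continuous. The pointwise inequality $\phb \leq \ph$ gives (i)$\implies$(iv) trivially, so only a bridging implication from the $\phb$-cluster back to the $\ph$-cluster remains.

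I would close the loop by showing (vi)$\implies$(iii): a bounded isomorphism $\alpha_A$ implemented by a linear map is automatically completely bounded. The linearity of $A$, combined with the homogeneity of $\fV$ and $\fW$, forces $\alpha_A$ to commute with the gauge action $\gamma_t(f)(X) := f(e^{it}X)$ and hence to preserve the natural grading on the polynomial subalgebras; on the $k$-homogeneous piece $H_k(\fV)$, the map $\alpha_A$ is induced by the $k$-fold tensor power $A^{\otimes k}$ acting on the coefficient spaces. Identifying $H_k(\fB_d)$ completely isometrically with the row operator space $R_{d^k}$ via the free shifts $L_w$ indexed by words of length $k$, and using the fact that cb-norms and operator norms coincide for linear maps between row operator spaces, one obtains the degree-wise bound $\|\alpha_A|_{H_k(\fV)}\|_{cb} \leq \|A\|^k$ after passing through the completely contractive quotients onto $H_k(\fV)$ and $H_k(\fW)$.

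When $\|A\| \leq 1$, the map $A$ sends $\fB_e$ into $\fB_d$, the image $(\alpha_A(z_1),\dots,\alpha_A(z_d))$ is a pure row contraction in $H^\infty(\fW)$, and the Bunce--Frazho--Popescu functional calculus yields $\|\alpha_A\|_{cb} \leq 1$ directly; the degree-wise estimate is then assembled using the Fej\'er means (completely contractive as convex averages of the gauge action) together with the weak-$*$ continuity of $\alpha_A$ furnished by Proposition \ref{prop:4equiv_prime}. The delicate case, and the main obstacle, is $\|A\| > 1$, where the degree-by-degree bound explodes. Here I would invoke Proposition \ref{prop:preserves_radius}, which since $A$ is $0$-preserving forces preservation of the joint spectral radius on $\widetilde\fW$, together with the bound $\sup_W \|\Phi_{A(W)}\| < \infty$ coming from (v). Writing $A(W) = S_W^{-1} V_W S_W$ with $V_W \in \fV$, the ampliation identity $\alpha_A^{(n)}(F)(W) = (I_n \otimes S_W^{-1}) F(V_W) (I_n \otimes S_W)$ yields $\|\alpha_A\|_{cb} \leq \sup_W \|S_W\| \|S_W^{-1}\|$, and by Paulsen's similarity theorem this supremum coincides with $\sup_W \|\Phi_{A(W)}\|_{cb}$. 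The crux, and the deepest step of the argument, is the uniform-similarity upgrade from $\sup_W \|\Phi_{A(W)}\|$ to $\sup_W \|\Phi_{A(W)}\|_{cb}$; it is precisely here that linearity of $A$, homogeneity of the varieties, and the spectral-radius constraint combine to close the loop in the linear case, even though no such automatic upgrade holds for a general bounded representation.
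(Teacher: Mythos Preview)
Your organization is correct: the within-cluster equivalences follow from Propositions \ref{prop:3equiv} and \ref{prop:4equiv_prime}, and the only nontrivial bridge is (vi)$\implies$(iii). But your argument for that bridge has a genuine gap.

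In the case $\|A\| > 1$ you correctly arrive at $\|\alpha_A\|_{cb} \leq \sup_W \|S_W\|\|S_W^{-1}\| = \sup_W \|\Phi_{A(W)}\|_{cb}$, which is nothing but condition (ii). You then assert that the ``uniform-similarity upgrade'' from $\sup_W \|\Phi_{A(W)}\|$ to $\sup_W \|\Phi_{A(W)}\|_{cb}$ is where ``linearity of $A$, homogeneity of the varieties, and the spectral-radius constraint combine to close the loop,'' but you never say \emph{how} they combine. This is precisely the content of the proposition, restated; neither the spectral-radius preservation of Proposition \ref{prop:preserves_radius} nor the degree-wise row-space identification you sketched earlier gives any leverage here, since the degree-$k$ bound $\|A\|^k$ blows up and spectral-radius control does not bound condition numbers of the similarities $S_W$ uniformly across levels. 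The case split on $\|A\|$ is a red herring: there is no reason to expect $\|A\| \leq 1$, and the hard case is the generic one.

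The paper takes a completely different route that avoids pointwise cb-estimates altogether. Using the identity $\|f\|_{H^\infty(\fV)} = \|f\|_{\cH_\fV}$ for homogeneous $f$ (Lemma 7.10 of \cite{SalShaSha17}), one shows that $\alpha$ extends to a bounded operator $T : \cH_\fV \to \cH_\fW$ on the underlying Hilbert spaces, with $\|T\| \leq \|\alpha\|$. A kernel-function computation then gives $T^* k_{W,v,y} = k_{AW,v,y}$, from which one checks directly that $M_{\alpha(f)} = T M_f T^{-1}$. Thus $\alpha$ is implemented by a \emph{single} similarity of multiplication operators, and complete boundedness is immediate. This global RKHS argument is the missing idea in your proposal.
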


\begin{proof}
The statements (i)--(iii) are equivalent by Proposition
\ref{prop:3equiv}, and the statements (iv)--(vii) are equivalent by Proposition \ref{prop:4equiv_prime}. In addition, the equivalent statements (i)--(iii) clearly imply (iv)--(vii), so we must show the opposite implication. Let $\alpha:H^\infty(\fV) \to H^\infty(\fW)$ be the bounded homomorphism of item (vi); we will show it is completely bounded. 

Recall that by \cite[Lemma 7.10]{SalShaSha17}, every nc holomorphic function $f$ on an nc variety $\fV$ which is $n$-homogeneous --- namely, $f(\lambda Z)=\lambda^n f(Z)$ for every $Z \in \fV$ and $\lambda \in \mathbb D$ --- is a restriction to the variety of an $n$-homogeneous polynomial $p$, and $\|f\|_{H^\infty(\fV)} = \|f\|_{\mathcal H_\fV} = \|p\|_{H^\infty(\fB_d)} = \|p\|_{\mathcal H^2_d}$.

Let $T$ be the densely-defined linear map $\mathcal H_\fV \to \mathcal H_\fW$ defined on a polynomial $f$ by $Tf=\alpha(f)$. 
As $A$ is linear, $T$ must be graded, namely, it maps $n$-homogeneous functions to $n$-homogeneous functions. Let $f \in \mathcal H_\fV$ be a finite sum of homogeneous functions $f=\sum_n f_n$. 
Then obviously $\sum_n Tf_n$ is the homogeneous decomposition of $Tf$ and
\[
\begin{split}
\|Tf\|_{\mathcal H_\fW}^2	&=		\sum_n \|Tf_n\|_{\mathcal H_\fW}^2
					=		\sum_n \|Tf_n\|_{H^\infty(\fW)}^2\\
					&\leq		\sum_n \|\alpha\|^2 \|f_n\|_{H^\infty(\fV)}^2
					=		\|\alpha\|^2 \sum_n \|f_n\|_{\mathcal H_\fV}^2\\
					&=		\|\alpha\|^2 \|f\|_{\mathcal H_\fV}^2.
\end{split}
\]
Thus, $T$ extends to a well defined bounded map $\mathcal H_\fV \to \mathcal H_\fW$.

Now let $k_{W,v,y}$ be a kernel function in $\mathcal H_\fW$. Then, for every $g \in \mathcal H_\fV$ we have
\[
\langle g , T^*k_{W,v,y} \rangle	=	\langle (Tg)(W)v,y \rangle
						=	\langle g(AW)v,y \rangle
						=	\langle g,k_{AW,v,y} \rangle.
\]
Thus, $T^*k_{W,v,y}=k_{AW,v,y}$.

Now, as $H^\infty (\fV) \cong \mlt(\mathcal H_\fV)$ for every $f \in \mathcal H^\infty(\fV)$ there exists a bounded operator $M_f \in B(\mathcal H_\fV)$ such that $fg=M_fg$ for every $g \in \mathcal H_\fV$. So for every kernel function $k_{W,v,y} \in \mathcal H_\fW$ we have
\[
(T^*)^{-1} M_f^* T^* k_{W,v,y}	=	(T^*)^{-1}M_f^* k_{AW,v,y}
					%	=	(T^*)^{-1} k_{AW,v,f(AW)^*y}
						=	k_{W,v,f(AW)^*y}
						=	M_{\alpha(f)}^* k_{W,v,y}.
\]
Therefore
\[
M_{\alpha(f)}	=	T M_f T^{-1}.
\]
Thus, $\alpha$ is given by similarity and is therefore completely bounded.
\end{proof}

Combining Theorem \ref{thm:homog_no_need_w*} and Propositions \ref{prop:zeros_to_zeros} and \ref{prop:bdd_linear=>cb_linear}, we obtain the main result of this section. 

\begin{thm}\label{thm:isom_thm_for_homo}
Let $\fV \subseteq \fB_d$ and $\fW \subseteq \cB_e$ be two homogeneous nc varieties. 
The following statements are equivalent: 
\begin{enumerate}[(i)]
\item $H^\infty(\fV)$ and $H^\infty(\fW)$ are weak-$*$ continuously isomorphic. 
\item $H^\infty(\fV)$ and $H^\infty(\fW)$ are boundedly isomorphic. 
\item $H^\infty(\fV)$ and $H^\infty(\fW)$ are completely boundedly isomorphic. 
\item There exists a $\phb$-bi-Lipschitz linear map mapping $\widetilde{\fW}$ onto $\widetilde{\fV}$. 
\item There exists a $\ph$-bi-Lipschitz linear map mapping $\widetilde{\fW}$ onto $\widetilde{\fV}$. 
\end{enumerate}
\end{thm}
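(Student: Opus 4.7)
I will close the cycle $(\mathrm{iii}) \Rightarrow (\mathrm{ii}) \Rightarrow (\mathrm{i}) \Rightarrow (\mathrm{v}) \Rightarrow (\mathrm{iv}) \Rightarrow (\mathrm{iii})$, invoking the three ingredients referenced in the excerpt's closing remark.

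The easy links dispatch as follows. The implication $(\mathrm{iii}) \Rightarrow (\mathrm{ii})$ is immediate. For $(\mathrm{ii}) \Rightarrow (\mathrm{i})$, Theorem \ref{thm:homog_no_need_w*} shows any bounded isomorphism $\alpha$ is implemented by an nc biholomorphism $G$ via $\alpha(f) = \widetilde f \circ G$, and Proposition \ref{prop:4equiv_prime}(c) (applied to both $\alpha$ and $\alpha^{-1}$) ensures such a composition map is automatically weak-$*$ continuous. The implications $(\mathrm{v}) \Rightarrow (\mathrm{iv})$ and $(\mathrm{iv}) \Rightarrow (\mathrm{iii})$ follow directly from the equivalence of the seven conditions in Proposition \ref{prop:bdd_linear=>cb_linear}: in the first, a $\ph$-bi-Lipschitz linear map is in particular $\phb$-bi-Lipschitz, and in the second, a $\phb$-bi-Lipschitz linear map gives a completely bounded isomorphism via composition.

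The substantive implication is $(\mathrm{i}) \Rightarrow (\mathrm{v})$. Starting from a weak-$*$ continuous isomorphism $\alpha$, Corollary \ref{cor:cb-isom<=>biholo_weakstar} furnishes a $\phb$-bi-Lipschitz nc biholomorphism $G : \widetilde{\fW} \to \widetilde{\fV}$. The 0-preserving modification of $G$ produced in the proof of Proposition \ref{prop:biholo=>0-biholo} is of the form $G \circ \phi$ where $\phi$ is an automorphism of $\widetilde{\fW}$ built explicitly from $G$, $G^{-1}$, and the isometric rotations $X \mapsto e^{i\theta}X$, all of which are $\phb$-bi-Lipschitz; hence I may assume $G$ itself is 0-preserving and $\phb$-bi-Lipschitz. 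Setting $A := \Delta G(0,0)$ and $B := \Delta G^{-1}(0,0)$, Proposition \ref{prop:zeros_to_zeros} gives that $A$ maps $\widetilde{\fW}$ into $\widetilde{\fV}$ and $B$ maps $\widetilde{\fV}$ into $\widetilde{\fW}$; the chain rule forces $BA = I$ and $AB = I$, so $A$ is an invertible linear map between the two similarity envelopes.

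To conclude (v), I verify condition (vi) of Proposition \ref{prop:bdd_linear=>cb_linear} by producing the bounded isomorphism $\alpha_0 : H^\infty(\fV) \to H^\infty(\fW)$ implementing $A$ via a Ces\`aro-type averaging. Homogeneity yields the isometric circle actions $\rho^\fV_\theta(f)(Z) := f(e^{-i\theta}Z)$ on $H^\infty(\fV)$ and the analogous action on $H^\infty(\fW)$; each $\alpha_\theta := \rho^\fW_{-\theta} \circ \alpha \circ \rho^\fV_\theta$ is a weak-$*$ continuous isomorphism with $\|\alpha_\theta\| = \|\alpha\|$, so the pointwise integral
\[
\alpha_0(f)(W) := \frac{1}{2\pi}\int_0^{2\pi}\alpha_\theta(f)(W)\,d\theta
\]
defines a bounded linear map $H^\infty(\fV) \to H^\infty(\fW)$ of norm at most $\|\alpha\|$. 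For a homogeneous polynomial $f$ of degree $m$ a direct Fourier calculation identifies $\alpha_0(f)$ with the $m$-homogeneous component $(\alpha(f))_m$; expanding $G(W) = A(W) + O(W^2)$ and using $m$-homogeneity of $\widetilde f$ identifies this explicitly with $\widetilde f \circ A$. For general $f \in H^\infty(\fV)$, weak-$*$ approximation of $f$ by polynomials together with the weak-$*$ continuity of the finite-dimensional representations $\Phi_{A(W)}$ for $A(W) \in \widetilde{\fV}$ (Theorem \ref{thm:finite_dim_reps}) yields $\alpha_0(f)(W) = \widetilde f(A(W))$ for every $W \in \fW$. Multiplicativity of $\alpha_0$ then follows from multiplicativity of composition, and the same construction applied to $\alpha^{-1}$ produces a bounded inverse corresponding to $A^{-1}$. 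Proposition \ref{prop:bdd_linear=>cb_linear} now gives that $A$ is $\ph$-bi-Lipschitz, which is (v).

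The main technical obstacle lies in the two bookkeeping points supporting the averaging argument: first, verifying that the 0-preserving modification of $G$ supplied by Proposition \ref{prop:biholo=>0-biholo} really does retain the $\phb$-bi-Lipschitz property, which requires tracing through that proof to see that the automorphism involved is a composition of $\phb$-bi-Lipschitz pieces; and second, promoting the identification $\alpha_0(f) = \widetilde f \circ A$ from polynomials to all of $H^\infty(\fV)$, which rests on the fact that $A$ maps $\widetilde\fW$ into $\widetilde\fV$ so that every $\Phi_{A(W)}$ is weak-$*$ continuous.
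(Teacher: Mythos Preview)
Your argument is correct and closes the cycle cleanly. The one place where you genuinely diverge from the paper is in the core step: showing that the linear part $A=\Delta G(0,0)$ implements a bounded isomorphism. The paper fixes $W\in\fW$, sets $u(z)=G(zW)/z$, and studies the Banach-space valued holomorphic function $F(z)=\Phi_{u(z)}\in B(H^\infty(\fV),M_n)$; it proves $F$ is holomorphic on $\overline{\bD}$ and then applies Cauchy's integral formula on the boundary circle, where the rotation automorphisms $\beta_\theta$ give $\|F(e^{i\theta})\|\leq M$, to conclude $\|\Phi_{A(W)}\|=\|F(0)\|\leq M$. Your approach instead averages the conjugated isomorphisms $\alpha_\theta=\rho^\fW_{-\theta}\circ\alpha\circ\rho^\fV_\theta$ directly to produce $\alpha_0$, and identifies $\alpha_0(f)=\widetilde f\circ A$. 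These are two faces of the same coin: unwinding your integral gives $\alpha_0(f)(W)=\frac{1}{2\pi}\int_0^{2\pi}\widetilde f(u(e^{i\theta}))\,d\theta$, which equals $\widetilde f(u(0))=\widetilde f(A(W))$ by the mean value property --- the same Cauchy-formula content the paper uses, just packaged at the level of the isomorphism rather than at the level of the point evaluation. Your averaging phrasing is arguably slicker (it gives the bounded map $\alpha_0$ in one stroke), while the paper's phrasing makes the analytic input (holomorphy of $F$) explicit. A minor note: your weak-$*$ approximation step can be bypassed entirely by the mean-value computation above, which works directly for every $f\in H^\infty(\fV)$ once one knows $u(z)\in\widetilde\fV$ for all $z\in\overline{\bD}$ --- a fact the paper establishes carefully and which your argument implicitly relies on.
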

\begin{proof} 
By Corollary \ref{cor:cb-isom<=>biholo_weakstar} and Proposition \ref{prop:bdd_linear=>cb_linear}, we need to prove only one implication. 

If $\alpha$ is a bounded isomorphism, then by Theorem \ref{thm:homog_no_need_w*} it induces a bi-Lipschitz biholomorphism $G : \widetilde{\fW} \to \widetilde{\fV}$. 
By applying the analogue of Proposition \ref{prop:biholo=>0-biholo} in the bi-Lipschitz category, we may assume that $G : \widetilde{\fW} \to \widetilde{\fV}$ is a bi-Lipschitz biholomorphism that takes $0$ to $0$. 
By Proposition \ref{prop:4equiv_prime}, we have 
\[
M:= \sup_{W \in \fW} \|\Phi_{G(W)}\| < \infty .
\]
Let $A = \Delta G(0,0)$. 
By Proposition \ref{prop:zeros_to_zeros}, $A$ maps $\widetilde{W}$ into $\widetilde{V}$. 
Our goal is to show that $\sup_{W \in \fW} \|\Phi_{A(W)}\| < \infty$; in fact we will show that $\sup_{W \in \fW} \|\Phi_{A(W)}\| \leq M$. 
  
Fix $W \in \fW(n)$.
We define $u: \ol{\bD} \to \bM_d$ by $u(z) = G(zW)/z$. 
Since $\fW$ is homogeneous, for every $z \in \ol{\bD}$ we have that $zW \in \fW(n)$, and therefore, $G(zW) \in \widetilde \fV \subseteq \widetilde \fB_d$, so by Lemma \ref{lem:tilde_is_spectral_ball} $\rho(G(zW))<1$. Since $\ol{\bD}$ is compact $s:=\sup_{z\in\ol{\bD}}\rho(G(zW))<1$, so we can choose some $s<r<1$.
As $z \mapsto \frac{1}{r}G(zW)$ is an analytic function mapping $\mathbb D$ into $\widetilde \fB_d$ and $0$ to $0$, by the Vesentini--Schwarz Lemma (Lemma \ref{lem:free_spectral_Schwarz}) we have that $\rho(\frac{1}{r}G(zW)) \leq |z|$ so that $\rho(u(z)) < r<1$. Thus, by Lemma \ref{lem:tilde_is_spectral_ball} we see that $u(z) \in \widetilde{\fB_d}$ for all $z \in \bD$. 
Since $\widetilde \fV$ is homogeneous, $u(z) = G(zW)/z$ belongs to $Z(J_{\fV})$, so we conclude that $u(z) \in Z(J_{\fV}) \cap \widetilde{\fB_d} = \widetilde \fV$ for all $z \in \mathbb D$.

Next, we define a function $F$ from $\ol{\bD}$ to the Banach space $B(H^{\infty}(\fV),M_n)$ by $F(z) = \Phi_{u(z)}$. We will show that this function is holomorphic. By applying some M\"obius transformation it suffices to show that $F$ is differentiable at the origin. 
Let $f \in H^{\infty}(\fV)$ with $\|f\| \leq 1$, and set 
$g(z):=\frac{1}{2}\left(f(u(z)) - f(u(0))\right)$. Then $g:\mathbb D \to M_n$ is a holomorphic function with $\|g\| \leq 1$ and $g(0)=0$. By the Schwarz lemma, $\|g(z)\| \leq |z|$, so we obtain
\[
\|\Phi_{u(z)}-\Phi_{u(0)}\| \leq 2|z|.
\]

Now, the function $f \circ u$ is holomorphic in $\ol{\bD}$ so by Cauchy's integral formula for every $z\neq w \in \frac{1}{2}\bD$
\[
\begin{split}
\frac{1}{|z-w|} \Bigg\|& \frac{\Phi_{u(z)}(f)-\Phi_{u(0)}(f)}{z} 
				-  \frac{\Phi_{u(w)}(f)-\Phi_{u(0)}(f)}{w}\Bigg\|\\
	&=\frac{1}{|z-w|}\left\| \frac{f(u(z))-f(u(0))}{z} - \frac{f(u(w))-f(u(0))}{w} \right\| \\
	&=\frac{1}{|z-w|}\left\| \frac{\int_{|\zeta|=1}f(u(\zeta))\left(\frac{1}{\zeta-z} - \frac{1}{\zeta}\right)d\zeta}{z} -  \frac{\int_{|\zeta|=1}f(u(\zeta))\left(\frac{1}{\zeta-w} - \frac{1}{\zeta}\right)d\zeta}{w}\right\| \\
	&=\left\|\int_{|\zeta|=1}f(u(\zeta))\left(\frac{1}{\zeta(\zeta-z)(\zeta-w)} \right)d\zeta\right\|\leq 4,
\end{split}
\]
where $d \zeta$ is the normalized Lebesgue measure of $\mathbb T$.
Since this is true for every $\|f\| \leq 1$, we have that for every $z,w \in \frac{1}{2}\bD$
\[
\left\|\frac{F(z)-F(0)}{z}-\frac{F(w)-F(0)}{w}\right\|
=\left\| \frac{\Phi_{u(z)}-\Phi_{u(0)}}{z} 
				-  \frac{\Phi_{u(w)}-\Phi_{u(0)}}{w}\right\| \leq 4 |z-w|,
\]
and in particular $\lim_{z \to 0}\frac{F(z)-F(0)}{z}$ exists, so $F$ is differentiable at the origin. 

Now let $\beta_{\theta} \in \Aut(H^{\infty}(\fV))$ be the (completely isometric) rotation automorphism
\[
\beta_{\theta}(f)(V):=f(e^{-i\theta} V),\quad f \in H^{\infty}(\fV),~V \in \fV.
\]
Then $F(e^{i\theta})=\Phi_{e^{-i\theta}G(e^{i\theta} W)} =\Phi_{G(e^{i\theta} W)} \circ \beta_\theta.$ Thus, $\sup_{|z|=1}\|F(z)\| \leq M$.
But as $F$ is holomorphic in $\ol{\bD}$, it satisfies Cauchy's integral formula in the closed disc, which implies a maximum principle: 
\[
\|F(0)\| \leq \max_{|z|=1} \|F(z)\| \leq M. 
\]
However, since $F(0) = \Phi_{A(W)}$, we have that $\|\Phi_{A(W)}\| \leq M$ for all $W \in \fW$. 
This completes the proof.
\end{proof}

%%%%%%%%%%%%%%%%%%%%%%%%%%%%%%%%%%
\begin{quest}\label{quest:bilipLin}
Is a linear map between similarity envelopes of nc homogeneous varieties automatically bi-Lipschitz?
\end{quest}

%%%%%%%%%%%%%%%%%%%%%%%%%%%%%%%%%%

%%%%%%%%%%%%%%%%%%%%%%%%%%%%%%%%%%

\begin{example} \label{ex:x^2}

Let us consider a simple example that illuminates the rigidity of the noncommutative case.  
Let $\fV \subseteq \fB_2$ be the subvariety cut out by the single polynomial $x^2$. 
Let $A$ be a $2 \times 2$ matrix mapping $\widetilde{\fV}$ isomorphically onto some subvariety of $\widetilde{\fB}_2$. 
Note that $\fV(1)$ is the single line corresponding to the $y$-axis. 
Therefore, by \cite[Lemma 7.5]{DRS11} $A$ maps the $y$-axis isometrically onto some line. Multiplying $A$ on the left by a unitary we may assume that $A$ has the form $A = \left( \begin{smallmatrix} a & 0 \\ c & 1 \end{smallmatrix}\right)$. We will show that in this case $A$ is a diagonal unitary.

First consider the point
\[
Z = \begin{bmatrix} 0 & 1 \\ 0 & 0 \end{bmatrix}, \, W = \begin{bmatrix} 0 & 0 \\ 1 & 0 \end{bmatrix}.
\]
This point is a coisometry and thus has spectral radius $1$. By Corollary \ref{cor:linear_isom} we know that $A$ preserves the spectral radius and thus the image must also have spectral radius $1$. The matrix of the associated completely positive map with respect to the standard basis of $M_2$ is
\[
\begin{bmatrix} 0 & 0 & 0& |a|^2 + |c|^2 \\ 0 & 0 & c &0 \\ 0 & \overline{c} & 0 & 0 \\ 1 & 0 & 0 & 0\end{bmatrix}.
\]
Its spectral radius is $\sqrt{|a|^2 + |c|^2}$ and thus we have that $|a|^2 + |c|^2 = 1$.

Now, let $\epsilon, \delta > 0$, be such that $\epsilon^2 + \delta^2 = 1$. Write $c = r e^{i \theta}$ and consider the point 
\[
X = \begin{bmatrix} 0 & \epsilon e^{- i \theta} \\ 0 & 0 \end{bmatrix}, \, Y = \begin{bmatrix} 0 & \delta \\ \delta & 0 \end{bmatrix}.
\]
It is immediate that $X X^* + Y Y^* = \left(\begin{smallmatrix} 1 & 0 \\ 0 & \delta^2 \end{smallmatrix}\right)$ and therefore this point lies on the boundary of the ball. Furthermore, a short computation of the associated completely positive map shows that the spectral radius of this point is $0 < \sqrt{\delta} < 1$. By Corollary \ref{cor:linear_isom} $A$ maps the point $(X,Y)$ to a point with spectral radius $\sqrt{\delta}$. The image of this point is
\[
A(X,Y) = \left( \begin{bmatrix} 0 & a \epsilon e^{-i \theta} \\ 0 & 0 \end{bmatrix}, \, \begin{bmatrix} 0 & \delta + r \epsilon \\ \delta & 0 \end{bmatrix} \right).
\]
%Note that for every $n \in \N$, the $2n$-th power of the second coordinate is $\delta^n (\delta + r\epsilon)^n I$. Since the spectral radius is at most one, we have that for every $\eta > 0$, $\delta (\delta + r \epsilon) < (1 + \eta)^4$ and thus $\delta^2 + r \epsilon \delta \leq 1$. Since $\delta^2 = 1 - \epsilon^2$ and $\epsilon > 0$, we have that $r \leq \epsilon/\delta$. Letting $\epsilon$ tend to $0$ we see that it can only hold if $r = 0$. We conclude that $A$ is in fact diagonal.
Computing again the associated completely positive map and its spectral radius we get that the spectral radius of the point is $\sqrt{\delta} \sqrt[4]{(\delta+r \epsilon)^2 + |a|^2 \epsilon^2}$. Thus $(\delta+r \epsilon)^2 + |a|^2 \epsilon^2 = 1$. Opening brackets and using our assumptions and the fact that $|a|^2 + r^2 = 1$ we get that:
\[
1 = (\delta+r \epsilon)^2 + |a|^2 \epsilon^2 = 1 - \epsilon^2 + 2 r \epsilon \delta + r^2 \epsilon^2 + |a|^2 \epsilon^2 = 1 + 2 r \epsilon \delta.
\]
This is possible if and only if $r = 0$ and thus $A$ is a diagonal unitary.

Combining this with Theorem \ref{thm:isom_thm_for_homo} we see that the only varieties that are isomorphic to the variety cut out by $x^2$ are those that are conformally equivalent to it via a unitary.
\end{example}

%%%%%%%%%%%%%%%%%%%%%%%%%%%%%%%%%%%%%%%%%%%%%%%
\section{Example: the $q$-commutation varieties} \label{sec:q-commutation}
%%%%%%%%%%%%%%%%%%%%%%%%%%%%%%%%%%%%%%%%%%%%%%%

In quantized versions of classical algebras, relations such as $xy = qyx$ --- also known as $q$-commutation --- naturally appear. Let $q \in \mathbb C$ and let $\fV_q \subseteq \fB_2$ be the homogeneous variety generated by the ideal $\langle z_1z_2 - qz_2 z_1\rangle$.

We will first show that unless $p \in \{q,\frac{1}{q}\}$, there is  
no $2 \times 2$ matrix mapping $\widetilde{\fV_q}$ to $\widetilde{\fV_p}$, and therefore for $q \in \mathbb D \cup \{e^{i\theta}: 0\leq \theta \leq \pi\}$ we get a continuum of mutually non-isomorphic Banach algebras $H^\infty(\fV_p)$. 

\begin{thm}
Let $q \neq p$ be two complex numbers. 
If $p = \frac{1}{q}$, then
\begin{enumerate}[(i)]
\item the varieties $\fV_q$ and $\fV_p$ are conformally equivalent, ~and
\item the operator algebras $H^{\infty}(\fV_q)$  and $H^{\infty}(\fV_p)$ are unitarily equivalent;
\end{enumerate}
and otherwise,
\begin{enumerate}[(i)]
\item the varieties $\widetilde{\fV_q}$ and $\widetilde{\fV_p}$ not biholomorphically equivalent, ~and
\item the Banach algebras $H^{\infty}(\fV_q)$  and $H^{\infty}(\fV_p)$ are not boundedly isomorphic.
\end{enumerate}
\end{thm}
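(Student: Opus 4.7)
When $p=1/q$, I would exhibit the coordinate swap $\sigma\colon(z_1,z_2)\mapsto(z_2,z_1)$ --- a unitary automorphism of $\bC^2$ --- as the required equivalence. Since $X_1X_2=qX_2X_1$ is equivalent to $X_2X_1=q^{-1}X_1X_2$, $\sigma$ restricts to an nc biholomorphism $\fV_q\to\fV_{1/q}$, which gives (i). As $\sigma$ is implemented by a unitary on $\bC^2$, it induces a unitary between the ambient free Drury--Arveson spaces, and $f\mapsto f\circ\sigma$ is a unitarily implemented, completely isometric isomorphism between $H^\infty(\fV_{1/q})$ and $H^\infty(\fV_q)$ (a special case of \cite[Corollary~6.14]{SalShaSha17}), giving (ii).

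Now suppose $p\neq q$ and $p\neq 1/q$. My plan is to reduce both negative claims to a single finite-dimensional linear-algebra problem. By Theorem~\ref{thm:isom_thm_for_homo}, bounded isomorphism of $H^\infty(\fV_q)$ and $H^\infty(\fV_p)$ is equivalent to the existence of a bi-Lipschitz invertible linear map $A\colon\widetilde{\fV_q}\to\widetilde{\fV_p}$; and the biholomorphism assertion reduces to the same linear question, because Proposition~\ref{prop:biholo=>0-biholo} allows us to arrange any biholomorphism $\widetilde{\fV_q}\to\widetilde{\fV_p}$ to fix the origin, after which Proposition~\ref{prop:zeros_to_zeros}, applied to the map and to its inverse, identifies the derivative at $0$ as an invertible linear bijection between the envelopes. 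It thus suffices to rule out \emph{any} invertible $A=\left(\begin{smallmatrix}a&b\\c&d\end{smallmatrix}\right)$ carrying $\widetilde{\fV_q}$ into $\widetilde{\fV_p}$.

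To do this, for $(X_1,X_2)\in\fV_q$ I would set $Y=A(X_1,X_2)$ and impose $Y_1Y_2=pY_2Y_1$. A direct computation using $X_1X_2=qX_2X_1$ yields
\[
Y_1Y_2-pY_2Y_1=(1-p)ac\,X_1^2+\bigl(ad(q-p)+bc(1-pq)\bigr)X_2X_1+(1-p)bd\,X_2^2.
\]
Testing this identity on three members of $\fV_q$, namely $(X_1,0)$, $(0,X_2)$, and the scaled pair $(e_{12},\diag(1,q))\in\fV_q(2)$ --- for which $X_1^2=0$ and $X_2X_1=e_{12}$ --- extracts the three scalar relations
\[
(1-p)ac=0,\qquad (1-p)bd=0,\qquad ad(q-p)+bc(1-pq)=0.
\]
A brief case analysis then closes the argument: if $p=1$ the last relation reads $(q-1)\det A=0$, which is impossible since $q\neq p=1$ and $\det A\neq 0$; if $p\neq 1$ then $ac=bd=0$, and the invertibility of $A$ forces it to be either diagonal (so $ad(q-p)=0$, hence $p=q$) or anti-diagonal (so $bc(1-pq)=0$, hence $p=1/q$). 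Both alternatives contradict the standing hypothesis.

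The main obstacle I anticipate is translating the geometric condition ``$A$ preserves the similarity envelope'' into usable algebraic constraints on the entries of $A$; this is handled by the careful choice of test pairs, which between them break both the diagonal and the anti-diagonal symmetry and pin down the only possible values of $p$ in terms of $q$.
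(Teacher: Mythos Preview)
Your argument is correct and, in the linear-algebra step, takes a somewhat different route than the paper. Both proofs reduce to showing that no invertible linear $A\in\GL_2(\bC)$ carries $\widetilde{\fV_q}$ into $\widetilde{\fV_p}$ when $p\notin\{q,1/q\}$. The paper argues geometrically on the first level: for $q\neq 1$ the scalar level $\fV_q(1)$ is the cross $\{xy=0\}$ in $\bB_2$, so any $A$ preserving it must already be diagonal or anti-diagonal (hence $p\in\{q,1/q\}$), while the case $q=1$ is handled separately because $\fV_1(1)=\bB_2$ is not a cross. Your approach bypasses the first-level geometry entirely by expanding $Y_1Y_2-pY_2Y_1$ and evaluating at well-chosen level-$1$ and level-$2$ points to read off the three scalar constraints directly. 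This has the advantage that no case split on $q=1$ is needed and no reduction to a fundamental domain is required; the paper's approach is perhaps more conceptual in that it explains \emph{why} only the diagonal/anti-diagonal maps survive.

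One small point you should make explicit: you assert that $A=\Delta G(0,0)$ is invertible on $\bC^2$, but Proposition~\ref{prop:zeros_to_zeros} only gives that $\Delta G(0,0)$ and $\Delta G^{-1}(0,0)$ map the envelopes into one another. The chain rule shows they are mutual inverses on $Z(J_{\fV_p})$ and $Z(J_{\fV_q})$; since the first level of each $\fV_q$ spans $\bC^2$ (it contains both coordinate axes), this forces $A$ to be invertible on all of $\bC^2$, which is what your case analysis uses. Also, your third test pair $(e_{12},\diag(1,q))$ lies in $Z(J_{\fV_q})$ but not in $\fB_2$; you should note that the constraint $Y_1Y_2=pY_2Y_1$ is homogeneous, so by scaling one may test on all of $Z(J_{\fV_q})$, not just $\fV_q$.
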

\begin{proof}
For the first case, note that for every $q \neq 0$, the unitary matrix
$
\begin{bmatrix}
0	&	1\\
1	&	0
\end{bmatrix}
$
--- an automorphism of $\fB_2$ --- maps $\fV_q$ onto $\fV_{\frac{1}{q}}$. Thus, $\fV_q$ and $\fV_{\frac{1}{q}}$ are conformally equivalent, and as a result $H^\infty(\fV_q)$ and $H^\infty(\fV_{\frac{1}{q}})$ are unitarily equivalent.

We now move to the second case. Note that if $q=1$, then the first level of $\fV_q$ --- a variety that is considered in the next section and is called the commutative nc ball --- is $\mathbb B_2$ while the first level of every other $\fV_q$ is only the cross $C:=\{(x,y) \in \mathbb B_2: xy=0 \}$. So we may assume that $q \neq 1$.

Now let $p,q \in \mathbb D \cup \{e^{i\theta}: 0< \theta \leq \pi\}$, and assume there is a $2 \times 2$ invertible matrix $A$ mapping $\fV_q$ onto $\fV_p$. A simple computation shows that a matrix $A$, mapping the cross $C$ onto itself, must be of the form
\[
\begin{bmatrix}
\lambda_1	&	0\\
0		&	\lambda_2
\end{bmatrix}
~\text{ or }~
\begin{bmatrix}
0		&	\lambda_1\\
\lambda_2&	0
\end{bmatrix}
\]
for some $\lambda_1,\lambda_2 \in \mathbb T$. 
The first option maps $\fV_q$ onto itself, and the second option maps $\fV_q$ onto $\fV_{\frac{1}{q}}$, so we only need to show that $\fV_q \neq \fV_p$ for $q \neq p$. 
Indeed, the point
\[
\frac{1}{2\sqrt{q^2+1}}
\left(
\begin{bmatrix}
q	&	0\\
0	&	1
\end{bmatrix}
,
\begin{bmatrix}
0	&	1\\
0	&	0
\end{bmatrix}
\right) \in \fB_2
\] 
belongs to $\fV_q$ but not to $\fV_p$.
\end{proof}

We will now examine the existence of irreducible points in the varieties $\fV_q$. 
This will later help us understand maps of the corresponding operator algebras. 

\begin{prop}\label{prop:q-comm-irreducible}
If $q$ is not a root of unity, then $\fV_q$ has no non-scalar irreducible points.
If $q$ is a root of unity, primitive of order $k$, then it has non-scalar irreducible points only in the $k$-th level. In the latter case, they are all similar to a point of the form
\[
\left(
\lambda\begin{bmatrix}
1		& 0		& \cdots	& \cdots	& 0 		\\
0		& q		&  \ddots 	& 		& \vdots	\\
\vdots	& \ddots	& q^2  	& \ddots	& \vdots 	\\
\vdots	& 		& \ddots  	& \ddots 	& 0	 	\\
0		& \cdots	& \cdots	& 0		& q^{k-1}	\\
\end{bmatrix} 
,
\begin{bmatrix}
0		& \cdots	& \cdots	& 0		& \mu	\\
1		& \ddots	&   		& 		& 0 		\\
0		& \ddots	& \ddots  	& 		& \vdots 	\\
\vdots	& \ddots	& \ddots  	& \ddots 	& \vdots 	\\
0		& \cdots	& 0		& 1		& 0 		\\
\end{bmatrix} \right)
\]
for some non zero $\lambda$ and $\mu$.
\end{prop}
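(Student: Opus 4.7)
The plan is to classify the irreducible points in $\fV_q$ by analyzing the joint eigenstructure of $(X_1, X_2)$ using the relation $X_1 X_2 = q X_2 X_1$. I first assume $q \neq 0$; the case $q = 0$ (which is not a root of unity) is handled by observing that $X_1 X_2 = 0$ forces $\im X_2 \subseteq \ker X_1$, producing a proper common invariant subspace unless $n = 1$.

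For $q \neq 0$, I would first show that any irreducible $(X_1, X_2) \in \fV_q(n)$ with $n \geq 2$ must have both $X_1$ and $X_2$ invertible. Indeed, for $v \in \ker X_1$ one has $X_1(X_2 v) = q X_2 X_1 v = 0$, so $\ker X_1$ is $X_2$-invariant and, trivially, $X_1$-invariant; by irreducibility it equals $0$ or $\mathbb{C}^n$, and the case $X_1 = 0$ would leave $X_2$ alone to be irreducible, forcing $n = 1$. An analogous argument (using $q \neq 0$) shows $X_2$ is invertible.

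Next, fix an eigenvalue $\lambda$ of $X_1$, necessarily nonzero, and write $V_\mu$ for the generalized eigenspace of $X_1$ at $\mu$. The commutation relation gives $X_2(V_\mu) \subseteq V_{q\mu}$, so $W := \bigoplus_{j \geq 0} V_{q^j \lambda}$ is $(X_1, X_2)$-invariant, and irreducibility forces $W = \mathbb{C}^n$. When $q$ is not a root of unity, the orbit $\{q^j \lambda : j \geq 0\}$ is infinite, so choosing the maximal $j_0$ with $V_{q^{j_0}\lambda} \neq 0$ yields $X_2 V_{q^{j_0}\lambda} \subseteq V_{q^{j_0+1}\lambda} = 0$, contradicting $\ker X_2 = 0$. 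Hence no nonscalar irreducible points exist in this case.

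When $q$ is a primitive $k$-th root of unity, the orbit consists of exactly $k$ elements, so $\mathbb{C}^n = \bigoplus_{j=0}^{k-1} V_{q^j \lambda}$. The main technical step, which I expect to be the crux, is to produce a distinguished common eigenvector: since $X_2^k$ commutes with $X_1$, it preserves $V_\lambda$ and commutes with $X_1|_{V_\lambda} = \lambda I + N$, hence with $N$; in particular $X_2^k$ preserves the genuine $\lambda$-eigenspace $\ker N$, on which it has an eigenvector $v$ with $X_2^k v = \mu v$, $\mu \neq 0$. By the relation $X_1 X_2^j = q^j X_2^j X_1$, the vectors $v, X_2 v, \ldots, X_2^{k-1}v$ are eigenvectors of $X_1$ with the distinct eigenvalues $\lambda, q\lambda, \ldots, q^{k-1}\lambda$, so they are linearly independent; moreover, the cyclic relation $X_2(X_2^{k-1} v) = \mu v$ makes their span $(X_1, X_2)$-invariant. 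Irreducibility then forces this span to equal $\mathbb{C}^n$, so $n = k$, and reading off the matrices of $X_1$ and $X_2$ in the ordered basis $v, X_2 v, \ldots, X_2^{k-1} v$ delivers exactly the normal form claimed.
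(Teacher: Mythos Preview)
Your argument is correct, and in the root-of-unity case it is cleaner than the paper's. The paper handles the case where $q$ is a primitive $k$-th root of unity in two separate steps: first it treats $n=k$ by taking an $X_1$-eigenvector $v$ and using that $v, X_2 v,\ldots,X_2^{k-1}v$ are eigenvectors for distinct eigenvalues, hence a basis; then it treats $n>k$ by a longer block argument (rearranging a $Y$-cyclic basis, reducing to a $k\times k$ block structure with an $l\times l$ companion block $A$, and finding an eigenvector of $A$ to produce a proper invariant subspace). Your observation that $X_2^k$ commutes with $X_1$, and hence that one can choose $v$ to be a \emph{simultaneous} eigenvector of $X_1$ and $X_2^k$, collapses both cases at once: the span of $v, X_2 v,\ldots, X_2^{k-1}v$ is automatically invariant and $k$-dimensional, so irreducibility forces $n=k$ directly. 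This is a genuine simplification.

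One small omission: you establish that any non-scalar irreducible point must have $n=k$ and be similar to the displayed normal form, but you do not verify that such points actually exist in $\fV_q$, i.e., that the normal form (for small enough $\lambda,\mu$) is irreducible. The paper checks this by noting $\det(X_1X_2-X_2X_1)=(q-1)^k\det X_1\det X_2\neq 0$, which rules out simultaneous triangularization; since you have already shown there are no non-scalar irreducibles in levels $<k$, this forces the normal form itself to be irreducible. Adding a sentence to this effect would close the gap.
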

\begin{proof}
Before we start, for $A \in M_n$ and $v \in \mathbb C^n$, let $\cC(A;v)$ denote the $A$-cyclic subspace of $\mathbb C^n$, namely, the linear subspace generated by $\{v, Av, A^2v, \dots, A^{n-1}v\}$.

Let $n>1$ and $(X,Y) \in \fV_q(n)$. 
First note that if $(X,Y)$ is irreducible, then $X$ cannot have $0$ as an eigenvalue. 
Indeed, if $v \neq 0$ is an eigenvector of $X$ corresponding to an eigenvalue $0$, then $\cC(Y;v) \subseteq \ker X$. 
So if $\dim \cC(Y;v)=n$, then $X=0$, and otherwise $\cC(Y;v)$ is a nontrivial invariant subspace for both $X$ and $Y$; in any case, this contradicts the irreducibility of $(X,Y)$. If $q \neq 0$, a symmetric argument shows that $Y$ cannot have $0$ as an eigenvalue too. 

If $q=0$, then as $X$ is invertible, the equality $XY=0$ implies $Y=0$, which contradicts the irreducibility of $(X,Y)$. Thus, the only irreducible points of $\fV_0$ are the ones in the first level. 
So henceforward, we assume that $q \neq 0$.

Suppose first that $1,q,q^2,\dots, q^{n}$ are $n+1$ distinct numbers, namely, that $q$ is not a root of unity of order less than or equal to $n$. 
Let $\lambda$ be a nonzero eigenvalue of $X$ with an eigenvector $v$. 
Then it is easy to check inductively that for every $m$, $Y^m v$ is an eigenvector of $X$ for the eigenvalue $\lambda  q^m$, which is of course impossible. 

In particular, if $q$ is not a root of unity, then the only irreducible points of $\fV_q$ are the ones in the first level, and if $q$ is a root of unity, say primitive of order $k>1$, then for every $1< l <k$, the $l$'th level $\fV_q(k)$ does not contain irreducible points.

We will now show that in the latter case --- where $q$ is a root of unity primitive of order $k$ --- there exist irreducible points in the $k$-th level, and we will give a full description of them. 
Let $\lambda$ be an eigenvalue of $X$, and recall that for every $0 \leq m \leq k-1$, $Y^m v$ is an eigenvector of $X$ with an eigenvalue $\lambda q^m$. 
Thus, the vectors $v, Yv, Y^v, \dots, Y^{k-1}v$ form a basis. 
Let $S$ be the invertible matrix whose rows are the latter vectors (in the order they were written). Additionally, note that as $Y^k v$ is an eigenvector of $X$ with an eigenvalue $\lambda q^k = \lambda$, there must be a scalar $\mu$ such that $Y Y^{k-1} v= \mu v$. 
Thus,

\[
S^{-1}XS=\lambda\begin{bmatrix}
1		& 0		& \cdots	& \cdots	& 0 		\\
0		& q		&  \ddots 	& 		& \vdots	\\
\vdots	& \ddots	& q^2  	& \ddots	& \vdots 	\\
\vdots	& 		& \ddots  	& \ddots 	& 0	 	\\
0		& \cdots	& \cdots	& 0		& q^{k-1}	\\
\end{bmatrix}
\quad\text{ and }\quad
S^{-1}YS=\begin{bmatrix}
0		& \cdots	& \cdots	& 0		& \mu	\\
1		& \ddots	&   		& 		& 0 		\\
0		& \ddots	& \ddots  	& 		& \vdots 	\\
\vdots	& \ddots	& \ddots  	& \ddots 	& \vdots 	\\
0		& \cdots	& 0		& 1		& 0 		\\
\end{bmatrix},
\]
and $\mu$ must be non-zero. 

%On the other hand, it is evident that if $\lambda$ and $\mu$ are small, then the above pair of matrices constitutes a point in $\widetilde{\fV_q}$. 
On the other hand, it is evident that the above pair of matrices constitutes a point in $Z(J_{\fV_q})$. 
To see that $(X,Y)$ is irreducible one may argue as follows. 
Begin by observing that $\det(XY-YX)=(q-1)^k\det X \det Y \neq 0$. 
Since for each two upper triangular matrices $R$ and $T$ the matrix $RT-TR$ is strictly upper triangular its determinant is $0$, so the pair $(X,Y)$ cannot be similar to a triangular pair $(R,T)$. As there are no irreducible points in levels $1<m<k$, the point $(X,Y)$ itself must be irreducible.

Finally, we will show --- still in the case that $q$ is a primitive root of unity of order $k$ --- that levels higher than $k$ contain no irreducible points.

Indeed, assume that $n>k$ and $(X,Y) \in \fV_q(n)$ is irreducible. 
Let $\lambda$ be an eigenvalue of $X$ (so $\lambda \neq 0$) with an eigenvector $v$. 
As before, consider the $Y$-cyclic subspace $\cC(Y,v)$. 
If its dimension is not $n$, then this subspace is a nontrivial invariant subspace for both $X$ and $Y$. Otherwise, the vectors $v,Yv,Y^2v, \dots, Y^{n-1}v$ form a basis.
Rearrange them such that all the eigenvectors in the list corresponding to the eigenvalue $\lambda$ (i.e., $v, Y^kv, \dots$) comes first, then the ones corresponding to the eigenvalue $\lambda q $ (i.e., $Yv, Y^{k+1}v, \dots$) , and so on. 
For each eigenvalue, keep them arranged in their original increasing form. 
Let $S$ be the invertible matrix whose rows are the latter vectors. 
Note also that $Y^nv$ is an eigenvector of $X$ with an eigenvalue $\lambda q^{n} $. 
Thus, if $k$ does not divide $n$, the first row of $S^{-1}YS$ must be zero, which is impossible.
Thus, $k | n$. 

Let $l=n/k$. Since $Y^nv$ is an eigenvector of $X$ with an eigenvalue $\lambda$, there are scalars $\mu_0, \dots \mu_{l-1} \in \mathbb C$ such that $Y^n v= \sum_{j=0}^{l-1} \mu_j Y^{jk+1}v$. Let
\[
A:=\begin{bmatrix}
0		& \cdots	& \cdots	& 0		& \mu_0	\\
1		& \ddots	&   		& \vdots	& \mu_1	\\
0		& \ddots	& \ddots  	& \vdots	& \vdots 	\\
\vdots	& \ddots	& \ddots  	& 0 		& \vdots 	\\
0		& \cdots	& 0		& 1		& \mu_{l-1}\\
\end{bmatrix} \in M_{l}(\mathbb C).
\] 

Then  
\[
S^{-1}XS=\lambda\begin{bmatrix}
I_l		& 0		& \cdots	& \cdots	& 0 		\\
0		& q I_l	&  \ddots 	& 		& \vdots	\\
\vdots	& \ddots	& q^2 I_l  	& \ddots	& \vdots 	\\
\vdots	& 		& \ddots  	& \ddots 	& 0	 	\\
0		& \cdots	& \cdots	& 0		& q^{k-1}I_l\\
\end{bmatrix}
\quad\text{ and }\quad
S^{-1}YS=\begin{bmatrix}
0		& \cdots	& \cdots	& 0		& A		\\
I_l		& \ddots	&   		& \vdots	& 0 		\\
0		& \ddots	& \ddots  	& \vdots	& \vdots 	\\
\vdots	& \ddots	& \ddots  	& 0	 	& \vdots 	\\
0		& \cdots	& 0		& I_l		& 0 		\\
\end{bmatrix}.
\]
Now let $0\neq v \in \mathbb C^n$ be some eigenvector of $A$.
Then, $S(\mathbb C v)^{\oplus k} \subseteq \mathbb C^n$ is a $k$-dimensional subspace invariant under both $X$ and $Y$. 
\end{proof}

Let us now consider a change of angle in the varieties $\fV_q$.
More precisely, let $\fW_q \subseteq \fB_2$ be the nc variety generated by the ideal $\langle (z_1- z_2)z_2 -q z_2(z_1-z_2) \rangle$, and let
\[
A=\begin{bmatrix}
1 & \frac{1}{\sqrt{2}}\\
0 & \frac{1}{\sqrt{2}}
\end{bmatrix}.
\]
Then obviously, for every $q \neq 1$, $A$ maps $\fV_q(1)$ onto $\fW_q(1)$ and $Z(J_{\fV_q})$ onto $Z(J_{\fW_q})$. 

Suppose first that $q$ is not a root of unity. 
By Proposition \ref{prop:q-comm-irreducible}, the irreducible points in $\widetilde{\fV_q}$ are only the scalar points. 
Now, since every $(X,Y) \in \M_2(n)$ is similar to a block upper triangular pair whose diagonal blocks are all irreducible, every irreducible $(X,Y) \in \widetilde{\fV_q}$ must be similar to an upper triangular matrix whose diagonal entries are in $\fV_q(1)$. 
Therefore, as $A$ maps $\fV_q(1)$ onto $\fW_q(1)$, Lemma \ref{lem:irreducible_radius} implies that for every  $(X,Y) \in \widetilde{\fV_q}$, we have $\rho(A(X,Y))<1$, so in view of Lemma \ref{lem:tilde_is_spectral_ball}, $A(X,Y) \in \widetilde{\fB_2}$, so $A$ maps $\widetilde{\fV_q}$ into $\widetilde{\fW_q}$. 

This argument works in reverse, and we conclude that for every $q$ which is not a root of unity $A$ maps $\widetilde{\fV_q}$ bijectively onto $\widetilde{\fW_q}$.
Unfortunately, we couldn't prove that $A$ is bi-Lipschitz; if we could, this would have shown that $H^{\infty}(\fV_q)$ and $H^\infty(\fW_q)$ are completely boundedly isomorphic.

If $q$ is a root of unity, say primitive of order $k$, then since irreducible points exist not only in the first level, it might happen that $A$ does not map $\widetilde{\fV_q}$ bijectively onto $\widetilde{\fW_q}$. 

Indeed, consider the case $q=-1$. Let
\[
X_0=\begin{bmatrix}
1		& 0		\\
0		& -1		\\
\end{bmatrix}
\quad\text{ and }\quad
Y_0=\begin{bmatrix}
0		& i		\\
1		& 0		\\
\end{bmatrix},
\]
which by Proposition \ref{prop:q-comm-irreducible}, is an irreducible point in $Z(J_{\fV_q})$.
We will show that $\rho(A(X_0,Y_0)) \neq \rho(X_0,Y_0)$. Corollary \ref{cor:linear_isom} would then imply that $\widetilde{\fV_{-1}}$ is not mapped bijectively onto $\widetilde{\fW_{-1}}$.

To this end, recall that for a pair of two $n \times n$ matrices $(A,B)$, the linear transformation on $\Theta_{(A,B)}:M_n\to M_n$, defined by $\Theta_{(A,B)}(T)=ATB$, is represented, with respect to the standard basis $E_{1,1}, E_{1,2}$, \dots, $E_{n,n}$, by the matrix $A \otimes B^T$. Thus, the matrix representing $\Psi_{(X,Y)}$ --- which is by definition $\Theta_{(X,X^*)}+\Theta_{(Y,Y^*)}$ --- is
\[
X \otimes \ol{X} + Y \otimes \ol{Y}.
\]
The square root of the largest number among all eigenvalue absolute values of the latter $n^2 \times n^2$ matrix is its spectral radius, and hence is equal to $\rho(X,Y)$.  

Now, the matrix representing $\Psi_{(X_0,Y_0)}$ is
\[
X_0 \otimes \ol{X_0} + Y_0 \otimes \ol{Y_0}=\begin{bmatrix}
1			& 0			& 0			& 1			\\
0			& -1			& i			& 0			\\
0			& -i			& -1			& 0			\\
1			& 0			& 0			& 1			\\
\end{bmatrix},
\]
and the matrix representing $\Psi_{A(X_0,Y_0)}$ is
\[
(X_0 + \frac{1}{\sqrt{2}}Y_0) \otimes \ol{(X_0 + \frac{1}{\sqrt{2}}Y_0)} + \frac{1}{\sqrt{2}}Y_0 \otimes\frac{1}{\sqrt{2}} \ol{Y_0}=\begin{bmatrix}
1 				& -\frac{1}{\sqrt{2}}i	&\frac{1}{\sqrt{2}}i	& 1				\\
\frac{1}{\sqrt{2}}		& -1				& i				& -\frac{1}{\sqrt{2}}i	\\
\frac{1}{\sqrt{2}}		& -i				& -1				& \frac{1}{\sqrt{2}}i	\\
1				& -\frac{1}{\sqrt{2}}	& -\frac{1}{\sqrt{2}}	& 1				\\
\end{bmatrix}.
\]
The largest number among all eigenvalue absolute values of the first matrix is $2$, and of the second --- $\sqrt{3}$.
Thus, $\rho(A(X_0,Y_0)) \neq \rho(X_0,Y_0)$. We conclude that $H^\infty(\fV_{-1})$ is not boundedly isomorphic to $H^\infty(\fW_{-1})$

Similar computations show that $\fV_{e^{\frac{2\pi i}{3}}}$ and $\fV_i$ also contain irreducible points whose joint spectral radius is not preserved under $A$.
Thus, $H^\infty(\fV_{e^{\frac{2\pi i}{3}}})$ and $H^\infty(\fV_{i})$ are not boundedly isomorphic to $H^\infty(\fW_{e^{\frac{2\pi i}{3}}})$ and $H^\infty(\fW_{i})$. %We could not prove the general case for any root of unity $q$.

%The matrix representing $\Psi_{(X,Y)}$ is
%\[
%\begin{bmatrix}
%|\lambda|^2	& 0			& 0			& |\mu|^2		\\
%0			& -|\lambda|^2	& \mu		& 0			\\
%0			& \ol{\mu}		& -|\lambda|^2	& 0			\\
%1			& 0			& 0			& |\lambda|^2	\\
%\end{bmatrix},
%\]
%and the matrix representing $\Psi_{A(X,Y)}$ is
%\[
%\begin{bmatrix}
%|\lambda|^2				& \frac{1}{\sqrt{2}}\ol{\mu}\lambda	& \frac{1}{\sqrt{2}}\mu\ol{\lambda}	& |\mu|^2						\\
%\frac{1}{\sqrt{2}}\lambda		& -|\lambda|^2					& \mu						& -\frac{1}{\sqrt{2}}\mu\ol{\lambda}	\\
%\frac{1}{\sqrt{2}}\ol{\lambda}	& \ol{\mu}						& -|\lambda|^2					& -\frac{1}{\sqrt{2}}\ol{\mu}\lambda	\\
%1						& -\frac{1}{\sqrt{2}}\ol{\lambda}		& -\frac{1}{\sqrt{2}}\lambda		& |\lambda|^2					\\
%\end{bmatrix}.
%\]

%%%%%%%%%%%%%%%%%%%%%%%%%%%%%%%%%%%%%%%%%%%%%%%
%%%%%%%%%%%%%%%%%%%%%%%%%%%%%%%%%%%%%%%%%%%%%%%
\section{The commutative case} \label{sec:connections_to_comm} 
In this section, we study the isomorphism problem for the algebras of bounded analytic functions on commutative nc varieties.
This problem was treated extensively in the fully commutative case, that is, when the algebra $H^\infty(\fV)$ lives on a variety $\fV$ which is minimal, in the sense that it is the minimal nc variety in $\fB_d$ which contains $V = \fV(1)$; this is referred to as the {\em isomorphism problem for complete Pick algebras}, see \cite{DHS14,DRS11,DRS15,Hartz16,KerMcSh13,McSh16,SalomonShalit}.
In \cite[Section 11]{SalShaSha17} we explained how this problem can be investigated in the nc commutative setting (cf. Remark \ref{rem:commutative_radical} below).

As in Section \ref{sec:homogeneous}, we assume $d< \infty$.
Let us define
\[
\fC\bM_d = \{ X \in \bM_d ~:~ X_i X_j = X_j X_i\, , \, i,j=1, \ldots, d\}
\]
and
\[
\fC\fB_d = \{ X \in \fB_d ~:~ X_i X_j = X_j X_i\, , \, i,j=1, \ldots, d\} .
\]
Our goal is, as before, to classify the algebras $H^\infty(\fV)$, but this time only for nc subvarieties $\fV \subseteq \fC\fB_d$; we shall refer to such nc varieties as {\em commutative nc varieties}. Obviously, the operator algebra $H^\infty(\fV)$ of a commutative nc variety $\fV$ is commutative. 
Note that $H^\infty(\fC\fB_d) = \cM_d$ --- the multiplier algebra on the Drury--Arveson space;
see \cite[Section 11]{SalShaSha17} for a detailed explanation of this fact.

In the setting of commutative nc varieties, we relax somewhat the weak-$*$ continuity conditions of our basic classification results from Section \ref{sec:isomorphisms}. 
When we specialize further to homogeneous commutative nc varieties, we will recover the results of Section \ref{sec:homogeneous} by other techniques. 
An important ingredient for the proof of the classification result is a Nullstellensatz for homogeneous commutative nc varieties, which may be of independent interest. 

Let us define a new nc set $\widehat{\fC\fB_d}$ by
\[
\widehat{\fC\fB_d} := \{X \in \fC \bM_d : \sigma(X) \subseteq \bB_d\} .
\]
Here $\sigma(X)$ could mean Taylor spectrum, but since $X$ is a tuple acting on a finite dimensional space, $\sigma(X)$ can be defined by any other reasonable definition of spectrum.
Our working definition for the joint spectrum $\sigma(X)$ will be the set of $d$-tuples in $\C^d$ made from taking elements from the diagonal of $(X_1, \ldots, X_d)$ after they are realized in a simultaneous upper triangular form.

For every $X \in \widehat{\fC\fB_d}$, the mapping $f \mapsto f(X)$, which is well defined on $\bC[z]$, extends to a continuous representation of the topological algebra $\cO(\bB_d)$ consisting of all analytic functions on $\bB_d$ \cite[Theorem 4.3]{Tay70}.

\begin{thm}\label{thm:rep_similar}
Every $X \in \widehat{\fC\fB_d}$ is simultaneously similar to some $Y \in \fC\fB_d$.
In other words, $\widehat{\fC\fB}_d = \widetilde{\fC\fB_d}$.
\end{thm}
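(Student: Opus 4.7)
The plan is to prove the two inclusions separately. For $\widetilde{\fC\fB_d}\subseteq\widehat{\fC\fB_d}$, simultaneous similarities clearly preserve commutation, and they preserve the joint spectrum as defined here via simultaneous triangularization (if $S^{-1}XS=T$ is upper triangular and $X'=RXR^{-1}$, then $(RS)^{-1}X'(RS)=T$ as well). So it suffices to check that every $Y\in\fC\fB_d$ satisfies $\sigma(Y)\subseteq\bB_d$. Given $\lambda=(\lambda_1,\dots,\lambda_d)\in\sigma(Y)$, simultaneously upper-triangularize $Y$ as $T=S^{-1}YS$ so that $\lambda$ appears as one of the diagonal $d$-tuples; then $\sum_i\bar\lambda_i T_i$ is upper triangular with $\|\lambda\|^2$ on the corresponding diagonal entry, so $\|\lambda\|^2$ is an eigenvalue of $\sum_i\bar\lambda_i Y_i$. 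On the other hand, for any unit $v$,
\[
\left\|\Bigl(\sum_i\bar\lambda_i Y_i\Bigr)v\right\|=\|Y(\bar\lambda\otimes v)\|\leq \|Y\|\cdot\|\bar\lambda\otimes v\|=\|Y\|\cdot\|\lambda\|,
\]
so $\|\lambda\|^2\leq\|\sum_i\bar\lambda_i Y_i\|\leq\|Y\|\cdot\|\lambda\|$, and hence $\|\lambda\|\leq\|Y\|<1$.

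For the reverse inclusion let $X\in\widehat{\fC\fB_d}(n)$. Since the $X_i$ commute, there exists $S\in\GL_n$ with $T_k:=S^{-1}X_kS$ simultaneously upper triangular; let $\lambda_j=(\lambda_j^{(1)},\dots,\lambda_j^{(d)})$ denote the $d$-tuple formed by the $j$-th diagonal entries, so that $\|\lambda_j\|<1$ for each $j$ by hypothesis. Conjugate further by the diagonal matrix $D_t:=\diag(1,t,t^2,\dots,t^{n-1})$ for $t\in(0,1)$. A direct computation shows that the $(i,j)$-entry of $D_t^{-1}T_kD_t$ equals $t^{j-i}(T_k)_{ij}$: on the diagonal it is unchanged, while each strictly upper triangular entry is multiplied by a positive power of $t$ and hence tends to $0$ as $t\to0^+$. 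Consequently $D_t^{-1}TD_t$ converges in $\M_d(n)$ to the diagonal commuting tuple $D$ whose $k$-th component is $\diag(\lambda_1^{(k)},\dots,\lambda_n^{(k)})$, and $\|D\|^2=\bigl\|\sum_k D_kD_k^*\bigr\|=\bigl\|\diag(\|\lambda_1\|^2,\dots,\|\lambda_n\|^2)\bigr\|=\max_j\|\lambda_j\|^2<1$. By continuity of the row norm, $D_t^{-1}TD_t\in\fB_d$ for every sufficiently small $t>0$; being similar to $X$ it automatically commutes, so it lies in $\fC\fB_d$, and $SD_t$ is the similarity placing $X$ in $\widetilde{\fC\fB_d}$.

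I do not expect a serious obstacle: the simultaneous triangularization of a commuting tuple is classical, and the diagonal-scaling trick $D_t$ is precisely the commutative analogue of how strict row contractions are extracted from upper triangular tuples in Proposition \ref{prop:pure_iff_conj}. The point that merits attention is that the hypothesis $\sigma(X)\subseteq\bB_d$ is used sharply: it forces the limit tuple $D$ to be a strict row contraction, via the identity $\|D\|=\max_j\|\lambda_j\|$, and the argument would break without it --- a diagonal $\lambda_j$ of norm $\geq 1$ would obstruct any similarity into $\fB_d$.
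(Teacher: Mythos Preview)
Your proof is correct. It differs from the paper's main proof, which takes a functional-analytic route: the paper uses that evaluation at $X$ gives a continuous representation of $\cO(\bB_d)$, hence a bounded (thus completely bounded) homomorphism of a uniform algebra into $M_n$, and then invokes Paulsen's similarity theorem to conjugate it to a complete contraction $EV_Y$ with $\|Y\|<1$. Your argument instead is entirely elementary: triangularize and scale by $D_t=\diag(1,t,\dots,t^{n-1})$ to push the off-diagonal entries to zero while keeping the diagonal fixed, then use openness of $\fB_d(n)$. This is essentially the approach the paper records in the Remark immediately following the theorem, where the same scaling idea is carried out inductively (peeling off one row/column at a time) rather than in one shot as you do. Your version is arguably cleaner, and you also supply the easy inclusion $\widetilde{\fC\fB_d}\subseteq\widehat{\fC\fB_d}$ explicitly, which the paper leaves implicit. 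What the paper's main proof buys is a conceptual link between the similarity and operator-space completions of $\cO(\bB_d)$; what your proof (and the paper's alternative) buys is self-containment and transparency.
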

\begin{proof}
This theorem follows from the results of Section \ref{subsec:jnr}, but we wish to present a different proof that is also interesting. 

Fix $X \in \widehat{\fC\fB_d}(n)$.
Since the map $EV_X : \cO(\bB_d) \to M_n$ given by evaluating at $X$ is continuous, there must be a compact set $K \subseteq \bB_d$ and a constant $C$ for which
\[
\|f(X)\| \leq C \|f\|_{\infty,K},
\]
where $\|f\|_{\infty,K} = \sup\{|f(x)| : x \in K\}$.
Considering $A$, the closure of $(\cO(\bB_d), \|\cdot\|_{\infty,K})$, as a subalgebra of $C(K)$, it is evident that it is a unital operator algebra.
The evaluation map $EV_X: \cO(\bB_d) \to M_n$ extends to a bounded unital map from $A$ into $M_n$.
Since $M_n$ is finite dimensional, the evaluation map must be completely bounded \cite[Corollary 2.2.4]{EffrosRuan}.
By Paulsen's Theorem \cite[Theorem 9.1]{PaulsenBook}, $EV_X$ is similar to a completely contractive homomorphism $\rho$. Since $(\bC[z], \|\cdot\|_{\infty,K})$ is dense in $A$, the homomorphism $\rho$ must be of the form $EV_Y$ for $Y=(\rho(z_1),\dots,\rho(z_d)) \in \fC\bM_d$.
Finally, to see that $Y$ must be in $\fC \fB_d$, we note that the row norm of $(z_1, \ldots, z_d)$ is
\[
\|(z_1, \ldots, z_d)\| = \sup_{x \in K}\sqrt{\sum |x_i|^2} < 1,
\]
so, because $\rho$ is a complete contraction, $\|(Y_1, \ldots, Y_d)\| <1$.
\end{proof}

\begin{rem}
The above proof exposes the connection between the operator space completions of $\cO(\bB_d)$ and the representations at hand. 
However, one can provide a more elementary proof by induction as follows. 
For every $X \in \cbh(1) = \bB_d$ the claim is trivial. 
Now let $X \in \cbh(n)$ and choose a unitary, such that $U^* X U$ is upper triangular. 
Write $X = \left( \begin{smallmatrix} \alpha & v \\ 0 & X' \end{smallmatrix} \right)$, where $\alpha \in \bB_d$, $v$ is some vector and $X' \in \cbh(n-1)$. 
By induction, there exists a matrix $S$, such that $S^{-1} X' S$ is a strict contraction. 
Take for $t > 0$ the invertible matrix $T_t = \left( \begin{smallmatrix} t & 0 \\ 0 & t^{-1} S^{-1} \end{smallmatrix} \right)$, then $T_t U^* X U T_t^{-1} = \left( \begin{smallmatrix} \alpha & t^2 v S \\ 0 & S^{-1} X' S \end{smallmatrix} \right)$. Since $\alpha \oplus (S^{-1} X' S)$ is a strict contraction, for $t$ small enough, so is $T_t U^* X U T_t^{-1}$.
\end{rem}

%%%%%%%%%%%%%%%%%%%%%%%%%%%
\subsection{The isomorphism problem in the commutative case}

%%%%%%%%%%%%%%%%%%%%%%%%%%%
%\begin{lem}\label{lem:comm_var_spec1}
%Let $\fV \subseteq \fC\fB_d$ be a commutative nc variety, and write $V$ for $\fV(1)$.
%Then for all $X \in \fV$, $\sigma(X) \subseteq V$.
%\end{lem}
%\begin{proof}
%Suppose that $X \in M_n^d$, and put $X_1, \ldots, X_d$ in upper triangular form.
%Then $\sigma(X)$ is precisely the set of points on the diagonals of $X$:
%\[
%\sigma(X) = \{((X_1)_{ii}, \ldots, (X_d)_{ii}) : i = 1, \ldots, n\}.
%\]
%If $f \in \cJ_\fV$, then the $i$th element on the diagonal of $f(X)$ is equal to $f((X_1)_{ii}, \ldots, (X_d)_{ii})$.
%Since $X \in \fV = V(\cJ_\fV)$, for such $f$ we have $f(X) = 0$.
%Thus, for all $\lambda \in \sigma(X)$ and all $f \in \cJ_\fV$, $f(\lambda) = 0$.
%Consequently, $\sigma(X) \subseteq V$.
%\end{proof}

%%%%%%%%%%%%%%%%%%%%%%%%%%%
%\begin{lem}\label{lem:comm_var_spec2}
%Then
%\[
%\widetilde{\fV} = \widetilde{\fC\fB_d} \cap \pi(\sqcup_k\Rep_k(H^\infty(\fV))).
%\]
%\end{lem}
%\begin{proof}

%If $X \in \widetilde{\fV} \subseteq \widetilde{\fC \fB_d}$ then $X = \pi(\Phi_X)$, and so $\widetilde{\fV} \subseteq \widetilde{\fC\fB_d} \cap \pi(\sqcup_k\Rep_k(H^\infty(\fV)))$.
%
%On the other hand, suppose that $X \in \pi(\Rep_k(H^\infty(\fV)))$ and $X \in \widetilde{\fC\fB_d}(k) \subseteq \widetilde{\fB_d}(k)$.
%By Theorem \ref{thm:finite_dim_reps}, $\Phi_X$ is the unique representation in $\pi^{-1}(X)$ and so $X \in \widetilde{\fV}(k)$.
%This shows $\widetilde{\fV} \supseteq \widetilde{\fC\fB_d} \cap \pi(\sqcup_k\Rep_k(H^\infty(\fV)))$.
%\end{proof}

%%%%%%%%%%%%%%%%%%%%%%%%%%%
\begin{prop}\label{prop:cb-homo=>weakstar_commutative}
Let $\fV \subseteq \fC\fB_d$ and $\fW \subseteq \fC \cB_e$ be two commutative nc varieties, and let $\alpha : H^\infty(\fV) \to H^\infty(\fW)$ be a bounded homomorphism.
Suppose that $\alpha^*$ maps every evaluation functional $\Phi_w : f \mapsto f(w)$ ($f \in H^\infty(\fW)$, $w \in W = \fW(1)$) to an evaluation functional on $H^\infty(\fV)$.
Then $\alpha^*$ maps $\widetilde{\fW}$ into $\widetilde{\fV}$.
\end{prop}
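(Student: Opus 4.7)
The plan is to show that for every $W_0 \in \fW(n)$, the tuple
$V_0 := \bigl(\alpha(z_1|_\fV)(W_0), \ldots, \alpha(z_d|_\fV)(W_0)\bigr) \in M_n^d$
lies in $\widetilde{\fV}(n)$ and implements $\alpha^*(\Phi_{W_0})$ as $\Phi_{V_0}$. The extension from $\fW$ to $\widetilde{\fW}$ is then automatic: for $\widetilde W_0 = T W_0 T^{-1} \in \widetilde{\fW}(n)$ we get $\alpha^*(\Phi_{\widetilde W_0}) = T \alpha^*(\Phi_{W_0}) T^{-1} = \Phi_{T V_0 T^{-1}}$, with $T V_0 T^{-1} \in \widetilde{\fV}(n)$.

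First I observe that $V_0 \in \fC\bM_d(n)$: because $\fV \subseteq \fC\fB_d$, the coordinates $z_i|_\fV$ commute in $H^\infty(\fV)$, and the homomorphism $\alpha$ sends them to a commuting tuple in $H^\infty(\fW)$ whose joint evaluation at $W_0$ is commuting. Next I simultaneously triangularize $W_0 = S R S^{-1}$ with $R \in M_n^e$ an upper triangular commuting tuple having diagonal tuples $r_1, \ldots, r_n \in \C^e$. For any $f \in \cJ_\fW$ the vanishing $f(W_0)=0$ and similarity invariance give $f(R) = 0$; since $R$ preserves the standard flag of $\C^n$, so does the nc function $f$, and evaluating on the successive one-dimensional quotients shows that $f(R)$ is upper triangular with diagonal $(f(r_1), \ldots, f(r_n))$. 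Hence $f(r_j) = 0$ for all $j$ and all $f \in \cJ_\fW$, and combined with the elementary fact that joint eigenvalues of a strict row contraction lie in $\bB_e$, this yields $r_j \in W = \fW(1)$ for each $j$.

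Now the scalar hypothesis enters: for each $r_j \in W$ there exists $v_j \in V = \fV(1)$ with $\alpha^*(\Phi_{r_j}) = \Phi_{v_j}$, and in particular $\alpha(z_i|_\fV)(r_j) = v_{j,i}$. Applying the same triangularization analysis to the nc functions $\alpha(z_i|_\fV) \in H^\infty(\fW)$ evaluated at $R$ shows that $\alpha(z_i|_\fV)(R)$ is upper triangular with diagonal $(v_{1,i}, \ldots, v_{n,i})$. Consequently $V_0$ is similar to an upper triangular commuting tuple whose diagonal tuples $v_1, \ldots, v_n$ all lie in $V \subseteq \bB_d$, so $\sigma(V_0) = \{v_1, \ldots, v_n\} \subseteq \bB_d$. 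Theorem \ref{thm:rep_similar} then yields $V_0 \in \widetilde{\fC\fB_d} \subseteq \widetilde{\fB_d}(n)$.

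The final step upgrades from $\widetilde{\fB_d}$ to $\widetilde{\fV}$ via Theorem \ref{thm:finite_dim_reps}: the bounded representation $\alpha^*(\Phi_{W_0}) \in \Rep_n(H^\infty(\fV))$ satisfies $\pi_n(\alpha^*(\Phi_{W_0})) = V_0 \in \widetilde{\fB_d}(n)$, and the identity $\pi_n(\Rep_n(H^\infty(\fV))) \cap \widetilde{\fB_d}(n) = \widetilde{\fV}(n)$ (which uses $d<\infty$) then forces $V_0 \in \widetilde{\fV}(n)$; the singleton-fiber clause of the same theorem identifies $\alpha^*(\Phi_{W_0})$ with $\Phi_{V_0}$. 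I expect the main subtlety to be the triangularization step --- specifically the nc-function-theoretic fact that an nc function evaluated at an upper triangular commuting tuple is again upper triangular with diagonal obtained by applying the function to the original diagonal. This is exactly what lets the scalar-point hypothesis propagate through every level of $\fW$, and it is precisely the reduction that is unavailable in the general nc setting, where simultaneous triangularization is impossible and one must argue differently, as in Section \ref{sec:homogeneous}.
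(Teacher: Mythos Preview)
Your proof is correct and follows essentially the same route as the paper's: triangularize the commuting tuple, use the scalar-level hypothesis on the diagonal entries to conclude $\sigma(V_0)\subseteq\bB_d$, apply Theorem~\ref{thm:rep_similar} to land in $\widetilde{\fC\fB_d}$, and finish with Theorem~\ref{thm:finite_dim_reps}. The paper compresses your explicit triangularization into the single observation that $G=\pi\circ\alpha^*$ is an nc map and hence satisfies $\sigma(G(T))=G(\sigma(T))$, but the content is identical.
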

\begin{proof}
Define $G : \widetilde{\fW} \to \pi(\sqcup_k\Rep_k(H^\infty(\fV)))$ by
\[
G(T) = \pi_k (\alpha^*(\Phi_T))~, \quad T \in \widetilde{\fW}(k).
\]
Since $\alpha^*$ is a similarity preserving and a direct sum preserving map between the nc spaces of representations, it follows that $G$ is an nc map from $\widetilde{\fW}$ into $\pi(\sqcup_k\Rep_k(H^\infty(\fV))) = \ol{\widetilde{\fV}}$. 
Let $T \in \widetilde{\fW}$. 
We know that $G(T) \in \ol{\widetilde{\fV}}$, and we need to show that $G(T) \in \widetilde{\fV}$. 
Since $T$ is simultaneously upper-triangular in some basis, and since nc maps act entry-wise on the diagonal, we find that
\[
\sigma(G(T)) = G(\sigma(T)).
\]
But as $T \in \widetilde{\fW}(k)$, we know that $\sigma(T) \subseteq W = \fW(1)$.
By assumption, $G(\sigma(T)) \subseteq V = \fV(1) \subseteq \bB_d$, so Theorem \ref{thm:rep_similar} implies that $G(T) \in \widetilde{\fC \fB_d}$. %$G(T) \in \widetilde{\fV}(k)$.
Thus $\alpha^*(\Phi_T) = \Phi_{G(T)}$, and the proof is complete.
\end{proof}

Recall that a {\em character} of an algebra is a non-zero linear multiplicative functional.

%%%%%%%%%%%%%%%%%%%%%%%%%%%
\begin{prop}\label{prop:commutative_cb-homo=>holo}
Let $\fV \subseteq \fC\fB_d$ and $\fW \subseteq \fC \cB_e$ be two commutative nc varieties, and let $\alpha : H^\infty(\fV) \to H^\infty(\fW)$ be a unital and bounded homomorphism.
Suppose that $\alpha^*$ maps weak-$*$ continuous characters to weak-$*$ continuous characters.
Then there exists an nc holomorphic map $G: \widetilde{\fW} \to \widetilde{\fV}$
which implements $\alpha$ by the formula
\[
\alpha(f) = \widetilde{f} \circ G
\]
for all $f \in H^\infty(\fV)$ where $\widetilde{f}$ denotes the unique extension of $f$ to $\widetilde{\fV}$.
\end{prop}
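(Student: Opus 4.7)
The plan is to deduce this proposition from Proposition \ref{prop:cb-homo=>weakstar_commutative}. Since $\fV$ and $\fW$ are commutative nc varieties, the algebras $H^\infty(\fV)$ and $H^\infty(\fW)$ are commutative, so a character of either algebra is the same thing as a unital one-dimensional representation. Applying Theorem \ref{thm:finite_dim_reps} with $k=1$ (noting that at the scalar level $\widetilde{\fV}(1)=\fV(1)=V\subseteq \bB_d$ and likewise for $\fW$), the weak-$*$ continuous characters of $H^\infty(\fV)$ are exactly the evaluation functionals $\Phi_v$ for $v \in V$, and similarly for $H^\infty(\fW)$. Hence the standing hypothesis reads: $\alpha^*(\Phi_w)$ is a point evaluation for every $w \in W$. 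This is precisely the hypothesis of Proposition \ref{prop:cb-homo=>weakstar_commutative}.

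Invoking that proposition produces an nc map
\[
G(T) = \pi_k(\alpha^*(\Phi_T)) \in \widetilde{\fV}, \qquad T \in \widetilde{\fW}(k),
\]
carrying $\widetilde{\fW}$ into $\widetilde{\fV}$. To confirm that $G$ is nc holomorphic I will observe that the $i$-th coordinate of $G(T)$ equals $\alpha^*(\Phi_T)(z_i|_\fV) = \Phi_T(\alpha(z_i|_\fV))$, i.e., $G_i$ is the value at $T$ of the unique nc holomorphic extension to $\widetilde{\fW}$ of the bounded nc function $\alpha(z_i|_\fV) \in H^\infty(\fW)$. Since every bounded nc holomorphic function on $\fW$ extends uniquely to an nc holomorphic function on $\widetilde{\fW}$ (as recalled at the beginning of Section \ref{sec:prelim_jspr}), each component, and hence $G$ itself, is nc holomorphic on $\widetilde{\fW}$.

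Finally, the intertwining identity $\alpha(f) = \widetilde{f} \circ G$ will follow from the same short computation that appeared in the proof of Proposition \ref{prop:cb-homo=>holo}: for $W \in \fW$ and $f \in H^\infty(\fV)$,
\[
\widetilde{f}(G(W)) = \Phi_{G(W)}(f) = \alpha^*(\Phi_W)(f) = \Phi_W(\alpha(f)) = \alpha(f)(W).
\]
Since the substantive work is absorbed into Proposition \ref{prop:cb-homo=>weakstar_commutative}, which takes care of the delicate point that a priori a representation $\alpha^*(\Phi_T)$ attached to a higher-level $T$ might fail to evaluate at a point of $\widetilde{\fV}$ (this is where the commutativity is essential, via the Taylor-spectrum/joint-spectrum argument and Theorem \ref{thm:rep_similar}), I do not anticipate a genuine obstacle. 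The only conceptual step is the observation that, in the commutative setting, the hypothesis on weak-$*$ continuous characters is exactly what the previous proposition needs; everything after that is packaging.
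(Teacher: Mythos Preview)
Your proof is correct and follows essentially the same route as the paper: reduce to Proposition \ref{prop:cb-homo=>weakstar_commutative} by identifying weak-$*$ continuous characters with point evaluations at scalar points (via Theorem \ref{thm:finite_dim_reps} at $k=1$), and then extract the implementing nc holomorphic map $G$ and the intertwining formula. The paper simply cites Proposition \ref{prop:cb-homo=>holo} and Theorem \ref{thm:finite_dim_reps} for the last step, while you spell out the coordinate computation and the identity $\widetilde{f}(G(W))=\alpha(f)(W)$ explicitly; the content is the same.
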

\begin{proof}
Applying Proposition \ref{prop:cb-homo=>weakstar_commutative} to $\alpha$, we find that $\alpha^*$ restricts to an nc holomorphic map $G: \widetilde{\fW} \to \widetilde{\fV}$.
The conclusion now follows from Proposition \ref{prop:cb-homo=>holo} and Theorem \ref{thm:finite_dim_reps}.
\end{proof}

Putting the conclusion of the previous Proposition with Propositions \ref{prop:4equiv_prime} and \ref{prop:cb-homo=>holo}, we obtain the following strengthening of Corollary \ref{cor:cb-isom<=>biholo_weakstar}.

%%%%%%%%%%%%%%%%%%%%%%%%%%%
\begin{cor}\label{cor:cb-isom<=>biholo_commutative}
Let $d,e \in \mathbb N$, and $\fV \subseteq \fC\fB_d$ and $\fW \subseteq \fC\fB_{e}$ be commutative nc varieties.
Then  $H^\infty(\fV)$ and $H^\infty(\fW)$ are continuously isomorphic via an isomorphism that preserves weak-$*$ continuous characters if and only if the similarity envelopes $\widetilde{\fV}$ and $\widetilde{\fW}$ are nc biholomorphic via an nc biholomorphism $G: \widetilde{\fW} \to \widetilde{\fV}$ satisfying
\[
\sup_{W\in \fW}\|\Phi_{G(W)}\| < \infty \quad \text{and} \quad \sup_{V\in \fV}\|\Phi_{G^{-1}(V)}\|<\infty.
\]
Furthermore, in this case $\sup_{W\in \fW}\|\Phi_{G(W)}\|= \|\alpha\|$.

Likewise, $H^\infty(\fV)$ and $H^\infty(\fW)$ are completely boundedly isomorphic via an isomorphism that preserves weak-$*$ continuous characters if and only if the similarity envelopes $\widetilde{\fV}$ and $\widetilde{\fW}$ are nc biholomorphic via an nc biholomorphism $G: \widetilde{\fW} \to \widetilde{\fV}$ satisfying
\[
\sup_{W\in \fW}\|\Phi_{G(W)}\|_{cb} < \infty \quad \text{and} \quad \sup_{V\in \fV}\|\Phi_{G^{-1}(V)}\|_{cb}<\infty, 
\]
and in this case $\sup_{W\in \fW}\|\Phi_{G(W)}\|_{cb} = \|\alpha\|_{cb}$ and $\sup_{V\in \fV}\|\Phi_{G^{-1}(V)}\|_{cb} = \|\alpha^{-1}\|_{cb}$.
\end{cor}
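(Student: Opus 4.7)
The plan is to combine Proposition \ref{prop:commutative_cb-homo=>holo} with the equivalence lists of Propositions \ref{prop:3equiv} and \ref{prop:4equiv_prime}. Once those results are available, the corollary is essentially bookkeeping: Proposition \ref{prop:commutative_cb-homo=>holo} already produces the implementing map $G$ out of the character-preservation hypothesis, while \ref{prop:3equiv} and \ref{prop:4equiv_prime} match norm-boundedness of the pullback against norm-boundedness (resp.\ cb-boundedness) of the induced homomorphism together with its weak-$*$ continuity.

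For the forward direction, suppose $\alpha : H^\infty(\fV) \to H^\infty(\fW)$ is a bounded isomorphism that preserves weak-$*$ continuous characters (interpreted here to mean that the bijection $\alpha^*$ between full character spaces restricts to a bijection between the weak-$*$ continuous character subspaces, so that both $\alpha$ and $\alpha^{-1}$ satisfy the hypothesis of Proposition \ref{prop:commutative_cb-homo=>holo}). Applying that proposition to $\alpha$ yields an nc holomorphic $G : \widetilde{\fW} \to \widetilde{\fV}$ with $\alpha(f) = \widetilde{f}\circ G$, and applying it to $\alpha^{-1}$ yields an nc holomorphic $F : \widetilde{\fV} \to \widetilde{\fW}$ with $\alpha^{-1}(g) = \widetilde{g}\circ F$. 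Testing $\alpha \circ \alpha^{-1} = \mathrm{id}$ and $\alpha^{-1}\circ\alpha=\mathrm{id}$ on the coordinate functions $z_j$ forces $G\circ F = \mathrm{id}_{\widetilde{\fV}}$ and $F\circ G = \mathrm{id}_{\widetilde{\fW}}$, so $G$ is an nc biholomorphism with $G^{-1}=F$. The boundedness of $\alpha$ and $\alpha^{-1}$ feeds directly into part (a) of Proposition \ref{prop:4equiv_prime}, producing $\sup_{W\in\fW}\|\Phi_{G(W)}\| = \|\alpha\|<\infty$ and $\sup_{V\in\fV}\|\Phi_{G^{-1}(V)}\| = \|\alpha^{-1}\|<\infty$. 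The completely bounded variant is identical, invoking Proposition \ref{prop:3equiv} in place of \ref{prop:4equiv_prime}.

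For the reverse direction, given such a $G$, the equivalence (ii)$\iff$(iii) in Proposition \ref{prop:4equiv_prime} gives a bounded homomorphism $\alpha(f):=\widetilde{f}\circ G$ from $H^\infty(\fV)$ to $H^\infty(\fW)$ with $\|\alpha\|=\sup_{W}\|\Phi_{G(W)}\|$, and the same proposition applied to $G^{-1}$ produces a bounded inverse $\beta(g):=\widetilde{g}\circ G^{-1}$, so $\alpha$ is a bounded isomorphism. Part (c) of Proposition \ref{prop:4equiv_prime} then guarantees that both $\alpha$ and $\alpha^{-1}=\beta$ are weak-$*$ continuous, which in particular ensures they preserve weak-$*$ continuous characters. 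The cb statement is obtained identically by using Proposition \ref{prop:3equiv}. No genuine obstacle is anticipated; the only conceptual point to resolve up front is the interpretation of ``an isomorphism that preserves weak-$*$ continuous characters'', after which the norm identities of the conclusion fall out directly from part (a) of the two cited propositions.
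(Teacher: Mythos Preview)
Your proposal is correct and follows essentially the same approach as the paper, which simply notes (just before stating the corollary) that the result follows by putting Proposition \ref{prop:commutative_cb-homo=>holo} together with Propositions \ref{prop:4equiv_prime} and \ref{prop:cb-homo=>holo}. Your write-up is in fact more careful than the paper's one-line justification: you make explicit the interpretation of ``preserves weak-$*$ continuous characters'' as a two-sided condition, verify that $G$ and $F$ are mutual inverses by testing on coordinate functions, and track which parts of Propositions \ref{prop:3equiv} and \ref{prop:4equiv_prime} deliver the norm identities and the weak-$*$ continuity needed for the reverse direction.
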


\begin{rem}\label{rem:commutative_radical}
Let $\bB_d$ and $\bB_e$ be the commutative (classical) open unit balls in $d$ and $e$ dimensions, respectively. 
Let $V\subseteq \bB_d$ and $W \subseteq \bB_e$ be two subvarieties, meaning that $V$ and $W$ are both given as zero sets of multipliers of the Drury--Arveson space. 
Let $\fV$ and $\fW$ be the minimal nc varieties containing $V$ and $W$ as their first level. 
The ``quantized" varieties $\fV$ and $\fW$ are clearly commutative nc varieties, and the algebras $H^\infty(\fV)$ and $H^\infty(\fW)$ are the multiplier algebras $\cM_V$ and $\cM_W$ studied in \cite{DRS11} and \cite{DRS15}. 
In \cite{DRS11} the authors show that if $\cM_V$ and $\cM_W$ are isomorphic via a map that preserves weak-$*$ continuous characters, then the varieties $V$ and $W$ are {\em multiplier biholomorphic}, in the sense that the varieties are biholomorphic via maps $G$ and $G^{-1}$ whose coordinates are multipliers. 
Multiplier biholomorphism, however, is not even an equivalence relation; see \cite[Remark 6.7]{DHS14}. 
This raises the interesting open problem of describing an equivalence relation on varieties that encodes the isomorphism classes of algebras of the form $\cM_V$; see \cite[Subsection 7.3]{SalomonShalit}. 

Before handling this problem from our perspective, we wish to understand how ``variety quantization" (i.e., the passage from a variety $W \subseteq \bB_d$ to the smallest nc variety $\fW \subseteq \fB_d$ containing it as its first level $\fW(1) = W$) behaves with respect to maps between varieties. 
(It is perhaps worth stressing that here, as in the rest of the paper, by {\em nc variety} we mean an nc variety that is cut out by bounded nc holomorphic functions.)
Let $g:W \to V$ be a holomorphic map. 
Since the spectrum $\sigma(X)$ of every $X\in \widetilde\fW$ is a subset of $W$, the function $g$ extends to an nc holomorphic map $ G : \widetilde \fW \to \widetilde {\fC\fB_d}$ satisfying $\sigma( G(\widetilde W) ) \subseteq V$. 
We claim that if the latter map $G$ is $\phb$-Lipschitz, or equivalently, if $\sup_{X\in \fW}\|\Phi_{G(X)}\| < \infty$, then in fact $G$ maps $\widetilde \fW$ into $\widetilde \fV$, where $\fV$ is the smallest nc variety which has $\fV(1) = V$. 
To see why this is true, consider $X \in \widetilde \fW$. 
We wish to show that $f(G(X)) = 0$ every bounded nc function $f \in H^\infty(\fB_d)$ that vanishes on $V$. 
But for every such $f$, the composition $f \circ G$ is in $H^\infty(\fW)$ and vanishes on $W$ (because $G(W)  = g(W) \subseteq V$). 
From the definition of $\widetilde{\fW}$ it follows that $f \circ G$ vanishes on $\widetilde{\fW}$, whence $f(G(X)) = 0$, as required. 

Categorically speaking, if the morphisms in the category of commutative radical varieties are assumed to be Lipschitz on a quantized variety, then the quantized mapping maps the quantized variety to the quantization of the range, and so this quantization becomes a functor into the category of nc varieties with Lipschitz nc holomorphic maps. 

We now see, due to Corollary \ref {cor:cb-isom<=>biholo_commutative}, that the ``right" equivalence relation between varieties $V$ and $W$ corresponding to weak-$*$ isomorphism between the algebras $\cM_V$ and $\cM_W$, is a bi-Lipschitz biholomorphism of their quantizations. 
We do not know whether there exists a characterization of this equivalence relation that can be read directly from the original varieties $V$ and $W$.
\end{rem}

%%%%%%%%%%%%%%%%%%%%%%%%%%%
\subsection{A Nullstellensatz for homogeneous commutative nc varieties}

We now prove a Nullstellensatz that will be important for our classification result.
These results were essentially obtained already in \cite[Section 2.1.6]{RamseyThesis} (and in \cite[Section 6]{DRS11} in the norm closed setting), but there are some modifications we need, so we expand.
The reader is referred to \cite[Section 7]{SalShaSha17} for a similar discussion in the noncommutative setting.

We define, given a subset $\mathfrak{S} \subseteq \fC\fB_d$,
\[
\cJ_\mathfrak{S} = \{f \in H^\infty(\fB_d) :   f(X) = 0 \,\, \textrm{ for all } X \in \mathfrak{S}\}.
\]
Since we are dealing with the free commutative case, it is convenient to define
\[
\cJ_\mathfrak{S}^c = \{f \in \cM_d = H^\infty(\fC \fB_d):   f(X) = 0 \,\, \textrm{ for all } X \in \mathfrak{S}\},
\]
as well as the polynomial ideal
\[
I(\mathfrak{S}) = \{p \in \bC[z_1, \ldots, z_d]  :   p(X) = 0 \,\, \textrm{ for all } X \in \mathfrak{S}\}.
\]

%%%%%%%%%%%%%%%%%%%%%%%%%%%
\begin{thm}\label{thm:null_com_homo}
Let $\fV \subseteq \fC\fB_d$ be a homogeneous commutative nc variety and let $V = \fV(1)$.
There exists an integer $N$ such that for every $f \in \cM_d$,
\[
f \big|_{V} = 0 \Longrightarrow f^N \big|_\fV = 0 .
\]
Likewise, for every $f \in H^\infty(\fB_d)$
\[
f \big|_{V} = 0 \Longrightarrow f^N \big|_\fV = 0 .
\]
In other words, $f \in \cJ^c_V$ implies that $f^N \in \cJ^c_\fV$ and similarly for $\cJ_V$ and $\cJ_\fV$.
\end{thm}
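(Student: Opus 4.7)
The plan is to reduce the problem to classical commutative algebra on the polynomial defining ideal of $\fV$. Since $\fV$ is homogeneous, every bounded nc function vanishing on $\fV$ has each of its homogeneous components also vanishing on $\fV$; thus $\fV$ is cut out by homogeneous bounded nc holomorphic functions, and any such function is in fact a (commutative, since $\fV\subseteq \fC\fB_d$) polynomial. Setting $J:=\{p\in \bC[z_1,\dots,z_d]: p|_\fV = 0\}$, this is a homogeneous ideal in the Noetherian ring $\bC[z]$, and
\[
\fV = \{X \in \fC\fB_d : p(X)=0 \text{ for all } p \in J\}.
\]
For $V=\fV(1)\subseteq \bB_d$, I claim $I(V)=\sqrt{J}$: the algebraic set $Z(J)\subseteq\bC^d$ is a homogeneous cone whose irreducible components are themselves cones through $0$, so $V = Z(J)\cap \bB_d$ is a Euclidean open neighborhood of $0$ in $Z(J)$ and is Zariski dense in each irreducible component. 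Hilbert's classical Nullstellensatz then gives $I(V)=I(Z(J))=\sqrt{J}$, and by Noetherianity of $\bC[z]$ I can choose an integer $N$ with $(\sqrt{J})^N \subseteq J$.

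Now let $f\in \cM_d$ satisfy $f|_V=0$ (the case $f\in H^\infty(\fB_d)$ is identical), and decompose $f=\sum_{n\geq 0}f_n$ into homogeneous components. For every $v\in V$, the disc $\lambda \mapsto \lambda v$ lies in $V$ for $|\lambda|$ sufficiently small, so $0=f(\lambda v)=\sum_n f_n(v)\lambda^n$ forces $f_n(v)=0$ for all $n$; hence each (polynomial) $f_n$ lies in $I(V)=\sqrt{J}$. Grading $f^N$ by total degree,
\[
f^N = \sum_m (f^N)_m, \qquad (f^N)_m := \sum_{n_1+\cdots+n_N=m} f_{n_1}\cdots f_{n_N} \in (\sqrt{J})^N \subseteq J .
\]
Therefore $(f^N)_m$ vanishes identically on $\fV$ for every $m$.

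Finally, to conclude that $f^N$ itself vanishes on $\fV$, fix $X\in \fV$. By homogeneity of $\fV$ and the fact that $X$ is a strict row contraction, $zX\in \fV$ for $|z|<1/\|X\|$, and the Taylor series of $f^N$ at $0$ along the line $\bC X$ converges on that disc:
\[
f^N(zX)=\sum_m z^m (f^N)_m(X)=0 .
\]
Setting $z=1$ (which lies in the disc of convergence, since $\|X\|<1$) yields $f^N(X)=0$, as required. The principal technical obstacle is the identification $I(V)=\sqrt{J}$, which crucially uses homogeneity of $\fV$ both to reduce the defining relations to polynomials and to produce the Zariski density of $V$ in $Z(J)$; once this is in hand, the uniform exponent $N$ comes for free from Noetherianity of $\bC[z]$ (using $d<\infty$), and the passage from polynomials to $\cM_d$ or $H^\infty(\fB_d)$ is a routine matter of expanding $f^N$ into its homogeneous components and invoking convergence of the Taylor series at strict row contractions.
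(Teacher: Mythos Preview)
Your proof is correct and shares the same commutative-algebra backbone as the paper's: both identify $I(V)=\sqrt{J}$ (with $J=I(\fV)$ the polynomial ideal of $\fV$), invoke Noetherianity of $\bC[z]$ to obtain a uniform exponent $N$ with $(\sqrt{J})^N\subseteq J$, and then pass from polynomials back to analytic functions. The difference lies in this last passage. The paper approximates $f\in\cJ^c_V$ in the weak-$*$ topology by Ces\`aro means $q_k\in I(V)$ (via the Fej\'er kernel), observes that $q_k^N\in I(\fV)$ by the polynomial step, and concludes by taking a bounded pointwise limit; it then reduces the $H^\infty(\fB_d)$ case to the $\cM_d$ case by pushing forward along the abelianization quotient $H^\infty(\fB_d)\to\cM_d$. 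You instead expand $f^N$ directly into homogeneous components and note that each $(f^N)_m$ is a \emph{finite} sum of $N$-fold products of elements of $\sqrt{J}$, hence lies in $(\sqrt{J})^N\subseteq J$; absolute convergence of the Taylor series at a strict row contraction then gives $f^N(X)=0$ for every $X\in\fV$. Your route avoids the Fej\'er-kernel machinery altogether and has the added virtue of making the equality $I(V)=\sqrt{J}$ explicit via Zariski density of $V=Z(J)\cap\bB_d$ in the cone $Z(J)$, a point the paper asserts without comment. The paper's Ces\`aro approach, on the other hand, packages the analytic-to-algebraic passage in a form that recurs throughout this circle of ideas (homogeneous ideals in $H^\infty(\fB_d)$ being weak-$*$ generated by their polynomial parts), so it connects more transparently to the ambient machinery.
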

\begin{proof}
The ideal $\cJ^c_\fV$ is homogeneous.
Therefore, if $f \in \cJ^c_\fV$ and $f = \sum_{n=0}^\infty f_n$, then every $f_n$ is in the polynomial ideal $I(\fV) \triangleleft \bC[z_1, \ldots, z_d]$.
It holds that
\[
f_n(X) = \int_0^{2\pi} f(e^{int}X) e^{-int} dt,
\]
and a familiar argument using the Fej\'er kernel shows that the Ces\`aro sums of the series $\sum f_n$ are bounded and converge pointwise to $f$.
Thus every $f \in \cJ_{\fV}^c$ is the weak-$*$ limit of a bounded sequence in $I(\fV)$.

By Hilbert's Basis Theorem, the radical $I(V) = \sqrt{I(\fV)}$ of the ideal $I(\fV) \triangleleft \bC[z]$ is finitely generated.
Combining this with Hilbert's Nullstellensatz, it is easy to see that there is some $N$ such that for every $p \in \bC[z_1, \ldots, z_d]$,
\[
p\big|_{V} = 0  \Longrightarrow p^N \in I(\fV) .
\]
If $f \in \cJ^c_V$, that is, if $f \in \cM_d$ vanishes on $V$, then, by the first paragraph of the proof, $f$ is the pointwise limit of a bounded sequence of polynomials $q_k \in I(V)$.
It follows that $f^N$ is the bounded pointwise limit of $q_k^N \in I(\fV)$, and therefore $f^N \in \cJ^c_\fV$.

That proves the statement for $f \in \cM_d$.
Now let $f \in H^\infty(\fB_d)$, and suppose that $f \big|_V = 0$.
Letting $q : H^\infty(\fB_d) \to \cM_d = H^\infty(\fC\fB_d)$ be the quotient map, we find that $q(f)\big|_V = 0$, so $q(f^N) = q(f)^N \in \cJ^c_{\fV}$.
Therefore, $f^N \in q^{-1}(\cJ^c_{\fV}) = \cJ_{\fV}$.
\end{proof}

%Given an algebra $\cB$ and an ideal $\cI$ in $\cB$, we define
%\[
%\operatorname{rad}_{\cB}(\cI) = \{f \in \cB :  f^N \in \cI \,\, \textrm{ for some } \,\, N \in \bN\}.
%\]
%We now recover Theorem 2.1.30 from \cite{RamseyThesis} (for the case $\cB = \cM_d$).
%%%%%%%%%%%%%%%%%%%%%%%%%%%%
%\begin{cor}
%For every weak-$*$ closed homogeneous ideal $\cI \triangleleft \cB$ (with $\cB = \cM_d$ or $\cB = H^\infty(\fB_d)$), if we put $V = \{z \in \bB_d : g(z) = 0 \textrm{ for all } g \in \cI\}$, then 
%\[
%\{f \in \cB : f\big|_V = 0\} = \operatorname{rad}_{\cB}(\cI).
%\]
%\end{cor}

%%%%%%%%%%%%%%%%%%%%%%%%%%%
\subsection{Classification of homogeneous commutative nc varieties}

%The main results of \cite[Section 5]{DRS15}, when translated to the language of this paper, can be stated as follows: if $\fV$ and $\fW$ are minimal commutative nc varieties in $\fC\fB_d$, and if $\alpha : H^\infty(\fV) \to H^\infty(\fW)$ is a continuous algebra isomorphism, then there exists a multiplier biholomorphism $G$ from $\fW(1)$ onto $\fV(1)$ that implements $\alpha$.
%It is worth noting that $G$ must then extend to a biholomorphism from $\widetilde{\fW}$ onto $\widetilde{\fV}$, since these are the minimal nc varieties containing $\fW(1)$ and $\fV(1)$, and since $G$ is induced by a bounded map.
%One of the main goals of this paper is to extend the results of \cite[Section 5]{DRS15} to the setting of nc  varieties.
%Under a weak-$*$ continuity assumption this was achieved in Corollary \ref{cor:cb-isom<=>biholo_weakstar}, and under a less stringent weak-$*$ continuity assumption this was achieved in the nc commutative case in Corollary \ref{cor:cb-isom<=>biholo_commutative}.
%In this section, we completely remove all weak-$*$ continuity assumptions, in the homogeneous commutative nc setting. 
%We already obtained this goal for general homogeneous nc varieties in Theorem \ref{thm:homog_no_need_w*}. 
The following theorem is a special case of Theorem \ref{thm:homog_no_need_w*}, when attention is restricted to commutative homogeneous nc varieties; 
we believe that it deserves to be presented separately, because of the importance of the commutative case, and also because we have a different proof. 

%%%%%%%%%%%%%%%%%%%%%%%%%%%
\begin{thm}\label{thm:commutative isom=>biholo2}
Let $\fV \subseteq \fC\fB_d$ and $\fW \subseteq \fC \cB_e$ be two homogeneous commutative nc varieties, and let $\alpha : H^\infty(\fV) \to H^\infty(\fW)$ be a bounded isomorphism.
Then there exists an nc biholomorphism $G: \widetilde{\fW} \to \widetilde{\fV}$
which implements $\alpha$ by the formula
\[
\alpha(f) = \widetilde{f} \circ G
\]
for all $f \in H^\infty(\fV)$ where $\widetilde{f}$ denotes the unique extension of $f$ to $\widetilde{\fV}$. 
Moreover, \[
\sup_{W\in \fW}\|\Phi_{G(W)}\|= \|\alpha\| ,
\] 
and if $\alpha$ is completely bounded, then 
\[
\sup_{W\in \fW}\|\Phi_{G(W)}\|_{cb} = \|\alpha\|_{cb} .
\]
\end{thm}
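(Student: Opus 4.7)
The plan is to reduce Theorem~\ref{thm:commutative isom=>biholo2} to Proposition~\ref{prop:commutative_cb-homo=>holo} by verifying that $\alpha^*$ sends each scalar evaluation $\Phi_w$ on $H^\infty(\fW)$ (for $w \in W=\fW(1)$) to an evaluation $\Phi_\lambda$ at some $\lambda \in V = \fV(1)$. Once that is established, Proposition~\ref{prop:cb-homo=>weakstar_commutative} provides $\alpha^*(\widetilde\fW) \subseteq \widetilde\fV$, Proposition~\ref{prop:commutative_cb-homo=>holo} yields the nc biholomorphism $G: \widetilde{\fW} \to \widetilde{\fV}$ with $\alpha(f)=\widetilde{f}\circ G$, and Propositions~\ref{prop:3equiv} and~\ref{prop:4equiv_prime} give the norm identities $\sup_{W\in \fW}\|\Phi_{G(W)}\|= \|\alpha\|$ (and the completely bounded analogue).

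First I would set $F_i := \alpha(z_i|_\fV) \in H^\infty(\fW)$ and $F=(F_1,\ldots,F_d)$. For $w \in W$, the bounded character $\chi_w:= \Phi_w \circ \alpha$ of the commutative Banach algebra $H^\infty(\fV)$ has norm one and acts on polynomials as evaluation at $F(w)=(F_1(w),\ldots,F_d(w)) \in \bC^d$; lifting $\chi_w$ through the quotient $\cM_d \twoheadrightarrow H^\infty(\fV)$ to a contractive character of $\cM_d$ places $F(w)$ in $\overline{\bB_d}$. The key new ingredient --- the feature that makes the proof genuinely ``commutative'' in flavor, as opposed to the noncommutative proof of Theorem~\ref{thm:homog_no_need_w*} --- is the Nullstellensatz of Theorem~\ref{thm:null_com_homo}: for every polynomial $p \in \bC[z_1,\ldots,z_d]$ vanishing on $V$ there exists $N$ with $p^N|_\fV = 0$ in $H^\infty(\fV)$, so applying $\alpha$ and then $\Phi_w$ yields $p(F(w))^N = \chi_w(p^N|_\fV) = 0$, whence $p(F(w)) = 0$. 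Therefore $F(w) \in Z(I(V)) \cap \overline{\bB_d} = \overline{V}$.

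The main obstacle is to upgrade the inclusion from $F(w) \in \overline V$ to $F(w) \in V$, that is, to rule out $\|F(w)\|=1$. I plan to use a Schwarz-type subharmonic argument in the same spirit as the proof of Theorem~\ref{thm:homog_no_need_w*}: by homogeneity of $\fW$ the disc $\{zw : z\in\bD\}$ lies in $W$, so $z \mapsto F(zw)$ is an analytic map $\bD \to \bC^d$, and by Corollary~\ref{cor:subharmonic} the function $z \mapsto \rho(F(zw)) = \|F(zw)\|$ is subharmonic on $\bD$ and dominated by $1$. If $\|F(w_0)\|=1$ for some $w_0 \in W$, then this subharmonic function attains its maximal value $1$ at the interior point $z=1$, so by the maximum principle it is identically $1$; a further rotation/maximum-modulus argument, modeled on the one used in the proof of Theorem~\ref{thm:homog_no_need_w*}, then forces every $F_i$ to be constant on $\fW$. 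By injectivity of $\alpha$ this collapses $\fV$ to a single scalar point, contradicting the nontriviality of the isomorphism (or reducing the theorem to the trivial case $\fV=\fW=\{0\}$).

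Finally, applying the mirror argument to $\alpha^{-1}$, with $G_j := \alpha^{-1}(z_j|_\fW) \in H^\infty(\fV)$, yields the inverse scalar map $V \to W$, and the hypothesis of Proposition~\ref{prop:cb-homo=>weakstar_commutative} is satisfied. The theorem then follows as outlined in the first paragraph. The hard part will be executing the subharmonic/maximum-modulus step cleanly in the commutative setting and handling the Ces\`aro--Fej\'er decomposition of the $F_i$ into homogeneous polynomial components (via \cite[Lemma~7.10]{SalShaSha17}) so as to obtain the ``constant'' conclusion directly, without re-invoking the noncommutative machinery from Theorem~\ref{thm:homog_no_need_w*}.
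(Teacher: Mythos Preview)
Your proposal is essentially correct, but it takes a different route from the paper's own proof, and your deployment of the Nullstellensatz is misplaced relative to where it actually does the work.

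The paper does \emph{not} use the subharmonic/maximum--modulus machinery of Corollary~\ref{cor:subharmonic}; the whole point of stating Theorem~\ref{thm:commutative isom=>biholo2} separately is to give a proof independent of the spectral-radius arguments of Section~\ref{sec:prelim_jspr} and Theorem~\ref{thm:homog_no_need_w*}. Instead, assuming some $\alpha^*(\Phi_{w_0})$ lies in the fiber over a boundary point $v\in\partial\bB_d$, the paper invokes the connectedness of $W$ together with \cite[Lemma~5.3]{DRS15} to conclude that \emph{every} $\alpha^*(\Phi_w)$ lies over the \emph{same} boundary point $v$. After a unitary rotation $v=(1,0,\ldots,0)$, one takes $g=z_1-1$: then $\alpha(g)$ vanishes on $W$, so Theorem~\ref{thm:null_com_homo} gives $\alpha(g)^N=\alpha(g^N)=0$ in $H^\infty(\fW)$, contradicting injectivity because $(X_1-I)^N$ is invertible for every $X\in\fV$. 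Thus in the paper the Nullstellensatz is used at the \emph{end}, to produce the contradiction, not at the beginning.

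By contrast, your use of Theorem~\ref{thm:null_com_homo} to deduce $F(w)\in\overline V$ is unnecessary: once $\|F(w)\|<1$ the last clause of Theorem~\ref{thm:finite_dim_reps} already gives $F(w)\in V$. Where you \emph{do} need the Nullstellensatz is precisely the step you flag as ``hard'': passing from ``$F_1\equiv 1$ on $W$'' to a contradiction. Your plan there---either redo the trace argument from Theorem~\ref{thm:homog_no_need_w*}, or use a Ces\`aro--Fej\'er decomposition---reintroduces the noncommutative machinery you said you wanted to avoid. The clean closure is exactly the paper's: $F_1-1$ vanishes on $W$, hence $(F_1-1)^N=0$ in $H^\infty(\fW)$ by Theorem~\ref{thm:null_com_homo}, so $\alpha((z_1-1)^N)=0$, contradiction. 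If you reorganize along these lines your argument converges to the paper's; the only remaining difference is that you reach ``$\alpha^*(\Phi_w)$ all lie over a single boundary point'' via subharmonicity, while the paper reaches it via the connectivity lemma from \cite{DRS15}.
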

\begin{proof}
By Corollary \ref{cor:cb-isom<=>biholo_commutative}, it suffices to prove
that $\alpha^*$ restricts to a bijection between the weak-$*$ continuous functionals.
In fact, it suffices to show that $\alpha^*$ maps $W = \fW(1)$ into $V = \fV(1)$, since by symmetry this will also be true for $\alpha^{-1}$.

Suppose, therefore, that $\alpha^*$ maps an evaluation functional $\Phi_{w_0}$ to a functional $\rho = \alpha^*(\Phi_{w_0})$ which is not an evaluation functional. 
By Theorem \ref{thm:finite_dim_reps}, $\rho$ lies in a fiber over a point of the boundary of the ball, that is $\rho \in \pi_1^{-1}(v)$ for some $v \in \partial \bB_d \cap \ol{V}$.
Since $W$ is connected, we can use the proof of \cite[Lemma 5.3]{DRS15} to obtain that $\alpha^*(\Phi_w) \subseteq \pi_1^{-1}(v)$ for all $w \in W$.
We will now show that this leads to a contradiction.

Without loss of generality, we may assume that $v = (1, 0, \ldots, 0)$.
Now let $g = z_1 - 1$, which can be considered as a function in $\cM_d$ and also as a function in $H^\infty(\fW)$.
Then $g^N$ does not vanish on $\fV$ for any $N \in \bN$, since $A - I$ is invertible for any matrix $A$ with $\|A\|<1$.
On the other hand, for every $w \in W$,
\[
\alpha(g) (w) = \alpha^*(\Phi_w)(g) = \alpha^*(\Phi_w)(z_1) - \alpha^*(\Phi_w)(1) = 1-1 = 0.
\]
Note that this does not imply yet that $\alpha(g)$ is zero on $\fW$, just that it vanishes on the first level.
However, by Theorem \ref{thm:null_com_homo}, there exists $N$ such that $\alpha(g^N)  = \alpha(g)^N = 0$, and this contradicts the fact that $\alpha$ is an isomorphism.
This contradiction shows that $\alpha^*$ must map $W$ into $V$, and by the first paragraph of the proof, we are done.
\end{proof}

Combining Theorem \ref{thm:commutative isom=>biholo2} with Propositions \ref{prop:biholo=>0-biholo}, \ref{prop:zeros_to_zeros} and \ref{prop:bdd_linear=>cb_linear}, we recover Theorem \ref{thm:isom_thm_for_homo} for the case of commutative nc varieties (where Theorem \ref{thm:commutative isom=>biholo2} replaces Theorem \ref{thm:homog_no_need_w*} in the proof). 
Proposition \ref{prop:zeros_to_zeros} can be given a slightly different proof in the commutative case, using the joint spectrum instead of the joint spectral radius (we omit the details), and with this approach the proof of the commutative case becomes different in a meaningful way.

We know that for homogeneous nc varieties, and in particular for commutative homogeneous nc varieties, the algebras $H^\infty(\fV)$ and $H^\infty(\fW)$ are boundedly isomorphic if and only if there exists a $\phb$-bi-Lipschitz linear map mapping $\widetilde{\fW}$ onto $\widetilde{\fV}$. 
In Question \ref{quest:bilipLin} we asked whether every bijective linear map taking $\widetilde{\fW}$ onto $\widetilde{\fV}$ is automatically $\phb$-bi-Lipschitz. 
We were unable to resolve the question. 
The answer is known only when the varieties are commutative and minimal in a certain sense, as explained in the following example. 

%%%%%%%%%%%%%%%%%%%%%
\begin{example}
The answer to Question \ref{quest:bilipLin} is affirmative in the case that one (and hence both) of the varieties are the nc zero sets of a radical homogeneous ideal. 
This follows from a difficult result of Hartz \cite{Hartz12} (see Remark \ref{rem:commutative_radical} or \cite[Section 11]{SalShaSha17} for the connection). 
In our notation, Hartz's theorem says that if $A : W \to V$ a bijective linear map between two homogeneous varieties in the unit ball $\bB_d \subseteq \bC^d$, and if $\fV$ and $\fW$ are the minimal nc varieties in the nc ball $\fB_d$ containing $V$ and $W$ as their first levels, then pre-composing with $A$ gives rise to a bounded isomorphism from $H^\infty(\fV)$ onto $H^\infty(\fW)$. 
Recall that every nc holomorphic map $\widetilde{\fW} \to \widetilde{\fV}$ that gives rise to a homomorphism $H^\infty(\fV) \to H^\infty(\fW)$ by pre-composition is Lipschitz (Proposition \ref{prop:4equiv_prime}). 
Thus, Hartz's result provides a solution to Question \ref{quest:bilipLin} in the commutative, radical case, because the restriction to the first level a bijective linear map $A : \widetilde{\fW} \to \widetilde{\fV}$ is a bijective linear map $ A : W \to V$. 
\end{example}

%%%%%%%%%%%%%%%%%%%%%%%%%%%%%%%%%%%%%%%%%%%%%%%
\section{Algebras of continuous multipliers} \label{sec:continuous_case}

Let $\fV \subseteq \fB_d$ be an nc variety. 
Recall that we let $A(\fV)$ denote the algebra of all bounded analytic functions that extend to uniformly continuous functions on $\ol{\fV}$, in the sense that for every $\epsilon > 0$, there exists a $\delta > 0$, such that for every $n \in \N$ we have that if $X, Y \in \ol{\fV}(n)$ are such that $\|X - Y \| < \delta$, then $\|f(X) - f(Y)\| < \epsilon$ (in other words, these are the bounded analytic functions on $\ol{\fV}$ that are uniformly continuous on $\ol\fV(n)$, for all $n$, uniformly in $n$). 
We give $A(\fV)$ the sup norm (which coincides with the multiplier norm), and this gives $A(\fV)$ the structure of an operator algebra. 
By \cite[Section 9]{SalShaSha17}, these algebras can be considered to be continuous multipliers on the nc reproducing kernel Hilbert spaces $\mathcal H_\fV$. 

The algebras of the form $A(\fV)$ include among them many algebras that have been investigated before. 
For example, if $\fV$ is a disc, then $A(\fV) = A(\bD)$ is the disc algebra. 
If $\fV = \fB_d$ is an nc ball, then $A(\fV) = A(\fB_d)$ is the noncommutative disc algebra \cite{Popescu91}. 
If $\fV$ is a homogeneous ideal, then $A(\fV)$ is the tensor algebra associated with a subproduct system (with Hilbert space fibers), as studied in \cite{DRS11}, \cite{KakSh19} and \cite{ShalitSolel}. 

A natural question, in the spirit of our investigations, is: how does the (operator/Banach) algebraic structure of $A(\fV)$ reflect the geometric structure of $\fV$? 
Other natural algebras one might consider are (i) the closure $\fA_\fV$ of the algebra generated by the polynomials, with respect to the supremum norm and (ii) the quotient algebra $A(\fB_d) / \cI_\fV$, where 
\[
\cI_\fV = \{f \in A(\fB_d) : f(X) = 0 \textrm{ for all } X \in \fV\}. 
\]

In the general case, several technical difficulties immediately arise. 
First, it is not true that every $f \in A(\fV)$ has an extension $F \in A(\fB_d)$ such that $\|F\| = \|f\|$. 
Moreover, it might happen that $\cI_\fV = \{0\}$ for a nontrivial $\fV$ (see \cite[Theorem 8.1]{DRS15}). 
In this case, the restriction map $A(\fB_d) \to A(\fV)$ is an isomorphism, hence the algebraic structure is preserved while the geometry is dramatically changed. 
It is also not a simple matter to determine when $A(\fV) = \fA_\fV$. 
We refer the reader to Section 7 of the paper \cite{DRS15} for some discussion of the kind of subtleties involved. 

To recap, in the setting of multiplier algebras, we had the natural and completely isometric identifications 
\[
H^\infty(\fV) = H^\infty(\fB_d) \big|_{\fV} = H^\infty(\fB_d) / \cJ_\fV = \ol{\alg}^{w*} (z_1, \ldots, z_d) ,
\]
where the last algebra denotes the weak-operator (or weak-$*$) closure of the unital algebra generated by the free polynomials in $\mlt \cH^2_\fV$; whereas, when passing to the norm closed algebras, we do not know in general whether 
\begin{equation}\label{eq:Aequalities}
A(\fV) = A(\fB_d)\big|_{\fV} = A(\fB_d) / \cI_\fV = \fA_\fV := \ol{\alg}^{\|\cdot\|} (z_1, \ldots, z_d) 
\end{equation}
holds. In particular, we cannot identify the spaces $\Rep_{k}(A(\fV))$ of finite dimensional representations, which in the previous context has been the starting point of the classification.  
In \cite[Section 9]{SalShaSha17}, we proved that the identifications in \eqref{eq:Aequalities} all hold completely isometrically when $\fV$ is a homogeneous variety (with the provision that the appearance of $A(\fB_d)\big|_\fV$ in the identifications should be interpreted only as equality of sets). 

In the rest of this section we shall consider the isomorphism problem for the algebras of the kind $A(\fV)$ for a homogeneous nc variety $\fV \subseteq \fB_d$, where $d = \infty$ is included. 
We leave the investigation of the function theory, representation theory, and classification of the algebras $A(\fV)$ associated with general varieties for future work. 

In Section 9.3 of \cite{SalShaSha17} we observed that when $\fV$ is homogeneous, every $X \in \ol{\fV}$ gives rise to completely contractive unital representation 
\[
f \mapsto f(X).
\]
Further, for every $\Phi \in \Rep_{k}^{cc}(A(\fV))$ there exists $X \in \ol{\fV}$ such that $\Phi = \Phi_X$. 
Using --- as is done in the proof of Theorem \ref{thm:finite_dim_reps} --- the fact that bounded representations into $M_n$ are completely bounded, together with the fact that completely bounded representations are similar to completely contractive ones, we obtain the following description of the bounded finite dimensional representations of $A(\fV)$.

%%%%%%%%%%%%%%%%%%%%%%%%%%%%%%%%%%%
\begin{prop}\label{prop:finite_dim_reps_AV}
Let $\fV \subseteq \fB_d$ be a homogeneous nc variety. 
For very $k \in \bN$, the natural projection $\pi_{k}$ of $\Rep_{k}(A(\fV))$ into $\widetilde{\ol{\fB_d}}$ given by 
\[
\pi_{k}(\Phi) = (\Phi(z_1), \ldots, \Phi(z_d)) 
\]
is a  bijection onto the similarity invariant envelope $\widetilde{\ol{\fV}}$ of $\ol{\fV}$. 
\end{prop}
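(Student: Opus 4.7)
The plan is to mirror the proof of Theorem \ref{thm:finite_dim_reps}, using in place of Theorem \ref{thm:finite_dim_ccreps} the characterization of $\Rep^{cc}_{k}(A(\fV))$ recalled just before the statement: for homogeneous $\fV$, every completely contractive representation of $A(\fV)$ on $\C^k$ arises as evaluation $\Phi_X$ at some point $X \in \ol{\fV}(k)$, and conversely every such evaluation is completely contractive.

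To see that $\pi_{k}$ takes values in $\widetilde{\ol{\fV}}(k)$, I would fix $\Phi \in \Rep_{k}(A(\fV))$. By Smith's theorem, any bounded homomorphism into $M_k$ is automatically completely bounded, and Paulsen's similarity theorem then supplies $S \in \GL_k$ and a completely contractive $\rho \in \Rep^{cc}_{k}(A(\fV))$ with $\Phi(\cdot) = S \rho(\cdot) S^{-1}$. The preceding discussion gives $\pi_{k}(\rho) \in \ol{\fV}(k)$, so
\[
\pi_{k}(\Phi) = S \pi_{k}(\rho) S^{-1} \in \widetilde{\ol{\fV}}(k).
\]

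Surjectivity is immediate: given $X \in \widetilde{\ol{\fV}}(k)$, write $X = S Y S^{-1}$ with $Y \in \ol{\fV}(k)$ and set $\Phi_X(f) := S \Phi_Y(f) S^{-1}$, where $\Phi_Y$ is the completely contractive evaluation at $Y$; this is a bounded representation with $\pi_{k}(\Phi_X) = X$. For injectivity, the key input is the identification $A(\fV) = \fA_\fV$ for homogeneous $\fV$ recorded in \cite[Section 9]{SalShaSha17}, namely that the free polynomials in $z_1, \ldots, z_d$ are norm-dense in $A(\fV)$. Since any bounded representation is determined by continuity on a dense subalgebra, and since a representation of the free polynomial algebra is determined by its values on the generators, this yields injectivity of $\pi_{k}$.

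I do not anticipate a substantial obstacle here: everything reduces to Smith--Paulsen together with the two facts already established in the homogeneous setting (completely contractive representations correspond to points of $\ol{\fV}$, and free polynomials are norm-dense in $A(\fV)$). The case $d = \infty$ is covered simultaneously, as both ingredients from \cite{SalShaSha17} are available in that generality.
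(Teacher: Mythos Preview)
Your proof is correct and follows essentially the same approach as the paper's: the paper proves this proposition by the sentence immediately preceding it, invoking the characterization of $\Rep^{cc}_k(A(\fV))$ from \cite[Section 9]{SalShaSha17} together with Smith's and Paulsen's theorems exactly as you do. Your explicit treatment of injectivity via norm-density of the free polynomials (the identification $A(\fV) = \fA_\fV$) is a nice detail that the paper leaves implicit.
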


Once we know that every bounded finite dimensional representation of $A(\fV)$ is given by evaluation at a point of $\widetilde{\ol{\fV}}$, we can use the methods of this paper (with significantly simpler proofs) to prove the following classification results. 

%%%%%%%%%%%%%%%%%%%%%%%%%%%%%%%%
\begin{thm}\label{thm:isom_thm_for_homoAV1}
Let $\fV \subseteq \fB_d$ and $\fW \subseteq \fB_{e}$ be homogeneous nc varieties.
Let $\alpha : A(\fV) \to A(\fW)$ be a unital bounded homomorphism.   
Then there exists an nc map $G: \widetilde{\ol{\fW}} \to \widetilde{\ol{\fV}}$ such that $G(\widetilde{\ol{\fW}}) = \widetilde{\ol{\fV}}$, which implements $\alpha$ by the formula
\[
\alpha(f) = \widetilde{f} \circ G. 
\]
In this case, $\|\alpha\|=\sup_{W\in \fW}\|\Phi_{G(W)}\|$, and $\|\alpha\|_{cb}=\sup_{W\in \fW}\|\Phi_{G(W)}\|_{cb}$ if $\alpha$ is completely bounded. 
Moreover, if we write $G = (G_1, \ldots, G_d)$, then the component $G_i$ is in $A(\fW)$ for all $i=1, \ldots, d$. 

Consequently, $A(\fV)$ and $A(\fW)$ are (completely) boundedly isomorphic, if and only if there exist ($\ph$-bi-Lipschitz) $\phb$-bi-Lipschitz nc maps $G : \widetilde{\ol{\fW}} \to \widetilde{\ol{\fV}}$ with components in $A(\fW)$, and $H :  \widetilde{\ol{\fV}} \to \widetilde{\ol{\fW}}$ with components in $A(\fV)$, which are mutual inverses of each other. 
\end{thm}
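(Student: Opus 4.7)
The main ingredient is Proposition \ref{prop:finite_dim_reps_AV}, which identifies $\Rep_k(A(\fV))$ with $\widetilde{\ol{\fV}}(k)$ via $\pi_k$. The plan is to mimic the proofs of Proposition \ref{prop:cb-homo=>holo} and Corollary \ref{cor:cb-isom<=>biholo_weakstar}, but in the uniformly-continuous setting, using density of free polynomials in $A(\fV)$ (which holds by \cite[Section 9]{SalShaSha17} since $\fV$ is homogeneous) in place of the weak-$*$ considerations.

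First I would define $G\colon \widetilde{\ol{\fW}}\to \widetilde{\ol{\fV}}$ by $G(W)=\pi_k(\alpha^*(\Phi_W))$ for $W\in \widetilde{\ol{\fW}}(k)$. Since $\alpha$ is bounded and $\Phi_W$ is a bounded finite-dimensional representation of $A(\fV)$, the composition $\alpha^*(\Phi_W)=\Phi_W\circ \alpha$ is a bounded finite-dimensional representation of $A(\fV)$, so by Proposition \ref{prop:finite_dim_reps_AV} it corresponds uniquely to a point of $\widetilde{\ol{\fV}}(k)$. The assignment $W\mapsto G(W)$ is graded, respects direct sums, and respects similarities because $\Phi_W$ does, so $G$ is an nc map. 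Its $i$-th component is the function $W\mapsto \Phi_W(\alpha(z_i))=\alpha(z_i)(W)$, i.e.\ the element $\alpha(z_i)\in A(\fW)$; this gives $G_i\in A(\fW)$ for each $i$.

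For the implementation formula, for any free polynomial $p$ and any $W\in \fW$ we compute
\[
\alpha(p)(W)=\Phi_W(\alpha(p))=\alpha^*(\Phi_W)(p)=\Phi_{G(W)}(p)=p(G(W))=\widetilde{p}(G(W)),
\]
so $\alpha(p)=\widetilde{p}\circ G$ on $\fW$ and, by extending nc-holomorphically, on $\widetilde{\ol{\fW}}$. Since free polynomials are norm dense in $A(\fV)$ and $\alpha$ is norm continuous, and since for any $f\in A(\fV)$ the composition $\widetilde{f}\circ G$ is the uniform limit of $\widetilde{p_n}\circ G$ for polynomial approximants $p_n\to f$, the identity $\alpha(f)=\widetilde{f}\circ G$ persists for every $f\in A(\fV)$. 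The norm equalities $\|\alpha\|=\sup_{W\in\fW}\|\Phi_{G(W)}\|$ and (when $\alpha$ is completely bounded) $\|\alpha\|_{cb}=\sup_{W\in\fW}\|\Phi_{G(W)}\|_{cb}$ are immediate from this formula together with the definitions: one direction is a direct sup computation of $\|\alpha(f)\|=\sup_{W}\|\Phi_{G(W)}(f)\|$, and the reverse is obtained by taking $f$ to saturate the evaluation at a given $W$.

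For the equivalence at the end, the forward implication follows from applying the preceding construction to both $\alpha$ and $\alpha^{-1}$: we obtain $G\colon \widetilde{\ol{\fW}}\to \widetilde{\ol{\fV}}$ and $H\colon \widetilde{\ol{\fV}}\to \widetilde{\ol{\fW}}$ with components in $A(\fW)$ and $A(\fV)$ respectively, and the uniqueness clause of Proposition \ref{prop:finite_dim_reps_AV} forces $H\circ G$ and $G\circ H$ to be the identity maps. The bi-Lipschitz properties with respect to $\phb$ (resp.\ $\ph$) then follow from the norm (resp.\ cb-norm) identities above applied to $\alpha$ and $\alpha^{-1}$, exactly as in Corollary \ref{cor:cb-isom<=>biholo_weakstar}. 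For the converse direction, given such a pair $(G,H)$ with Lipschitz bounds, define $\alpha(f)=\widetilde{f}\circ G$; the fact that $G$ has components in $A(\fW)$ together with density of polynomials and the Lipschitz bound shows that $\alpha$ maps $A(\fV)$ into $A(\fW)$ continuously, and $H$ provides the inverse by the same recipe.

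The main obstacle I expect is the passage from polynomials to all of $A(\fV)$ in showing $G$-composition yields elements of $A(\fW)$: one must verify that the Lipschitz bound on $G$ (with respect to the appropriate pseudo-hyperbolic distance) ensures that composition with $G$ preserves uniform continuity on $\ol{\fW}$, not merely boundedness. This is the analogue in the continuous setting of Proposition \ref{prop:3equiv}/\ref{prop:4equiv_prime}, and should go through using the fact that the kernel of $\pi_k$ on $\Rep_k^{cc}$ is governed by the (completely) bounded norm, together with the dense inclusion of polynomials in $A(\fV)$.
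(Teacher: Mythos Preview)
Your proposal is correct and follows essentially the same approach the paper intends: the paper gives no detailed proof here, only the remark that once Proposition \ref{prop:finite_dim_reps_AV} identifies $\Rep_k(A(\fV))$ with $\widetilde{\ol{\fV}}(k)$, ``the methods of this paper (with significantly simpler proofs)'' apply, and you have carried out precisely that program by transplanting the arguments of Proposition \ref{prop:cb-homo=>holo}, Propositions \ref{prop:3equiv}/\ref{prop:4equiv_prime}, and Corollary \ref{cor:cb-isom<=>biholo_weakstar} to the norm-closed setting. Your final worry is not a genuine obstacle: since $A(\fW)$ is norm-closed and polynomials are dense in $A(\fV)$, the estimate $\|\widetilde{p_n}\circ G - \widetilde{p_m}\circ G\|_\infty \leq (\sup_W\|\Phi_{G(W)}\|)\,\|p_n-p_m\|$ already shows $\widetilde{p_n}\circ G$ is Cauchy in $A(\fW)$, so the limit $\widetilde{f}\circ G$ lies in $A(\fW)$ automatically --- no separate uniform-continuity check is needed.
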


%%%%%%%%%%%%%%%%%%%%%%%%%%%%%%%%
\begin{thm}\label{thm:isom_thm_for_homoAV2}
Let $\fV \subseteq \fB_d$ and $\fW \subseteq \cB_e$ be two homogeneous nc varieties. 
The following are equivalent: 
\begin{enumerate}
\item $A(\fV)$ and $A(\fW)$ are boundedly isomorphic. 
\item $A(\fV)$ and $A(\fW)$ are completely boundedly isomorphic. 
\item There exists a $\phb$-bi-Lipschitz linear map mapping $\widetilde{\ol\fW}$ onto $\widetilde{\ol\fV}$. 
\item There exists a $\ph$-bi-Lipschitz linear map mapping $\widetilde{\ol\fW}$ onto $\widetilde{\ol\fV}$. 
\end{enumerate}
\end{thm}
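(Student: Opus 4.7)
The plan is to mirror the strategy behind Theorem \ref{thm:isom_thm_for_homo}, replacing $H^{\infty}(\fV)$ by $A(\fV)$, $\widetilde{\fV}$ by $\widetilde{\ol{\fV}}$, and Theorem \ref{thm:homog_no_need_w*} together with Corollary \ref{cor:cb-isom<=>biholo_weakstar} by Theorem \ref{thm:isom_thm_for_homoAV1}. The implication $(2)\Rightarrow(1)$ is trivial, and the implications $(3)\Rightarrow(1)$ and $(4)\Rightarrow(2)$ follow at once from Theorem \ref{thm:isom_thm_for_homoAV1}, since a bi-Lipschitz linear bijection between the closed similarity envelopes is automatically an nc biholomorphism whose coordinate functions are polynomials, and hence elements of $A(\fW)$.

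The substantive content is the implication $(1)\Rightarrow(3)$; the argument with $\|\cdot\|_{cb}$ in place of $\|\cdot\|$ gives $(2)\Rightarrow(4)$. Given a bounded isomorphism $\alpha\colon A(\fV)\to A(\fW)$, Theorem \ref{thm:isom_thm_for_homoAV1} supplies a $\phb$-bi-Lipschitz nc biholomorphism $G\colon\widetilde{\ol{\fW}}\to\widetilde{\ol{\fV}}$ with components in $A(\fW)$ and satisfying $\sup_{W\in\ol{\fW}}\|\Phi_{G(W)}\|=\|\alpha\|$. The disc trick of Proposition \ref{prop:biholo=>0-biholo} adapts to the closed homogeneous setting essentially verbatim, since its only input is the invariance of the relevant orbit sets under multiplication by unit modulus scalars, which $\widetilde{\ol{\fV}}$ and $\widetilde{\ol{\fW}}$ inherit from the homogeneity of $\fV$ and $\fW$. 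So we may replace $G$ by a biholomorphism that sends $0$ to $0$. Setting $A:=\Delta G(0,0)$, the Vesentini--Schwarz Lemma (Lemma \ref{lem:free_spectral_Schwarz}) combined with the argument of Proposition \ref{prop:zeros_to_zeros} shows that $A$ is a linear map from $\widetilde{\ol{\fW}}$ into $\widetilde{\ol{\fV}}$.

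The quantitative step that $A$ is $\phb$-bi-Lipschitz with Lipschitz constant $\|\alpha\|$ is the heart of the argument and closely follows the template of Theorem \ref{thm:isom_thm_for_homo}. For $W\in\ol{\fW}(n)$ set $u(z):=G(zW)/z$ and $F(z):=\Phi_{u(z)}\in B(A(\fV),M_n)$. Because each coordinate of $G$ lies in $A(\fW)$, the map $u$ is continuous on $\ol{\bD}$ and holomorphic on $\bD$; for every $f\in A(\fV)$ the composition $f\circ u$ is then continuous on $\ol{\bD}$ and holomorphic on $\bD$. Cauchy's integral formula on the unit circle, applied exactly as in the proof of Theorem \ref{thm:isom_thm_for_homo}, shows that $F$ is Banach-valued holomorphic on $\bD$ with continuous extension to $\ol{\bD}$. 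The rotation argument via the completely isometric automorphism $\beta_\theta$ of $A(\fV)$ gives $\|F(e^{i\theta})\|=\|\Phi_{G(e^{i\theta}W)}\|\le\|\alpha\|$, and the maximum modulus principle forces $\|\Phi_{A(W)}\|=\|F(0)\|\le\|\alpha\|$. Running the same argument on $G^{-1}$ yields the reverse estimate, so $A$ is $\phb$-bi-Lipschitz.

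Finally, the equivalence $(3)\Leftrightarrow(4)$ (equivalently, $(1)\Leftrightarrow(2)$) is settled by the analogue of Proposition \ref{prop:bdd_linear=>cb_linear}: once $\alpha$ is realized as precomposition with a linear map $A$, the densely defined graded map $Tp:=p\circ A$ extends to a bounded operator $T\colon\cH_{\fV}\to\cH_{\fW}$ whose adjoint satisfies $T^{*}k_{W,v,y}=k_{AW,v,y}$, so that $M_{\alpha(f)}=TM_{f}T^{-1}$ for every $f\in A(\fV)\subseteq H^{\infty}(\fV)$, and $\alpha$ is then automatically completely bounded. The main obstacle I foresee lies in the quantitative step producing $\|\Phi_{A(W)}\|\le\|\alpha\|$: unlike the interior case in Theorem \ref{thm:isom_thm_for_homo}, compactness no longer yields $\rho(G(zW))<1$ uniformly on $\ol{\bD}$, so one has to work directly with the boundary values of $u(z)$ in $\widetilde{\ol{\fV}}$, on which only $\rho\le 1$ is available, and verify that the holomorphicity of $F$ on $\bD$ together with continuity on $\ol{\bD}$ is enough to run Cauchy's integral formula and the maximum modulus principle as in the open case.
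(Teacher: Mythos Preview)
Your outline follows the paper's intended route (the paper omits the proof, saying only that the methods of Sections \ref{sec:homogeneous} and \ref{sec:ph_dist} apply ``with significantly simpler proofs''), and the logical structure you propose is exactly right. However, the difficulty you flag at the end is genuine and is precisely where your write-up needs a correction: for $z\in\bD$ you do \emph{not} know that $u(z)=G(zW)/z$ lies in $\widetilde{\ol{\fV}}$ (recall that $\rho(X)\le 1$ is strictly weaker than $X\in\widetilde{\ol{\fB_d}}$), so the sentence ``for every $f\in A(\fV)$ the composition $f\circ u$ is then continuous on $\ol{\bD}$ and holomorphic on $\bD$'' is not justified, and hence $F(z)=\Phi_{u(z)}$ is not a priori defined as a bounded functional on $A(\fV)$ for interior $z$.

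The fix --- and this is the simplification the paper alludes to --- is to use that polynomials are norm-dense in $A(\fV)$ when $\fV$ is homogeneous (equation \eqref{eq:Aequalities}). For a polynomial $p$, the evaluation $p(u(z))$ makes sense for all $z\in\bC$ regardless of where $u(z)$ lives, and $z\mapsto p(u(z))$ is holomorphic on $\bD$ and continuous on $\ol{\bD}$. On the circle, your rotation argument gives $\|p(u(e^{i\theta}))\|=\|\Phi_{G(e^{i\theta}W)}(\beta_\theta p)\|\le\|\alpha\|\,\|p\|_{A(\fV)}$, so the maximum principle yields $\|p(A(W))\|=\|p(u(0))\|\le\|\alpha\|\,\|p\|$. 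Density of polynomials then gives $\|\Phi_{A(W)}\|\le\|\alpha\|$ and, via Proposition \ref{prop:finite_dim_reps_AV}, also $A(W)\in\widetilde{\ol{\fV}}$. This bypasses entirely the Vesentini--Schwarz compactness trick from Theorem \ref{thm:isom_thm_for_homo}, which was needed there precisely because polynomials are not dense in $H^\infty(\fV)$. The rest of your argument (the disc trick, the chain-rule identification of $\Delta G(0,0)$ and $\Delta G^{-1}(0,0)$ as mutual inverses, and the similarity implementation $M_{\alpha(f)}=TM_fT^{-1}$ from Proposition \ref{prop:bdd_linear=>cb_linear}) goes through; for the last step note that a linear bijection $A:\widetilde{\ol{\fW}}\to\widetilde{\ol{\fV}}$ preserves $\rho$ by the closed-variety analogue of Proposition \ref{prop:preserves_radius}, hence restricts to a bijection $\widetilde{\fW}\to\widetilde{\fV}$, so the kernel-function computation is legitimate.
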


%%%%%%%%%%%%%%%%%%%%%%%%%%%%%%%%
\begin{rem}
When $\fV$ and $\fW$ are varieties determined by {\em monomials} in (finitely many) noncommuting variables, the algebras $A(\fV)$ and $A(\fW)$ are precisely the tensor algebras studied in \cite{KakSh19} (the algebra $A(\fV)$ appeared in \cite{KakSh19} as $\cA_X$). 
In \cite[Theorem 9.2]{KakSh19} it was shown that in this case $A(\fV)$ and $A(\fW)$ are algebraically isomorphic if and only if they are completely isometrically isomorphic, and this happens if and only if there is a permutation of the variables, such that the defining monomials (and hence the varieties) are the same. 
\end{rem}

\bibliographystyle{abbrv}
\bibliography{nc_bibliography}

\end{document}